\documentclass[11pt]{amsart}

\usepackage{a4wide}
\usepackage{amssymb}
\usepackage{mathrsfs}
\usepackage{enumerate}
\usepackage{esint}
\usepackage{titletoc}
\usepackage[colorlinks=true, urlcolor=red, linkcolor=red, citecolor=blue]{hyperref}
\usepackage[nameinlink]{cleveref}

\theoremstyle{plain}
\newtheorem{theorem}{Theorem}[section]
\newtheorem{lemma}[theorem]{Lemma}
\newtheorem{corollary}[theorem]{Corollary}

\theoremstyle{definition}
\newtheorem{definition}[theorem]{Definition}
\newtheorem{remark}[theorem]{Remark}

\numberwithin{equation}{section}


\DeclareMathOperator*{\osc}{osc}
\DeclareMathOperator*{\essosc}{ess\,osc}
\DeclareMathOperator*{\esssup}{ess\,sup}
\DeclareMathOperator*{\essinf}{ess\,inf}
\DeclareMathOperator*{\essliminf}{ess\,liminf}

\makeatletter
\@namedef{subjclassname@2020}{\textup{2020} Mathematics Subject Classification}
\makeatother


\title[Supersolutions and superharmonic functions for nonlocal operators]{Supersolutions and superharmonic functions for nonlocal operators with Orlicz growth}

\author{Minhyun Kim}
\address{Department of Mathematics \& Research Institute for Natural Sciences, Hanyang University, 04763 Seoul, Republic of Korea}
\email{minhyun@hanyang.ac.kr}

\author{Se-Chan Lee}
\address{Department of Mathematical Sciences, Seoul National University, Seoul 08826, Republic of Korea}
\email{dltpcks1@snu.ac.kr}

\subjclass[2020]{31B05, 31B25, 35R09}
\keywords{supersolutions, superharmonic function, nonlocal equation, Orlicz growth, obstacle problem}
\thanks{M. Kim is supported by the National Research Foundation of Korea (NRF) grant funded by the Korean government (MSIT) (RS-2023-00252297). S.-C. Lee is supported by Basic Science Research Program through the National Research Foundation of Korea (NRF) funded by the Ministry of Education (2022R1A6A3A01086546).}


\begin{document}

\begin{abstract}
We study supersolutions and superharmonic functions related to problems involving nonlocal operators with Orlicz growth, which are crucial tools for the development of nonlocal nonlinear potential theory. We provide several fine properties of supersolutions and superharmonic functions, and reveal the relation between them. Along the way we prove some results for nonlocal obstacle problems such as the well-posedness and (both interior and boundary) regularity estimates, which are of independent interest.
\end{abstract}

\maketitle

\section{Introduction}

Nonlocal nonlinear potential theory (NNPT) is the study of harmonic functions associated with nonlocal nonlinear operators. It was initiated by Kuusi--Mingione--Sire~\cite{KMS15} and its basis has been established by Korvenp\"a\"a--Kuusi--Palatucci~\cite{KKP17}, Lindgren–Lindqvist~\cite{LL17}, and Korvenp\"a\"a--Kuusi--Lindgren~\cite{KKL19}. As in the classical nonlinear potential theory (NPT), it turns out that it is crucial to study supersolutions and superharmonic functions in NNPT. Indeed, the Perron method was developed in \cite{KKP17,LL17} by using superharmonic functions. Moreover, the Wiener criterion was recently obtained by Kim--Lee--Lee~\cite{KLL23} based on these results. 

The nonlinearity governing the nonlocal problems in the aforementioned papers is given by a polynomial of degree $p>1$. However, as in the classical NPT, various nonlinearities can come into play in NNPT. For instance, one can consider nonlocal problems with Orlicz growth, variable exponent, double phase structure, and so on. In this work, we focus on the nonlinearity described by an Orlicz function. To the best of our knowledge, NNPT with Orlicz growth has not been developed yet. The aim of this paper is to present a comprehensive study on supersolutions and superharmonic functions associated with nonlocal operators with Orlicz growth as a first step toward the development of NNPT with Orlicz growth.

The nonlocal operator we are interested in is defined by
\begin{equation*}
	\mathcal{L}u(x) = 2\,\mathrm{p.v.} \int_{\mathbb{R}^{n}} g(|D^su|) \frac{D^su}{|D^su|} \frac{k(x, y)}{|x-y|^{n+s}} \,\mathrm{d}y,
\end{equation*}
where $g$ is given by the derivative of a Young function $G$ (see \Cref{sec-preliminaries} for the definition), 
\begin{equation*}
D^su = D^su(x, y) = \frac{u(x)-u(y)}{|x-y|^s}
\end{equation*}
denotes a \emph{fractional gradient} of $u$ of order $s \in (0, 1)$, and $k: \mathbb{R}^{n} \times \mathbb{R}^{n} \to [0, \infty]$ is a measurable function satisfying $\Lambda^{-1} \leq k(x, y) = k(y, x) \leq \Lambda$ for a.e.\ $x, y \in \mathbb{R}^{n}$ for some ellipticity constant $\Lambda \geq 1$. Note that if $G$ has the standard growth, i.e., $G(t)=t^p$ for some $p>1$, then our study falls into the rather standard case of fractional $p$-Laplacian-type operators. Let us first provide the definition of a supersolution with respect to $\mathcal{L}$. We refer the reader to \Cref{sec-preliminaries} for the definitions of function spaces.

\begin{definition}\label{def-supersolution}
A function $u \in W_{\text{loc}}^{s, G}(\Omega)$ with $u_- \in L^g_s(\mathbb{R}^n)$ is a \emph{(weak) supersolution} of $\mathcal{L}u=0$ in $\Omega$ if
\begin{equation}\label{eq-supersolution}
\int_{\mathbb{R}^n} \int_{\mathbb{R}^n} g(|D^su|) \frac{D^su}{|D^su|} D^s\varphi \frac{k(x, y)}{|x-y|^n} \,\mathrm{d}y \,\mathrm{d}x \geq 0
\end{equation}
for all nonnegative functions $\varphi \in C_c^\infty(\Omega)$. A function $u$ is a \textit{(weak) subsolution} if $-u$ is a supersolution, and $u$ is a \textit{(weak) solution} if it is both a subsolution and a supersolution.
\end{definition}

Let us next define a superharmonic function with respect to $\mathcal{L}$. While a supersolution is defined by using the energy corresponding to the operator $\mathcal{L}$, a superharmonic function is defined by means of the comparison principle.

\begin{definition}\label{def-superharmonic}
A measurable function $u: \mathbb{R}^n \to [-\infty, \infty]$ is a ($\mathcal{L}$-)\emph{superharmonic function} in $\Omega$ if it satisfies the following:
\begin{enumerate}[(i)]
\item $u < \infty$ a.e.\ in $\mathbb{R}^n$ and $u>-\infty$ everywhere in $\Omega$,
\item $u$ is lower semicontinuous in $\Omega$,
\item if $D \Subset \Omega$ is an open set and $v \in C(\overline{D})$ is a solution of $\mathcal{L}v=0$ in $D$ with $v_+ \in L^{\infty}(\mathbb{R}^n)$ such that $u \geq v$ on $\partial D$ and almost everywhere on $\mathbb{R}^n \setminus D$, then $u \geq v$ in $D$,
\item $u_- \in L_{s}^{g}(\mathbb{R}^n)$.
\end{enumerate}
\end{definition}

The first part of this paper is devoted to local regularity estimates of solutions up to the boundary. More precisely, in \Cref{sec-loc-est}, we prove the local boundedness of subsolutions (\Cref{thm-loc-bdd-int} and \Cref{thm-loc-bdd-bdry}) and the weak Harnack inequality for supersolutions (\Cref{thm-WHI-int} and \Cref{thm-WHI-bdry}) both in the interior and up to the boundary of the domain. For contributions on such results for the fractional $p$-Laplacian-type operators, we refer the reader to Di Castro--Kuusi--Palatucci~\cite{DCKP14,DCKP16}, Korvenp\"a\"a--Kuusi--Palatucci~\cite{KKP16}, Kim--Lee--Lee~\cite{KLL23}, and references therein. In the the general framework with Orlicz growth, the theory seems rather incomplete; only interior estimates were recently obtained in Chaker--Kim--Weidner~\cite{CKW22,CKW23} and Byun--Kim--Ok~\cite{BKO23} by the approach of De Giorgi. Our contribution in this section is two-fold: on the one hand, we provide the local estimates up to the boundary. On the other hand, we develop Moser's iteration method. It allows us to obtain the weak Harnack inequalities (\Cref{thm-WHI-int} and \Cref{thm-WHI-bdry}) with an improved exponent compared to the known result in \cite{CKW23}. We emphasize that our exponent is sharp in the sense that it recovers the optimal exponent for the weak Harnark inequality \cite{DCKP14,KLL23} for the fractional $p$-Laplacian when $G(t)=t^p$.

We then move our attention to the nonlocal obstacle problems in \Cref{sec-obstacle}. The analysis on the obstacle problem plays a crucial role in capturing behavior of superharmonic functions, see Heinonen--Kilpel\"ainen--Martio~\cite{HKM06} and references therein for the $p$-Laplacian-type operators and Korvenp\"a\"a--Kuusi--Palatucci~\cite{KKP16} for the fractional $p$-Laplacian-type operators. We prove the existence and uniqueness of the solution to the nonlocal obstacle problem (\Cref{thm-obstacle}) in a certain admissible class of functions. Moreover, we derive the interior and boundary regularity of solutions to the nonlocal obstacle problems by using the techniques developed in \Cref{sec-loc-est}.

These results can be used for studying the Perron method, capacity potential, and balayage, which are important components in NNPT. Furthermore, since the Dirichlet problem is a special case of the obstacle problem, our results can be immediately applied to solutions of the Dirichlet problem. We also mention that the nonlocal obstacle problems are of independent interest as they have significant applications in various contexts. Just to name a few, the nonlocal obstacle problems appear in the interacting particle systems \cite{BCLR13,CDM16}, the thin obstacle problem \cite{ACS08,Sig59}, and the pricing model for American options \cite{CT04,Mer76}; we also refer to \cite{FR22,FRRO23} for an exhaustive discussion. In the special case when $G(t)=t^2$, the corresponding (linear) fractional obstacle problem was treated in the celebrated papers \cite{CF13,CSS08,Sil07}.

The last part of this paper consists of the properties of supersolutions and superharmonic functions. In \Cref{sec-supersolution}, we prove the lower semicontinuity, comparison principle, and convergence results for supersolutions. Based on such results we then prove \Cref{thm-relation}, which explains the relation between supersolutions and superharmonic functions. It is noteworthy that this relation between two function classes is an important building block in the development of NPT. See, for instance, Kilpel\"ainen--Mal\'y~\cite{KM94} and Kim--Lee--Lee~\cite{KLL23} for the Wolff potential estimates and the Wiener criterion. Finally, we provide the pointwise behavior of superharmonic functions and prove some integrability results in the rest of \Cref{sec-superharmonic}.

\section{Preliminaries}\label{sec-preliminaries}

\subsection{Growth function}

A function $G: [0, \infty) \to [0, \infty)$ is called a {\it Young function} if it has the form
\begin{equation*}
G(t) = \int_0^t g(\tau) \,\mathrm{d}\tau
\end{equation*}
for some non-decreasing, left-continuous function $g: [0, \infty) \to [0, \infty)$ which is not identically equal to 0. Clearly, a Young function is convex. Throughout the paper, we assume that $G$ is a differentiable Young function, with a strictly increasing function $g=G'$, and there exist $1<p\leq q$ such that
\begin{equation}\label{eq-pq}
	pG(t) \leq tg(t) \leq qG(t).
\end{equation}
Without loss of generality, we may assume that $G(1)=1$.

It is sometimes more convenient to use a function
\begin{equation}\label{eq-bar-g}
\bar{g}(t) = G(t)/t
\end{equation}
instead of $g$ because $\bar{g}$ satisfies a growth condition
\begin{equation}\label{eq-bar-g-pq}
(p-1)\bar{g}(t) \leq t\bar{g}'(t) \leq (q-1)\bar{g}(t).
\end{equation}
We emphasize that the condition \eqref{eq-bar-g-pq} with $\bar{g}$ replaced by $g$ does not follow from the assumption \eqref{eq-pq}. Note that $\bar{g}$ satisfies
\begin{equation}\label{eq-bar-g-comp}
p\bar{g}(t) \leq g(t) \leq q\bar{g}(t) \quad\text{and}\quad p^{1/(q-1)} g^{-1}(t) \leq \bar{g}^{-1}(t) \leq q^{1/(p-1)} g^{-1}(t).
\end{equation}
Moreover, it is easy to see that $G$, $g$ and $\bar{g}$ satisfy the following properties; see Chaker--Kim--Weidner~\cite{CKW22,CKW23} for instance.

\begin{lemma}\label{lem-G}
Let $t, t' \geq 0$ and $c=p^{1/(q-1)}/q^{1/(p-1)}$.
\begin{enumerate}[(i)]
\item
For all $\lambda \geq 1$, it holds that
\begin{alignat*}{2}
&\lambda^p G(t) \leq G(\lambda t) \leq \lambda^q G(t),
&\quad&\lambda^{1/q} G^{-1}(t) \leq G^{-1}(\lambda t) \leq \lambda^{1/p}G^{-1}(t), \\
&\lambda^{p-1} \bar{g}(t) \leq \bar{g}(\lambda t) \leq \lambda^{q-1} \bar{g}(t),
&&\lambda^{1/(q-1)} \bar{g}^{-1}(t) \leq \bar{g}^{-1}(\lambda t) \leq \lambda^{1/(p-1)} \bar{g}^{-1}(t), \\
&\frac{p}{q} \lambda^{p-1} g(t) \leq g(\lambda t) \leq \frac{q}{p} \lambda^{q-1} g(t),
&&c \lambda^{1/(q-1)} g^{-1}(t) \leq g^{-1}(\lambda t) \leq c^{-1} \lambda^{1/(p-1)} g^{-1}(t).
\end{alignat*}
\item
For all $\lambda \leq 1$, it holds that
\begin{alignat*}{2}
&\lambda^q G(t) \leq G(\lambda t) \leq \lambda^p G(t),
&\quad&\lambda^{1/p} G^{-1}(t) \leq G^{-1}(\lambda t) \leq \lambda^{1/q}G^{-1}(t), \\
&\lambda^{q-1} \bar{g}(t) \leq \bar{g}(\lambda t) \leq \lambda^{p-1} \bar{g}(t),
&&\lambda^{1/(p-1)} \bar{g}^{-1}(t) \leq \bar{g}^{-1}(\lambda t) \leq \lambda^{1/(q-1)} \bar{g}^{-1}(t), \\
&\frac{p}{q} \lambda^{q-1} g(t) \leq g(\lambda t) \leq \frac{q}{p} \lambda^{p-1} g(t),
&&c \lambda^{1/(p-1)} g^{-1}(t) \leq g^{-1}(\lambda t) \leq c^{-1} \lambda^{1/(q-1)} g^{-1}(t).
\end{alignat*}
\item
It holds that
\begin{alignat*}{2}
&G(t+t') \leq 2^q(G(t)+G(t')),
&\quad&G^{-1}(t+t') \leq 2^{1/p}(G^{-1}(t)+G^{-1}(t')), \\
&\bar{g}(t+t') \leq 2^{q-1}(\bar{g}(t)+\bar{g}(t')),
&&\bar{g}^{-1}(t+t') \leq 2^{1/(p-1)} (\bar{g}^{-1}(t) + \bar{g}^{-1}(t')), \\
&g(t+t') \leq \frac{q}{p}2^{q-1}(g(t)+g(t')),
&&g^{-1}(t+t') \leq c^{-1} 2^{1/(p-1)} (g^{-1}(t)+g^{-1}(t')).
\end{alignat*}
\end{enumerate}
\end{lemma}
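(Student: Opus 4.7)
The plan is to reduce every inequality in the lemma to one of two elementary facts: (a) a logarithmic differentiation applied to the hypothesis \eqref{eq-pq} or \eqref{eq-bar-g-pq} yields a two-sided power comparison under scaling, and (b) every inverse inequality is obtained from the direct one by a change of variables, while every subadditivity-type estimate follows by bounding the larger summand. Since the twelve inequalities split naturally into three groups (those for $G$, those for $\bar{g}$, and those for $g$), I would treat them in that order and then simply read off the $\lambda\leq 1$ and subadditive versions.

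For part (i), start with $G$: the hypothesis \eqref{eq-pq} rewrites as $p/t \leq g(t)/G(t) = (\log G)'(t) \leq q/t$, so integrating on $[t,\lambda t]$ for $\lambda\geq 1$ gives $\lambda^p G(t) \leq G(\lambda t) \leq \lambda^q G(t)$. Substituting $s=G(t)$ in $G(\lambda t) \leq \lambda^q G(t)$ and renaming $\lambda^q$ yields the upper bound on $G^{-1}$; the lower bound is symmetric. The same argument applied to \eqref{eq-bar-g-pq}, where $(\log\bar g)'(t) = \bar g'(t)/\bar g(t)$ is pinched between $(p-1)/t$ and $(q-1)/t$, gives the $\bar g$ estimates and, by inversion, the $\bar g^{-1}$ estimates. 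Finally, the $g$ bounds are bootstrapped from the $\bar g$ bounds through the sandwich $p\bar g \leq g \leq q\bar g$ in \eqref{eq-bar-g-comp}: this is exactly where the extra prefactor $q/p$ in the $g$-estimates enters, and the inverse $g^{-1}$ bound is again extracted by the substitution $s=g(t)$.

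For part (ii), each inequality is just the $\lambda\in(0,1]$ reformulation of its counterpart in part (i): applying the part~(i) estimate with the scaling factor $\mu=1/\lambda\geq 1$ and the base point $\lambda t$ swaps the roles of $p$ and $q$ and rearranges to the stated form. For part (iii), assume $t\leq t'$; then $t+t'\leq 2t'$, and part (i) with $\lambda=2$ gives, for example, $G(t+t')\leq G(2t')\leq 2^q G(t') \leq 2^q(G(t)+G(t'))$. The five remaining subadditive inequalities are produced by precisely the same trick, using either the direct or the inverse part (i) estimates with $\lambda=2$, and using \eqref{eq-bar-g-comp} once more when the constant $q/p$ is needed for $g$ and $g^{-1}$.

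There is no conceptual obstacle; the work is entirely bookkeeping. The one place that requires care is the transfer from $\bar g$ to $g$, because the constants \emph{do} degrade by the factor $q/p$ (reflecting the fact, emphasized after \eqref{eq-bar-g-pq}, that $g$ itself does not satisfy a clean two-sided growth estimate of the form \eqref{eq-bar-g-pq}). Tracking these constants consistently through the inversion step and through the subadditivity argument is the only point where a slip is plausible; everything else is a routine consequence of monotonicity and the two scalar integration arguments above.
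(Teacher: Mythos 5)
The paper itself does not prove \Cref{lem-G}: it records the statements as ``easy to see'' and points to the references \cite{CKW22,CKW23}, so there is no in-paper proof to compare against. Your blind proof is correct and is, as far as I can tell, the standard route for such growth estimates: integrating $(\log G)'(\tau)=g(\tau)/G(\tau)\in[p/\tau,\,q/\tau]$ over $[t,\lambda t]$, integrating $(\log\bar g)'(\tau)\in[(p-1)/\tau,\,(q-1)/\tau]$ for $\bar g$, sandwiching $g$ between $p\bar g$ and $q\bar g$, inverting by substituting along the graph of each function, deducing the $\lambda\le1$ case from the $\lambda\ge1$ case by replacing $(\lambda,t)$ with $(1/\lambda,\lambda t)$, and obtaining part~(iii) from $t+t'\le 2(t\vee t')$ combined with part~(i) at $\lambda=2$. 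One small remark on the step you flagged as delicate: for $g^{-1}$ the cleanest way to land exactly on the stated constant $c=p^{1/(q-1)}/q^{1/(p-1)}$ is to use the $\bar g^{-1}$ estimates together with the second half of \eqref{eq-bar-g-comp}, $p^{1/(q-1)}g^{-1}\le \bar g^{-1}\le q^{1/(p-1)}g^{-1}$, which gives $c$ and $c^{-1}$ on the nose. If instead you invert the two-sided $g$-estimate directly via $s=g(t)$, the lower $g^{-1}$ bound requires splitting the range of $\lambda$ at $q/p$ (since the chosen scaling factor may drop below~$1$) and produces a slightly different, in fact sharper, constant that still implies the claim; this is precisely the bookkeeping you anticipated, and it does not affect correctness.
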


Another useful property of the growth functions is the following: for any $a, b \geq 0$ and $\varepsilon > 0$,
\begin{equation}\label{eq-alg}
g(a)b \leq \varepsilon g(a)a + g(b/\varepsilon)b
\end{equation}
Notice that \eqref{eq-alg} follows by splitting the cases $b \leq \varepsilon a$ and $b > \varepsilon a$.

The \emph{conjugate function} of a Young function $H$ is defined by $H^\ast(\tau) = \sup_{t \geq 0} (t\tau-H(t))$ for all $\tau \geq 0$. We obtain by definition
\begin{equation}\label{eq-Young}
t\tau \leq H(t) + H^\ast(\tau) \quad\text{for all}~ t, \tau \geq 0,
\end{equation}
which is called \emph{Young's inequality}. The following lemma, together with Young's inequality, is frequently used in the sequel.

\begin{lemma}\label{lem-H}
Let $H(t) = G(t^{1/a})$ for $a \in (0, p]$. Then $H^\ast(G(t)/t^a) \sim G(t)$. In particular, $G^\ast(g(t)) \sim G(t)$.
\end{lemma}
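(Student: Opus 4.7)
The plan is to exploit the equality case of Young's inequality, $H^\ast(H'(s)) = s H'(s) - H(s)$, combined with the observation that the argument $G(t)/t^a$ is comparable to $H'(t^a)$ with constants depending only on $a$, $p$, $q$. Once both facts are in place, monotonicity of $H^\ast$ delivers the upper bound and a scaling inequality delivers the lower bound.

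First I would differentiate, obtaining $H'(s) = \frac{1}{a} s^{1/a - 1} g(s^{1/a})$, and substitute into \eqref{eq-pq} to get
\[
	\frac{p}{a}\, H(s) \;\leq\; s H'(s) \;=\; \frac{1}{a}\, s^{1/a}\, g(s^{1/a}) \;\leq\; \frac{q}{a}\, H(s).
\]
This simultaneously confirms that $H$ is a convex Young function (since $p/a \geq 1$), shows $H$ satisfies the analogue of \eqref{eq-pq} with exponents $p/a$ and $q/a$, and, via Hölder conjugation applied to $(H^\ast)' = (H')^{-1}$, implies that $H^\ast$ enjoys growth with indices $q/(q-a)$ and $p/(p-a)$. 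Evaluating at $s = t^a$ then gives $H(t^a) = G(t)$ and $H'(t^a) = tg(t)/(at^a)$, so a further application of \eqref{eq-pq} produces $\frac{p}{a}\cdot G(t)/t^a \leq H'(t^a) \leq \frac{q}{a} \cdot G(t)/t^a$.

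Young's identity then computes
\[
	H^\ast(H'(t^a)) \;=\; t^a H'(t^a) - G(t) \;=\; \frac{tg(t)}{a} - G(t),
\]
which by \eqref{eq-pq} lies between $(p/a - 1) G(t)$ and $(q/a - 1) G(t)$. The upper bound for $H^\ast(G(t)/t^a)$ follows from monotonicity of $H^\ast$ together with $G(t)/t^a \leq H'(t^a)$ (which holds since $a \leq p$). For the lower bound, the comparison $G(t)/t^a \geq (a/q) H'(t^a)$ combined with the scaling $H^\ast(\lambda \tau) \geq \lambda^{p/(p-a)} H^\ast(\tau)$ for $\lambda \in (0, 1]$ (a consequence of the upper growth index of $H^\ast$, via the fact that $H^\ast(s)/s^{p/(p-a)}$ is nonincreasing) yields $H^\ast(G(t)/t^a) \geq c(p, q, a)\, G(t)$ whenever $a < p$. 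The ``In particular'' assertion is then the case $a = 1 < p$, combined with the equivalence $\bar g(t) \sim g(t)$ from \eqref{eq-bar-g-comp} and the analogous growth of $G^\ast$ (which satisfies a $\Delta_2$-type condition inherited from \eqref{eq-pq}), so that passing from $\bar g(t)$ to $g(t)$ inside $G^\ast$ only affects constants.

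The main technical subtlety is the lower bound near the endpoint $a = p$, where the factor $p/a - 1$ degenerates to zero; in all intended uses, including the specialization to $G^\ast(g(t)) \sim G(t)$, the parameter $a$ is bounded strictly below $p$, so the estimate is harmless, but the implicit constants in the equivalence depend on the separation between $a$ and $p$.
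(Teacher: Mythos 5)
Your route---evaluating the Fenchel--Young equality at $s = t^a$ and then rescaling---is genuinely different from the paper's proof, which simply invokes the inverse-function comparison $H^{-1}(t)(H^\ast)^{-1}(t) \sim t$ from Harjulehto--H\"ast\"o and substitutes $H^{-1}(t) = G^{-1}(t)^a$. Your growth computation $\tfrac{p}{a}H(s) \leq sH'(s) \leq \tfrac{q}{a}H(s)$ is correct, and you are right to flag the degeneracy of the lower-bound constant as $a \to p$.

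The gap is in the assertion that $H$ is a convex Young function. The inequality $sH'(s) \geq \tfrac{p}{a}H(s)$ with $p/a \geq 1$ only shows that $H(s)/s$ is nondecreasing; it does not show $H'$ is nondecreasing. Writing $t = s^{1/a}$, convexity of $H$ amounts to $t \mapsto t^{1-a}g(t)$ being nondecreasing, i.e.\ $tg'(t) \geq (a-1)g(t)$; for $a > 1$ this is a genuine constraint on the local growth of $g$, and the hypotheses control $t\bar g'/\bar g$ but, as the paper warns just after \eqref{eq-bar-g-pq}, not $tg'/g$. Indeed one can build $G$ satisfying \eqref{eq-pq} in which $g$ is nearly flat on a short interval where $tg(t)/G(t)$ sits near $q$; on that interval $t^{1-a}g(t)$ strictly decreases for any $a>1$, so $H$ is not convex. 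Without convexity the identity $H^\ast(H'(s)) = sH'(s) - H(s)$ becomes a one-sided inequality ($\geq$), which kills your upper-bound step, and the formula $(H^\ast)' = (H')^{-1}$ you use for the conjugate growth is also unavailable. Your lower bound can actually be rescued, since $H^\ast(H'(t^a)) \geq t^a H'(t^a) - H(t^a)$ always holds and the scaling $H^\ast(\lambda\tau) \geq \lambda^{p/(p-a)}H^\ast(\tau)$ for $\lambda\le 1$ follows directly from the definition of $H^\ast$ together with $H(\mu s) \leq \mu^{p/a}H(s)$ for $\mu \leq 1$. But the cleanest repair bypasses the identity entirely: write
\begin{equation*}
H^\ast\!\left(\frac{G(t)}{t^a}\right) = \sup_{\mu > 0}\left(\mu G(t) - G(\mu^{1/a}t)\right)
\end{equation*}
and feed in the two-sided bounds $\mu^{q/a}G(t) \leq G(\mu^{1/a}t) \leq \mu^{p/a}G(t)$ for $\mu \leq 1$ (and the reversed ones for $\mu \geq 1$) from \Cref{lem-G}; this yields both directions of the comparison with explicit constants and never appeals to convexity of $H$.
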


\begin{proof}
Note that we have $H^{-1}(t) = G^{-1}(t)^a$. It follows from Theorem~2.4 in Harjulehto--H\"ast\"o~\cite{HH19} that $H^{-1}(t) (H^{\ast})^{-1}(t) \sim t$, and hence $(H^{\ast})^{-1}(t) \sim t/G^{-1}(t)^a$. This concludes $H^\ast(G(t)/t^a) \sim G(t)$.
\end{proof}

We refer the reader to Diening--Harjulehto--H\"ast\"o--R{\r u}{\v z}i{\v c}ka~\cite{DHHR11} and Harjulehto--H\"ast\"o~\cite{HH19} for an exhaustive description on Orlicz functions.

\subsection{Function spaces}

Let $\Omega$ be an open subset of $\mathbb{R}^n$. The \emph{Orlicz} and \emph{(fractional) Orlicz--Sobolev spaces} are defined by
\begin{align*}
L^G(\Omega) &= \left\{ u: \Omega \to \mathbb{R} ~\text{measurable}: \varrho_{L^G(\Omega)}(u) < \infty \right\}, \\
W^{s, G}(\Omega) &= \left\{ u \in L^G(\Omega): \varrho_{W^{s, G}(\Omega)}(u) < \infty \right\}, \\
V^{s, G}(\Omega) &= \left\{ u: \mathbb{R}^n \to \mathbb{R} ~\text{measurable}: u|_\Omega \in L^G(\Omega), \varrho_{V^{s, G}(\Omega)}(u) < \infty \right\},
\end{align*}
where
\begin{align*}
\varrho_{L^G(\Omega)}(u) &= \int_\Omega G(|u|) \,\mathrm{d}x, \\
\varrho_{W^{s, G}(\Omega)}(u) &= \int_\Omega \int_\Omega G(|D^s u|) \frac{\mathrm{d}y \,\mathrm{d}x}{|x-y|^n}, \\
\varrho_{V^{s, G}(\Omega)}(u) &= \int_\Omega \int_{\mathbb{R}^n} G(|D^s u|) \frac{\mathrm{d}y \,\mathrm{d}x}{|x-y|^n}.
\end{align*}
These spaces are Banach spaces with the norms
\begin{align*}
\|u\|_{L^G(\Omega)} &= \inf \left\{ \lambda>0: \varrho_{L^G(\Omega)}(u/\lambda) \leq 1 \right\}, \\
\|u\|_{W^{s, G}(\Omega)} &= \|u\|_{L^G(\Omega)} + \inf \left\{ \lambda>0: \varrho_{W^{s, G}(\Omega)}(u/\lambda) \leq 1 \right\}, \\
\|u\|_{V^{s, G}(\Omega)} &= \|u\|_{L^G(\Omega)} + \inf \left\{ \lambda>0: \varrho_{V^{s, G}(\Omega)}(u/\lambda) \leq 1 \right\},
\end{align*}
respectively. By $W^{s, G}_{\text{loc}}(\Omega)$ we denote the space of functions that belong to $W^{s, G}(D)$ for each open set $D \Subset \Omega$. We also define the space $V^{s, G}_0(\Omega) = \overline{C_c^{\infty}(\Omega)}^{V^{s, G}(\Omega)}$. We refer the reader to \cite[Remark~2.2]{KLL23} for remarks on the spaces $V^{s, G}(\Omega)$ and $V^{s, G}_0(\Omega)$. The following theorem will be useful in the sequel. See Proposition~3.2 in Salort~\cite{Sal20} for the proof of \Cref{thm-Poincare}.

\begin{theorem}[Fractional Poincar\'e inequality]\label{thm-Poincare}
Let $\Omega \subset \mathbb{R}^n$ be open and bounded. There exists a constant $C = C(n, p, q, s) > 0$ such that
\begin{equation*}
\int_\Omega G(|u|) \,\mathrm{d}x \leq C \int_{\Omega} \int_{\mathbb{R}^n} G(\mathrm{diam}^s(\Omega) |D^su|) \frac{\mathrm{d}y \,\mathrm{d}x}{|x-y|^n}
\end{equation*}
for all $u \in W_0^{s, G}(\Omega, \Omega')$.
\end{theorem}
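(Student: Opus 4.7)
The natural approach is a Jensen-type averaging argument against a reference set on which $u$ vanishes. Since $\Omega$ is bounded, for each $x \in \Omega$ I would set $R = 2\,\mathrm{diam}(\Omega)$ and consider the region
\[
E_x := B(x, R) \setminus \Omega.
\]
By the isodiametric inequality, $|\Omega| \leq \omega_n (\mathrm{diam}(\Omega)/2)^n$, while $|B(x, R)| = \omega_n 2^n \mathrm{diam}(\Omega)^n$, so $|E_x| \geq c_n \mathrm{diam}(\Omega)^n$. Since $u \in W_0^{s,G}(\Omega, \Omega')$ vanishes outside $\Omega$ (in the sense encoded by the space, taking $\Omega'$ large enough relative to $\Omega$ if needed), $u \equiv 0$ on $E_x$.

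Having secured this null region, I would apply Jensen's inequality (using the convexity of $G$):
\[
G(|u(x)|) = G\!\left( \frac{1}{|E_x|} \int_{E_x} |u(x) - u(y)| \,\mathrm{d}y \right) \leq \frac{1}{|E_x|} \int_{E_x} G(|u(x) - u(y)|) \,\mathrm{d}y.
\]
Writing $|u(x) - u(y)| = |x-y|^s |D^s u(x,y)|$ and noting that $|x-y| \leq R = 2\,\mathrm{diam}(\Omega)$ on $E_x$, the doubling property \Cref{lem-G}(i) gives $G(|u(x)-u(y)|) \leq 2^{sq} G(\mathrm{diam}^s(\Omega)\,|D^s u(x,y)|)$. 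Moreover, since $|E_x| \gtrsim \mathrm{diam}(\Omega)^n \geq (|x-y|/2)^n$ for $y \in E_x$, I can replace the constant weight $1/|E_x|$ by the singular kernel $C/|x-y|^n$, enlarge the domain of integration from $E_x$ to $\mathbb{R}^n$, and obtain
\[
G(|u(x)|) \leq C \int_{\mathbb{R}^n} G(\mathrm{diam}^s(\Omega)\,|D^s u(x,y)|) \frac{\mathrm{d}y}{|x-y|^n}.
\]
Integrating in $x$ over $\Omega$ and applying Fubini then gives the claim, with $C$ depending only on $n, p, q, s$ (the $q$ dependence entering through \Cref{lem-G}(i)).

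The main obstacle is Step~1: correctly interpreting the boundary vanishing encoded in the space $W_0^{s,G}(\Omega, \Omega')$ so as to secure a null set $E_x$ of measure $\gtrsim \mathrm{diam}(\Omega)^n$ inside a ball of radius $\lesssim \mathrm{diam}(\Omega)$. If the definition only guarantees $u = 0$ on $\Omega' \setminus \Omega$ for a specific $\Omega'$, one either enlarges $\Omega'$ a priori or replaces $E_x$ by any set of measure $\gtrsim \mathrm{diam}(\Omega)^n$ inside $B(x, C\,\mathrm{diam}(\Omega))$ on which $u$ is known to vanish. Once this geometric input is in place, the remaining manipulations are routine consequences of the doubling estimates collected in \Cref{lem-G}.
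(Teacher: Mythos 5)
The paper itself does not prove this theorem; it simply cites Proposition~3.2 of Salort~\cite{Sal20}. So there is no ``paper's own proof'' to compare against line by line, but your argument is a correct, self-contained proof along the standard route for fractional Poincar\'e inequalities with a hole. All the core steps check out: with $R = 2\,\mathrm{diam}(\Omega)$ and $E_x = B(x,R)\setminus\Omega$, one has $|E_x| \geq \omega_n(2^n-1)\,\mathrm{diam}(\Omega)^n$ (the isodiametric inequality is not even needed, since $\Omega\subset B(x_0,\mathrm{diam}\,\Omega)$ for any $x_0\in\Omega$), and on $E_x$ the pointwise identity $|u(x)| = |x-y|^s|D^s u(x,y)|$ together with $|x-y|\leq 2\,\mathrm{diam}(\Omega)$, monotonicity and \Cref{lem-G}(i) gives $G(|u(x)|)\leq 2^{sq}G(\mathrm{diam}^s(\Omega)\,|D^s u(x,y)|)$. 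Averaging over $y\in E_x$ (your Jensen step is actually an equality here, since $u\equiv 0$ on $E_x$ makes the integrand constant in $y$; it is correct either way), using $|E_x|^{-1}\lesssim |x-y|^{-n}$ on $E_x$, enlarging $E_x$ to $\mathbb{R}^n$, and integrating in $x$ gives the claim with $C=C(n,q,s)$, consistent with the stated $C(n,p,q,s)$.

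On the ``main obstacle'' you flag: it is not really an obstacle. Since $W_0^{s,G}(\Omega,\Omega')$ is by definition the closure of $C_c^\infty(\Omega)$, one proves the estimate for $\varphi\in C_c^\infty(\Omega)$ (which genuinely vanishes on all of $\mathbb{R}^n\setminus\Omega$, not merely $\Omega'\setminus\Omega$, so $E_x$ is a null set for $\varphi$ regardless of how far $B(x,R)$ reaches beyond $\Omega'$) and then passes to the limit. For that limit the right-hand side $\int_\Omega\int_{\mathbb{R}^n}$ splits into the $\Omega\times\Omega'$ piece controlled by the $W^{s,G}(\Omega,\Omega')$-seminorm and the $\Omega\times(\mathbb{R}^n\setminus\Omega')$ piece, where $|x-y|\geq\mathrm{dist}(\Omega,\partial\Omega')>0$ and $u(y)=0$, so the latter is controlled by $\|u\|_{L^G(\Omega)}$. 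This is the same observation made in the proof of \Cref{lem-finiteness}, and it is what makes the density step routine; it is cleaner than trying to work directly with a general element of the closure and worrying about its behaviour outside $\Omega'$.
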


Since we work with nonlocal equations, we need a function space that captures integrability of functions in the whole of $\mathbb{R}^n$. We define the \emph{tail space} $L^g_s(\mathbb{R}^n)$ by
\begin{equation*}
L^g_s(\mathbb{R}^n) = \left\{ u \in L^g_{\text{loc}}(\mathbb{R}^n): \int_{\mathbb{R}^n} g\left(\frac{|u(x)|}{(1+|x|)^s}\right) \frac{\mathrm{d}x}{ (1+|x|)^{n+s} } <\infty \right\},
\end{equation*}
or equivalently,
\begin{equation*}
L^g_s(\mathbb{R}^n) = \left\{ u \in L^g_{\text{loc}} (\mathbb{R}^n): \mathrm{Tail}_g(u; x_0, R) <\infty ~\text{for some $x_0 \in \mathbb{R}^n$ and $R>0$} \right\},
\end{equation*}
where $\mathrm{Tail}_g(u; x_0, R)$ is a \emph{(nonlocal) tail} defined by
\begin{equation*}
\mathrm{Tail}_g(u; x_0, R) = R^s g^{-1} \left( R^s \int_{\mathbb{R}^n \setminus B_R(x_0)} g \left( \frac{|u(x)|}{|x-x_0|^s} \right) \frac{\mathrm{d}x}{|x-x_0|^{n+s}} \right).
\end{equation*}
Note that $\mathrm{Tail}_g(u; x_0, R)$ is finite for any $x_0 \in \mathbb{R}^n$ and $R>0$ if $u \in L^g_s(\mathbb{R}^n)$. Since we often investigate the integral in the tail, we introduce
\begin{equation*}
T(u; x_0, R) := g\left( \frac{\mathrm{Tail}_g(u; x_0, R)}{R^s} \right) = R^s \int_{\mathbb{R}^n \setminus B_R(x_0)} g \left( \frac{|u(x)|}{|x-x_0|^s} \right) \frac{\mathrm{d}x}{|x-x_0|^{n+s}}
\end{equation*}
for notational convenience.

Having the spaces $W^{s, G}_\mathrm{loc}(\Omega)$ and $L^g_s(\mathbb{R}^n)$ at hand, we can define supersolution of $\mathcal{L}u=0$ in $\Omega$ as in \Cref{def-supersolution}. Note that $W^{s, G}_\mathrm{loc}(\Omega)$ is consistent with the local counterpart $W^{1, G}_\mathrm{loc}(\Omega)$ and serves as a natural function space for the interior regularity theory. However, a delicate issue arises when we choose an appropriate function space for the Dirichlet problem and the boundary regularity theory. Indeed, there have been several approaches to find suitable spaces for functions regular across the boundary of the domain. For instance, Byun--Kim--Ok~\cite{BKO23} and Kim--Lee--Lee~\cite{KLL23} used $V^{s, G}(\Omega)$ and Korvenp\"a\"a--Kuusi--Palatucci~\cite{KKP16} used $W^{1, G}_{\mathrm{loc}}(\Omega')$. However, these spaces are not appropriate for our purpose due to the following reasons:

\begin{enumerate}[(i)]
	\item Functions in $V^{s, G}(\Omega)$ belong to $L^G_s(\mathbb{R}^n)$, which is smaller than $L^g_s(\mathbb{R}^n)$. Supersolutions and superharmonic functions are not necessarily in $L^g_s(\mathbb{R}^n)$.	
	\item $W^{s, G}(\Omega')$ converges to $W^{1, G}(\Omega')$ as $s \nearrow 1$, which relies on the particular choice of $\Omega' (\Supset \Omega)$. 
\end{enumerate}

To complement aforementioned problems, we introduce a new space. Let $\Omega, \Omega'$ be open subsets of $\mathbb{R}^n$ such that $\Omega \Subset \Omega'$. We define
\begin{equation*}
W^{s, G}(\Omega, \Omega') = \left\{ u: \Omega' \to \mathbb{R} ~\text{measurable}: u|_\Omega \in L^G(\Omega), \varrho_{W^{s, G}(\Omega, \Omega')}(u) < \infty \right\},
\end{equation*}
where
\begin{equation*}
\varrho_{W^{s, G}(\Omega, \Omega')}(u) = \int_\Omega \int_{\Omega'} G(|D^s u|) \frac{\mathrm{d}y \,\mathrm{d}x}{|x-y|^n}.
\end{equation*}
It is a Banach space with the norm 
\begin{equation*}
\|u\|_{W^{s, G}(\Omega, \Omega')} = \|u\|_{L^G(\Omega)} + \inf \left\{ \lambda>0: \varrho_{W^{s, G}(\Omega, \Omega')}(u/\lambda) \leq 1 \right\}.
\end{equation*}
Obviously, we have $V^{s, G}(\Omega) \subset W^{s, G}(\Omega, \Omega') \subset W^{s, G}(\Omega) \subset L^G(\Omega)$. We also define a space $W^{s, G}_0(\Omega, \Omega') = \overline{C_c^{\infty}(\Omega)}^{W^{s, G}(\Omega, \Omega')}$. A couple of remarks are in order. First, functions in $W^{s, G}(\Omega, \Omega')$ do not belong to $L^G_s(\mathbb{R}^n)$ in general unless $\Omega'=\mathbb{R}^n$. Second, $W^{s, G}(\Omega, \Omega')$ converges to $W^{1, G}(\Omega)$ as $s \nearrow 1$, whereas $W^{s, G}(\Omega')$ converges to $W^{1, G}(\Omega')$, see Fern\'andez Bonder--Salort~\cite{FBS19}.

\subsection{Supersolutions and superharmonic functions}

In the introduction, we defined supersolutions and superharmonic functions. We provide several properties and make some remarks on these functions. Let us begin with supersolutions. For measurable functions $u, v: \mathbb{R}^n \to \mathbb{R}$, we consider a quantity
\begin{equation*}
\mathcal{E}(u, v) = \int_{\mathbb{R}^n} \int_{\mathbb{R}^n} g(|D^su|) \frac{D^su}{|D^su|} D^sv \frac{k(x, y)}{|x-y|^n} \,\mathrm{d}y \,\mathrm{d}x.
\end{equation*}
A sufficient condition for $\mathcal{E}$ to be well defined is given in the next lemma.

\begin{lemma}\label{lem-finiteness}
For $u \in W^{s, G}_{\mathrm{loc}}(\Omega) \cap L^g_s(\mathbb{R}^n)$ and for $v \in W^{s, G}_{\mathrm{loc}}(\Omega)$ with compact support in $\Omega$, $\mathcal{E}(u, v)$ is finite.
\end{lemma}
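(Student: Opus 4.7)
The plan is to exploit the compact support of $v$ to reduce the double integral over $\mathbb{R}^n \times \mathbb{R}^n$ to a bounded part plus a tail part, and then bound each using, respectively, Young's inequality combined with \Cref{lem-H} and the tail space assumption $u \in L^g_s(\mathbb{R}^n)$. More precisely, let $K = \supp v$ and pick an open set $D$ with $K \subset D \Subset \Omega$. Because $D^s v(x,y) = 0$ unless at least one of $x, y$ lies in $K$, and since $k$ is symmetric, I split
\begin{equation*}
\mathcal{E}(u,v) = \iint_{D \times D} (\cdots)\,\mathrm{d}y\,\mathrm{d}x + 2 \iint_{K \times (\mathbb{R}^n \setminus D)} (\cdots)\,\mathrm{d}y\,\mathrm{d}x,
\end{equation*}
and bound the absolute value of the integrand by $\Lambda\, g(|D^s u|)\, |D^s v|/|x-y|^n$ throughout.

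For the local piece on $D \times D$, I would apply Young's inequality \eqref{eq-Young} to the pair $(g(|D^s u|), |D^s v|)$ and invoke \Cref{lem-H} (with $a=1$) to get $G^{\ast}(g(|D^s u|)) \sim G(|D^s u|)$; this yields $g(|D^s u|) |D^s v| \lesssim G(|D^s u|) + G(|D^s v|)$, and integration against $k(x,y)/|x-y|^n$ produces a constant multiple of $\varrho_{W^{s,G}(D)}(u) + \varrho_{W^{s,G}(D)}(v)$, both finite since $u, v \in W^{s,G}_{\mathrm{loc}}(\Omega)$.

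For the tail piece on $K \times (\mathbb{R}^n \setminus D)$, set $d_0 = \mathrm{dist}(K, \mathbb{R}^n \setminus D) > 0$ so that $|x-y| \geq d_0$ there, and use $v(y)=0$, giving $|D^s v(x,y)| = |v(x)|/|x-y|^s$. Writing $|D^s u| \leq (|u(x)|+|u(y)|)/|x-y|^s$ and applying the subadditivity of $g$ from \Cref{lem-G}(iii), the integrand is controlled by
\begin{equation*}
C\,\Lambda \left[ g\!\left(\frac{|u(x)|}{|x-y|^s}\right) + g\!\left(\frac{|u(y)|}{|x-y|^s}\right) \right] \frac{|v(x)|}{|x-y|^{n+s}}.
\end{equation*}
The term involving $u(x)$ is handled using $|x-y| \geq d_0$: integrating $|x-y|^{-n-s}$ over $y \in \mathbb{R}^n \setminus D$ gives a constant depending on $d_0$, and an application of Young's inequality with \Cref{lem-H} reduces $g(|u(x)|/d_0^s)|v(x)|$ to $G(|u(x)|/d_0^s) + G(|v(x)|)$, which is integrable on $K$ since $u, v \in L^G(K)$. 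The term involving $u(y)$ is the delicate one and is where the tail space enters: for $x \in K$ fixed and $y \in \mathbb{R}^n \setminus D$ one has $|x-y| \sim 1 + |y|$ up to constants depending on $K$ and $D$, so
\begin{equation*}
\int_{\mathbb{R}^n \setminus D} g\!\left(\frac{|u(y)|}{|x-y|^s}\right) \frac{\mathrm{d}y}{|x-y|^{n+s}} \leq C(K,D) \int_{\mathbb{R}^n} g\!\left(\frac{|u(y)|}{(1+|y|)^s}\right) \frac{\mathrm{d}y}{(1+|y|)^{n+s}} < \infty,
\end{equation*}
by definition of $L^g_s(\mathbb{R}^n)$; integrating the remaining factor $|v(x)|$ over $x \in K$ is finite because $v \in L^G(K) \subset L^1(K)$.

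The main obstacle is the $u(y)$ contribution in the tail part: here one must simultaneously control a $g$ applied to a quotient by $|x-y|^s$ and pull out the dependence on $x \in K$ so that what remains is exactly the tail integral appearing in the definition of $L^g_s(\mathbb{R}^n)$. The comparison $|x-y| \sim 1 + |y|$ for $x$ in the compact set $K$ is the simple but essential geometric input that turns a two-variable tail estimate into the one-variable tail finiteness already hypothesized.
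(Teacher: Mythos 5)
Your proof is correct and follows essentially the same route as the paper's: choose $D \Supset \supp v$ compactly contained in $\Omega$, split $\mathcal{E}(u,v)$ into a local double integral over $D\times D$ and a tail integral over $\supp v \times (\mathbb{R}^n\setminus D)$ (using symmetry of the integrand), bound the local piece via Young's inequality together with \Cref{lem-H} so that $g(|D^su|)|D^sv| \lesssim G(|D^su|) + G(|D^sv|)$, and control the tail piece by splitting $g(|D^su|)$ with \Cref{lem-G}(iii), absorbing the $u(x)$-contribution via $|x-y|\geq d_0$ and Young again, and identifying the $u(y)$-contribution with the tail quantity defining $L^g_s(\mathbb{R}^n)$ via the comparison $|x-y|\sim 1+|y|$ uniformly for $x$ in a compact set. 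The only organizational difference from the paper is that the paper first normalizes and proves the quantitative H\"older-type bound \eqref{eq-Holder1} (which it reuses later, e.g., in \Cref{lem-A}), whereas you apply Young's inequality in the un-normalized form, which is enough for finiteness.
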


\begin{proof}
We first prove
\begin{equation}\label{eq-Holder1}
\int_\Omega \int_E g(|D^su|) |D^s v| \frac{\mathrm{d}y\,\mathrm{d}x}{|x-y|^n} \leq C \left( [u]^{q-1} \land [u]^{p-1} \right) [v],
\end{equation}
where $E$ is either $\Omega$, $\Omega'$, or $\mathbb{R}^n$, and $[\cdot]$ is the corresponding norm, namely, $[\cdot]=[\cdot]_{W^{s, G}(\Omega)}$, $[\cdot]=[\cdot]_{W^{s, G}(\Omega, \Omega')}$, or $[\cdot]=[\cdot]_{V^{s, G}(\Omega)}$, respectively. Indeed, we may assume that $[u] \neq 0$ and $[v] \neq 0$, then by \eqref{eq-Young}, \Cref{lem-G}, \Cref{lem-H}, and the unit ball property, we obtain
\begin{align*}
&\int_\Omega \int_E \frac{q}{p} \frac{g(|D^s u|)}{[u]^{q-1} \land [u]^{p-1}} \frac{|D^s v|}{[v]} \frac{\mathrm{d}y \,\mathrm{d}x}{|x-y|^n} \\
&\leq \int_\Omega \int_E \left( G^\ast \left( \frac{p}{q} \frac{g(|D^s u|)}{[u]^{p-1} \land [u]^{q-1}} \right) + G\left( \frac{|D^sv|}{[v]} \right) \right) \frac{\mathrm{d}y \,\mathrm{d}x}{|x-y|^n} \\
&\leq \int_\Omega \int_E \left( G^\ast \left( g\left( \frac{|D^su|}{[u]} \right) \right) + G\left( \frac{|D^sv|}{[v]} \right) \right) \frac{\mathrm{d}y \,\mathrm{d}x}{|x-y|^n} \\
&\leq C+1,
\end{align*}
as desired. Note that the same argument also shows that
\begin{equation}\label{eq-Holder2}
\int_\Omega g(|u|)|v| \,\mathrm{d}x \leq C \left( \|u\|_{L^G(\Omega)}^{q-1} \land \|u\|_{L^G(\Omega)}^{p-1} \right) \|v\|_{L^G(\Omega)}.
\end{equation}
Let $D=\mathrm{supp}\,v$. Then for $D \Subset D' \Subset \Omega$, we have by using \eqref{eq-Holder1} and \eqref{eq-Holder2}
\begin{align}\label{eq-finiteness}
\begin{split}
|\mathcal{E}(u, v)|
&\leq 2\Lambda \int_D \int_{D'} g(|D^su|) |D^s v| \frac{\mathrm{d}y \,\mathrm{d}x}{|x-y|^n} \\
&\quad + 2\Lambda \int_D \int_{\mathbb{R}^n \setminus D'} g\left( \frac{|u(x)|+|u(y)|}{|x-y|^s} \right) \frac{|v(x)|}{|x-y|^s} \frac{\mathrm{d}y \,\mathrm{d}x}{|x-y|^n} \\
&\leq C [u]_{W^{s, G}(D, D')}^{p-1}[v]_{W^{s, G}(D, D')} + C\|u\|_{L^G(D)}^{p-1} \|v\|_{L^G(D)} + C T(u; x_0, d) \|v\|_{L^1(D)} \\
&\leq C \left( \|u\|_{W^{s, G}(D')}^{p-1} + T(u; x_0, d) \right) \|v\|_{W^{s, G}(D')},
\end{split}
\end{align}
where $x_0$ is any point in $D$ and $d=\mathrm{dist}(D, \partial D')$. Thus, $\mathcal{E}(u, v)$ is finite.
\end{proof}

\Cref{lem-finiteness} shows that the left-hand side of \eqref{eq-supersolution} in \Cref{def-supersolution} is always well defined. Moreover, a standard approximation argument shows the following result.

\begin{lemma}
If $u \in W^{s, G}(\Omega, \Omega')$ ($u \in V^{s, G}(\Omega)$, respectively) is a supersolution of $\mathcal{L}u=0$ in $\Omega$, then \eqref{eq-supersolution} holds for all nonnegative functions $\varphi \in W^{s, G}_0(\Omega, \Omega')$ ($\varphi \in V^{s, G}_0(\Omega)$, respectively).
\end{lemma}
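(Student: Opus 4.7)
My plan is a density argument based on the defining property $W^{s,G}_0(\Omega,\Omega') = \overline{C_c^\infty(\Omega)}^{W^{s,G}(\Omega,\Omega')}$ together with the linearity of $\mathcal{E}(u,\cdot)$ in its second argument. Given a nonnegative $\varphi$ in this space, I want to produce a sequence of \emph{nonnegative} test functions $\varphi_k \in C_c^\infty(\Omega)$ with $\varphi_k \to \varphi$ in norm, apply \eqref{eq-supersolution} to each $\varphi_k$, and then pass to the limit.

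To produce the approximants, I would start from any sequence $\tilde\varphi_k \in C_c^\infty(\Omega)$ converging to $\varphi$, replace it by its positive part $(\tilde\varphi_k)_+$, and mollify after a slight shrinkage of the support to recover smoothness without destroying nonnegativity. The convergence $(\tilde\varphi_k)_+ \to \varphi$ in $W^{s,G}(\Omega,\Omega')$ follows from the pointwise inequality $|D^s(v_+)| \leq |D^s v|$ combined with the identity $v_+ - \varphi = (v-\varphi) - v_-$ (valid because $\varphi_- = 0$), reducing the problem to $v_- \to 0$; the $\Delta_2$ condition implicit in \eqref{eq-pq} then upgrades modular to norm convergence. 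Having the nonnegative smooth approximants, I would establish continuity of $\mathcal{E}(u,\cdot)$ on $W^{s,G}_0(\Omega,\Omega')$ by mimicking the splitting used in the proof of \Cref{lem-finiteness}: the integral over $\Omega \times \Omega'$ is bounded via Young's inequality \eqref{eq-Young} and \Cref{lem-H} by a constant multiple of $\|u\|^{p-1}_{W^{s,G}(\Omega,\Omega')}\|\psi\|_{W^{s,G}(\Omega,\Omega')}$, and the tail integral over $\Omega \times (\mathbb{R}^n \setminus \Omega')$ uses the positive gap $d = \mathrm{dist}(\Omega,\partial\Omega') > 0$ to yield a bound by $C\,T(u;x_0,d)\,\|\psi\|_{L^G(\Omega)}$, finite since $u_- \in L^g_s(\mathbb{R}^n)$ and $u_+$ is locally integrable. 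The symmetric pieces are handled identically, and the piece over $(\mathbb{R}^n \setminus \Omega) \times (\mathbb{R}^n \setminus \Omega)$ vanishes because $\psi = 0$ there. Applying \eqref{eq-supersolution} to each $\varphi_k$ and letting $k \to \infty$ gives $\mathcal{E}(u,\varphi) \geq 0$. The $V^{s,G}(\Omega)$ case is entirely analogous, with $\mathbb{R}^n$ replacing $\Omega'$ throughout and the tail control supplied by the inclusion $V^{s,G}(\Omega) \hookrightarrow L^g_s(\mathbb{R}^n)$.

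The main technical obstacle I anticipate is the approximation step: one must ensure that nonnegative smooth compactly supported functions are dense in the nonnegative cone of $W^{s,G}_0(\Omega,\Omega')$. The naive inequality $|a_+ - b_+| \leq |a - b|$ does not translate into a pointwise bound on $|D^s(v_+) - D^s(w_+)|$ (as the difference of fractional gradients need not be monotone under truncation), so norm convergence must be extracted indirectly via the negative-part decomposition above, combined with a careful treatment of how the final mollification, with its accompanying support shrinkage, interacts with the Orlicz--Sobolev modular. Once this approximation is in place, the passage to the limit is a routine continuity estimate driven by the same Young-type argument that underpins \Cref{lem-finiteness}.
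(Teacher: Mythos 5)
Your overall strategy—approximate the nonnegative test function by nonnegative functions in $C_c^\infty(\Omega)$, prove continuity of $\mathcal{E}(u,\cdot)$ on $W^{s,G}_0(\Omega,\Omega')$, and pass to the limit—is precisely the ``standard approximation argument'' the paper alludes to, and your continuity bound (reproducing the splitting in the proof of \Cref{lem-finiteness}, with the gap $d=\mathrm{dist}(\Omega,\partial\Omega')>0$ controlling the far tail) is sound. The genuine gap is in the density step. You correctly flag it as the main obstacle, but the proposed resolution via the ``negative-part decomposition'' does not close it. The identity is $v_+-\varphi=(v-\varphi)+v_-$ (you have a sign error), and it reduces the problem to showing $(\tilde\varphi_k)_-\to 0$ in $W^{s,G}(\Omega,\Omega')$; but this is not easier than what you started with. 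The only pointwise majorant of $|D^s(\tilde\varphi_k)_-|$ supplied by $|a_--b_-|\le|a-b|$ is $|D^s\tilde\varphi_k|$, which does not tend to zero, and there is no pointwise bound of $|D^s(\tilde\varphi_k)_-|$ by $|D^s(\tilde\varphi_k-\varphi)|$ (take $\varphi(x)=1$, $\varphi(y)=0$, $\tilde\varphi_k=\varphi-\varepsilon$: then $D^s(\tilde\varphi_k)_-\ne 0$ while $D^s(\tilde\varphi_k-\varphi)=0$). Since $\tilde\varphi_k=(\tilde\varphi_k)_+-(\tilde\varphi_k)_-$, the statement ``$(\tilde\varphi_k)_-\to 0$ in the seminorm'' is \emph{equivalent} to ``$(\tilde\varphi_k)_+\to\varphi$ in the seminorm,'' so the reduction is circular.

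The missing ingredient is a dominated (Vitali-type) convergence argument along a subsequence, which is how one actually proves that $v\mapsto v_+$ is continuous on $W^{s,G}(\Omega,\Omega')$. Since $\tilde\varphi_k\to\varphi$ in $W^{s,G}(\Omega,\Omega')$, extract a subsequence along which $\tilde\varphi_k\to\varphi$ a.e.\ and $D^s\tilde\varphi_k\to D^s\varphi$ a.e.\ on $\Omega\times\Omega'$, with $|D^s\tilde\varphi_k|\le F$ for some $F$ with $G(F)\in L^1(|x-y|^{-n}\,\mathrm{d}y\,\mathrm{d}x)$ (the standard Riesz--Fischer by-product of $L^G$ convergence). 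Then $|D^s(\tilde\varphi_k)_+|\le|D^s\tilde\varphi_k|\le F$ and, by continuity of $t\mapsto t_+$, $D^s(\tilde\varphi_k)_+\to D^s\varphi_+=D^s\varphi$ a.e.; dominated convergence applied to the modular, together with the $\Delta_2$ property that follows from \eqref{eq-pq}, gives $(\tilde\varphi_k)_+\to\varphi$ in $W^{s,G}(\Omega,\Omega')$ (the full sequence converges since the limit is subsequence-independent). After that, the final mollification, your continuity estimate, and the $V^{s,G}(\Omega)$ case are all routine, as you say.
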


The requirement $u_- \in L_s^g(\mathbb{R}^n)$ in \Cref{def-supersolution} can be replaced by $u \in L_s^g(\mathbb{R}^n)$ due to the next lemma.

\begin{lemma}\label{lem-tail}
Let $\sigma \in (0,s)$ satisfy
\begin{equation}\label{eq-sigma}
\sigma \geq (sq-1)/(q-1).
\end{equation}
There exists a constant $C=C(n, p, q, s, \sigma, \Lambda) > 0$ such that if $u$ is a supersolution of $\mathcal{L}u=0$ in $B_R=B_R(x_0)$, then
\begin{equation}\label{eq-tail}
T(u_+; x_0, R) \leq C \left( \fint_{B_R} \int_{B_R} g(R^{\sigma-s}|D^\sigma u|) \frac{\mathrm{d}y\,\mathrm{d}x}{|x-y|^n} + \fint_{B_R} g\left( \frac{|u|}{R^s} \right) \,\mathrm{d}x + T(u_-; x_0, R) \right).
\end{equation}
In particular, if $u$ is a supersolution of $\mathcal{L}u=0$ in $\Omega$, then $u \in L^g_s(\mathbb{R}^n)$.
\end{lemma}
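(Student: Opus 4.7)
The plan is to test \eqref{eq-supersolution} with a radial cut-off $\varphi\in C_c^\infty(B_R)$ satisfying $\varphi\equiv 1$ on $B_{R/2}$, $0\leq\varphi\leq 1$, and $|\nabla\varphi|\leq C/R$. Splitting the double integral into the interior part $I_1$ over $B_R\times B_R$ and the nonlocal part $I_2$ over $B_R\times(\mathbb{R}^n\setminus B_R)$ (using $k(x,y)=k(y,x)$ to absorb the symmetric piece), the supersolution inequality becomes $-2I_2\leq I_1$. The goal is to bound $I_1$ from above by $CR^{-s}|B_R|$ times the first term on the right-hand side of \eqref{eq-tail}, and to extract a lower bound for $-2I_2$ of the form $c|B_{R/2}|T(u_+;x_0,R)/R^s$ minus $CR^{-s}|B_R|$ times the remaining two terms of \eqref{eq-tail}, after which dividing by $|B_R|/R^s$ concludes the proof.

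For $I_2$: on $B_R\times(\mathbb{R}^n\setminus B_R)$ one has $D^s\varphi=\varphi(x)/|x-y|^s\geq 0$. Decompose $\mathbb{R}^n\setminus B_R$ pointwise in $x$ into (a) $\{u(y)\geq 2|u(x)|\}$, (b) $\{|u(y)|<2|u(x)|\}$, and (c) $\{u(y)\leq -2|u(x)|\}$. On (a) one has $D^su\leq -u_+(y)/(2|x-y|^s)$, and \Cref{lem-G} upgrades this to $g(|D^su|)\geq c\,g(u_+(y)/|x-y|^s)$; restricting $x$ to $B_{R/2}$ (where $\varphi\equiv 1$) and using $|x-y|\sim|y-x_0|$ there, and absorbing the $y$-integral over the set $\{0<u(y)<2|u(x)|\}$ into the (b)-type bound below via $u_+(y)<2|u(x)|$ and the monotonicity of $g$, we arrive at $-I_2|_{(\mathrm{a})}\geq c|B_{R/2}|T(u_+;x_0,R)/R^s-CR^{-s}|B_R|\fint_{B_R}g(|u|/R^s)\,\mathrm{d}x$. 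On (b), $|D^su|\lesssim|u(x)|/|x-y|^s$ combined with the tail identity $\int_{|y-x_0|\geq R}g(|u(x)|/|y-x_0|^s)|y-x_0|^{-n-s}\,\mathrm{d}y\sim g(|u(x)|/R^s)/R^s$ (via \Cref{lem-G}) gives $|I_2|_{(\mathrm{b})}|\leq CR^{-s}|B_R|\fint_{B_R}g(|u|/R^s)\,\mathrm{d}x$. On (c), the analogous $|D^su|\lesssim u_-(y)/|x-y|^s$ yields $|I_2|_{(\mathrm{c})}|\leq CR^{-s}|B_R|T(u_-;x_0,R)$ directly.

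For $I_1$: since $|g(|D^su|)(D^su/|D^su|)D^s\varphi|\leq g(|D^su|)|D^s\varphi|$, split $B_R\times B_R$ into the near region $\{|x-y|\leq R/2\}$, where $|D^s\varphi|\leq C|x-y|^{1-s}/R$, and the far region $\{R/2<|x-y|\leq 2R\}$, where $|D^s\varphi|\leq C/R^s$ and $|D^su|=|x-y|^{\sigma-s}|D^\sigma u|\leq CR^{\sigma-s}|D^\sigma u|$. On the near region, combining the same identity with the $\lambda\geq 1$ bound $g(\lambda t)\leq(q/p)\lambda^{q-1}g(t)$ from \Cref{lem-G} produces
\[
g(|D^su|)\,|D^s\varphi|\leq C\,R^{(s-\sigma)(q-1)-1}\,|x-y|^{(1-s)-(s-\sigma)(q-1)}\,g(R^{\sigma-s}|D^\sigma u|).
\]
The condition \eqref{eq-sigma} is precisely what makes the exponent $\alpha=(1-s)-(s-\sigma)(q-1)$ nonnegative, so $|x-y|^\alpha\leq R^\alpha$ on the near region, and the net powers collapse to $|x-y|^{-n}$ and $R^{-s}$. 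Together with the far region (handled by the direct bound above), this gives $|I_1|\leq CR^{-s}|B_R|\fint_{B_R}\int_{B_R}g(R^{\sigma-s}|D^\sigma u|)\,\mathrm{d}y\,\mathrm{d}x/|x-y|^n$.

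Combining the two sides of $-2I_2\leq I_1$ and dividing by $|B_R|/R^s$ proves \eqref{eq-tail}, and the final assertion $u\in L^g_s(\mathbb{R}^n)$ then follows because $u_-\in L^g_s(\mathbb{R}^n)$ is built into \Cref{def-supersolution}. The main obstacle will be the near-region analysis for $I_1$: tracking the worst-case $q$-growth through \Cref{lem-G} is what produces the exponent $\alpha$, and the sharpness of the hypothesis \eqref{eq-sigma} corresponds exactly to the threshold $\alpha=0$. A secondary technicality is the a priori well-definedness of $\mathcal{E}(u,\varphi)$ when $u_+$ has no tail integrability; this is handled by truncating $u$ from above at level $M$, applying the estimate to $\min(u,M)$, and sending $M\to\infty$ via monotone convergence.
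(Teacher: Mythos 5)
Your approach is essentially the paper's: test \eqref{eq-supersolution} with a cut-off, split into an interior integral $I_1$ over $B_R\times B_R$ and a tail integral $I_2$, bound $I_1$ using the Lipschitz estimate on $D^s\varphi$ together with \Cref{lem-G}, and extract the negative term $-cT(u_+;x_0,R)$ from the tail. The only real stylistic difference is in the tail term: the paper invokes the prepackaged algebraic inequality \Cref{lem-ineq1}, which gives $g(|D^su|)\frac{D^su}{|D^su|}\leq 2C_0 g(u_+(x)/|x-y|^s)+2C_0 g(u_-(y)/|x-y|^s)-\frac{1}{C_0}g(u_+(y)/|x-y|^s)$ in one stroke, whereas you unfold the same estimate into three explicit cases $(\mathrm a)$--$(\mathrm c)$ according to the relative sizes of $u(x)$ and $u(y)$. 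Both are correct and yield the same three-term right-hand side. Your $I_1$ computation is also the one in the paper: the exponent $(1-s)-(s-\sigma)(q-1)\geq 0$ is exactly the content of \eqref{eq-sigma}, and its nonnegativity is what collapses the powers of $|x-y|$ to give $R^{-s}$. The remark about a priori well-definedness of $\mathcal E(u,\varphi)$ is a legitimate subtlety that the paper does not belabor; your proposed fix via truncation is fine provided one accepts that $u\wedge M$ is again a supersolution (a fact the paper later uses, citing Korvenp\"a\"a--Kuusi--Palatucci).

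There is one genuine gap: the last assertion ``$u\in L^g_s(\mathbb R^n)$'' does not follow merely from $u_-\in L^g_s(\mathbb R^n)$ plus \eqref{eq-tail}; you must also check that the \emph{right-hand side} of \eqref{eq-tail} is finite, so that \eqref{eq-tail} actually forces $T(u_+;x_0,R)<\infty$. In the paper this is a separate estimate: the term $\fint_{B_R}\int_{B_R}g(R^{\sigma-s}|D^\sigma u|)\frac{\mathrm{d}y\,\mathrm{d}x}{|x-y|^n}$ is bounded, via Young's inequality and \Cref{lem-H}, by $C\fint_{B_R}\int_{B_R}G(|D^su|)\frac{\mathrm{d}y\,\mathrm{d}x}{|x-y|^n}+C$, which is finite because $u\in W^{s,G}_{\mathrm{loc}}(\Omega)$; similarly $\fint_{B_R}g(|u|/R^s)\,\mathrm{d}x\leq C\fint_{B_R}G(|u|/R^s)\,\mathrm{d}x+C<\infty$. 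Without this step, your estimate \eqref{eq-tail} could in principle read $\infty\leq\infty$ and yield nothing; you should include it.
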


\begin{proof}
Let $\eta \in C_c^{\infty}(B_{R/2})$ be a cut-off function such that $\eta \equiv 1$ on $B_{R/4}$, $0 \leq \eta \leq 1$, and $|\nabla \eta| \leq 8/R$. By testing \eqref{eq-supersolution} with $\eta$, we obtain
\begin{align*}
0
&\leq \int_{B_R} \int_{B_R} g(|D^su|) \frac{D^su}{|D^su|} D^s\eta \frac{k(x, y)}{|x-y|^n} \,\mathrm{d}y \,\mathrm{d}x \\
&\quad + 2\int_{B_{R/2}} \int_{\mathbb{R}^n \setminus B_R} g(|D^su|) \frac{D^su}{|D^su|} \frac{\eta(x)}{|x-y|^s} \frac{k(x, y)}{|x-y|^n} \,\mathrm{d}y \,\mathrm{d}x =: I_1 + 2I_2.
\end{align*}
By using \Cref{lem-G}, $|D^s\eta| \leq 8|x-y|^{1-s}/R$, and the assumption \eqref{eq-sigma}, we estimate
\begin{align*}
I_1
&\leq \frac{C}{R} \int_{B_R} \int_{B_R} g(R^{\sigma-s}|D^\sigma u|) \left( \frac{R}{|x-y|} \right)^{(s-\sigma)(q-1)} \frac{\mathrm{d}y\,\mathrm{d}x}{|x-y|^{n+s-1}} \\
&\leq \frac{C}{R^s} \int_{B_R} \int_{B_R} g(R^{\sigma-s}|D^\sigma u|) \frac{\mathrm{d}y\,\mathrm{d}x}{|x-y|^n}.
\end{align*}
For $I_2$, we use \Cref{lem-ineq1} and \Cref{lem-G} to have
\begin{equation*}
g(|D^su|) \frac{D^su}{|D^su|} \leq 2C_0 g\left( \frac{u_+(x)}{|x-y|^s} \right) + 2C_0 g\left( \frac{u_-(y)}{|x-y|^s} \right) - \frac{1}{C_0} g\left( \frac{u_+(y)}{|x-y|^s} \right),
\end{equation*}
where $C_0 = \frac{q}{p}2^{q-1}$. Thus, we obtain
\begin{equation*}
\frac{I_2}{R^{n-s}} \leq C \fint_{B_{R/2}} g\left( \frac{|u|}{R^s} \right) \,\mathrm{d}x + C T(u_-; x_0, R) - c T(u_+; x_0, R).
\end{equation*}
By combining the estimates for $I_1$ and $I_2$, we arrive at \eqref{eq-tail}.

For the second assertion, we fix $B_R=B_R(x_0) \Subset \Omega$ and let $I_3$, $I_4$ denote the first and the second terms in the right-hand side of \eqref{eq-tail}, respectively. By using Young's inequality and applying \Cref{lem-H}, we have
\begin{align*}
I_3
&\leq C \fint_{B_R} \int_{B_R} g(|D^su|) \left| \frac{x-y}{R} \right|^{(s-\sigma)(p-1)} \frac{\mathrm{d}y\,\mathrm{d}x}{|x-y|^n} \\
&\leq C \fint_{B_R} \int_{B_R} G(|D^su|) \frac{\mathrm{d}y\,\mathrm{d}x}{|x-y|^n} + C \fint_{B_R} \int_{B_R} G\left( \left| \frac{x-y}{R} \right|^{(s-\sigma)(p-1)} \right) \frac{\mathrm{d}y\,\mathrm{d}x}{|x-y|^n} \\
&\leq C \fint_{B_R} \int_{B_R} G(|D^su|) \frac{\mathrm{d}y\,\mathrm{d}x}{|x-y|^n} + C \fint_{B_R} \int_{B_R} \left| \frac{x-y}{R} \right|^{(s-\sigma)(p-1)p} \frac{\mathrm{d}y\,\mathrm{d}x}{|x-y|^n} < \infty
\end{align*}
and
\begin{equation*}
I_4 \leq C \fint_{B_R} G\left( \frac{|u(x)|}{R^s} \right) \,\mathrm{d}x + C < \infty.
\end{equation*}
Thus, $u \in L^g_s(\mathbb{R}^n)$.
\end{proof}

The solvability of the Dirichlet problem for the equation $\mathcal{L}u=0$ with Sobolev boundary data is proved in \Cref{sec-obstacle} in a more general framework of obstacle problems. Moreover, some properties of supersolutions (and subsolutions) are provided in \Cref{sec-loc-est} and \Cref{sec-supersolution}.

Let us next make a couple of remarks on superharmonic functions.

\begin{remark}\label{rmk-superharmonic}
\begin{enumerate}[(i)]
\item
It immediately follows from the definition that the pointwise minimum of two superharmonic functions is superharmonic as well.
\item
A superharmonic function is locally bounded from below in $\Omega$ since the minimum on compact sets cannot be $-\infty$ by definition.
\end{enumerate}
\end{remark}

Some properties of superharmonic functions are studied in~\Cref{sec-superharmonic}.

\section{Local estimates up to boundary}\label{sec-loc-est}

In this section, we study the local boundedness of subsolutions and the weak Harnack inequality for supersolutions. Such results for local operators with Orlicz growth are very well known, see Lieberman~\cite{Lie91a}, Moscariello--Nania~\cite{MN91}, and Mascolo--Papi~\cite{MP96} for instance. Let us first state the interior estimates for nonlocal operators.

\begin{theorem}\label{thm-loc-bdd-int}
Let $\varepsilon, p_0 > 0$. If $u$ is a subsolution of $\mathcal{L}u=0$ in $B_R=B_R(x_0)$, then
\begin{equation}\label{eq-loc-bdd-int}
\esssup_{B_{R/2}} u_+ \leq \varepsilon \, \mathrm{Tail}_g(u_+; x_0, R/2) + C \left( \fint_{B_R} u_+^{p_0} \,\mathrm{d}x \right)^{1/p_0},
\end{equation}
where $C=C(n, p, p_0, q, s, \Lambda, \varepsilon)>0$.
\end{theorem}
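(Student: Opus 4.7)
I would prove \Cref{thm-loc-bdd-int} by a De~Giorgi iteration adapted to the nonlocal Orlicz setting, arranged so that the nonlocal tail enters with an arbitrarily small prefactor $\varepsilon$. The first ingredient is a Caccioppoli-type estimate for the truncations $w_k:=(u-k)_+$, $k\geq 0$. Given concentric radii $R/2\leq r<\rho\leq R$ and a cut-off $\eta\in C_c^\infty(B_{(r+\rho)/2})$ with $\eta\equiv 1$ on $B_r$ and $|\nabla\eta|\leq 4/(\rho-r)$, I test the subsolution inequality for $u$ with $\varphi=w_k\eta^q$. Exploiting the pointwise splitting already used in the proof of \Cref{lem-tail}, together with $|D^s\eta|\lesssim|x-y|^{1-s}/(\rho-r)$ and the algebraic inequality \eqref{eq-alg}, one obtains, after absorbing mixed terms,
\begin{equation*}
\int_{B_r}\int_{B_r} G(|D^sw_k|)\,\frac{\mathrm{d}y\,\mathrm{d}x}{|x-y|^n}
\leq \frac{C}{(\rho-r)^{sq}}\int_{B_\rho} G(w_k)\,\mathrm{d}x + C\,|\{u>k\}\cap B_\rho|\,T(u_+;x_0,\rho),
\end{equation*}
where the last term originates from the long-range interaction and is controlled by the $g$-tail.

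Coupling this with the fractional Poincar\'e inequality \Cref{thm-Poincare} applied to $w_k\eta$ (extended by zero) yields a Sobolev gain of integrability
\begin{equation*}
\left(\fint_{B_r} G(w_k)^\chi\,\mathrm{d}x\right)^{1/\chi}
\leq \frac{C\,\rho^{sq}}{(\rho-r)^{sq}}\fint_{B_\rho} G(w_k)\,\mathrm{d}x + C\,\rho^{sq}\,T(u_+;x_0,\rho)
\end{equation*}
for some $\chi=\chi(n,p,q,s)>1$. Setting $r_j:=R/2+R\,2^{-(j+1)}$, $k_j:=\tilde k(1-2^{-j})$ and $U_j:=\fint_{B_{r_j}}G(w_{k_j})\,\mathrm{d}x$, an application of Chebyshev's inequality (combined with $(k_{j+1}-k_j)\sim 2^{-j}\tilde k$ and \Cref{lem-G}(ii)) converts the above into a standard recursion $U_{j+1}\leq Cb^j U_j^{1+\alpha}+\theta_j$. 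Writing $\tilde k=\varepsilon\,\mathrm{Tail}_g(u_+;x_0,R/2)+M$, the scaling of $G$ in \Cref{lem-G}(ii) forces $\theta_j\leq 2^{-j}U_0$, and choosing $M$ so that $U_0$ meets the smallness threshold of the classical iteration lemma gives $U_j\to 0$, whence
\begin{equation*}
\esssup_{B_{R/2}} u_+ \leq \varepsilon\,\mathrm{Tail}_g(u_+;x_0,R/2) + C\,G^{-1}\!\left(\fint_{B_R} G(u_+)\,\mathrm{d}x\right).
\end{equation*}

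To lower the exponent from $G$ to an arbitrary $p_0>0$, I would run the previous step uniformly on every pair of nested balls $B_r\subset B_{r'}$ with $R/2\leq r<r'\leq R$, apply Young's inequality on the $G^{-1}$ term with exponents tuned to $(p_0,q)$, and arrive at a self-improving estimate of the form
\begin{equation*}
\esssup_{B_r} u_+ \leq \tfrac{1}{2}\esssup_{B_{r'}} u_+ + \varepsilon\,\mathrm{Tail}_g(u_+;x_0,R/2) + C(r'-r)^{-n/p_0}\!\left(\int_{B_R} u_+^{p_0}\,\mathrm{d}x\right)^{1/p_0},
\end{equation*}
then absorb the first term on the right by the standard iteration lemma, as in \cite{DCKP16}.

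The main obstacle is reconciling the non-homogeneity of $g$ with the nonlocal tail: the Caccioppoli test function, the Sobolev exponent $\chi$, and the Chebyshev split must all be chosen coherently, invoking the growth bounds \eqref{eq-pq}--\eqref{eq-bar-g-pq} in the correct direction at each step, so that the recursion constants stay independent of $j$ and the tail contribution can be forced into the arbitrarily small factor $\varepsilon$ while $\sum_j\theta_j$ remains summable against the smallness of $U_0$.
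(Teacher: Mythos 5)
Your proposal takes a genuinely different route from the paper. You run a De~Giorgi iteration over truncations $w_k=(u-k)_+$ with Chebyshev level-set decay and a second absorption loop to lower the exponent to $p_0$. The paper instead builds Moser's iteration: it tests with $\varphi=(\bar{g}^\beta(v)-\bar{g}^\beta(l))\eta^q$, establishes Caccioppoli estimates (\Cref{lem-Caccio1}), reverse H\"older inequalities (\Cref{lem-RHI-sub}), a conversion lemma (\Cref{lem-Lp}), and iterates on the exponent $\gamma$. Both routes reach \Cref{thm-loc-bdd-int}; indeed the paper explicitly says the De~Giorgi-based results of Byun--Kim--Ok and Chaker--Kim--Weidner are essentially \Cref{thm-loc-bdd-int} up to slight modification. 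The paper's preference for Moser is driven not by the local boundedness estimate but by the sharp weak Harnack exponent $\delta<n/(n-sp)$ in \Cref{thm-WHI-int}, which the De~Giorgi machinery does not readily produce. Your final absorption step (Young on the Orlicz average plus a covering/iteration lemma) mirrors what the paper does after \eqref{eq-g-gamma}, so that part of the plan is aligned.

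There is, however, a concrete gap: you cite the fractional Poincar\'e inequality, \Cref{thm-Poincare}, as the source of the gain of integrability with exponent $\chi>1$, but \Cref{thm-Poincare} has no such gain --- it only bounds the $L^G$-norm by the $W^{s,G}$-seminorm with the same Young function $G$ on both sides. To close the recursion you need a genuine fractional Orlicz--Sobolev embedding, or (as in the paper's proof of \Cref{lem-RHI-sub}) the $L^{p}$ fractional Sobolev inequality applied to an auxiliary function such as $h=\bar{g}^{\gamma/p}(v)$, combined with \Cref{lem-ineq4}. A second, smaller issue is that your displayed Sobolev step drops the superlevel-set density factor $|\{u>k\}\cap B_\rho|/|B_\rho|$ in front of $T(u_+;x_0,\rho)$; that factor is precisely what makes $\theta_j$ decay geometrically and lets you absorb the tail into the prescribed $\varepsilon\,\mathrm{Tail}_g(u_+;x_0,R/2)$ term, so it must be kept. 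Both points are repairable, but as written they are missing ingredients rather than cosmetic omissions.
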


Some results similar to \Cref{thm-loc-bdd-int} were proved in Byun--Kim--Ok~\cite{BKO23} and Chaker--Kim--Weidner~\cite{CKW23} by using De Giorgi's iteration technique. These results are obtained under some additional regularity assumption on $u$ ($u \in V^{s, G}(\Omega$)), but the proofs indeed do not use this assumption. Thus, \Cref{thm-loc-bdd-int} can be seen as a slight modification of the results in \cite{BKO23,CKW23}.

\begin{theorem}\label{thm-WHI-int}
Let $\tau_1, \tau_2 \in (0,1)$. Let $\delta \in (0, \frac{n}{n-sp})$ if $sp<n$ and $\delta \in (0,\infty)$ if $sp \geq n$. If $u$ is a supersolution of $\mathcal{L}u=0$ in $B_R=B_R(x_0)$ such that $u\geq 0$ in $B_R$, then
\begin{equation}\label{eq-WHI-int}
\fint_{B_{\tau_1R}} g^\delta \left( \frac{u}{R^s} \right) \,\mathrm{d}x \leq C g^\delta\left( \essinf_{B_{\tau_2R}} \frac{u}{R^s} \right) + C g^\delta \left( \frac{\mathrm{Tail}_g(u_-; x_0, R)}{R^s} \right),
\end{equation}
where $C = C(n, p, q, s, \Lambda, \tau_1, \tau_2, \delta)>0$.
\end{theorem}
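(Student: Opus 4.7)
The plan is to run Moser's iteration in the nonlocal Orlicz setting. To neutralize the negative tail, I set
$$
\tilde u := u + d, \qquad d := R^s\, \bar g^{-1}\!\bigl(T(u_-; x_0, R)\bigr),
$$
so that $\tilde u \geq d > 0$ in $B_R$ and the tail term on the right-hand side of \eqref{eq-WHI-int} is comparable to $g^\delta(d/R^s)$. It then suffices to prove the two-exponent estimate $\fint_{B_{\tau_1 R}} g^\delta(\tilde u / R^s) \, dx \leq C\, g^\delta(\essinf_{B_{\tau_2 R}} \tilde u / R^s)$ for the strictly positive function $\tilde u$.

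The first ingredient is a family of Caccioppoli-type estimates: testing \eqref{eq-supersolution} with $\varphi = \eta^{q} \bar g(\tilde u)^{\gamma}$ for $\gamma \neq -1$, and with $\varphi = \eta^{q}/\bar g(\tilde u)$ in the limiting case, using the convexity of $G$ together with the algebraic inequalities of \Cref{lem-G}, and splitting the long-range interaction across $B_R \times (\mathbb{R}^n \setminus B_R)$ as in the proof of \Cref{lem-tail}, yields for concentric balls $B_r \subset B_{r'} \subset B_R$ an energy estimate of the shape
$$
\int_{B_r}\!\!\int_{B_r} G\!\left(|D^s \Phi_\gamma(\tilde u)|\right) \frac{dy\,dx}{|x-y|^n} \lesssim \frac{1}{(r'-r)^{sq}} \int_{B_{r'}} \bar g(\tilde u)^{\gamma+1}\tilde u \, dx,
$$
together with a logarithmic estimate
$$
\int_{B_r}\!\!\int_{B_r} |\log \tilde u(x) - \log \tilde u(y)|^{p} \, \frac{dy\,dx}{|x-y|^{n+sp}} \lesssim r^{n-sp}.
$$
Here $\Phi_\gamma$ is the natural nonlinear transform of $\tilde u$ associated to the chosen test function (in the $p$-Laplacian case $\Phi_\gamma$ reduces to a power); the additive shift by $d$ is precisely what makes the contribution from $\mathbb{R}^n \setminus B_R$ absorbable.

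Combining the Caccioppoli inequality with the fractional Orlicz--Sobolev embedding $W^{s,G} \hookrightarrow L^{G^\ast}$ on nested balls yields a reverse Hölder inequality, and iterating it with $\gamma$ in a geometric sequence of negative values produces the lower Moser bound
$$
\essinf_{B_{\tau_2 R}} g(\tilde u/R^s) \gtrsim \left(\fint_{B_{r_0}} g^{-\delta_0}(\tilde u/R^s)\, dx\right)^{-1/\delta_0}
$$
for some small $\delta_0 > 0$ and some radius $r_0 \in (\tau_2 R, R)$. The logarithmic estimate, combined with a fractional John--Nirenberg inequality applied to $\log \tilde u$, crosses the zero exponent and gives
$$
\left(\fint_{B_{r_0}} g^{\delta_0}(\tilde u/R^s)\, dx\right)^{1/\delta_0} \leq C \left(\fint_{B_{r_0}} g^{-\delta_0}(\tilde u/R^s)\, dx\right)^{-1/\delta_0},
$$
so \eqref{eq-WHI-int} already follows for the particular exponent $\delta_0$. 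To reach a general admissible $\delta$, I would then run the positive-exponent Moser iteration with the same Caccioppoli/Sobolev machinery and $\gamma > 0$; each step multiplies the exponent by the Sobolev factor $n/(n-sp)$ (or by any finite factor when $sp \geq n$), so the iteration can be stopped at any $\delta$ strictly below the Sobolev threshold.

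The main obstacle is the sharp identification of the admissible range of $\delta$: although the Orlicz--Sobolev embedding depends on both $p$ and $q$, the target exponent $n/(n-sp)$ depends only on $p$. Achieving this requires choosing $\Phi_\gamma$ and the test function so that, read on the $g^\delta$ scale, the reverse Hölder step produces precisely an $n/(n-sp)$-gain rather than a $q$-dependent one, and then tracking constants carefully through the iteration. This is the point at which the Moser-based approach improves upon the De Giorgi argument of \cite{CKW23} and recovers the sharp fractional $p$-Laplacian exponent when $G(t)=t^p$.
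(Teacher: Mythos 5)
Your overall skeleton is the same as the paper's: shift $u$ by $d\sim\mathrm{Tail}_g(u_-;x_0,R)$, prove Caccioppoli-type estimates with test functions of the form $\eta^q(\bar g^\beta(v)-\bar g^\beta(l))$, a fractional log/BMO estimate, cross zero via John--Nirenberg, and bridge the two sides with two Moser iterations (one with negative exponents down to the $\essinf$, one with positive exponents up to $\delta$). That all matches.

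The gap is exactly where you flag it, but you leave it unresolved, and it is the substance of the theorem. You propose to get the reverse H\"older gain from ``the fractional Orlicz--Sobolev embedding $W^{s,G}\hookrightarrow L^{G^*}$'' applied to the transformed function. That route cannot produce the asserted range $\delta\in(0,\tfrac{n}{n-sp})$: an Orlicz--Sobolev embedding of $W^{s,G}$ has a gain that depends on both $p$ and $q$ (the target Young function is a Sobolev conjugate, not the Young conjugate $G^*$ in any case), so iterating it yields a $\delta$-range with a $q$-dependent cap. The paper avoids this by never invoking an Orlicz--Sobolev inequality in the reverse H\"older step. Instead (\Cref{lem-RHI-super}), it sets $h=\bar g^{\gamma/p}(v)$, uses the purely algebraic pointwise bound of \Cref{lem-ineq4} to control $|D^\sigma h|^p$ by the Orlicz Caccioppoli integrand, and then applies the \emph{ordinary} $L^p$ fractional Sobolev embedding $W^{\sigma,p}\hookrightarrow L^{p\chi}$ with $\chi=\tfrac{n}{n-\sigma p}$, $\sigma<s$ arbitrary. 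This makes the gain factor $\chi$ depend only on $p$ and $\sigma$, and letting $\sigma\nearrow s$ in the iteration reaches any $\delta<\tfrac{n}{n-sp}$. That change of scale -- from $W^{s,G}$ to $W^{\sigma,p}$ on the nonlinear transform -- is the key device your proposal is missing.

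Two smaller points. The Caccioppoli estimate in the paper (\Cref{lem-Caccio2}) keeps $G(|D^s u|)$ paired with the weight $\fint_{v(y)}^{v(x)}\bar g^\beta(t)\,t^{-1}dt$ on the left; it does not rewrite the left side as $G(|D^s\Phi_\gamma(\tilde u)|)$ as your sketch does, and this matters because the conversion to $|D^\sigma h|^p$ is done afterwards via \Cref{lem-ineq4}, not inside the Caccioppoli step. Also the positive-exponent iteration in \eqref{eq-Moser1} runs \emph{downward} from $\delta$ to a small $\varepsilon_1$ (each application of \eqref{eq-RHI-super-pos} trades an $L^{\gamma\chi}$ average on a smaller ball for an $L^\gamma$ average on a larger one), rather than ``stopping the upward iteration at $\delta$'' as you describe; the two pictures are equivalent but the bookkeeping is cleaner in the paper's direction.
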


The weak Harnack inequality with a small exponent $\delta$ is provided in \cite{CKW23}. However, their result is not sufficient for the development of nonlocal nonlinear potential theory because we need an optimal exponent $\delta$ in \Cref{thm-WHI-int} in some occasions. We emphasize that the range of $\delta$ in \Cref{thm-WHI-int} is optimal in the sense that \Cref{thm-WHI-int} recovers Theorem~1.2 in Di Castro--Kuusi--Palatucci~\cite{DCKP14} when $G(t)=t^{p}$.

Let us next state the local regularity estimates up to the boundary. In this case, we need regularity of $u$ across the boundary.

\begin{theorem}\label{thm-loc-bdd-bdry}
Let $\Omega, \Omega'$ be open subsets of $\mathbb{R}^n$ such that $\Omega \Subset \Omega'$ and let $B_R=B_R(x_0)$ satisfy $B_R \cap \Omega \neq \emptyset$. Let $\varepsilon, p_0 > 0$. If $u \in W^{s, G}(\Omega, \Omega')$ is a subsolution of $\mathcal{L}u=0$ in $\Omega$, then
\begin{equation}\label{eq-loc-bdd-bdry}
\esssup_{B_{R/2}} u_M^+ \leq \varepsilon \,\mathrm{Tail}_g(u_M^+; x_0, R/2) + C \left( \fint_{B_R} (u_M^+)^{p_0} \,\mathrm{d}x \right)^{1/p_0},
\end{equation}
where
\begin{equation}\label{eq-uM}
M=\esssup_{B_R \setminus \Omega} u_+, \quad u_M^+(x) = \max\{u_+(x), M\},
\end{equation}
and $C=C(n, p, p_0, q, s, \Lambda, \varepsilon)>0$.
\end{theorem}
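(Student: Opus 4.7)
The plan is to adapt the proof of \Cref{thm-loc-bdd-int} by testing against $\varphi = \eta^{q}(u_M^+-k)_+$, where $\eta\in C_c^\infty(B_R)$ is a standard cutoff and $k\ge M$. A direct computation shows that $(u_M^+-k)_+ = (u-k)_+$ whenever $k\ge M$, so $\varphi$ is the familiar truncation test function from the interior proof. The point of working with $u_M^+$ is that on $B_R\setminus\Omega$ one has $u_+\le M\le k$, hence $\varphi$ vanishes there; combined with $\eta\in C_c^\infty(B_R)$, this gives $\supp\varphi\subset B_R\cap\Omega$, and a standard approximation procedure places $\varphi\in W^{s,G}_0(\Omega,\Omega')$, so that the subsolution inequality \eqref{eq-supersolution} may be applied.

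First I would derive a Caccioppoli-type inequality for $w_k := (u-k)_+$. Plugging $\varphi = \eta^{q}w_k$ into \eqref{eq-supersolution}, splitting the double integral into the four sign configurations of $u(x)-k$ and $u(y)-k$, and using the pointwise monotonicity trick that underlies the proof of \Cref{lem-tail}, together with \eqref{eq-alg} and \Cref{lem-G}, I would obtain a bound of the form
\begin{equation*}
\int_{B_r}\!\!\int_{B_r} G(|D^s w_k|)\eta(x)^{q}\eta(y)^{q}\,\frac{\mathrm dy\,\mathrm dx}{|x-y|^n}
\leq C\!\int_{B_r}\!\!\int_{B_r} G\bigl(w_k(x)\,|D^s\eta|\bigr)\frac{\mathrm dy\,\mathrm dx}{|x-y|^n}
+ C\,T(u_M^+;x_0,R)\!\int_{B_r}\! w_k\,\eta^{q}\,\mathrm dx
\end{equation*}
on a ball $B_r$ slightly smaller than $B_R$. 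The far-field contribution is first expressed in terms of $u_+$ on $\mathbb{R}^n\setminus B_R$ and then bounded by the tail of $u_M^+$ using $u_+\le u_M^+$, which produces precisely the $\mathrm{Tail}_g(u_M^+;x_0,R/2)$ factor appearing in \eqref{eq-loc-bdd-bdry}.

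Next I would combine the Caccioppoli inequality with the fractional Orlicz--Sobolev embedding (\Cref{thm-Poincare} together with a standard Sobolev-type upgrade) to obtain a reverse-H\"older-type self-improvement for the superlevel-set masses, and then run a Moser/De Giorgi iteration on geometrically shrinking radii $r_j\searrow R/2$ and increasing levels $k_j\nearrow k_\infty$ starting from $k_0 = M$. This forces $|\{u_M^+>k_\infty\}\cap B_{R/2}|=0$ once $k_\infty$ dominates the right-hand side of \eqref{eq-loc-bdd-bdry}, yielding the $L^\infty$ bound for $u_M^+$. Standard interpolation between $L^{p_0}$ and $L^\infty$ replaces the natural $L^G$ exponent by an arbitrary $p_0>0$, and the coefficient of the tail can be made smaller than any prescribed $\varepsilon$ by careful bookkeeping in the iteration.

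The main obstacle is controlling the cross terms in the Caccioppoli step when $x\in B_r$ and $y\in\mathbb{R}^n\setminus B_R$, where the integrand mixes values of $u$ inside and outside $\Omega$. The monotonicity trick decomposes $g(|D^su|)\,(D^su/|D^su|)\,D^s\varphi$ into a nonpositive piece that can be discarded, a piece involving $u_+(y)$ on the exterior (which is absorbed into $T(u_M^+;x_0,R)$ using $u_+\le u_M^+$), and an error controlled via Young's inequality \eqref{eq-Young} together with \Cref{lem-H}. Once this bookkeeping is in place, the iteration is essentially identical to the interior case and the boundary bound \eqref{eq-loc-bdd-bdry} follows.
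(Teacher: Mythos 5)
Your proposal takes a genuinely different route from the paper: you use De Giorgi's method (truncation test functions $\eta^q(u-k)_+$ and level-set iteration), while the paper deliberately develops Moser's iteration, with test functions of the form $\varphi=(\bar g^{\beta}(v)-\bar g^{\beta}(l))\eta^q$ where $v=(u_M^++d)/R^s$ and $l=(M+d)/R^s$ (Lemma~\ref{lem-Caccio1}~(ii)), feeding into the reverse H\"older inequality of Lemma~\ref{lem-RHI-sub} and a covering argument. The paper explicitly notes that the De Giorgi route was already carried out in the interior setting by Byun--Kim--Ok and Chaker--Kim--Weidner, and one of its stated goals is to develop the Moser method instead (which is essential for the sharp exponent in the weak Harnack inequality, Theorem~\ref{thm-WHI-int}). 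So your approach is a valid alternative for the local boundedness statement alone, and your key observation --- that $(u_M^+-k)_+=(u-k)_+$ for $k\ge M$ vanishes a.e.\ on $B_R\setminus\Omega$, so the test function belongs to $W^{s,G}_0(\Omega,\Omega')$ after approximation --- is exactly the mechanism the paper also exploits (via the choice of $l$).

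Two cautions on execution. First, your description mixes Moser and De Giorgi vocabulary (``reverse-H\"older-type self-improvement for the superlevel-set masses, and then run a Moser/De Giorgi iteration''); a clean De Giorgi scheme should iterate the quantity $\int_{B_{r_j}}(u-k_j)_+^{\,\text{(exponent)}}\,\mathrm{d}x$ or $|\{u>k_j\}\cap B_{r_j}|$ directly, and the Orlicz structure (with $p\neq q$) makes this nontrivial: the Sobolev gain must be expressed through $G$ via Lemma~\ref{lem-G}/\ref{lem-H}, not a fixed power. Second, the statement requires an arbitrarily small $\varepsilon$ in front of the tail; in the paper this is achieved by choosing $d$ so that $g(d/R^s)=\varepsilon_0 T(\bar u;x_0,R/2)$ together with a covering argument that re-expresses tails over smaller balls in terms of the tail over $B_{R/2}$ plus local suprema (eq.~\eqref{eq-y-tail}). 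Your remark about ``careful bookkeeping in the iteration'' glosses over this; the absorption of the tail coefficient is a genuine step, not a free byproduct of the iteration, and should be spelled out.
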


\begin{theorem}\label{thm-WHI-bdry}
Let $\tau_1, \tau_2 \in (0,1)$. Let $\delta \in (0, \frac{n}{n-sp})$ if $sp<n$ and $\delta \in (0,\infty)$ if $sp \geq n$. Let $\Omega, \Omega'$ be open subsets of $\mathbb{R}^n$ such that $\Omega \Subset \Omega'$ and let $B_R=B_R(x_0)$ satisfy $B_R \cap \Omega \neq \emptyset$. If $u \in W^{s, G}(\Omega, \Omega')$ is a supersolution of $\mathcal{L}u=0$ in $\Omega$ such that $u\geq0$ in $B_R$, then
\begin{equation}\label{eq-WHI-bdry}
\fint_{B_{\tau_1R}} g^\delta \left( \frac{u_m^-}{R^s} \right) \,\mathrm{d}x \leq C g^\delta\left( \essinf_{B_{\tau_2R}} \frac{u_m^-}{R^s} \right) + C g^\delta \left( \frac{\mathrm{Tail}_g((u_m^-)_-; x_0, R)}{R^s} \right),
\end{equation}
where
\begin{equation}\label{eq-um}
m=\essinf_{B_R \setminus \Omega} u, \quad u_m^-(x) = \min\{u(x), m\},
\end{equation}
and $C = C(n, p, q, s, \Lambda, \tau_1, \tau_2, \delta)>0$.
\end{theorem}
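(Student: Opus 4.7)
The plan is to reduce the boundary weak Harnack inequality to its interior analogue \Cref{thm-WHI-int} via a truncation lemma: under the hypotheses of the theorem, $v := u_m^- = \min(u, m)$ is itself a supersolution of $\mathcal{L}v = 0$ on the \emph{full} ball $B_R$, not only on $B_R \cap \Omega$. Granting this, the theorem follows at once: since $u \geq 0$ on $B_R$ forces $m = \essinf_{B_R \setminus \Omega} u \geq 0$, we have $v \geq 0$ on $B_R$, and \Cref{thm-WHI-int} applied to the supersolution $v$ yields
\begin{equation*}
\fint_{B_{\tau_1 R}} g^\delta\!\left(\frac{v}{R^s}\right) \mathrm{d}x \leq C g^\delta\!\left(\essinf_{B_{\tau_2 R}} \frac{v}{R^s}\right) + C g^\delta\!\left(\frac{\mathrm{Tail}_g(v_-; x_0, R)}{R^s}\right),
\end{equation*}
which is precisely \eqref{eq-WHI-bdry} upon identifying $v = u_m^-$ and $v_- = (u_m^-)_-$.

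To prove the truncation lemma, I would fix a nonnegative $\varphi \in C_c^\infty(B_R)$ and approximate it by
\begin{equation*}
\varphi_\varepsilon := \varphi \cdot \min\{1,\, (m-u)_+/\varepsilon\}, \qquad \varepsilon > 0.
\end{equation*}
Since $u \geq m$ a.e.\ on $B_R \setminus \Omega$ by the definition of $m$, the function $\varphi_\varepsilon$ vanishes a.e.\ outside $\Omega$, hence belongs to $W^{s,G}_0(\Omega, \Omega')$ and is admissible in \eqref{eq-supersolution} for $u$ by the admissibility lemma following \Cref{lem-finiteness}; this gives $\mathcal{E}(u, \varphi_\varepsilon) \geq 0$. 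Passing $\varepsilon \to 0^+$ (via the monotone approximation $\min\{1, (m-u)_+/\varepsilon\} \nearrow \chi_{\{u < m\}}$ together with the integrability bounds of \Cref{lem-finiteness}) replaces $\varphi_\varepsilon$ by $\varphi \chi_{\{u<m\}}$ in the limiting inequality. Writing $A(\xi) := g(|\xi|) \mathrm{sign}(\xi)$ and using the symmetry of the integrand, I may restrict to $u(x) < u(y)$ and decompose by the position of $u(x), u(y)$ relative to $m$: on $\{u(y) < m\}$ the integrands of $\mathcal{E}(v, \varphi)$ and of the limit coincide (since $v = u$); on $\{u(x) \geq m\}$ both vanish (since $D^s v = 0$ and $\chi_{\{u<m\}} \equiv 0$); and on the mixed region $\{u(x) < m \leq u(y)\}$ one has $D^s u \leq D^s v \leq 0$, so that the difference of integrands equals
\begin{equation*}
\bigl[(A(D^s v) - A(D^s u))\varphi(x) - A(D^s v) \varphi(y)\bigr]/|x-y|^s \cdot k(x,y)/|x-y|^n \geq 0
\end{equation*}
by the monotonicity of $A$ and $\varphi \geq 0$. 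Summing the three contributions yields $\mathcal{E}(v, \varphi) \geq 0$, proving that $v$ is a supersolution in $B_R$.

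The main obstacle is the truncation lemma, especially the sign analysis in the mixed region $\{u(x) < m \leq u(y)\}$: the nonlocal coupling between $\Omega$ and $B_R \setminus \Omega$ is genuine there, and the required pointwise inequality relies on the monotonicity of $A$ rather than on any cancellation. A secondary technical point is the passage to the limit $\varepsilon \to 0^+$ in the nonlocal energy, which must be handled through the monotone structure of the approximation and the integrability supplied by \Cref{lem-finiteness} rather than by a naive dominated convergence on gradients of the test functions. Once the truncation lemma is in hand, the application of \Cref{thm-WHI-int} in Step~2 is immediate.
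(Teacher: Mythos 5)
Your strategy is genuinely different from the paper's. The paper does not pass through a truncation/pasting lemma; instead it proves the Caccioppoli-type estimate (\Cref{lem-Caccio2}~(ii)) \emph{directly} for $u_m^-$ by choosing the test function $\varphi = (\bar{g}^\beta(v)-\bar{g}^\beta(l))\eta^q$ with $l=(m+d)/R^s$, which vanishes precisely on $\{u\geq m\}$ and hence a.e.\ on $B_R\setminus\Omega$, and then runs Moser's iteration on $u_m^-$ in parallel with the interior case (the proof of \Cref{thm-WHI-bdry} is literally combined with that of \Cref{thm-WHI-int}, via the unified notation $\bar u$). Your proposal, by contrast, tries to first establish that $v:=u_m^-$ is a supersolution on the full ball $B_R$ and then quote the interior theorem verbatim. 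If the truncation lemma holds, this is a cleaner, more modular route, and your sign analysis in the mixed region $\{u(x)<m\leq u(y)\}$ is correct: $D^su\leq D^sv<0$ gives $A(D^sv)\geq A(D^su)$ and $-A(D^sv)\varphi(y)\geq 0$, so the difference of integrands is nonnegative as you claim.

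However, there are genuine gaps in the truncation lemma as sketched. First, for $v$ to be a supersolution in $B_R$ in the sense of \Cref{def-supersolution} you need $v\in W^{s,G}_{\mathrm{loc}}(B_R)$; this is not given, since $u$ is only in $W^{s,G}(\Omega,\Omega')$ and $B_R$ is not contained in $\Omega$. It can be rescued (using $|D^s u_m^-|\leq|D^su|$ pointwise and $D^su_m^-=0$ a.e.\ on $(B_R\setminus\Omega)^2$) but only under the implicit assumption $B_R\subset\Omega'$, which you should state. Second, your admissibility claim ``$\varphi_\varepsilon$ vanishes a.e.\ outside $\Omega$, hence belongs to $W^{s,G}_0(\Omega,\Omega')$'' is exactly the implication that \Cref{rmk-obstacle} warns is false in general: vanishing a.e.\ on $\mathbb{R}^n\setminus\Omega$ does not by itself give membership in the closure of $C_c^\infty(\Omega)$. (The paper faces a closely related issue with its own test function, but this needs an argument, not an assertion.) Third, and most concretely, the limit passage $\varepsilon\to0^+$ is problematic: the Lipschitz constant of $t\mapsto\min\{1,(m-t)_+/\varepsilon\}$ blows up like $1/\varepsilon$, so you have no uniform control of $D^s\varphi_\varepsilon$, and the candidate limit $\varphi\chi_{\{u<m\}}$ need not lie in $W^{s,G}$ at all, so the quantity $\mathcal{E}(u,\varphi\chi_{\{u<m\}})$ that your pointwise comparison targets may not even be defined. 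You flag this as a ``secondary technical point,'' but it is in fact the crux of the argument; monotone convergence of the test functions does not by itself give convergence of the nonlocal energies, since the integrand $A(D^su)\,D^s\varphi_\varepsilon$ is not monotone in $\varepsilon$. A cleaner route within your framework would be to avoid the limit entirely: note that $v=u_m^-$ is itself a supersolution in $\Omega$ (truncations of supersolutions are supersolutions, as the paper uses in \Cref{cor-conv}), test $v$'s supersolution property directly with a test function vanishing on $\{u\geq m\}$ that lies in $W^{s,G}(\Omega,\Omega')$, and carry out your pointwise comparison against that. That is essentially what the paper does at the level of the Caccioppoli inequality, and your truncation lemma would then follow along the same lines but requires the admissibility point to be addressed explicitly, not taken for granted.
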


If $B_R(x_0) \subset \Omega$ in \Cref{thm-loc-bdd-bdry} and \Cref{thm-WHI-bdry}, then $M=\esssup_{\emptyset} u_+ = -\infty$, $u_M^+=u_+$, $m=\essinf_\emptyset u=\infty$, and $u_m^-=u$, and hence the estimates \eqref{eq-loc-bdd-bdry} and \eqref{eq-WHI-bdry} reduce to \eqref{eq-loc-bdd-int} and \eqref{eq-WHI-int}, respectively.

\subsection{Caccioppoli estimates}\label{sec-Caccioppoli}

To prove \Cref{thm-loc-bdd-int}--\Cref{thm-WHI-bdry}, we use Moser's iteration technique. For this purpose, we establish Caccioppoli-type estimates.

\begin{lemma}\label{lem-Caccio1}
Let $\gamma=\beta+1>1$ and $d>0$.
\begin{enumerate}[(i)]
\item
If $u$ is a subsolution of $\mathcal{L}u=0$ in $B_R=B_R(x_0)$, then
\begin{align}\label{eq-Caccio1}
\begin{split}
&\int_{B_r} \int_{B_r} G(|D^su_+|) \left( \fint_{v(y)}^{v(x)} \frac{\bar{g}^{\beta}(t)}{t} \,\mathrm{d}t \right) \frac{\mathrm{d}y \,\mathrm{d}x}{|x-y|^n} \\
&\leq C\left( \frac{1}{|\beta|} + \frac{1}{|\beta|^{q}} \right) \left( \frac{R}{R-r} \right)^{n+q} \left( 1+ \frac{T(u_++d; x_0, R)}{g(d/R^s)} \right) \int_{B_R} \bar{g}^{\gamma}(v) \,\mathrm{d}x
\end{split}
\end{align}
for any $r \in (0,R)$, where $v=(u_++d)/R^s$ and $C=C(n, p, q, s, \Lambda)>0$.
\item
Let $\Omega, \Omega'$ be open subsets of $\mathbb{R}^n$ such that $\Omega \Subset \Omega'$ and let $B_R=B_R(x_0)$ satisfy $B_R \cap \Omega \neq \emptyset$. If $u \in W^{s, G}(\Omega, \Omega')$ is an essentially bounded subsolution of $\mathcal{L}u=0$ in $\Omega$, then the estimate \eqref{eq-Caccio1} with $u_+$ replaced by $u_M^+$ holds for any $r \in (0,R)$, where $u_M^+$ is defined as in \eqref{eq-uM}.
\end{enumerate}
\end{lemma}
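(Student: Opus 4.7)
The approach is the standard nonlocal Moser scheme adapted to the Orlicz setting. I would test the subsolution inequality with
\begin{equation*}
\varphi(x) := R^s \eta(x)^q H(v(x)), \qquad H(t) := \int_0^t \frac{\bar g^\beta(\tau)}{\tau}\,\mathrm{d}\tau,
\end{equation*}
where $\eta \in C_c^\infty(B_{(R+r)/2})$ satisfies $\eta \equiv 1$ on $B_r$, $0\le\eta\le 1$, $|\nabla \eta|\le C/(R-r)$. Since $\beta = \gamma-1 > 0$, the integral defining $H$ converges at $0$ and \eqref{eq-bar-g-pq} yields the comparison $H(t) \sim \bar g^\beta(t)/|\beta|$; this is the source of the leading factor $1/|\beta|$ on the right-hand side of \eqref{eq-Caccio1}. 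Admissibility of $\varphi$ is secured by truncating $u_+$ at levels $k \nearrow \infty$ and passing to the limit via \Cref{lem-finiteness}. In part (ii) the same calculation is performed with $u_+$ replaced by $u_M^+$; the hypothesis $u \in W^{s,G}(\Omega,\Omega')\cap L^\infty$, together with subtraction of the constant $H((M+d)/R^s)$ from the test function, keeps the modified $\varphi$ in $W^{s,G}_0(\Omega,\Omega')$.

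The key algebraic step is the decomposition
\begin{equation*}
\varphi(x) - \varphi(y) = R^s \eta^q(x)\bigl[H(v(x)) - H(v(y))\bigr] + R^s H(v(y))\bigl[\eta^q(x) - \eta^q(y)\bigr],
\end{equation*}
together with the splitting of $\mathbb{R}^n \times \mathbb{R}^n$ into the interior piece $B_R \times B_R$ and the tail piece $B_{(R+r)/2} \times (\mathbb{R}^n \setminus B_R)$ and its symmetric partner. The interior main term reproduces the LHS of \eqref{eq-Caccio1}: the identity $H(v(x))-H(v(y)) = (v(x)-v(y))\fint_{v(y)}^{v(x)} \bar g^\beta(t)/t\,\mathrm{d}t$, combined with the sign alignment $\mathrm{sign}(D^s u) = \mathrm{sign}(v(x)-v(y))$ and $R^s(v(x)-v(y)) = u_+(x)-u_+(y)$, together with $g(|D^s u|)\ge g(|D^s u_+|)$ (since $|D^s u|\ge |D^s u_+|$) and $g(t)t \sim G(t)$, yields the claimed integrand with a harmless $\eta^q$ weight that equals $1$ on $B_r$. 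The $\eta$-cross term on $B_R \times B_R$ is controlled by the pointwise bound $|D^s(\eta^q)| \lesssim q \max\{\eta(x),\eta(y)\}^{q-1}|x-y|^{1-s}/(R-r)$ together with the Young-type inequality \eqref{eq-alg} used at a scale $\varepsilon$ which is a specific power of $|\beta|$: the $g$-side is absorbed into the main term, while the $G^\ast$-side, via \Cref{lem-H} and the pointwise equivalence $v \bar g^\beta(v) \sim \bar g^\gamma(v)$, contributes a term of order $|\beta|^{-q}(R/(R-r))^{n+q}\int_{B_R}\bar g^\gamma(v)\,\mathrm{d}x$. The tail integral, where $\eta(y)=0$, is handled by the pointwise algebraic estimate used in the proof of \Cref{lem-tail}; after dividing and multiplying by $g(d/R^s)$ (legitimate since $v \ge d/R^s$), it yields the factor $T(u_+ + d;x_0,R)/g(d/R^s)$ multiplied by $\int_{B_R}\bar g^\gamma(v)\,\mathrm{d}x$.

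The step I expect to be the main obstacle is the precise bookkeeping of the $\beta$-dependence. The factor $1/|\beta|$ is essentially immediate from $H(v) \le \bar g^\beta(v)/((p-1)|\beta|)$, but the second factor $1/|\beta|^q$ emerges only from the Young absorption in the cross term, where one must calibrate $\varepsilon$ as a precise power of $|\beta|$ and then invoke the $(q-1)$-upper growth of $\bar g$ together with $G^\ast(g(t)) \sim G(t)$ from \Cref{lem-H}. Modulo these technicalities the proof is a faithful rewrite of the Di Castro--Kuusi--Palatucci and Chaker--Kim--Weidner Caccioppoli template in Orlicz variables; part (ii) differs from part (i) only cosmetically, the boundary hypothesis being used only to ensure that the modified test function remains in $W^{s,G}_0(\Omega,\Omega')$.
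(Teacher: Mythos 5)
Your test function $\varphi = R^s\eta^q H(v)$ with $H(t) = \int_0^t \bar g^\beta(\tau)\tau^{-1}\,\mathrm{d}\tau$ is, up to the factor $\beta$ and the subtraction of a constant, the same as the paper's $\varphi = (\bar g^\beta(v) - \bar g^\beta(l))\eta^q$, and the overall scheme (test, split into interior/cross/tail, absorb the cross term by a Young-type inequality, bound the tail by $T(\cdot)$) matches the paper's. The gap is in \emph{which function you test the inequality for}.

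You test the subsolution inequality for $u$ itself, so every term carries $g(|D^s u|)$. In the interior main term this is harmless: as you observe, $g(|D^s u|)\,\mathrm{sign}(D^s u)\,D^s u_+ \geq g(|D^s u_+|)\,|D^s u_+| \sim G(|D^s u_+|)$, which gives the left-hand side of \eqref{eq-Caccio1}. But the same $g(|D^s u|)$ sits in the cross term $g(|D^s u|)\,\mathrm{sign}(D^s u)\,R^s H(v(y))\,D^s(\eta^q)$ and in the tail, and there the inequality $|D^s u|\geq |D^s u_+|$ works against you. On the set $\{u(y)<0\leq u(x)\}$ one has $|D^s u| = (u_+(x)+u_-(y))/|x-y|^s$, which is not controlled by $u_+$. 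After applying \eqref{eq-alg} with $a=|D^s u|$, the $\varepsilon g(|D^s u|)|D^s u| \sim \varepsilon G(|D^s u|)$ piece cannot be absorbed: the main term supplies only $G(|D^s u_+|)$ and $G(|D^s u|)/G(|D^s u_+|)$ is unbounded on that set, nor can it be bounded by $\int_{B_R}\bar g^\gamma(v)\,\mathrm{d}x$, which depends on $u_+$ alone. The same defect appears in the tail: \Cref{lem-ineq1} applied to $D^s u$ produces $g((u_+(x)+u_-(y))/|x-y|^s)$, which yields $T(u_-;\cdot)$ rather than the stated $T(u_++d;\cdot)$.

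The paper avoids this entirely by first replacing $u$ with $\bar u:=u_+$ (resp.\ $u_M^+$ in case (ii)), noting that $\bar u$ is again a subsolution because truncation from below preserves subsolutions, and only then testing the inequality for $\bar u$. With $\bar u$ in place of $u$, every $g(|D^s\cdot|)$ in the cross and tail terms is $g(|D^s u_+|)$, so the Young absorption into $G(|D^s u_+|)$ succeeds and the tail produces exactly $T(u_++d;\cdot)$. Your proposal never passes to $\bar u$, and the ``sign alignment plus $g(|D^su|)\geq g(|D^su_+|)$'' device, while valid for the main term, is precisely what makes the cross and tail terms uncontrollable. To repair the argument you must either invoke the truncation fact and test $\bar u$, or split the cross and tail integrals according to the sign of $u$ and show that the pieces involving $u_-$ carry a favorable sign so they can be discarded — which is a nontrivial additional argument not present in your sketch.
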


\begin{lemma}\label{lem-Caccio2}
Let $\gamma=1+\beta<1$ and $d>0$.
\begin{enumerate}[(i)]
\item
If $u$ is a supersolution of $\mathcal{L}u=0$ in $B_R=B_R(x_0)$ such that $u\geq 0$ in $B_R$, then
\begin{align}\label{eq-Caccio2}
\begin{split}
&\int_{B_r} \int_{B_r} G(|D^su|) \left( \fint_{v(y)}^{v(x)} \frac{\bar{g}^{\beta}(t)}{t} \,\mathrm{d}t \right) \frac{\mathrm{d}y \,\mathrm{d}x}{|x-y|^n} \\
&\leq C\left( \frac{1}{|\beta|} + \frac{1}{|\beta|^{q}} \right) \left( \frac{R}{R-r} \right)^{n+q} \left( 1+ \frac{T((u+d)_-; x_0, R)}{g(d/R^s)} \right) \int_{B_R} \bar{g}^{\gamma}(v) \,\mathrm{d}x
\end{split}
\end{align}
for any $r \in (0, R)$, where $v=(u+d)/R^s$ and $C=C(n, p, q, s, \Lambda)>0$.
\item
Let $\Omega, \Omega'$ be open subsets of $\mathbb{R}^n$ such that $\Omega \Subset \Omega'$ and let $B_R=B_R(x_0)$ satisfy $B_R \cap \Omega \neq \emptyset$. If $u \in W^{s, G}(\Omega, \Omega')$ is a supersolution of $\mathcal{L}u=0$ in $\Omega$ such that $u \geq 0$ in $B_R$, then the estimate \eqref{eq-Caccio2} with $u$ replaced by $u_m^-$ holds for any $r \in (0, R)$, where $u_m^-$ is defined as in \eqref{eq-um}.
\end{enumerate}
\end{lemma}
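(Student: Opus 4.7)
The plan is to test the weak formulation \eqref{eq-supersolution} with a nonnegative function built from $\bar{g}^\beta(v)$ and a suitable Lipschitz cutoff. For part (i), fix $r<R$ and pick $\eta \in C_c^\infty(B_{(r+R)/2})$ with $\eta \equiv 1$ on $B_r$, $0\le \eta \le 1$, and $|\nabla \eta| \le 4/(R-r)$. Then set $\varphi = \eta^q \bar{g}^\beta(v)$: since $u \ge 0$ in $B_R$ and $d>0$, we have $v \geq d/R^s > 0$ on $B_R$, so $\bar{g}^\beta(v)$ is positive and bounded and $\varphi$ is a valid nonnegative test function with compact support in $B_R$. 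For part (ii), the analogous choice is $\varphi = \eta^q\bigl[\bar{g}^\beta(v_m)-\bar{g}^\beta((m+d)/R^s)\bigr]$, which vanishes on $B_R\setminus\Omega$ because $u_m^- \equiv m$ there, so that $\varphi \in W^{s,G}_0(\Omega,\Omega')$ after extension by zero; the constant subtraction produces only controllable additional terms once the integral is split.

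Next I would decompose $\mathcal{E}(u,\varphi)\ge 0$ into the local piece over $B_R\times B_R$ and the nonlocal piece over $B_R\times (\mathbb{R}^n\setminus B_R)$ (symmetrized). The heart of the local estimate is a pointwise algebraic inequality of the form
\begin{equation*}
\bigl(g(a)-g(b)\bigr)\bigl(\bar{g}^\beta(a)-\bar{g}^\beta(b)\bigr) \;\le\; -\,c\,|\beta|\,G(|a-b|)\,\fint_{b}^{a}\frac{\bar{g}^\beta(t)}{t}\,\mathrm{d}t, \qquad a,b>0,
\end{equation*}
which is proved from the monotonicity of $\bar{g}$ and the growth bound \eqref{eq-bar-g-pq}; the prefactor $|\beta|$ arises because the antiderivative of $t\mapsto \bar{g}^\beta(t)/t$ scales like $|\beta|^{-1}\bar{g}^\beta(t)$. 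Applied with $a=v(x),\ b=v(y)$ and combined with the product decomposition of $D^s(\eta^q\bar{g}^\beta(v))$, this delivers the left-hand side of \eqref{eq-Caccio2} (on $B_r\times B_r$, where $\eta\equiv 1$) up to an error generated by $D^s\eta^q$. That error is handled by the pointwise bound $|D^s\eta^q|\le C|x-y|^{1-s}/(R-r)$, Young's inequality \eqref{eq-alg}, and \Cref{lem-G}, and absorbed into the term $C(|\beta|^{-1}+|\beta|^{-q})(R/(R-r))^{n+q}\int_{B_R}\bar{g}^\gamma(v)\,\mathrm{d}x$.

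For the nonlocal piece, on the set $x\in\{\eta>0\}$, $y\in\mathbb{R}^n\setminus B_R$, I would invoke the same pointwise inequality used in the proof of \Cref{lem-tail} (via \Cref{lem-ineq1}) to dominate $g(|D^s u|)D^s u/|D^s u|$ by a positive multiple of $g((u+d)_-(y)/|x-y|^s)$ plus terms whose integrals against $\varphi$ are positive and can be discarded on the supersolution side. Factoring out $\bar{g}^\beta((m+d)/R^s)$ and comparing $g(d/R^s)$ with $\bar{g}^\gamma(d/R^s)$ through \Cref{lem-G} then produces the bound $C\,T((u+d)_-;x_0,R)\,g(d/R^s)^{-1}\int_{B_R}\bar{g}^\gamma(v)\,\mathrm{d}x$, which matches the tail factor in the parentheses on the right-hand side of \eqref{eq-Caccio2}.

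The main obstacle is the pointwise algebraic inequality displayed above, specifically extracting the correct $|\beta|$-dependence so that the constant degenerates only like $|\beta|^{-1}+|\beta|^{-q}$ as $\beta\to 0^-$; this sharp dependence is precisely what will later enable the full range of $\delta$ in \Cref{thm-WHI-int} and \Cref{thm-WHI-bdry} via Moser iteration. For part (ii), the additional technicality is the truncation: on $\{u>m\}\cap B_R$ one has $D^s u_m^-$ dominated pointwise by $D^s u$, and the sign of the corresponding integrand is favorable, so replacing $u$ by $u_m^-$ only improves the estimate in the direction we need, while the extra local and tail contributions from the constant subtraction are absorbed by the same cutoff and tail estimates used in part (i).
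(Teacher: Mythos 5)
Your overall strategy --- test with $\varphi=\eta^q$ times a power of $\bar{g}(v)$, split into local and nonlocal pieces, estimate the local part with a pointwise algebraic inequality and the nonlocal part with tail estimates --- matches the paper's structure. However, the key algebraic inequality you propose has the wrong form, and this is a genuine gap.

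After substituting the test function, the integrand on $B_r\times B_r$ (where $\eta\equiv 1$) is
\begin{equation*}
g(|D^s\bar{u}|)\,\frac{D^s\bar{u}}{|D^s\bar{u}|}\,D^s\bigl(\bar{g}^\beta(v)\bigr),
\end{equation*}
in which $g$ is evaluated at the fractional difference quotient $|D^s\bar{u}|=|\bar{u}(x)-\bar{u}(y)|/|x-y|^s$. Your proposed pointwise inequality $(g(a)-g(b))(\bar{g}^\beta(a)-\bar{g}^\beta(b))\le -c|\beta|\,G(|a-b|)\fint_b^a\bar{g}^\beta(t)t^{-1}\mathrm{d}t$ applied with $a=v(x)$, $b=v(y)$ involves $g(v(x))-g(v(y))$, which is $g$ evaluated at the pointwise values of $v$ --- a completely different object. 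It therefore cannot be plugged into the weak formulation as written, and $G(|a-b|)=G(|\bar{u}(x)-\bar{u}(y)|/R^s)$ is not $G(|D^s\bar{u}|)$. What the proof actually needs is \Cref{lem-ineq2}, which bounds $\tfrac{1}{\beta}g(|D^s\bar{u}|)\tfrac{D^s\bar{u}}{|D^s\bar{u}|}D^s\varphi$ from below by $G(|D^s\bar{u}|)$ times the correct averaged power, with the cutoff error (captured via $(R^s|D^s\eta|)^p\lor(R^s|D^s\eta|)^q$) pulled out carefully so the constant degenerates only like $|\beta|^{-1}+|\beta|^{-q}$.

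The tail estimate also needs more care than you indicate. For $\beta<0$, the paper does not use \Cref{lem-ineq1} for $I_2$ (that lemma is what enters \Cref{lem-tail} and the $\beta>0$ case). Instead, after restricting to $\{\bar{u}(x)\ge\bar{u}(y)\}$ and using $g\bigl(R^s(v(x)+v_-(y))/|x-y|^s\bigr)\lesssim g\bigl(R^sv(x)/|x-y|^s\bigr)+g\bigl(R^sv_-(y)/|x-y|^s\bigr)$, one gets an extra term $R^s\iint g\bigl(R^sv(x)/|x-y|^s\bigr)\bar{g}^\beta(v(x))\eta^q(x)|x-y|^{-s-n}$. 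This term is \emph{not} sign-favorable and cannot be discarded; it requires a separate estimate (the paper's \eqref{eq-Caccio-I2-extra}, via $g(v(x))\lesssim\bar{g}(v(x))$ and the decay of $R^s|D^s\eta|$) to absorb it into $\bigl(R/(R-r)\bigr)^q\int_{B_R}\bar{g}^\gamma(v)$. Your claim that the remaining pieces are ``positive and can be discarded'' misses this; without it the right-hand side of \eqref{eq-Caccio2} would not come out with the advertised form.

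Your choice of test function in part (i) without the constant subtraction $\bar{g}^\beta(d/R^s)$ is legitimate but unnecessary; the paper's subtraction unifies parts (i) and (ii). Your handling of part (ii) via the subtraction of $\bar{g}^\beta((m+d)/R^s)$ is the same as the paper's.
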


\begin{proof}[Proofs of \Cref{lem-Caccio1} and \Cref{lem-Caccio2}]
We set
\begin{equation}\label{eq-bar-u}
\bar{u}=
\begin{cases}
u_+ &\text{in the case of \Cref{lem-Caccio1} (i)}, \\
u_M^+ &\text{in the case of \Cref{lem-Caccio1} (ii)}, \\
u &\text{in the case of \Cref{lem-Caccio2} (i)}, \\
u_m^- &\text{in the case of \Cref{lem-Caccio2} (ii)},
\end{cases}
\end{equation}
then $v=(\bar{u}+d)/R^s$. If $u$ is either a subsolution or a supersolution, then so is $\bar{u}$. Let $\eta \in C^\infty_c(B_{(R+r)/2})$ be a cut-off function such that $0 \leq \eta \leq 1$, $\eta \equiv 1$ on $B_r$, and $|\nabla \eta| \leq C/(R-r)$. We define a nonnegative function
\begin{equation}\label{eq-varphi}
\varphi = (\bar{g}^\beta(v) - \bar{g}^\beta(l))\eta^q,
\end{equation}
where
\begin{equation*}
l=
\begin{cases}
d/R^s &\text{in the case of \Cref{lem-Caccio1} (i)}, \\
(M+d)/R^s &\text{in the case of \Cref{lem-Caccio1} (ii)}, \\
d/R^s &\text{in the case of \Cref{lem-Caccio2} (i)}, \\
(m+d)/R^s &\text{in the case of \Cref{lem-Caccio2} (ii)}.
\end{cases}
\end{equation*}
Note that $\varphi \in W^{s, G}_0(B_R)$ has a compact support in $B_R$ in the case (i) and $\varphi \in W^{s, G}_0(\Omega, \Omega')$ in the case (ii). By using $\varphi$ as a test function, we have
\begin{equation}\label{eq-Caccio-I1I2}
\begin{split}
0
&\geq \frac{R^s}{\beta} \int_{B_R} \int_{B_R} g(|D^s\bar{u}|) \frac{D^s\bar{u}}{|D^s\bar{u}|} D^s\varphi \frac{k(x, y)}{|x-y|^n} \,\mathrm{d}y \,\mathrm{d}x \\
&\quad + \frac{2R^s}{\beta} \int_{B_R} \int_{\mathbb{R}^n \setminus B_R} g(|D^s\bar{u}|) \frac{D^s\bar{u}}{|D^s\bar{u}|} \frac{\varphi(x)}{|x-y|^s} \frac{k(x, y)}{|x-y|^n} \,\mathrm{d}y \,\mathrm{d}x = I_1 + I_2.
\end{split}
\end{equation}
We apply \Cref{lem-ineq2} to estimate $I_1$ as
\begin{align*}
I_1
&\geq \frac{c}{\Lambda} \int_{B_R} \int_{B_R} G(|D^s\bar{u}|) \left( \fint_{v(y)}^{v(x)} \frac{\bar{g}^{\beta}(t)}{t} \,\mathrm{d}t \right) (\eta(x) \land \eta(y))^q \frac{\mathrm{d}y \,\mathrm{d}x}{|x-y|^n} \\
&\quad - C \Lambda \left( \frac{1}{|\beta|} \lor \frac{1}{|\beta|^q} \right) \int_{B_R} \bar{g}^{\gamma}(v(x)) \int_{B_R} \frac{(R^s |D^s\eta|)^p \lor (R^s |D^s\eta|)^q}{|x-y|^n} \,\mathrm{d}y \,\mathrm{d}x.
\end{align*}
Since
\begin{align}\label{eq-eta-q}
\begin{split}
\int_{\mathbb{R}^n} \frac{(R^s|D^s\eta|)^\alpha}{|x-y|^n} \,\mathrm{d}y
&\leq C \frac{R^{s\alpha}}{(R-r)^\alpha} \int_{B_R(x)} \frac{\mathrm{d}y}{|x-y|^{n-(1-s)\alpha}} + R^{s\alpha} \int_{\mathbb{R}^n \setminus B_R(x)} \frac{\mathrm{d}y}{|x-y|^{n+s\alpha}} \\
&\leq C \left( \frac{R}{R-r} \right)^{\alpha} + C \leq C \left( \frac{R}{R-r} \right)^q
\end{split}
\end{align}
for any $\alpha \in (0, q]$, we obtain
\begin{align}\label{eq-Caccio-I1}
\begin{split}
I_1
&\geq c \int_{B_r} \int_{B_r} G(|D^s\bar{u}|) \left( \fint_{v(y)}^{v(x)} \frac{\bar{g}^{\beta}(t)}{t} \,\mathrm{d}t \right) \frac{\mathrm{d}y \,\mathrm{d}x}{|x-y|^n} \\
&\quad - C \left( \frac{1}{|\beta|} + \frac{1}{|\beta|^q} \right) \left( \frac{R}{R-r} \right)^q \int_{B_R} \bar{g}^{\gamma}(v) \,\mathrm{d}x
\end{split}
\end{align}
for some $c, C>0$ depending only on $n$, $p$, $q$, $s$, and $\Lambda$.

Let us next estimate $I_2$. We have for $\beta>0$
\begin{align}\label{eq-Caccio-I2-pos}
\begin{split}
I_2
&\geq - \frac{2\Lambda}{|\beta|}R^s \int_{B_R} \int_{\mathbb{R}^n \setminus B_R} g(-R^sD^sv) \frac{(\bar{g}^{\beta}(v(x)) - \bar{g}^\beta(l))\eta^q(x)}{|x-y|^s} {\bf 1}_{\lbrace \bar{u}(x)\leq \bar{u}(y) \rbrace} \frac{\mathrm{d}y \,\mathrm{d}x}{|x-y|^n} \\
&\geq - \frac{2\Lambda}{|\beta|}R^s \int_{B_R} \int_{\mathbb{R}^n \setminus B_R} g\left( \frac{R^sv(y)}{|x-y|^s} \right) \frac{\bar{g}^{\beta}(v(x)) \eta^q(x)}{|x-y|^s} \frac{\mathrm{d}y \,\mathrm{d}x}{|x-y|^n} \\
&\geq - \frac{2\Lambda}{|\beta|} \left( R^s \sup_{x \in \mathrm{supp}\,\eta} \int_{\mathbb{R}^n \setminus B_R} g\left( \frac{R^sv(y)}{|x-y|^s} \right) \frac{\mathrm{d}y}{|x-y|^{n+s}} \right) \int_{B_R} \frac{\bar{g}^{\gamma}(v)}{\bar{g}(d/R^s)} \,\mathrm{d}x
\end{split}
\end{align}
and for $\beta<0$
\begin{align}\label{eq-Caccio-I2-neg}
\begin{split}
I_2
&\geq - \frac{2\Lambda}{|\beta|}R^s \int_{B_R} \int_{\mathbb{R}^n \setminus B_R} g(D^s\bar{u}) \frac{(\bar{g}^{\beta}(v(x)) - \bar{g}^\beta(l)) \eta^q(x)}{|x-y|^s} {\bf 1}_{\lbrace \bar{u}(x)\geq \bar{u}(y) \rbrace} \frac{\mathrm{d}y \,\mathrm{d}x}{|x-y|^n} \\
&\geq - \frac{2\Lambda}{|\beta|}R^s \int_{B_R} \int_{\mathbb{R}^n \setminus B_R} g\left( R^s\frac{v(x)+v_-(y)}{|x-y|^s} \right) \frac{\bar{g}^{\beta}(v(x)) \eta^q(x)}{|x-y|^s} \frac{\mathrm{d}y \,\mathrm{d}x}{|x-y|^n} \\
&\geq - \frac{C}{|\beta|}R^s \int_{B_R} \int_{\mathbb{R}^n \setminus B_R} g\left( \frac{R^sv(x)}{|x-y|^s} \right) \frac{\bar{g}^{\beta}(v(x)) \eta^q(x)}{|x-y|^s} \frac{\mathrm{d}y \,\mathrm{d}x}{|x-y|^n} \\
&\quad - \frac{C}{|\beta|} \left( R^s \sup_{x \in \mathrm{supp}\,\eta} \int_{\mathbb{R}^n \setminus B_R} g\left( \frac{R^sv_-(y)}{|x-y|^s} \right) \frac{\mathrm{d}y}{|x-y|^{n+s}} \right) \int_{B_R} \frac{\bar{g}^{\gamma}(v)}{\bar{g}(d/R^s)} \,\mathrm{d}x.
\end{split}
\end{align}
Since $\mathrm{supp}\,\eta \subset B_{(R+r)/2}(x_0)$, we have $|x-y| \geq \frac{R-r}{2R}|y-x_0|$ for $x \in \mathrm{supp}\,\eta$ and $y \in \mathbb{R}^n\setminus B_R$. Thus, \Cref{lem-G} yields
\begin{equation}\label{eq-Caccio-tail-pos}
R^s \sup_{x \in \mathrm{supp}\,\eta} \int_{\mathbb{R}^n \setminus B_R} g\left( \frac{R^sv(y)}{|x-y|^s} \right) \frac{\mathrm{d}y}{|x-y|^{n+s}} \leq C \left( \frac{R}{R-r} \right)^{n+sq} T(\bar{u}+d; x_0, R)
\end{equation}
for $\beta>0$ and
\begin{equation}\label{eq-Caccio-tail-neg}
R^s \sup_{x \in \mathrm{supp}\,\eta} \int_{\mathbb{R}^n \setminus B_R} g\left( \frac{R^sv_-(y)}{|x-y|^s} \right) \frac{\mathrm{d}y}{|x-y|^{n+s}} \leq C \left( \frac{R}{R-r} \right)^{n+sq} T((\bar{u}+d)_-; x_0, R)
\end{equation}
for $\beta<0$. Therefore, \eqref{eq-Caccio-I1I2}, \eqref{eq-Caccio-I1}, \eqref{eq-Caccio-I2-pos}, and \eqref{eq-Caccio-tail-pos} conclude \Cref{lem-Caccio1}.

To finish the proof of \Cref{lem-Caccio2}, we need to estimate the second to the last term in \eqref{eq-Caccio-I2-neg}. Note that we have
\begin{align*}
g\left( \frac{R^sv(x)}{|x-y|^s} \right) \frac{R^s}{|x-y|^s} \eta^q(x)
&\leq C g(v(x)) \left( \left( \frac{R^s}{|x-y|^s} \right)^p + \left( \frac{R^s}{|x-y|^s} \right)^q \right) \eta^q(x) \\
&\leq C \bar{g}(v(x)) ((R^s|D^s\eta|)^p + (R^s|D^s\eta|)^q)
\end{align*}
by using \Cref{lem-G} and \eqref{eq-bar-g-comp}. Using \eqref{eq-eta-q} again, we obtain
\begin{equation}\label{eq-Caccio-I2-extra}
R^s \int_{B_R} \int_{\mathbb{R}^n \setminus B_R} g\left( \frac{R^sv(x)}{|x-y|^s} \right) \frac{\bar{g}^{\beta}(v(x)) \eta^q(x)}{|x-y|^s} \frac{\mathrm{d}y \,\mathrm{d}x}{|x-y|^n} \leq C \left( \frac{R}{R-r} \right)^q \int_{B_R} \bar{g}^{\gamma}(v) \,\mathrm{d}x.
\end{equation}
Therefore, \Cref{lem-Caccio2} follows from \eqref{eq-Caccio-I1I2}, \eqref{eq-Caccio-I1}, \eqref{eq-Caccio-I2-neg}, \eqref{eq-Caccio-tail-neg}, and \eqref{eq-Caccio-I2-extra}.
\end{proof}

The following lemma improves \Cref{lem-Caccio2} when $\gamma=1+\beta=0$.

\begin{lemma}\label{lem-Caccio3}
Let $\sigma \in (0,s)$ and $d>0$.
\begin{enumerate}[(i)]
\item
If $u$ is a supersolution of $\mathcal{L}u=0$ in $B_R=B_R(x_0)$ such that $u \geq 0$ in $B_R$, then
\begin{equation}\label{eq-Caccio3}
\int_{B_r} \int_{B_r} |D^\sigma \log(u+d)|^p \frac{\mathrm{d}y\,\mathrm{d}x}{|x-y|^n} \leq CR^{n-\sigma p} \left( \frac{R}{R-r} \right)^{n+q} \left( 1+ \frac{T((u+d)_-; x_0, R)}{g(d/R^s)} \right),
\end{equation}
for any $r \in (0, R)$, where $C=C(n, p, q, s, \sigma, \Lambda) > 0$.
\item
Let $\Omega, \Omega'$ be open subsets of $\mathbb{R}^n$ such that $\Omega \Subset \Omega'$ and let $B_R=B_R(x_0)$ satisfy $B_R \cap \Omega \neq \emptyset$. If $u \in W^{s, G}(\Omega, \Omega')$ is a supersolution of $\mathcal{L}u=0$ in $\Omega$ such that $u\geq 0$ in $B_R$, then the estimate \eqref{eq-Caccio3} with $u$ replaced by $u_m^-$ holds for any $r \in (0,R)$, where $u_m^-$ is defined as in \eqref{eq-um}.
\end{enumerate}
\end{lemma}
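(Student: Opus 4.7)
The plan is to replay the proof of \Cref{lem-Caccio2} at the borderline exponent $\beta = -1$ (equivalently $\gamma = 0$), which is excluded from the statement of that lemma but for which the test function $\varphi$ in \eqref{eq-varphi} with $\beta = -1$ remains admissible. I would set $\bar u$ as in \eqref{eq-bar-u}, $v = (\bar u+d)/R^s$, and fix the usual cutoff $\eta\in C_c^\infty(B_{(R+r)/2})$ with $\eta\equiv 1$ on $B_r$ and $|\nabla\eta|\leq C/(R-r)$. After inserting $\varphi$ into the supersolution inequality and decomposing as in \eqref{eq-Caccio-I1I2}, the tail estimates \eqref{eq-Caccio-I2-neg}--\eqref{eq-Caccio-I2-extra} carry over verbatim with $\beta=-1$ and contribute a factor $\bigl(\tfrac{R}{R-r}\bigr)^{n+q}\bigl(1+T((\bar u+d)_-;x_0,R)/g(d/R^s)\bigr)$ multiplying $\int_{B_R}\bar g^0(v)\,\mathrm{d}x \leq C R^n$.

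For the local piece, the pointwise estimate used to derive \eqref{eq-Caccio-I1} leaves us (at $\gamma=0$) with the integrand
\begin{equation*}
G(|D^s\bar u|)\fint_{v(y)}^{v(x)} \frac{\mathrm{d}t}{G(t)} = \frac{G(|D^s\bar u|)}{|v(x)-v(y)|}\int_{v(x)\wedge v(y)}^{v(x)\vee v(y)} \frac{\mathrm{d}t}{G(t)},
\end{equation*}
and the remaining task is to bound this from below so that the factor $R^{n-\sigma p}$ appears after integration. Setting $h=|v(x)-v(y)|$ and $\lambda=(R/|x-y|)^s\geq 1$, one computes $G(|D^s\bar u|)/h = \lambda\,\bar g(\lambda h)$, so the conversion reduces to the pointwise algebraic inequality
\begin{equation*}
\lambda^{\sigma p/s}\,\bigl|\log(a/b)\bigr|^p \leq C\,\lambda\,\bar g(\lambda h)\int_b^a \frac{\mathrm{d}t}{G(t)}, \qquad a>b>0,\ h=a-b,\ \lambda\geq 1,
\end{equation*}
which after integration over $B_r\times B_r$ produces exactly \eqref{eq-Caccio3}.

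The main obstacle is this algebraic inequality. In the fractional $p$-Laplacian case $G(t)=t^p$ it reduces to the classical H\"older bound $|\log(a/b)|^p\leq (a-b)^{p-1}(b^{1-p}-a^{1-p})/(p-1)$ combined with $G(\lambda h)\geq \lambda^p G(h)$ from \Cref{lem-G}(i), and no loss in the differentiability exponent occurs. For general Young functions satisfying only \eqref{eq-pq}, the gap $p<q$ produces a genuine mismatch between the $p$- and $q$-growth scales of $\bar g$, and the resulting deficit must be compensated by the surplus factor $\lambda^{p(s-\sigma)/s}\geq 1$ coming from the $\lambda^p$-lower bound on $G(\lambda h)$. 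This surplus is available precisely because $\sigma<s$ strictly, which is why the hypothesis excludes $\sigma=s$. I expect the algebraic step to be handled by splitting into the regimes $a/b\leq 2$ and $a/b>2$ and combining H\"older's inequality applied to $\log(a/b)=\int_b^a \mathrm{d}t/t$ with the $p$- and $q$-growth bounds on $\bar g$ and $G$ provided by \Cref{lem-G}(i)--(ii).
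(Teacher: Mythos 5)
Your overall route is the paper's route: test with $\varphi=(\bar g^{-1}(v)-\bar g^{-1}(l))\eta^q$ (i.e.\ replay \Cref{lem-Caccio2} at the endpoint $\beta=-1$, $\gamma=0$, where the tail and cutoff estimates indeed carry over and $\int_{B_R}\bar g^0(v)\,\mathrm{d}x=CR^n$), then convert the Caccioppoli integrand $G(|D^s\bar u|)\fint_{v(y)}^{v(x)}\frac{\mathrm{d}t}{G(t)}$ into $|D^\sigma\log(u+d)|^p$. The gap is in that conversion step. The algebraic inequality you posit,
\[
\lambda^{\sigma p/s}\,|\log(a/b)|^p \;\leq\; C\,\lambda\,\bar g(\lambda h)\int_b^a\frac{\mathrm{d}t}{G(t)},\qquad h=a-b,
\]
is false once $q>p$, and the surplus power of $\lambda$ cannot rescue it. Take $G(t)=t^q$ and the regime $\lambda\approx 1$ (which occurs for $|x-y|\approx r\approx R$) with $h\ll b$: the left side scales like $(h/b)^p$ while the right side scales like $(h/b)^q$, and $(h/b)^q\ll(h/b)^p$ as $h/b\to 0$. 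The ratio is $(b/h)^{q-p}/\lambda^{q-\sigma p/s}$, which blows up because the deficit is driven by $h/b$ while the surplus is driven by $\lambda=(R/|x-y|)^s$, and these are decoupled. The $\sigma<s$ room is real, but it lives in the wrong variable to compensate.

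What is missing is precisely an additive constant. The paper's \Cref{lem-ineq3} obtains, via Jensen's inequality and then Young's inequality with $H(\tau)=G(\tau^{1/p})$ together with \Cref{lem-H},
\[
|D^s\log(\bar u+d)|^p \;\leq\; \frac{C}{R^{sp}}\left(G(|D^s\bar u|)\fint_{v(y)}^{v(x)}\frac{\mathrm{d}t}{G(t)}+1\right),
\]
where the $+1$ arises because $|D^s\bar u|^p G(t)/t^p\lesssim G(|D^s\bar u|)+G(t)$ and the $G(t)$ cancels against the $1/G(t)$. Multiplying by $|x-y|^{(s-\sigma)p}$ and integrating over $B_r\times B_r$, the first term gives $\frac{C}{R^{\sigma p}}$ times the Caccioppoli quantity (since $|x-y|\leq 2R$), and the $+1$ contributes exactly $CR^{n-\sigma p}$, which is absorbed into the right side of \eqref{eq-Caccio3}. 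Adding this additive term—and tracking its $CR^{n-\sigma p}$ contribution—closes the gap; without it the argument breaks for every Young function with $q>p$.
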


\begin{proof}
Let $L$ be the left-hand side of \eqref{eq-Caccio3} (with $u$ replaced by $u_m^-$ in the case (ii)). Then \Cref{lem-ineq3} shows that
\begin{align*}
L
&\leq \frac{C}{R^{sp}} \int_{B_r} \int_{B_r} |x-y|^{(s-\sigma)p} \left( G(|D^s\bar{u}|) \fint_{v(y)}^{v(x)} \frac{\mathrm{d}t}{G(t)} + 1 \right) \frac{\mathrm{d}y\,\mathrm{d}x}{|x-y|^n} \\
&\leq \frac{C}{R^{\sigma p}} \int_{B_r} \int_{B_r} G(|D^s\bar{u}|) \left( \fint_{v(y)}^{v(x)} \frac{\mathrm{d}t}{G(t)} \right) \frac{\mathrm{d}y\,\mathrm{d}x}{|x-y|^n} + CR^{n-\sigma p},
\end{align*}
where
\begin{equation}\label{eq-bar-u-v}
\bar{u}=
\begin{cases}
u &\text{in the case (i)}, \\
u_m^- &\text{in the case (ii)},
\end{cases}
\quad\text{and}\quad v=(\bar{u}+d)/R^s.
\end{equation}
Thus, \Cref{lem-Caccio2} with $\gamma=1+\beta=0$ finishes the proof.
\end{proof}

\subsection{Local boundedness}\label{sec-loc-bdd}

Using the Caccioppoli-type estimates proved in \Cref{sec-Caccioppoli} and Moser's iteration technique, we prove the local boundedness of subsolutions (\Cref{thm-loc-bdd-int} and \Cref{thm-loc-bdd-bdry}) in this section and the weak Harnack inequality for supersolutions (\Cref{thm-WHI-int} and \Cref{thm-WHI-bdry}) in the next section. We begin with the reverse H\"older inequalities.

\begin{lemma}\label{lem-RHI-sub}
Let $\gamma=\beta+1>1$, $d>0$, $\sigma \in (0, s)$, and
\begin{equation}\label{eq-chi}
\chi=
\begin{cases}
n/(n-\sigma p) &\text{if}~ p<n/\sigma, \\
\text{any number larger than 1 and $\sigma/(n-\sigma)$} &\text{if}~ p\geq n/\sigma.
\end{cases}
\end{equation}
\begin{enumerate}[(i)]
\item
If $u$ is a subsolution of $\mathcal{L}u=0$ in $B_R=B_R(x_0)$, then
\begin{align}\label{eq-RHI-sub}
\begin{split}
&\left( \fint_{B_r} \bar{g}^{\gamma\chi}(v) \,\mathrm{d}x \right)^{\frac{1}{\gamma\chi}} \\
&\leq (C(1+\gamma^p))^\frac{1}{\gamma} \left( \frac{R}{r} \right)^\frac{n}{\gamma} \left( \frac{R}{R-r} \right)^{\frac{n+q}{\gamma}} \left( 1+ \frac{T(u_++d; x_0, R)}{g(d/R^s)} \right)^\frac{1}{\gamma} \left( \fint_{B_R} \bar{g}^{\gamma}(v) \,\mathrm{d}x \right)^{\frac{1}{\gamma}},
\end{split}
\end{align}
for any $r \in (0,R)$, where $v=(u_++d)/R^s$. The constant $C=C(n, p, q, s, \sigma, \Lambda, \beta, \chi)>0$ is bounded when $\beta$ is bounded away from zero.
\item
Let $\Omega, \Omega'$ be open subsets of $\mathbb{R}^n$ such that $\Omega \Subset \Omega'$ and let $B_R=B_R(x_0)$ satisfy $B_R \cap \Omega \neq \emptyset$. If $u \in W^{s, G}(\Omega, \Omega')$ is an essentially bounded subsolution of $\mathcal{L}u=0$ in $\Omega$, then the estimate \eqref{eq-RHI-sub} with $u_+$ replaced by $u_M^+$ holds for $r \in (0,R)$, where $u_M^+$ is defined as in \eqref{eq-uM}.
\end{enumerate}
\end{lemma}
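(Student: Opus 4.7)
The plan is a Moser-type iteration step: I combine the Caccioppoli estimate of \Cref{lem-Caccio1} with a pointwise inequality and the fractional Sobolev embedding to upgrade an $L^\gamma$-type bound on $\bar{g}(v)$ to an $L^{\gamma\chi}$-type bound. I will present the argument for case (i); case (ii) is structurally identical with $u_M^+$ in place of $u_+$, since part (ii) of \Cref{lem-Caccio1} furnishes the same Caccioppoli inequality. Setting $w = \bar{g}^{\gamma/p}(v)$, the target \eqref{eq-RHI-sub} is equivalent to an $L^p \to L^{p\chi}$ bound on $w$.

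\textbf{Key steps.} First, I derive a pointwise inequality of the form
\begin{equation*}
\frac{|w(x)-w(y)|^p}{|x-y|^{\sigma p}} \leq C(1+\gamma^p) R^{(\sigma-s)p}\, G(|D^s \bar{u}|) \left( \fint_{v(y)}^{v(x)} \frac{\bar{g}^\beta(t)}{t}\,\mathrm{d}t \right),
\end{equation*}
where $\bar{u} = u_+$ in case (i). To prove it I write $w(x)-w(y) = \int_{v(y)}^{v(x)} (\gamma/p)\bar{g}^{\gamma/p-1}(t)\bar{g}'(t)\,\mathrm{d}t$, apply Jensen's inequality to the $p$-th power, and use the equivalences $\bar{g}'(t) \sim \bar{g}(t)/t$ and $\bar{g}(t)\,t \sim G(t)$ coming from \eqref{eq-pq} and \eqref{eq-bar-g-pq} to match the resulting integrand with $\bar{g}^\beta(t)/t$. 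The residual factor of order $|v(x)-v(y)|^{p-1}$ is converted into $R^{(\sigma-s)p}|x-y|^{(s-\sigma)p} G(|D^s \bar{u}|)$ via the identity $|v(x)-v(y)| = |D^s \bar{u}|\,|x-y|^s / R^s$ together with \Cref{lem-G}. Integrating this inequality against $|x-y|^{-n}$ over $B_r \times B_r$ and invoking \Cref{lem-Caccio1} gives
\begin{equation*}
\int_{B_r} \int_{B_r} |D^\sigma w|^p \frac{\mathrm{d}y\,\mathrm{d}x}{|x-y|^n} \leq C(1+\gamma^p) R^{-\sigma p} \left( \frac{R}{R-r} \right)^{n+q} \left( 1 + \frac{T(u_+ + d; x_0, R)}{g(d/R^s)} \right) \int_{B_R} \bar{g}^\gamma(v)\,\mathrm{d}x.
\end{equation*}
Second, I apply the fractional Sobolev--Poincar\'e inequality on $B_r$,
\begin{equation*}
\left( \fint_{B_r} |w|^{p\chi}\,\mathrm{d}x \right)^{1/(p\chi)} \leq C r^\sigma \left( \fint_{B_r} \fint_{B_r} |D^\sigma w|^p \frac{\mathrm{d}y\,\mathrm{d}x}{|x-y|^n} \right)^{1/p} + C \left( \fint_{B_r} |w|^p\,\mathrm{d}x \right)^{1/p},
\end{equation*}
valid for $\chi$ in the range \eqref{eq-chi}. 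Substituting $w^p = \bar{g}^\gamma(v)$ and $w^{p\chi} = \bar{g}^{\gamma\chi}(v)$, enlarging the second term's domain from $B_r$ to $B_R$ at the cost of a factor $(R/r)^{n/p}$, and raising both sides to the $1/\gamma$ power produces \eqref{eq-RHI-sub}.

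\textbf{Main obstacle.} The technical crux is the pointwise inequality in the first step, since $G$ and $\bar{g}$ are general Young functions rather than pure powers; the factor $\bar{g}^{\gamma/p-1}(t)\bar{g}'(t)$ obtained by differentiating $\bar{g}^{\gamma/p}$ must be matched against the $\bar{g}^\beta(t)/t$ appearing in the Caccioppoli integrand, and the leftover power of $|v(x)-v(y)|$ must be recombined with a power of $|x-y|$ to recover exactly $G(|D^s \bar{u}|)\,|x-y|^{(s-\sigma)p}\,R^{(\sigma-s)p}$. This matching relies essentially on the two-sided growth bounds \eqref{eq-pq}, \eqref{eq-bar-g-comp} and their doubling consequences in \Cref{lem-G}. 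The constant $1+\gamma^p$ in \eqref{eq-RHI-sub} tracks the $(\gamma/p)^p$ from differentiating $\bar{g}^{\gamma/p}$, while the prefactor $1/|\beta|+1/|\beta|^q$ produced by \Cref{lem-Caccio1} is absorbed into the $\beta$-dependent constant $C$ permitted in the statement, which remains bounded as long as $\beta$ stays away from zero.
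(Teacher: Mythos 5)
Your proposal follows the same structure as the paper's proof (define $h=\bar{g}^{\gamma/p}(v)$, bound the fractional gradient pointwise by a Caccioppoli-shaped integrand, apply the fractional Sobolev embedding, and invoke \Cref{lem-Caccio1}). Two points require repair, however.

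First, the pointwise inequality you state cannot hold without a zero-order remainder. After Jensen and the equivalences $\bar{g}'\sim\bar{g}/t$, $\bar{g}(t)t\sim G(t)$, one arrives at
\[
|w(x)-w(y)|^p \lesssim |\gamma|^p\,\frac{|x-y|^{sp}}{R^{sp}}\,|D^s\bar{u}|^p \fint_{v(y)}^{v(x)} \frac{G(t)}{t^p}\,\frac{\bar{g}^\beta(t)}{t}\,\mathrm{d}t ,
\]
and converting $|D^s\bar{u}|^p\,G(t)/t^p$ into $G(|D^s\bar{u}|)$ uses Young's inequality, which necessarily produces the additional term $G(t)$; after integrating in $t$ this becomes $\bar{g}^\gamma(v(x))\lor\bar{g}^\gamma(v(y))$. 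The pointwise claim as you wrote it is in fact \emph{false} when $|D^s\bar{u}|$ is small: by \eqref{eq-pq}, $G(t)/t^p$ is non-decreasing, so for $|D^s\bar{u}|\leq t$ one has $|D^s\bar{u}|^p G(t)\geq G(|D^s\bar{u}|)t^p$, the wrong direction. The correct pointwise bound is exactly the paper's \Cref{lem-ineq4}; the extra term is harmless after integration (it only adds another $\fint_{B_r}\bar{g}^\gamma(v)$, which is already present from the Sobolev--Poincar\'e step), but your write-up must account for it. Relatedly, the prefactor should be $|x-y|^{(s-\sigma)p}/R^{sp}$, not $R^{(\sigma-s)p}$; your subsequent intermediate display has the correct $R^{-\sigma p}$, so this looks like a transcription slip.

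Second, the remark that the Sobolev--Poincar\'e inequality is ``valid for $\chi$ in the range \eqref{eq-chi}'' glosses over the case $p\geq n/\sigma$, where the exponent $p\chi$ is \emph{not} a fractional Sobolev conjugate of $p$. The paper handles this by introducing $\tilde p=np\chi/(n+\sigma p\chi)\in(1,n/\sigma)$, applying the fractional Sobolev inequality at level $\tilde p$ (which yields exactly exponent $p\chi=\tilde p^\ast_\sigma$), and then using an embedding between fractional seminorms (Lemma~4.6 in \cite{Coz17}) together with H\"older to return to the $W^{\sigma,p}$ seminorm of $h$. Without this detour, the argument for $p\geq n/\sigma$ is not established.
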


The following lemma provides the reverse H\"older inequalities for supersolutions, which are used in the next section, but let us include it here because its proof is similar to that of \Cref{lem-RHI-sub}.

\begin{lemma}\label{lem-RHI-super}
Let $\gamma=\beta+1<1$, $\gamma\neq0$, $d>0$, $\sigma \in (0,s)$, and $\chi$ be as in \eqref{eq-chi}.
\begin{enumerate}[(i)]
\item
If $u$ is a supersolution of $\mathcal{L}u=0$ in $B_R=B_R(x_0)$ such that $u \geq 0$ in $B_R$, then for any $r \in (0,R)$,
\begin{align}\label{eq-RHI-super-pos}
\begin{split}
&\left( \fint_{B_r} \bar{g}^{\gamma\chi}(v) \,\mathrm{d}x \right)^{\frac{1}{\gamma\chi}} \\
&\leq (C(1+\gamma^p))^\frac{1}{\gamma} \left( \frac{R}{r} \right)^\frac{n}{\gamma} \left( \frac{R}{R-r} \right)^{\frac{n+q}{\gamma}} \left( 1+ \frac{T((u+d)_-; x_0, R)}{g(d/R^s)} \right)^\frac{1}{\gamma} \left( \fint_{B_R} \bar{g}^{\gamma}(v) \,\mathrm{d}x \right)^{\frac{1}{\gamma}}
\end{split}
\end{align}
when $0<\gamma<1$ and
\begin{align}\label{eq-RHI-super-neg}
\begin{split}
&\left( \fint_{B_R} \bar{g}^{\gamma}(v) \,\mathrm{d}x \right)^{\frac{1}{\gamma}} \\
&\leq (C(1+|\gamma|^p))^\frac{1}{|\gamma|} \left( \frac{R}{r} \right)^\frac{n}{|\gamma|} \left( \frac{R}{R-r} \right)^{\frac{n+q}{|\gamma|}} \left( 1+ \frac{T((u+d)_-; x_0, R)}{g(d/R^s)} \right)^\frac{1}{|\gamma|} \left( \fint_{B_r} \bar{g}^{\gamma \chi}(v) \,\mathrm{d}x \right)^{\frac{1}{\gamma\chi}}
\end{split}
\end{align}
when $\gamma < 0$, where $v=(u+d)/R^s$. The constant $C=C(n, p, q, s, \sigma, \Lambda, \beta, \chi)>0$ is bounded when $\beta$ is bounded away from zero.
\item
Let $\Omega, \Omega'$ be open subsets of $\mathbb{R}^n$ such that $\Omega \Subset \Omega'$ and let $B_R=B_R(x_0)$ satisfy $B_R(x_0) \cap \Omega \neq \emptyset$. If $u \in W^{s, G}(\Omega, \Omega')$ is a supersolution of $\mathcal{L}u=0$ in $\Omega$ such that $u \geq 0$ in $B_R(x_0)$, then the estimates \eqref{eq-RHI-super-pos} and \eqref{eq-RHI-super-neg} with $u$ replaced by $u_m^-$ hold for any $r \in (0,R)$ when $0<\gamma<1$ and $\gamma<0$, respectively, where $u_m^-$ is defined as in \eqref{eq-um}.
\end{enumerate}
\end{lemma}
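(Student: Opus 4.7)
The plan is to mirror the proof of \Cref{lem-RHI-sub} but starting from the supersolution Caccioppoli estimate \eqref{eq-Caccio2} in \Cref{lem-Caccio2}. Since the hypothesis $\gamma=\beta+1<1$, $\gamma\neq 0$ covers both regimes $0<\gamma<1$ and $\gamma<0$, and \Cref{lem-Caccio2} already handles both, no new energy bound is needed; the work lies in (a) converting the Caccioppoli left-hand side into a fractional Sobolev seminorm of $\bar g^{\gamma/p}(v)$, (b) applying a fractional Sobolev embedding, and (c) tracking the sign of $\gamma$ in the final rearrangement.

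More concretely, the first step is to establish a pointwise lower bound of the form
\begin{equation*}
G(|D^s \bar u|) \fint_{v(y)}^{v(x)} \frac{\bar g^{\beta}(t)}{t}\,\mathrm{d}t \;\geq\; \frac{c}{R^{sp}}\,\frac{|\bar g^{\gamma/p}(v(x)) - \bar g^{\gamma/p}(v(y))|^p}{|x-y|^{(\sigma - s)p}},
\end{equation*}
where $\bar u$ is $u$ in case (i) and $u_m^-$ in case (ii), and $c=c(n,p,q,\beta)>0$ stays bounded away from zero when $\beta$ does. This is an algebraic fact obtained by combining \eqref{eq-bar-g-pq} with the monotonicity manipulations used to prove \Cref{lem-ineq3}: informally, the inner integral is comparable to $|v(x)-v(y)|\,\bar g^{\beta}(\max\{v(x),v(y)\})$, while the mean value theorem gives $|\bar g^{\gamma/p}(v(x))-\bar g^{\gamma/p}(v(y))|\sim |v(x)-v(y)|\,\bar g^{\beta/p}(\max\{v(x),v(y)\})$, so the $p$-th power on the right matches. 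Integrating over $B_r\times B_r$ and inserting \eqref{eq-Caccio2} produces an upper bound for $R^{-sp}[\bar g^{\gamma/p}(v)]_{W^{\sigma,p}(B_r)}^{p}$ in terms of the Caccioppoli right-hand side. The fractional Sobolev embedding $W^{\sigma,p}(B_r)\hookrightarrow L^{p\chi}(B_r)$, with $\chi$ as in \eqref{eq-chi}, applied to $w=\bar g^{\gamma/p}(v)$ then yields an $L^{\chi}$-bound on $\bar g^{\gamma}(v)$; after absorbing the zero-order term into $\fint_{B_R}\bar g^{\gamma}(v)$, the outcome is
\begin{equation*}
\left(\fint_{B_r} \bar g^{\gamma\chi}(v)\,\mathrm{d}x\right)^{\!1/\chi} \leq C(1+\gamma^p)\bigl(\tfrac{R}{r}\bigr)^{n}\bigl(\tfrac{R}{R-r}\bigr)^{n+q}\Bigl(1+\tfrac{T((\bar u+d)_-;x_0,R)}{g(d/R^s)}\Bigr)\fint_{B_R}\bar g^{\gamma}(v)\,\mathrm{d}x.
\end{equation*}
For $\gamma\in(0,1)$, raising to the positive power $1/\gamma$ preserves the direction and gives \eqref{eq-RHI-super-pos}; for $\gamma<0$, raising to the negative power $1/\gamma$ reverses both sides and, after swapping, yields \eqref{eq-RHI-super-neg} with exponent $1/|\gamma|$. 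Case (ii) is identical, with \Cref{lem-Caccio2}(ii) replacing \Cref{lem-Caccio2}(i) and $\bar u = u_m^-$ throughout.

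The main obstacle is the pointwise inequality in the first step, and in particular the $\beta$-dependence of its constant: the reverse H\"older inequalities will be iterated in the subsequent Moser scheme, so the degeneration of $c$ as $|\beta|\to\infty$ must be at most polynomial, and $c$ must stay bounded away from zero on compact subsets of $\{\beta\neq -1\}$. A secondary technicality is the case $p\geq n/\sigma$ of the embedding, where $\chi$ in \eqref{eq-chi} is only constrained to be larger than $1$ and $\sigma/(n-\sigma)$; one must then choose $\sigma$ close enough to $s$ so that the admissible range of $\chi$ contains the desired target exponent.
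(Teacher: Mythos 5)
Your overall scheme coincides with the paper's (which in fact proves \Cref{lem-RHI-sub} and \Cref{lem-RHI-super} by one and the same argument): set $h=\bar g^{\gamma/p}(v)$, control the $W^{\sigma,p}(B_r)$ seminorm of $h$ by the left-hand side of the Caccioppoli estimate, apply the fractional Sobolev inequality, insert \eqref{eq-Caccio2}, and raise to the power $1/\gamma$, flipping the inequality when $\gamma<0$. Your treatment of the case $p\geq n/\sigma$ is also the paper's. The gap is the pointwise inequality you propose as the first step.

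That inequality is false as written, because it omits a necessary zeroth-order term. The algebraic fact the paper actually uses is \Cref{lem-ineq4}:
\begin{equation*}
|D^s\bar g^{\gamma/p}(v)|^p \leq C\frac{|\gamma|^p}{R^{sp}}\left(G(|D^s\bar u|)\fint_{v(y)}^{v(x)}\frac{\bar g^\gamma(t)}{G(t)}\,\mathrm{d}t + \bar g^\gamma(v(x))\vee\bar g^\gamma(v(y))\right),
\end{equation*}
and the second term inside the parentheses is not disposable: it comes from Young's inequality in \eqref{eq-ineq2-2} (via \Cref{lem-H}), where one must bound $|D^s\bar u|^p\,G(t)/t^p$ by $G(|D^s\bar u|)+G(t)$, and the $G(t)$ contribution cannot be suppressed. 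A concrete failure of your version: take $G$ with genuine $(p,q)$-growth with $q>p$ so that $G(\tau)\sim\tau^q$ for small $\tau$; fix $|x-y|$ and let $v(x)=N+\epsilon$, $v(y)=N$ with $N$ of order one and $\epsilon\to0$. Then $G(|D^s\bar u|)\sim(R^s\epsilon/|x-y|^s)^q$ while $|\bar g^{\gamma/p}(v(x))-\bar g^{\gamma/p}(v(y))|^p\sim\epsilon^p$; the $N$-dependent prefactors cancel because $(p-1)\beta-1=(p-1)\gamma-p$, so the ratio of your left side to your right side tends to zero like $\epsilon^{q-p}$. Keeping the zeroth-order term is harmless for the end result, since after integration it contributes the $\fint_{B_r}\bar g^\gamma(v)\,\mathrm{d}x$ term, exactly like the zeroth-order part of the Sobolev inequality. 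Your informal derivation conflates the barred average $\fint$ with an integral and uses a mean-value comparison $|\bar g^{\gamma/p}(v(x))-\bar g^{\gamma/p}(v(y))|\sim|v(x)-v(y)|\,\bar g^{\beta/p}(\max)$ whose exponent and $1/\xi$ factor are off; the correct comparison is $(\bar g^{\gamma/p})'(\xi)\sim\bar g^{\gamma/p}(\xi)/\xi$.

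There is also a bookkeeping issue in the powers: integrating your claimed right-hand side against the Caccioppoli kernel $|x-y|^{-n}\,\mathrm{d}y\,\mathrm{d}x$ gives $\int\int|h(x)-h(y)|^p/|x-y|^{n+(\sigma-s)p}$, which is not $[h]_{W^{\sigma,p}(B_r)}^p$. The paper's route is to write $|D^\sigma h|^p=|D^s h|^p|x-y|^{(s-\sigma)p}$, apply \Cref{lem-ineq4} to $|D^s h|^p$, and absorb the extra power $|x-y|^{(s-\sigma)p}$ together with the Sobolev-inequality factor $r^{\sigma p}$ by using $|x-y|<2r<2R$.
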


\begin{proof}
Let $\bar{u}$ be defined as in \eqref{eq-bar-u} and $v=(\bar{u}+d)/R^s$. We define a function $h=\bar{g}^{\gamma/p}(v)$. Let us first assume $p<n/\sigma$, then the fractional Sobolev inequality applied to $h$ yields
\begin{equation*}
L := \left( \fint_{B_r} \bar{g}^{\gamma \chi}(v) \,\mathrm{d}x \right)^{1/\chi} = \left( \fint_{B_r} h^{p^{\ast}_{\sigma}} \,\mathrm{d}x \right)^{1/\chi} \leq C r^{\sigma p} \fint_{B_r} \int_{B_r} |D^\sigma h|^p \frac{\mathrm{d}y \,\mathrm{d}x}{|x-y|^n} + C \fint_{B_r} h^p \,\mathrm{d}x,
\end{equation*}
where $p^{\ast}_{\sigma} := p\chi = np/(n-\sigma p)$ and $C=C(n, \sigma, p) > 0$. By applying \Cref{lem-ineq4}, we have
\begin{equation*}
|D^\sigma h|^p \leq C|\gamma|^p \frac{|x-y|^{(s-\sigma)p}}{R^{sp}} \left( G(|D^s\bar{u}|) \fint_{v(y)}^{v(x)} \frac{\bar{g}^{\gamma}(t)}{G(t)} \,\mathrm{d}t + \bar{g}^{\gamma}(v(x)) \lor \bar{g}^{\gamma}(v(y)) \right).
\end{equation*}
Since $|x-y| < 2r < 2R$, we obtain
\begin{align*}
L
&\leq C|\gamma|^p \fint_{B_r} \int_{B_r} \frac{r^{\sigma p}}{R^{sp}} |x-y|^{(s-\sigma)p} G(|D^s\bar{u}|) \left( \fint_{v(y)}^{v(x)} \frac{\bar{g}^{\gamma}(t)}{G(t)} \,\mathrm{d}t \right) \frac{\mathrm{d}y \,\mathrm{d}x}{|x-y|^n} \\
&\quad + C|\gamma|^p \frac{r^{\sigma p}}{R^{sp}} \fint_{B_r} \bar{g}^{\gamma}(v(x)) \int_{B_r} |x-y|^{-n+(s-\sigma)p} \,\mathrm{d}y \,\mathrm{d}x + C \fint_{B_r} \bar{g}^{\gamma}(v) \,\mathrm{d}x \\
&\leq C|\gamma|^p \fint_{B_r} \int_{B_r} G(|D^s\bar{u}|) \left( \fint_{v(y)}^{v(x)} \frac{\bar{g}^{\beta}(t)}{t} \,\mathrm{d}t \right) \frac{\mathrm{d}y \,\mathrm{d}x}{|x-y|^n} + C \left( 1+|\gamma|^p \right) \fint_{B_r} \bar{g}^{\gamma}(v) \,\mathrm{d}x,
\end{align*}
where $C = C(n, p, q, \sigma) > 0$. The desired result now follows from \Cref{lem-Caccio1} and \Cref{lem-Caccio2}.

Let us next consider the case $p \geq n/\sigma$. In this case, we take $\tilde{p}=np\chi/(n+\sigma p\chi)$ so that $\tilde{p} \in (1, n/\sigma)$ and $\chi=\tilde{p}_{\sigma}^\ast/p>1$. Note that $\tilde{p}>1$ follows from the choice of $\chi$ in \eqref{eq-chi}. Then, the desired inequality can be proved as above by applying the fractional Sobolev inequality with $\tilde{p} < n/\sigma$, and then using Lemma~4.6 in Cozzi~\cite{Coz17} and H\"older's inequality.
\end{proof}

\begin{lemma}\label{lem-Lp}
Let $\gamma > 0$. Then there exist constants $C, \alpha>0$, depending only on $n$, $p$, $q$, and $\gamma$, such that
\begin{equation*}
(\bar{g}^\gamma)^{-1} \left( \fint_{B_R} \bar{g}^\gamma(|u|) \,\mathrm{d}x \right) \leq C \left( \fint_{B_R} |u|^\alpha \,\mathrm{d}x \right)^{1/\alpha}
\end{equation*}
for any $u \in L^\alpha(B_R)$.
\end{lemma}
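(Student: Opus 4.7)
The plan is to reduce the inequality to a pointwise comparison by normalizing. Set $M := \bigl(\fint_{B_R} |u|^\alpha \,\mathrm{d}x\bigr)^{1/\alpha}$ and choose $\alpha := \gamma(q-1)$, which depends only on $q$ and $\gamma$. The case $M=0$ is trivial since then $u \equiv 0$ a.e.\ in $B_R$, so I will assume $M>0$. The idea is now to split $B_R$ into $\{|u|\le M\}$ and $\{|u|>M\}$ and estimate the left-hand side integrand on each piece separately, exploiting the two-sided growth bounds on $\bar g$ provided by \Cref{lem-G}.

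On $\{|u|\le M\}$, the monotonicity of $\bar g$ (consequence of \eqref{eq-bar-g-pq}) gives $\bar g^\gamma(|u|) \le \bar g^\gamma(M)$. On $\{|u|>M\}$, writing $|u| = M\cdot(|u|/M)$ with $|u|/M \ge 1$ and applying the upper estimate $\bar g(\lambda t)\le \lambda^{q-1}\bar g(t)$ for $\lambda\ge 1$ from \Cref{lem-G}(i) yields
\[
\bar g^\gamma(|u|) \le \bigl(|u|/M\bigr)^{\gamma(q-1)} \bar g^\gamma(M).
\]
With the choice $\alpha=\gamma(q-1)$ and the pointwise inequality $|u|/M\ge 1$ on this set, I can absorb $(|u|/M)^{\gamma(q-1)}\le (|u|/M)^\alpha$, so averaging gives
\[
\fint_{\{|u|>M\}} \bar g^\gamma(|u|) \,\mathrm{d}x \le \bar g^\gamma(M)\fint_{B_R} (|u|/M)^\alpha\,\mathrm{d}x = \bar g^\gamma(M).
\]
Combining the two pieces produces $\fint_{B_R} \bar g^\gamma(|u|) \,\mathrm{d}x \le 2\,\bar g^\gamma(M)$.

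It remains to push $(\bar g^\gamma)^{-1}$ through the factor $2$. Writing $(\bar g^\gamma)^{-1}(t)=\bar g^{-1}(t^{1/\gamma})$ and invoking the inverse growth bound $\bar g^{-1}(\lambda \tau)\le \lambda^{1/(p-1)}\bar g^{-1}(\tau)$ for $\lambda\ge 1$ from \Cref{lem-G}(i) with $\lambda = 2^{1/\gamma}$, I obtain
\[
(\bar g^\gamma)^{-1}\bigl(2\bar g^\gamma(M)\bigr) \le 2^{1/(\gamma(p-1))} M,
\]
which is the claim with $C = 2^{1/(\gamma(p-1))}$. The proof is not technically difficult; the only point that requires care is the calibration of $\alpha$, which must be at least $\gamma(q-1)$ so that the bound $(|u|/M)^{\gamma(q-1)}\le (|u|/M)^\alpha$ on $\{|u|>M\}$ closes the argument. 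This is the ``main obstacle'' in the sense that a smaller exponent $\alpha$ would fail to control the $\bar g^\gamma$-average when $\bar g$ is close to its upper power behavior $t^{q-1}$, so the selected $\alpha$ is essentially forced by the worst-case growth rate of $\bar g$.
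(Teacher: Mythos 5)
Your proof is correct. You make the same essential choice as the paper, namely $\alpha = \gamma(q-1)$, which is the exponent forced by the upper growth rate $\bar g(\lambda t)\le\lambda^{q-1}\bar g(t)$ from \Cref{lem-G}(i), but the mechanism you use to close the argument is genuinely different. The paper defines $H(t)=\bar g^\gamma(t^{1/\alpha})$, uses \eqref{eq-bar-g-pq} to check that $H(t)/t$ is non-increasing when $\alpha\ge(q-1)\gamma$, invokes the concavification lemma of Ok \cite{OK18} to replace $H$ by a genuinely concave $\widetilde H$ with $\widetilde H/2\le H\le\widetilde H$, and then applies Jensen's inequality to the concave $\widetilde H$. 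You instead normalize by $M=(\fint_{B_R}|u|^\alpha)^{1/\alpha}$, split $B_R$ into $\{|u|\le M\}$ and $\{|u|>M\}$, and use monotonicity of $\bar g$ on the first set and the homogeneity bound on the second to land directly at $\fint_{B_R}\bar g^\gamma(|u|)\le 2\bar g^\gamma(M)$, then undo $(\bar g^\gamma)^{-1}$ with the inverse growth bound at cost $C=2^{1/(\gamma(p-1))}$. Your route is more elementary: it avoids Jensen and the external concavification result, using only the two-sided power bounds collected in \Cref{lem-G}. The paper's approach has the mild advantage of being a one-line application of a standard tool once $H$ is identified, and it packages the monotonicity condition $\alpha\ge(q-1)\gamma$ cleanly, but nothing in the present application needs the extra generality, so your self-contained argument is arguably preferable here. (One stylistic remark: the line where you "absorb" $(|u|/M)^{\gamma(q-1)}\le(|u|/M)^\alpha$ is an equality under your choice $\alpha=\gamma(q-1)$; phrasing it as an inequality is harmless but slightly misleading, and the averaged integral on the exceptional set should be written as $\tfrac{1}{|B_R|}\int_{\{|u|>M\}}$ rather than $\fint_{\{|u|>M\}}$ to make the addition of the two pieces transparent.)
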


\begin{proof}
We define a non-decreasing function $H(t) = \bar{g}^\gamma(t^{1/\alpha})$ with $\alpha$ to be determined. We have by using \eqref{eq-bar-g-pq}
\begin{equation*}
\left( \frac{H(t)}{t} \right)' = \left( \frac{\gamma}{\alpha} \frac{\bar{g}'(t^{1/\alpha}) t^{1/\alpha}}{\bar{g}(t^{1/\alpha})} - 1 \right) \frac{\bar{g}^{\gamma}(t^{1/\alpha})}{t^2} \leq \left( (q-1)\frac{\gamma}{\alpha} - 1 \right) \frac{\bar{g}^{\gamma}(t^{1/\alpha})}{t^{2}},
\end{equation*}
and hence the function $t \mapsto H(t)/t$ is non-increasing, provided that $\alpha \geq (q-1)\gamma$. Thus, by Lemma~2.2 in Ok~\cite{OK18} there exists a concave function $\widetilde{H}$ such that $\widetilde{H}/2 \leq H \leq \widetilde{H}$. We then have
\begin{align*}
\begin{split}
\left( \fint_{B_R} \bar{g}^{\gamma}(|u|) \,\mathrm{d}x \right)^{1/\gamma}
&\leq \left( \fint_{B_R} \widetilde{H}(|u|^\alpha) \,\mathrm{d}x \right)^{1/\gamma} \leq \widetilde{H}^{1/\gamma} \left( \fint_{B_R} |u|^\alpha \,\mathrm{d}x \right) \\
&\leq C H^{1/\gamma} \left( \fint_{B_R} |u|^\alpha \,\mathrm{d}x \right) \leq C \bar{g} \left( \left( \fint_{B_R} |u|^\alpha \,\mathrm{d}x \right)^{1/\alpha} \right)
\end{split}
\end{align*}
by means of Jensen's inequality.
\end{proof}

We are now ready to prove \Cref{thm-loc-bdd-int} and \Cref{thm-loc-bdd-bdry} by iterating the reverse H\"older inequality \Cref{lem-RHI-sub}.

\begin{proof}[Proofs of \Cref{thm-loc-bdd-int} and \Cref{thm-loc-bdd-bdry}]
Let $\bar{u}$ be defined as in \eqref{eq-bar-u}, and define $v=(\bar{u}+d)/R^s$. For $\varepsilon_0 \in (0,1)$ to be determined, choose $d>0$ to satisfy
\begin{align*}
	g(d/R^s)=\varepsilon_0 T(\bar{u}; x_0, R/2).
\end{align*}
In the case of \Cref{thm-loc-bdd-bdry}, we may assume that $u$ is essentially bounded since the desired result follows by approximating a subsolution by a sequence of essentially bounded subsolutions. We now fix a constant $\chi>1$ satisfying \eqref{eq-chi} and choose $\gamma>1$ so that
\begin{equation*}
	1-\frac{1}{\gamma}\frac{\chi}{\chi-1} \geq \frac{1}{2}.
\end{equation*}
Then by applying \Cref{lem-RHI-sub} and using $T(\bar{u}+d; x_0, R) \leq C\varepsilon_0^{-1}g(d/R^s)$, we have for $\sigma \in (0,s)$, and $R/2 \leq \rho_1 < \rho_2 \leq R$,
\begin{equation}\label{eq-iteration1}
\left( \fint_{B_{\rho_1}} \bar{g}^{\gamma\chi}(v) \,\mathrm{d}x \right)^{\frac{1}{\gamma\chi}} \leq \varepsilon_0^{-\frac{1}{\gamma}}\left( \frac{CR}{\rho_2-\rho_1} \right)^{\frac{n+q}{\gamma}} \left( \fint_{B_{\rho_2}} \bar{g}^{\gamma}(v) \,\mathrm{d}x \right)^{\frac{1}{\gamma}}.
\end{equation}
By iterating \eqref{eq-iteration1}, we obtain
\begin{align}\label{eq-g-gamma}
\begin{split}
\esssup_{B_{R/2}} \bar{g}\left( \frac{\bar{u}}{R^s} \right)
&\leq \esssup_{B_{R/2}} \bar{g}(v) \leq C\varepsilon_0^{-\frac{1}{\gamma}\frac{\chi}{\chi-1}} \left( \fint_{B_R} \bar{g}^\gamma(v) \,\mathrm{d}x \right)^{1/\gamma} \\
&\leq C \varepsilon_0^{1/2} \bar{g}\left( \frac{\mathrm{Tail}_g(\bar{u}; x_0, R/2)}{R^s} \right) + C_0 \left( \fint_{B_R} \bar{g}^\gamma\left( \frac{\bar{u}}{R^s} \right) \,\mathrm{d}x \right)^{1/\gamma},
\end{split}
\end{align}
where $C_0=C_0(n, p, q, s, \Lambda, \varepsilon_0) > 0$.

We prove \eqref{eq-loc-bdd-int} and \eqref{eq-loc-bdd-bdry} by using \eqref{eq-g-gamma} and a covering argument. Let $R/2 \leq \rho_1<\rho_2\leq R$ and $\rho=\rho_2-\rho_1$, then $B_\rho(y) \subset B_R$ for any $y \in B_{\rho_1}$. Applying \eqref{eq-g-gamma} in $B_\rho(y)$, we have
\begin{equation}\label{eq-ball-y}
\esssup_{B_{\rho/2}(y)} \bar{u} \leq C \varepsilon_0^{1/2} \, \mathrm{Tail}_g(\bar{u}; y, \rho/2) + C_0 \rho^s (\bar{g}^\gamma)^{-1} \left( \fint_{B_\rho(y)} \bar{g}^\gamma \left( \frac{\bar{u}}{\rho^s} \right) \,\mathrm{d}x \right).
\end{equation}
Let us estimate the right-hand side of \eqref{eq-ball-y} as follows. By using \Cref{lem-G}, we have
\begin{align*}
T(\bar{u}; y, \rho/2)
&\leq (\rho/2)^s \int_{B_{\rho_2} \setminus B_{\rho/2}(y)} g\left( \frac{\esssup_{B_{\rho_2}}\bar{u}}{|x-y|^s} \right) \frac{\mathrm{d}x}{|x-y|^{n+s}} \\
&\quad + C \left( \frac{R}{\rho_2-\rho_1} \right)^{n+sq} (R/2)^s \int_{\mathbb{R}^n \setminus B_{\rho_2}} g\left( \frac{\bar{u}(x)}{|x-x_0|^s} \right) \frac{\mathrm{d}x}{|x-x_0|^{n+s}} \\
&\leq C g\left( \frac{\esssup_{B_{\rho_2}}\bar{u}}{(\rho/2)^s} \right) + C \left( \frac{R}{\rho_2-\rho_1} \right)^{n+sq} T(\bar{u}; x_0, R/2),
\end{align*}
from which we deduce
\begin{equation}\label{eq-y-tail}
C\varepsilon_0^{1/2} \,\mathrm{Tail}_g(\bar{u}; y, \rho/2) \leq \frac{1}{4} \esssup_{B_{\rho_2}} \bar{u} + C \varepsilon_0^{1/2} \left( \frac{R}{\rho_2-\rho_1} \right)^{\frac{n+sq}{p-1}}\mathrm{Tail}_g(\bar{u}; x_0, R/2)
\end{equation}
by assuming that $\varepsilon_0$ is sufficiently small. For the last term in \eqref{eq-ball-y}, we use \Cref{lem-Lp} to find $\alpha>0$ such that
\begin{equation}\label{eq-alpha}
(\bar{g}^\gamma)^{-1} \left( \fint_{B_\rho(y)} \bar{g}^\gamma \left( \frac{\bar{u}}{\rho^s} \right) \,\mathrm{d}x \right) \leq C \left( \fint_{B_\rho(y)} \left( \frac{\bar{u}}{\rho^s} \right)^\alpha \,\mathrm{d}x \right)^{1/\alpha}.
\end{equation}
Suppose that $\alpha>p_0$, then Young's inequality yields
\begin{align*}
\left( \fint_{B_\rho(y)} \left( \frac{\bar{u}}{\rho^s} \right)^\alpha \,\mathrm{d}x \right)^{1/\alpha}
&\leq \esssup_{B_{\rho_2}} \left( \frac{\bar{u}}{\rho^s} \right)^{1-p_0/\alpha} \left( \fint_{B_\rho(y)} \left( \frac{\bar{u}}{\rho^s} \right)^{p_0} \,\mathrm{d}x \right)^{1/\alpha} \\
&\leq \varepsilon_1 \esssup_{B_{\rho_2}} \frac{\bar{u}}{\rho^s} + \varepsilon_1^{1-\frac{\alpha}{p_0}} \left( \fint_{B_\rho(y)} \left( \frac{\bar{u}}{\rho^s} \right)^{p_0} \,\mathrm{d}x \right)^{1/p_0}
\end{align*}
for any $\varepsilon_1>0$. Hence, we obtain
\begin{equation}\label{eq-y-Lp}
C_0 \rho^s (\bar{g}^\gamma)^{-1}\left( \fint_{B_\rho(y)} \bar{g}^\gamma \left( \frac{\bar{u}}{\rho^s} \right) \,\mathrm{d}x \right) \leq \frac{1}{4} \esssup_{B_{\rho_2}} \bar{u} + C \left( \frac{R}{\rho_2-\rho_1} \right)^{\frac{n}{p_0}} \left( \fint_{B_R} \bar{u}^{p_0} \,\mathrm{d}x \right)^{\frac{1}{p_0}}.
\end{equation}
by taking $\varepsilon_1=1/(4C_0)$. Note that \eqref{eq-y-Lp} is an obvious consequence of \eqref{eq-alpha} and H\"older's inequality when $\alpha \leq p_0$.

We combine \eqref{eq-ball-y}, \eqref{eq-y-tail}, and \eqref{eq-y-Lp} to derive
\begin{align*}
\esssup_{B_{\rho_1}} \bar{u}
&\leq \frac{1}{2} \esssup_{B_{\rho_2}} \bar{u} + C \left( \frac{R}{\rho_2-\rho_1} \right)^{n/p_0} \left( \fint_{B_R} \bar{u}^{p_0} \,\mathrm{d}x \right)^{1/p_0} \\
&\quad + C \varepsilon_0^{1/2} \left( \frac{R}{\rho_2-\rho_1} \right)^{\frac{n+sq}{p-1}} \mathrm{Tail}_g(\bar{u}; x_0, R/2).
\end{align*}
The desired estimates \eqref{eq-loc-bdd-int} and \eqref{eq-loc-bdd-bdry} are now obtained by using a standard iteration argument (see, for instance, Lemma~4.11 in Cozzi~\cite{Coz17}) and choosing $\varepsilon_0^{1/2} \leq \varepsilon/C$.
\end{proof}

\subsection{Weak Harnack inequality}

In this section we prove the weak Harnack inequalities \Cref{thm-WHI-int} and \Cref{thm-WHI-bdry}. The proofs use the reverse H\"older inequalities \Cref{lem-RHI-super} and the logarithmic estimate below.

\begin{lemma}\label{lem-log}
Let $\tau \in (0,1)$.
\begin{enumerate}[(i)]
\item
If $u$ is a supersolution of $\mathcal{L}u=0$ in $B_R=B_R(x_0)$ such that $u\geq 0$ in $B_R$ and if
\begin{equation}\label{eq-d-tail1}
d \geq \mathrm{Tail}_g(u_-; x_0, R),
\end{equation}
then
\begin{equation}\label{eq-log}
\left( \fint_{B_{\tau R}} \left( \frac{u+d}{R^s} \right)^{\varepsilon_{0}} \,\mathrm{d}x \right)^{1/\varepsilon_{0}} \leq C \left( \fint_{B_{\tau R}} \left( \frac{u+d}{R^s} \right)^{-\varepsilon_{0}} \,\mathrm{d}x \right)^{-1/\varepsilon_{0}}
\end{equation}
for some constants $\varepsilon_0 \in (0,1)$ and $C>0$ depending only on $n$, $p$, $q$, $s$, $\Lambda$, and $\tau$.
\item
Let $\Omega, \Omega'$ be open subsets of $\mathbb{R}^n$ such that $\Omega \Subset \Omega'$ and let $B_R=B_R(x_0)$ satisfy $B_R \cap \Omega \neq \emptyset$. If $u \in W^{s, G}(\Omega, \Omega')$ is a supersolution of $\mathcal{L}u=0$ in $\Omega$ such that $u \geq 0$ in $B_R$ and if
\begin{equation}\label{eq-d-tail2}
d \geq \mathrm{Tail}_g(u_m^-; x_0, R),
\end{equation}
then the estimate \eqref{eq-log} with $u$ replaced by $u_m^-$ holds, where $u_m^-$ is defined as in \eqref{eq-um}.
\end{enumerate}
\end{lemma}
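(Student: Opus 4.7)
The plan is to reduce \eqref{eq-log} to exponential integrability of $w := \log((u+d)/R^s)$ with respect to its mean, via the logarithmic Caccioppoli estimate of \Cref{lem-Caccio3}, a fractional Poincar\'e inequality, and the John--Nirenberg inequality. Indeed, writing $\bar w := (w)_{B_{\tau R}}$, the estimate \eqref{eq-log} is equivalent to
\begin{equation*}
\left(\fint_{B_{\tau R}} e^{\varepsilon_0(w-\bar w)}\,\mathrm{d}x\right)\left(\fint_{B_{\tau R}} e^{-\varepsilon_0(w-\bar w)}\,\mathrm{d}x\right)\leq C,
\end{equation*}
so it suffices to establish $\fint_{B_{\tau R}} e^{\varepsilon_0|w-\bar w|}\,\mathrm{d}x\leq C$ for some small $\varepsilon_0>0$, after which splitting the exponent and multiplying the two resulting bounds yields the claim.

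For part~(i), fix $\sigma\in(0,s)$ and apply \Cref{lem-Caccio3}(i) on $B_R(x_0)$ with $r=(1+\tau)R/2$. Since $(u+d)_-\leq u_-$ pointwise and the hypothesis $d\geq \mathrm{Tail}_g(u_-;x_0,R)$ translates into $T(u_-;x_0,R)\leq g(d/R^s)$, the tail factor in \Cref{lem-Caccio3} is bounded by~$2$, yielding
\begin{equation*}
\int_{B_r}\int_{B_r}|D^\sigma w|^p\,\frac{\mathrm{d}y\,\mathrm{d}x}{|x-y|^n} \leq C R^{n-\sigma p},
\end{equation*}
with $C$ depending only on $n$, $p$, $q$, $s$, $\sigma$, $\Lambda$, and $\tau$. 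A standard unweighted fractional Poincar\'e inequality on $B_{\tau R}\subset B_r$ then produces $\fint_{B_{\tau R}}|w-\bar w|^p\,\mathrm{d}x\leq C$.

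To promote this $L^p$ oscillation bound on a single ball to $w\in\mathrm{BMO}(B_{\tau R})$, I would apply \Cref{lem-Caccio3}(i) on every sub-ball $B_{R'}(y)$ with $y\in B_{\tau R}$ and $B_{R'}(y)\Subset B_R$. Because $u_-$ is supported in $\mathbb{R}^n\setminus B_R$, the distances $|x-y|$ and $|x-x_0|$ are comparable for $x\in B_R^c$ up to a $\tau$-dependent constant; together with the growth assumption \eqref{eq-pq} this furnishes $T(u_-;y,R')\leq C(R'/R)^s\,g(d/R^s)\leq C\,g(d/R'^s)$, so the Caccioppoli estimate applies on each such sub-ball with a uniform tail constant. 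Fractional Poincar\'e on $B_{R'/2}(y)$ then produces a uniform $L^p$ mean-oscillation bound on sub-balls, i.e., a BMO estimate, and the classical John--Nirenberg inequality supplies the required $\varepsilon_0>0$ and $\fint_{B_{\tau R}} e^{\varepsilon_0|w-\bar w|}\,\mathrm{d}x\leq C$, completing the proof of \eqref{eq-log}.

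Part~(ii) follows by the same chain of implications, now invoking \Cref{lem-Caccio3}(ii) and replacing $u$ by $u_m^-$; the tail comparison remains valid since $(u_m^-)_- = u_-$ outside $B_R$. The main obstacle is precisely the sub-ball tail estimate: the Caccioppoli estimate at scale $(y,R')$ needs its tail controlled by $g(d/R'^s)$ rather than $g(d/R^s)$, and the quantitative comparison between tails at different centers and radii hinges on the vanishing of $u_-$ inside $B_R$ together with the doubling behavior of $g$. An alternative route that avoids the BMO upgrade is a Bombieri--Giusti iteration combining the reverse H\"older estimates of \Cref{lem-RHI-super} with the single-ball $L^p$ oscillation bound, which extracts \eqref{eq-log} directly without appealing to John--Nirenberg.
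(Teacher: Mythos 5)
Your proposal is correct and takes essentially the same route as the paper: the logarithmic Caccioppoli estimate of \Cref{lem-Caccio3} applied uniformly on sub-balls of a slightly smaller concentric ball, followed by a fractional Poincar\'e inequality to get a BMO estimate for $\log((\bar u+d)/R^s)$, and then the John--Nirenberg inequality to convert this to the two-sided exponential integrability \eqref{eq-log}. You correctly identify the sub-ball tail comparison as the key technical point, and your estimate $T(u_-;y,R')\lesssim (R'/R)^s\,g(d/R^s)\lesssim g(d/R'^s)$ matches the paper's argument (which uses the inclusion $B_{R_0}(z_0)\subset B_R(x_0)$, the comparability $|y-x_0|\lesssim_\tau|y-z_0|$ for $y\notin B_R$, the vanishing of $\bar u_-$ in $B_R$, and \Cref{lem-G}). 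Your initial single-ball Poincar\'e step is redundant since the sub-ball estimates already contain it, but nothing is wrong. For part~(ii), the more precise observation is that $u_m^-\geq 0$ in $B_R$ (since $u\geq 0$ in $B_R$ and $m=\essinf_{B_R\setminus\Omega}u\geq 0$), hence $(u_m^-)_-$ vanishes in $B_R$, and $(u_m^-)_-\leq|u_m^-|$, so hypothesis \eqref{eq-d-tail2} controls the tail exactly as in part~(i).
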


\begin{proof}
We claim that there exists a constant $C_0 = C_0(n, p, q, s, \Lambda, \tau) > 0$ such that
\begin{equation}\label{eq-BMO}
I:= \fint_{B_r(z_0)} |\log (\bar{u}(x)+d) - (\log (\bar{u}+d))_{B_r(z_0)}| \,\mathrm{d}x \leq C_0
\end{equation}
for any $B_R(z_0) \subset B_{\tau R}(x_0)$ with $r < \tau_0R$, where $\tau_0=(1-\tau)\frac{2\tau}{1+\tau}$ and $\bar{u}$ is defined as in \eqref{eq-bar-u}. It means that $\log (\bar{u}+d)$ is a function of bounded mean oscillation. Once we prove the claim \eqref{eq-BMO}, the John--Nirenberg embedding shows that there exist constants $\varepsilon_0 \in (0,1)$ and $C > 0$, depending only on $n$ and $C_0$, such that
\begin{equation*}
\fint_{B_{\tau R}(x_0)} \exp \left( \varepsilon_0 |\log (\bar{u}(x)+d)-(\log (\bar{u}+d))_{B_{\tau R}(x_0)}| \right) \,\mathrm{d}x \leq C,
\end{equation*}
and hence \eqref{eq-log} follows.

Let us now prove the claim \eqref{eq-BMO}. Let $R_0=\frac{1+\tau}{2\tau}r$, then for $r < \tau_0R$ we have
\begin{equation*}
B_r(z_0) \Subset B_{R_0}(z_0) \subset B_R(x_0).
\end{equation*}
Indeed, we observe that if $x \in B_{R_0}(z_0)$ then $|x-x_0| \leq |x-z_0| + |z_0-x_0| < R_0 + \tau R < \frac{1+\tau}{2\tau} \tau_0R + \tau R = R$, so $x \in B_R(x_0)$. Let $\sigma \in (0,s)$. By applying \Cref{lem-Caccio3} to $\bar{u}$ in $B_r(z_0) \Subset B_{R_0}(z_0)$, we obtain
\begin{equation*}
[\log (\bar{u}+d)]_{W^{\sigma, p}(B_r(z_0))}^p \leq CR_0^{n-\sigma p} \left( 1 + \frac{T((\bar{u}+d)_-; z_0, R_0)}{g(d/R_0^s)} \right).
\end{equation*}
Recalling that $u$ is nonnegative in $B_R(x_0)$ and observing that
\begin{equation*}
|y-x_0| \leq |y-z_0| + |z_0-x_0| < |y-z_0| + \tau R \leq (1+\tau) |y-z_0|,
\end{equation*}
we have
\begin{equation*}
\frac{T((\bar{u}+d)_-; z_0, R_0)}{g(d/R_0^s)} \leq \frac{R_0^s}{g(d/R_0^s)} \int_{\mathbb{R}^n \setminus B_R(x_0)} g\left( \frac{\bar{u}_-(y)}{|y-z_0|} \right) \frac{\mathrm{d}y}{|y-z_0|^{n+s}} \leq C \frac{T(\bar{u}_-; x_0, R)}{g(d/R^s)} \leq C,
\end{equation*}
where we used $(\bar{u}+d)_- \leq \bar{u}_-$, \Cref{lem-G}, and the assumption \eqref{eq-d-tail1} (or \eqref{eq-d-tail2}). Therefore, we have
\begin{equation*}
[\log (\bar{u}+d)]_{W^{\sigma, p}(B_r(z_0))}^p \leq CR_0^{n-\sigma p}.
\end{equation*}
By Jensen's inequality and the fractional Poincar\'e inequality, we arrive at
\begin{align*}
I
&\leq \left( \fint_{B_r(z_0)} |\log (\bar{u}(x)+d) - (\log (\bar{u}+d))_{B_r(z_0)}|^p \,\mathrm{d}x \right)^{1/p} \\
&\leq C r^{-n/p+\sigma} [\log (\bar{u}+d)]_{W^{\sigma, p}(B_r(z_0))} \leq C (R_0/r)^{\frac{n-\sigma p}{p}} \leq C_0,
\end{align*}
where $C_0 = C_0(n, s, p, q, \Lambda, \tau) > 0$. This proves the claim \eqref{eq-BMO}.
\end{proof}

We are now ready to provide the proofs of \Cref{thm-WHI-int} and \Cref{thm-WHI-bdry} by using the previous results.

\begin{proof}[Proofs of \Cref{thm-WHI-int} and \Cref{thm-WHI-bdry}]
Let $\bar{u}$ be defined as in \eqref{eq-bar-u}. For $d=\mathrm{Tail}_g(\bar{u}_-; x_0, R)$ we define $v=(\bar{u}+d)/R^s$. For the sake of brevity, we write
\begin{equation*}
\Phi(\alpha, r) = \left( \fint_{B_{r}} \bar{g}^{\alpha}(v) \,\mathrm{d}x \right)^{1/\alpha}
\end{equation*}
for $\alpha \in \mathbb{R}$. Let $\tau = (1+(\tau_{1}\lor \tau_{2}))/2$ so that $\tau_{1} \lor \tau_{2}<\tau<1$. We claim that there exist $\varepsilon_1, \varepsilon_2 \in (0,1)$ such that
\begin{equation}\label{eq-claim-eps}
\Phi(\varepsilon_1, \tau R) \leq C \Phi(-\varepsilon_2, \tau R)
\end{equation}
for some $C > 0$. Indeed, for $\varepsilon_0$ given in \Cref{lem-log}, a similar argument as in \Cref{lem-Lp} allows us to find $\varepsilon_1, \varepsilon_2 \in (0,1)$ such that
\begin{equation}\label{eq-eps}
\Phi(\varepsilon_1, \tau R) \leq C \bar{g} \left( \left( \fint_{B_{\tau R}} v^{\varepsilon_0} \,\mathrm{d}x \right)^{1/\varepsilon_0} \right) \quad\text{and}\quad \bar{g} \left( \left( \fint_{B_{\tau R}} v^{-\varepsilon_0} \,\mathrm{d}x \right)^{-1/\varepsilon_0} \right) \leq C \Phi(-\varepsilon_2, \tau R).
\end{equation}
By combining \eqref{eq-log} and \eqref{eq-eps}, we arrive at \eqref{eq-claim-eps}.

We now prove
\begin{equation}\label{eq-Moser1}
\Phi(\delta, \tau_1 R) \leq C \Phi(\varepsilon_1, \tau R)
\end{equation}
and
\begin{equation}\label{eq-Moser2}
\Phi(-\varepsilon_2, \tau R) \leq C \essinf_{B_{\tau_2R}} \bar{g}(v)
\end{equation}
by using Moser's iteration. Let us first prove \eqref{eq-Moser1}. If $\delta \leq \varepsilon_1$, then the result follows from Jensen's inequality. Thus, let us consider the remaining case $\delta > \varepsilon_1$. If $sp<n$, we take $\gamma_0 \in (0,1)$ sufficiently close to 1 and $\sigma \in (0, s)$ sufficiently close to $s$ so that $\delta < \gamma_0 \chi$, where $\chi = \frac{n}{n-\sigma p}$. This is possible since $\delta \in (0, \frac{n}{n-sp})$. If $sp \geq n$, we take any $\gamma_0 \in (0,1)$ and $\sigma \in (0, n/p)$ sufficiently close to $n/p$ so that $\delta < \gamma_0 \chi$. By \Cref{lem-RHI-super} (i), we have for any $\gamma \in (0,1)$ and $\tau_1 \leq \rho < \tilde{\rho} \leq \tau$,
\begin{equation}\label{eq-RHI-pos}
\Phi(\gamma \chi, \rho R) \leq \left( \frac{C}{\tilde{\rho}-\rho} \right)^{\frac{n+q}{\gamma}} \left( \frac{1}{\tau_1} \right)^{\frac{n}{\gamma}} \Phi(\gamma, \tilde{\rho}R),
\end{equation}
where we used $\mathrm{Tail}_g((\bar{u}+d)_-; x_0, \tilde{\rho} R) \leq d$. Here, the constant $C$ depends only on $n$, $p$, $q$, $s$, $\sigma$, $\Lambda$, $\gamma$, and is bounded when $\gamma$ is bounded away from 1. Let $N \in \mathbb{N}$ be the smallest integer satisfying $\gamma_0 \chi^{1-N} \leq \varepsilon_1$ and let $\rho_j=\tau_1(\tau/\tau_1)^{j/N}$ for $j=0, \dots, N$. By iterating \eqref{eq-RHI-pos} with $\rho=\rho_j$, $\tilde{\rho}=\rho_{j+1}$ and $\gamma=\gamma_0\chi^{-j} \in (0,1)$ for $j=0, \dots, N-1$, we obtain
\begin{align*}
\Phi(\delta, \tau_1R)
&\leq \Phi(\gamma_0\chi, \rho_0R) \\
&\leq \left( \prod_{j=0}^{N-1} \left( \frac{C}{\rho_{j+1}-\rho_{j}} \right)^{\frac{n+q}{\gamma_0}\chi^j} \left( \frac{1}{\tau_1} \right)^{\frac{n\chi^j}{\gamma_0}} \right) \Phi(\gamma_0\chi^{1-N}, \rho_NR) \\
&\leq C \Phi(\varepsilon_1, \tau R),
\end{align*}
where $C = C(n, p, q, s, \sigma, \Lambda, \gamma_0, \tau, \tau_1) = C(n, p, q, s, \Lambda, \delta, \tau_1, \tau_2) > 0$. This proves \eqref{eq-Moser1}.

Let us next prove \eqref{eq-Moser2}. By \Cref{lem-RHI-super} (ii), we have for any $\gamma < 0$ and $\tau_2 \leq \rho < \tilde{\rho} \leq \tau$,
\begin{equation}\label{eq-RHI-neg}
\Phi(\gamma, \tilde{\rho}R) \leq \left( \frac{C(1+|\gamma|)}{\tilde{\rho}-\rho} \right)^{\frac{n+q}{|\gamma|}} \left( \frac{1}{\tau_2} \right)^{\frac{n}{|\gamma|}} \Phi(\gamma \chi, \rho R).
\end{equation}
Let $\rho_j=\tau_2+(\tau-\tau_2)2^{-j}$ for $j=0, 1, \dots$, then by iterating \eqref{eq-RHI-neg} with $\tilde{\rho}=\rho_j$, $\rho = \rho_{j+1}$ and $\gamma=-\varepsilon_2\chi^j < 0$ for $j=0, 1, \dots, M-1$, we obtain
\begin{align*}
\Phi(-\varepsilon_{2}, \tau R)
&\leq \left( \prod_{j=0}^{M-1} \left( C \frac{2^j(1+\chi^j)}{\tau-\tau_2} \right)^{\frac{n+q}{\varepsilon_2\chi^j}} \left( \frac{1}{\tau_2} \right)^{\frac{n}{\varepsilon_2\chi^j}} \right) \Phi(-\varepsilon_2\chi^M, \rho_{M}R) \\
&\leq (C\chi)^{\frac{n+1}{\varepsilon_2} \sum j\chi^{-j}} C^{\sum \chi^{-j}} \Phi(-\varepsilon_2\chi^M, \rho_{M}R) \\
&\leq C \Phi(-\varepsilon_2\chi^M, \rho_{M}R),
\end{align*}
where $C = C(n, p, q, s, \Lambda, \tau_1, \tau_2) > 0$. Letting $M \to \infty$ leads us to \eqref{eq-Moser2}.

Finally, we combine \eqref{eq-claim-eps}, \eqref{eq-Moser1}, \eqref{eq-Moser2}, and use \Cref{lem-G} to conclude
\begin{align*}
\fint_{B_{\tau_1R}} \bar{g}^{\delta}\left( \frac{\bar{u}}{R^s} \right) \,\mathrm{d}x
&\leq \Phi^{\delta}(\delta, \tau_1 R) \leq C \bar{g}^{\delta} \left( \essinf_{B_{\tau_2R}} v \right) \\
&\leq C \bar{g}^{\delta} \left( \essinf_{B_{\tau_2R}} \frac{u}{R^s} \right) + C \bar{g}^{\delta}\left( \frac{\mathrm{Tail}_g(\bar{u}_-; x_0, R)}{R^s} \right),
\end{align*}
where $C = C(n, p, q, s, \Lambda, \delta, \tau_1, \tau_2) > 0$.
\end{proof}

\section{The obstacle problem}\label{sec-obstacle}

The nonlocal obstacle problem for the case $G(t)=t^p$ was first introduced in Korvenp\"a\"a--Kuusi--Palatucci~\cite{KKP16} as a tool in the development of nonlocal nonlinear potential theory. In this section, we generalize the nonlocal obstacle problem in two aspects: on the one hand, we study the nonlocal obstacle problem with Orlicz growth. On the other hand, we improve the classical nonlocal obstacle problem with the standard growth by considering a larger admissible set. See \Cref{rmk-obstacle}. We also refer the reader to Lieberman~\cite{Lie91b} for local obstacle problems with Orlicz growth and to Chlebicka--Karppinen~\cite{CK21} and Karppinen--Lee~\cite{KL22} for local obstacle problems with nonstandard growth.

Throughout this section, we assume that $\Omega$ and $\Omega'$ are bounded subsets of $\mathbb{R}^n$ such that $\Omega \Subset \Omega'$. Let $\psi: \Omega \to [-\infty, \infty]$ and $\vartheta \in W^{s, G}(\Omega, \Omega') \cap L^g_s(\mathbb{R}^n)$. We define a class of functions
\begin{equation}\label{eq-admissible}
\mathcal{K}_{\psi, \vartheta}(\Omega, \Omega') = \{u \in W^{s, G}(\Omega, \Omega'): u \geq \psi ~\text{a.e.\ in}~\Omega, ~ u-\vartheta \in W^{s, G}_0(\Omega, \Omega') \}.
\end{equation}
The function $\psi$ is called an \emph{obstacle}. Let us make some remarks on the class $\mathcal{K}_{\psi,\vartheta}(\Omega, \Omega')$.

\begin{remark}\label{rmk-obstacle}
Our definition of the admissible set \eqref{eq-admissible} is a bit different from the one
\begin{equation}\label{eq-admissible-KKP16}
\widetilde{\mathcal{K}}_{\psi, \vartheta}(\Omega, \Omega') = \{u \in W^{s, p}(\Omega'): u \geq \psi ~\text{a.e.~in}~\Omega, ~ u=\vartheta ~\text{a.e.\ on}~ \mathbb{R}^n \setminus \Omega \}
\end{equation}
considered in \cite{KKP16} even when $G(t)=t^p$. First of all, recall that, in the local case, the admissible class consists of functions in $W^{1, p}(\Omega)$ (see Chapter~3 in Heinonen--Kilpel\"ainen--Martio~\cite{HKM06} for instance). Thus, it is more natural to have $W^{s, p}(\Omega, \Omega')$ than $W^{s, p}(\Omega')$ since
\begin{equation*}
(1-s) \int_\Omega \int_{\Omega'} \frac{|u(x)-u(y)|^p}{|x-y|^{n+sp}} \,\mathrm{d}y \,\mathrm{d}x \to C_{n, p} \int_\Omega |\nabla u|^p \,\mathrm{d}x
\end{equation*}
as $s \nearrow 1$ whereas $(1-s)[u]_{W^{s, p}(\Omega')}^p \to C_{n, p} \|\nabla u\|_{L^p(\Omega')}^p$ as $s \nearrow 1$, see Foghem Gounoue--Kassmann--Voigt~\cite{FGKV20}. An advantage of using \eqref{eq-admissible} instead of \eqref{eq-admissible-KKP16} is that some results for another obstacle problem with an obstacle $\vartheta \in V^{s, G}(\Omega)$ and an admissible set
\begin{equation}\label{eq-admissible-V}
\mathcal{K}_{\psi, \vartheta}(\Omega) = \{u \in V^{s, G}(\Omega): u \geq \psi ~\text{a.e.~in}~\Omega, ~ u-\vartheta \in V^{s, G}_0(\Omega) \}
\end{equation}
can be deduced from the results for our obstacle problem. See \Cref{sec-other-obs} below. We note however that one cannot use the space $W^{s, p}(\Omega)$ in the definition of admissible set because we need functions that are regular across the boundary of $\Omega$ as we will see later.

Another difference between \eqref{eq-admissible} and \eqref{eq-admissible-KKP16} is that functions in \eqref{eq-admissible} achieve the boundary data $\vartheta$ in the weak sense. Notice that this coincides with the requirement $u=\vartheta ~\mathrm{a.e.}$ in $\mathbb{R}^n \setminus \Omega$ when the boundary of $\Omega$ is sufficiently regular. However, these two requirements are different in general, see Remark~2.2~(ii) in Kim--Lee--Lee \cite{KLL23}.
\end{remark}

Let us provide the definition of solution to the nonlocal obstacle problem with Orlicz growth.

\begin{definition}
We say that $u \in \mathcal{K}_{\psi, \vartheta}(\Omega, \Omega')$ is a \emph{solution to the (nonlocal) obstacle problem in $\mathcal{K}_{\psi, \vartheta}(\Omega, \Omega')$} if
\begin{equation*}
\mathcal{E}(u, v-u) \geq 0
\end{equation*}
for all $v \in \mathcal{K}_{\psi, \vartheta}(\Omega, \Omega')$.
\end{definition}

Note that $\mathcal{E}(u, w)$ is well defined for $u \in \mathcal{K}_{\psi, \vartheta}(\Omega, \Omega')$ and $w \in W^{s, G}_0(\Omega, \Omega')$ by \Cref{lem-finiteness}.

\subsection{Solvability of the obstacle problem}

We discuss the solvability of the obstacle problem in $\mathcal{K}_{\psi, \vartheta}(\Omega, \Omega')$. We need the following assumption:
\begin{equation}\label{eq-q-1}
t \mapsto g(t)/t^{q-1} \quad\text{is non-increasing}.
\end{equation}

\begin{theorem}\label{thm-obstacle}
Assume \eqref{eq-q-1}. If $\mathcal{K}_{\psi, \vartheta}(\Omega, \Omega')$ is non-empty, then there exists a unique solution to the obstacle problem in $\mathcal{K}_{\psi, \vartheta}(\Omega, \Omega')$. Moreover, the solution to the obstacle problem is a supersolution of $\mathcal{L}u=0$ in $\Omega$.
\end{theorem}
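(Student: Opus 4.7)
The plan is to follow the classical direct method in the calculus of variations, adapted to the nonlocal Orlicz setting. Associate to the variational inequality the energy functional
\begin{equation*}
\mathcal{F}(u) = \int_\Omega \int_{\mathbb{R}^n} G(|D^s u|) \frac{k(x,y)}{|x-y|^n} \,\mathrm{d}y\,\mathrm{d}x,
\end{equation*}
where each $u \in \mathcal{K}_{\psi,\vartheta}(\Omega,\Omega')$ is implicitly extended by $\vartheta$ on $\mathbb{R}^n \setminus \Omega'$. First I would verify that $\mathcal{F}$ is finite on $\mathcal{K}_{\psi,\vartheta}$, splitting the outer integral into the piece on $\Omega \times \Omega'$ (which is $\varrho_{W^{s,G}(\Omega,\Omega')}(u) < \infty$ up to the ellipticity constant) and the tail piece on $\Omega \times (\mathbb{R}^n \setminus \Omega')$, which is finite because $\vartheta \in L^g_s(\mathbb{R}^n)$, $u \in L^G(\Omega)$, and $\mathrm{dist}(\Omega,\partial\Omega') > 0$; the same style of estimate as in the proof of \Cref{lem-finiteness} applies.

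Next, the admissible set $\mathcal{K}_{\psi,\vartheta}(\Omega,\Omega')$ is convex and, by a standard argument using a.e.\ pointwise convergence on the constraint $u \geq \psi$ together with closedness of $W^{s,G}_0(\Omega,\Omega')$ in $W^{s,G}(\Omega,\Omega')$, strongly (and hence weakly) closed. Given a minimizing sequence $\{u_j\} \subset \mathcal{K}_{\psi,\vartheta}$, the fractional Poincar\'e inequality (\Cref{thm-Poincare}) applied to $u_j - \vartheta \in W^{s,G}_0(\Omega,\Omega')$, together with the triangle inequality, yields boundedness of $\{u_j\}$ in $W^{s,G}(\Omega,\Omega')$. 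Since $G$ satisfies a $\Delta_2$ condition by \eqref{eq-pq}, $W^{s,G}(\Omega,\Omega')$ is reflexive, so a subsequence converges weakly to some $u^* \in \mathcal{K}_{\psi,\vartheta}$. Convexity of $G$ gives convexity of $\mathcal{F}$, and a Fatou argument on an almost-everywhere convergent subsequence gives strong lower semicontinuity; combined with convexity this gives weak lower semicontinuity, so $u^*$ is a minimizer.

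To derive the variational inequality, fix $v \in \mathcal{K}_{\psi,\vartheta}$ and note that $u^* + t(v - u^*) \in \mathcal{K}_{\psi,\vartheta}$ for $t \in (0,1]$ by convexity. Minimality then implies $(\mathcal{F}(u^* + t(v-u^*)) - \mathcal{F}(u^*))/t \geq 0$, and passing to $t \to 0^+$ via dominated convergence yields $\mathcal{E}(u^*, v - u^*) \geq 0$. An integrable majorant for the difference quotient is obtained from \Cref{lem-G} together with \eqref{eq-alg} and \eqref{eq-Young}, essentially as in the chain of estimates behind \Cref{lem-finiteness}. It is at this juncture that the hypothesis \eqref{eq-q-1} is expected to play a role: the non-increasing property of $t \mapsto g(t)/t^{q-1}$ gives subadditivity estimates of the form $g((1-t)a + tb) \lesssim g(a) + g(b)$ uniformly in $t \in [0,1]$, eliminating the multiplicative factor $q/p$ that a naive application of \Cref{lem-G} would leave behind and thus producing the clean majorant needed for dominated convergence. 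Uniqueness then follows by testing: if $u_1,u_2$ both solve the obstacle problem, then $\mathcal{E}(u_1,u_2-u_1) + \mathcal{E}(u_2,u_1-u_2) \geq 0$, and strict monotonicity of $\xi \mapsto g(|\xi|)\xi/|\xi|$ (inherited from strict convexity of $G$, which is implied by $g$ strictly increasing) forces $D^s(u_1 - u_2) \equiv 0$ a.e., whence $u_1 - u_2$ is a.e.\ constant on $\Omega'$; the constant must vanish because $u_1 - u_2 \in W^{s,G}_0(\Omega,\Omega')$. Finally, the supersolution property is immediate: for nonnegative $\varphi \in C_c^\infty(\Omega)$ the function $v = u^* + \varphi$ satisfies $v \geq u^* \geq \psi$ a.e.\ in $\Omega$ and $v - \vartheta = (u^* - \vartheta) + \varphi \in W^{s,G}_0(\Omega,\Omega')$, so $v \in \mathcal{K}_{\psi,\vartheta}$ and the variational inequality reads $\mathcal{E}(u^*,\varphi) \geq 0$, which is precisely \eqref{eq-supersolution}.

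The main obstacle I anticipate is the rigorous justification of the $t \to 0^+$ limit in the derivation of the variational inequality: one must produce a dominating function that controls both the near-diagonal singularity and the long-range interaction on $\Omega \times (\mathbb{R}^n \setminus \Omega')$ uniformly in $t$, and this is precisely where the extra structural hypothesis \eqref{eq-q-1} on $g$ appears to be indispensable.
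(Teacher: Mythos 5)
Your approach (direct method: minimize an energy functional over the convex admissible class) is genuinely different from the paper's, which verifies that the operator $\mathcal{A}u := \mathcal{E}(u,\cdot)$ is monotone, weakly continuous, and coercive on $\mathcal{K}_{\psi,\vartheta}(\Omega,\Omega')$ (\Cref{lem-A}) and then invokes a general existence theorem for variational inequalities from Kinderlehrer--Stampacchia. Unfortunately, the direct method breaks down here at the very first step, and the reason is exactly the point the paper flags in \Cref{sec-preliminaries} when explaining why $V^{s,G}(\Omega)$ was abandoned in favor of $W^{s,G}(\Omega,\Omega')$: the energy
\[
\mathcal{F}(u)=\int_\Omega\int_{\mathbb{R}^n}G(|D^su|)\frac{k(x,y)}{|x-y|^n}\,\mathrm{d}y\,\mathrm{d}x
\]
may be identically $+\infty$ on $\mathcal{K}_{\psi,\vartheta}(\Omega,\Omega')$. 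The boundary datum is only assumed to satisfy $\vartheta\in L^g_s(\mathbb{R}^n)$, which controls $\int_{\mathbb{R}^n\setminus\Omega'}g\left(|\vartheta(y)|/|x-y|^s\right)|x-y|^{-n-s}\,\mathrm{d}y$ but not the corresponding integral with $g$ replaced by $G\sim g\cdot\mathrm{id}$. Finiteness of the tail part of $\mathcal{F}$ would require $\vartheta\in L^G_s(\mathbb{R}^n)$, a strictly smaller space. This is precisely why \Cref{lem-finiteness} only asserts finiteness of the pairing $\mathcal{E}(u,w)$ for $w$ compactly supported in $\Omega$, and why the paper works with $\mathcal{E}$ directly rather than with an energy. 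Your claim that the tail piece of $\mathcal{F}$ is finite "because $\vartheta\in L^g_s(\mathbb{R}^n)$" conflates the two tail spaces.

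A secondary point: your guess for where \eqref{eq-q-1} enters is not how the paper uses it. In the paper, \eqref{eq-q-1} is needed in the proof of \Cref{lem-ineq5}, which in turn is used to establish weak continuity of the operator $\mathcal{A}$ in \Cref{lem-A}, i.e., to show that $u_j\to u$ in $W^{s,G}(\Omega,\Omega')$ implies $\langle\mathcal{A}u_j-\mathcal{A}u,w\rangle\to 0$. It is not needed for a dominated-convergence majorant in a $t\to 0^+$ limit. Indeed, for the admissible class $\mathcal{K}_{\psi,\vartheta}(\Omega)$ built on $V^{s,G}(\Omega)$ — where the energy $\mathcal{F}$ really is finite and a direct-method proof along your lines could be made to work — the paper proves solvability in \Cref{cor-1} \emph{without} assuming \eqref{eq-q-1}; there the weak continuity is obtained by a compactness argument in $L^{G^\ast}$ instead. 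That \eqref{eq-q-1} appears in \Cref{thm-obstacle} but not in \Cref{cor-1} is a further indication that its role is tied to the monotone-operator machinery on $W^{s,G}(\Omega,\Omega')$ rather than to any variational limit.

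The remaining parts of your proposal (convexity and weak closedness of $\mathcal{K}_{\psi,\vartheta}$, coercivity via \Cref{thm-Poincare}, uniqueness from strict monotonicity of $g(|\xi|)\xi/|\xi|$ plus $W^{s,G}_0$-membership of the difference, and the observation that $u^\ast+\varphi\in\mathcal{K}_{\psi,\vartheta}$ for $\varphi\geq 0$ gives the supersolution property) are sound and match the structure of the paper's argument; but they all rest on having an existence result to begin with, and the energy-minimization route you chose to get there does not go through in the $W^{s,G}(\Omega,\Omega')$ setting.
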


We prove the existence result in \Cref{thm-obstacle} by using a general result in the theory of monotone operators. We define a mapping $\mathcal{A}: \mathcal{K}_{\psi, \vartheta}(\Omega, \Omega') \to (W^{s, G}_0(\Omega, \Omega'))'$ by
\begin{equation*}
\langle \mathcal{A}u, w \rangle = \mathcal{E}(u, w).
\end{equation*}
Notice that a computation similar to \eqref{eq-finiteness} shows that
\begin{equation*}
|\mathcal{E}(u, w)| \leq C \left( \|u\|_{W^{s, G}(\Omega, \Omega')}^{p-1} + T(u; x_0, d) \right) \|w\|_{W^{s, G}(\Omega, \Omega')},
\end{equation*}
where $x_0$ is any point in $\Omega$ and $d=\mathrm{dist}(\Omega, \partial \Omega')$. Thus, the functional $\mathcal{A}u$ belongs to $(W^{s, G}_0(\Omega, \Omega'))'$. In order to solve the obstacle problem, it is enough to prove the following lemma, see, for instance, Corollary~III~1.8 in Kinderlehrer--Stampacchia~\cite{KS80}.

\begin{lemma}\label{lem-A}
Assume \eqref{eq-q-1}. Suppose that $\mathcal{K}_{\psi, \vartheta}(\Omega, \Omega')$ is non-empty. The mapping $\mathcal{A}$ is monotone, weakly continuous, and coercive on the set $\mathcal{K}_{\psi, \vartheta}(\Omega, \Omega')$.
\end{lemma}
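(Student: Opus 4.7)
The plan is to verify the three properties in turn so as to apply the existence theorem for variational inequalities on closed convex sets (Corollary~III.1.8 of~\cite{KS80}); note that $\mathcal{K}_{\psi, \vartheta}(\Omega, \Omega')$ is closed and convex in the reflexive Banach space $W^{s,G}(\Omega, \Omega')$ and is non-empty by hypothesis.

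\emph{Monotonicity.} For $u, v \in \mathcal{K}_{\psi, \vartheta}$, one has $u - v \in W^{s,G}_0(\Omega, \Omega')$, and
$$\langle \mathcal{A}u - \mathcal{A}v, u - v\rangle = \int_{\mathbb{R}^n}\int_{\mathbb{R}^n}\Bigl(g(|D^s u|)\tfrac{D^s u}{|D^s u|} - g(|D^s v|)\tfrac{D^s v}{|D^s v|}\Bigr)(D^s u - D^s v)\,\tfrac{k(x,y)}{|x-y|^n}\,dy\,dx \geq 0,$$
since the scalar map $t \mapsto g(|t|)\,t/|t|$ is non-decreasing (as $g$ is). For \emph{hemicontinuity} (the form of weak continuity needed in the Minty--Browder framework), I must show that $t \mapsto \langle \mathcal{A}((1-t)u_0 + tv), \varphi\rangle$ is continuous on $[0,1]$ for every $u_0, v \in \mathcal{K}_{\psi,\vartheta}$ and $\varphi \in W^{s,G}_0(\Omega, \Omega')$; this follows by dominated convergence, with an integrable majorant obtained from \Cref{lem-G} (bounding $g(|D^s((1-t)u_0 + tv)|) \leq C(g(|D^s u_0|) + g(|D^s v|))$ uniformly in $t \in [0,1]$) together with the integrability already established in the proof of \Cref{lem-finiteness}.

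The \emph{coercivity} is the main obstacle. Fix any $v_0 \in \mathcal{K}_{\psi, \vartheta}$ as a base point, set $w = u - v_0 \in W^{s,G}_0(\Omega, \Omega')$, and expand $D^s u = D^s w + D^s v_0$ in
$$\langle \mathcal{A}u, u - v_0\rangle = \int_{\mathbb{R}^n}\int_{\mathbb{R}^n} g(|D^s u|)\tfrac{D^s u}{|D^s u|}\,(D^s u - D^s v_0)\,\tfrac{k(x,y)}{|x-y|^n}\,dy\,dx.$$
The pointwise inequality $g(|D^s u|)\tfrac{D^s u}{|D^s u|}(D^s u - D^s v_0) \geq g(|D^s u|)|D^s u| - g(|D^s u|)|D^s v_0|$, combined with \eqref{eq-pq} and with \eqref{eq-alg} applied with a small $\varepsilon>0$, yields
$$g(|D^s u|)\tfrac{D^s u}{|D^s u|}(D^s u - D^s v_0) \geq c\,G(|D^s u|) - g(|D^s v_0|/\varepsilon)\,|D^s v_0|,$$
and the doubling bound $G(|D^s u|) \geq c\,G(|D^s w|) - C\,G(|D^s v_0|)$ from \Cref{lem-G}(iii) isolates the $w$-dependence. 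Splitting the integration domain into $\Omega\times\Omega$, $\Omega\times(\Omega'\setminus\Omega)$, and the tail region $\Omega\times(\mathbb{R}^n\setminus\Omega')$ (with symmetric counterparts), the terms depending only on $v_0$ (and hence $\vartheta$ in the tail, via the extension $v_0 \equiv \vartheta$ on $\mathbb{R}^n\setminus\Omega'$) are finite thanks to $v_0 \in W^{s,G}(\Omega, \Omega')$ and $\vartheta \in L^g_s(\mathbb{R}^n)$; collecting them into a constant $C(v_0)$ leaves
$$\langle \mathcal{A}u, u - v_0\rangle \geq c\int_\Omega\int_{\Omega'} G(|D^s w|)\,\tfrac{dy\,dx}{|x-y|^n} - C(v_0).$$
The fractional Poincar\'e inequality (\Cref{thm-Poincare}) for $w \in W^{s,G}_0(\Omega, \Omega')$ together with the unit ball property of the modular upgrades this to $\langle \mathcal{A}u, u - v_0\rangle \geq c\,\|u - v_0\|_{W^{s,G}(\Omega, \Omega')}^p - C(v_0)$ once $\|u - v_0\|_{W^{s,G}(\Omega, \Omega')} \geq 1$; dividing by $\|u - v_0\|$ and using $p > 1$ gives coercivity. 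The delicate step is the bookkeeping for the tail region, where $g(|u(x) - v_0(y)|/|x-y|^s)$ must be split via \Cref{lem-G}(iii) into $w$-parts and pure $v_0$-tails controlled by $T(v_0;\cdot,\cdot)$; the assumption \eqref{eq-q-1} enters here to guarantee that the absorbing constants can be chosen uniformly in $u \in \mathcal{K}_{\psi, \vartheta}$.
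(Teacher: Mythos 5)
Your monotonicity argument matches the paper's. Your continuity argument takes a genuinely different route: you verify \emph{hemicontinuity} (continuity of $t \mapsto \langle \mathcal{A}((1-t)u_0+tv), \varphi\rangle$ on $[0,1]$) via dominated convergence with the majorant from \Cref{lem-G}(iii), rather than the paper's demicontinuity on $\mathcal{K}_{\psi,\vartheta}$ (strong convergence $u_j\to u$ implies $\langle\mathcal{A}u_j-\mathcal{A}u,w\rangle\to 0$), which the paper proves by invoking \Cref{lem-ineq5}. Your route is sound for applying Kinderlehrer--Stampacchia and has the interesting feature that it avoids assumption~\eqref{eq-q-1} entirely; the paper needs~\eqref{eq-q-1} precisely and only for \Cref{lem-ineq5}. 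Worth noting, though, that you are then proving a slightly different property than the literal statement ``weakly continuous.''

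The coercivity argument, however, has a genuine gap. After your pointwise estimates you obtain an integrand of the schematic form $c\,G(|D^sw|) - C\,G(|D^sv_0|) - g(|D^sv_0|/\varepsilon)|D^sv_0|$, and you claim the $v_0$-dependent pieces integrate to a finite constant $C(v_0)$. This is false in the tail region $\Omega\times(\mathbb{R}^n\setminus\Omega')$: there $D^sv_0(x,y)=\frac{v_0(x)-\vartheta(y)}{|x-y|^s}$, so after the triangle inequality of \Cref{lem-G}(iii) you are left with $\int_\Omega\int_{\mathbb{R}^n\setminus\Omega'} G\bigl(\frac{|\vartheta(y)|}{|x-y|^s}\bigr)\frac{\mathrm{d}y\,\mathrm{d}x}{|x-y|^n}$, and this requires an $L^G_s$-type tail bound on $\vartheta$, which is strictly stronger than the hypothesis $\vartheta\in L^g_s(\mathbb{R}^n)$ (note $G(t)\sim g(t)\,t$, so the $G$-tail carries an extra factor of $|\vartheta|$ relative to the $g$-tail $T$). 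So your lower bound for the tail block is $-\infty$, not $-C(v_0)$. Moreover, the genuine tail contribution $\int_\Omega\int_{\mathbb{R}^n\setminus\Omega'} g(|D^su|)\frac{D^su}{|D^su|}D^sw\,\frac{k}{|x-y|^n}$ does depend on $u$ (through $g(|D^su|)$), so it cannot be a constant $C(v_0)$ even in principle. The clean way around this, and what the paper implicitly exploits, is to use the symmetric form $\langle \mathcal{A}u-\mathcal{A}v_0, u-v_0\rangle$: its integrand $\bigl(g(|a|)\frac{a}{|a|}-g(|b|)\frac{b}{|b|}\bigr)(a-b)$ is pointwise nonnegative by~\eqref{eq-FTC}, so the tail region may simply be discarded by lower-bounding by $0$, and all remaining algebra takes place on $\Omega\times\Omega'$ where $W^{s,G}(\Omega,\Omega')$-integrability is available. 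For monotone operators the two coercivity formulations are equivalent, so switching costs nothing. Your intuition that ``the delicate step is the bookkeeping for the tail region'' is exactly right, but the fix is to avoid the pointwise split there rather than absorb constants via~\eqref{eq-q-1} (which, incidentally, is not where~\eqref{eq-q-1} is actually used in the paper).
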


\begin{proof}
We first observe that the fundamental theorem of calculus yields
\begin{equation}\label{eq-FTC}
\left( g(|a|) \frac{a}{|a|} - g(|b|) \frac{b}{|b|} \right)(a-b) \geq 0 \quad\text{for all}~ a, b \in \mathbb{R}.
\end{equation}
This proves the monotonicity of $\mathcal{A}$, i.e., $\langle \mathcal{A}u-\mathcal{A}v, u-v \rangle \geq 0$ holds for every $u, v \in \mathcal{K}_{\psi, \vartheta}(\Omega, \Omega')$.

Let us next prove the weak continuity of $\mathcal{A}$. Suppose that a sequence $\{u_j\} \subset \mathcal{K}_{\psi, \vartheta}(\Omega, \Omega')$ converges to $u \in \mathcal{K}_{\psi, \vartheta}(\Omega, \Omega')$ in $W^{s, G}(\Omega, \Omega')$. We claim that $\langle \mathcal{A}u_j-\mathcal{A}u, w \rangle \to 0$ as $j \to \infty$ for any $w \in W^{s, G}_0(\Omega, \Omega')$. Indeed, by applying \Cref{lem-ineq5} we have for any $\varepsilon>0$,
\begin{align*}
|\langle \mathcal{A}u_j - \mathcal{A}u, w \rangle|
&\leq 2\Lambda \int_{\Omega} \int_{\mathbb{R}^n} \left| g(|D^su_j|) \frac{D^su_j}{|D^su_j|} - g(|D^su|) \frac{D^su}{|D^su|} \right| |D^s w| \frac{\mathrm{d}y \,\mathrm{d}x}{|x-y|^n} \\
&\leq C \int_{\Omega} \int_{\mathbb{R}^n} g(|D^s (u_j-u)|) |D^s w| \frac{\mathrm{d}y \,\mathrm{d}x}{|x-y|^n} \\
&\quad + 2\Lambda \varepsilon \int_{\Omega} \int_{\mathbb{R}^n} (g(|D^s u_j|) + g(|D^s u|)) |D^s w| \frac{\mathrm{d}y \,\mathrm{d}x}{|x-y|^n}.
\end{align*}
Following the computation in \eqref{eq-finiteness} and noticing that $u_j-u \in W^{s, G}_0(\Omega, \Omega')$, we obtain
\begin{align*}
|\langle \mathcal{A}u_j - \mathcal{A}u, w \rangle|
&\leq C \|u_j-u\|_{W^{s, G}(\Omega, \Omega')}^{p-1} \|w\|_{W^{s, G}(\Omega, \Omega')} \\
&\quad + C \varepsilon \left( \|u_j\|_{W^{s, G}(\Omega, \Omega')}^{p-1} + T(u_j; x_0, d) \right) \|w\|_{W^{s, G}(\Omega, \Omega')} \\
&\quad + C \varepsilon \left( \|u\|_{W^{s, G}(\Omega, \Omega')}^{p-1} + T(u; x_0, d) \right) \|w\|_{W^{s, G}(\Omega, \Omega')},
\end{align*}
where $x_0$ is any point in $\Omega$ and $d=\mathrm{dist}(\Omega, \partial \Omega')$. Since $u_j \to u$ in $W^{s, G}(\Omega, \Omega')$, we arrive at
\begin{equation*}
\limsup_{j \to \infty} |\langle \mathcal{A}u_j - \mathcal{A}u, w \rangle| \leq C \varepsilon \left( \|u\|_{W^{s, G}(\Omega, \Omega')}^{p-1} + T(u; x_0, d) \right) \|w\|_{W^{s, G}(\Omega, \Omega')}.
\end{equation*}
Letting $\varepsilon \to 0$ implies the weak continuity of $\mathcal{A}$.

Finally, we prove the coercivity of $\mathcal{A}$ on $\mathcal{K}_{\psi, \vartheta}(\Omega, \Omega')$, i.e., we show that there exists a function $v \in \mathcal{K}_{\psi, \vartheta}(\Omega, \Omega')$ such that
\begin{equation*}
\frac{\langle \mathcal{A}u_j-\mathcal{A}v, u_j-v \rangle}{\|u_j-v\|_{W^{s, G}(\Omega, \Omega')}} \to \infty
\end{equation*}
whenever $u_j$ is a sequence in $\mathcal{K}_{\psi, \vartheta}(\Omega, \Omega')$ with $\|u_j\|_{W^{s, G}(\Omega, \Omega')} \to \infty$. Since $\mathcal{K}_{\psi, \vartheta}(\Omega, \Omega')$ is non-empty, we can take a function $v \in \mathcal{K}_{\psi, \vartheta}(\Omega, \Omega')$. Then we have
\begin{align*}
\langle \mathcal{A}u_j-\mathcal{A}v, u_j-v \rangle
&\geq \frac{p}{\Lambda} \int_\Omega \int_{\mathbb{R}^n} (G(|D^s u_j|) + G(|D^s v|)) \frac{\mathrm{d}y\,\mathrm{d}x}{|x-y|^n} \\
&\quad - 2\Lambda \int_\Omega \int_{\mathbb{R}^n} (g(|D^s u_j|) |D^s v| + g(|D^s v|) |D^s u_j|) \frac{\mathrm{d}y\,\mathrm{d}x}{|x-y|^n} = I_1+I_2.
\end{align*}
By using \Cref{lem-G} and \Cref{thm-Poincare}, we obtain
\begin{equation*}
I_1 \geq C \int_\Omega \int_{\mathbb{R}^n} G(|D^s (u_j-v)|) \frac{\mathrm{d}y\,\mathrm{d}x}{|x-y|^n} \geq C \left( \|u_j-v\|_{W^{s, G}(\Omega, \Omega')}^p \land \|u_j-v\|_{W^{s, G}(\Omega, \Omega')}^q \right).
\end{equation*}
For $I_2$, we utilize the techniques used above to estimate
\begin{align*}
I_2
&\geq - C \int_\Omega \int_{\mathbb{R}^n} (g(|D^s (u_j-v)|) + g(|D^s v|)) |D^s v| \frac{\mathrm{d}y\,\mathrm{d}x}{|x-y|^n} \\
&\quad - C \int_\Omega \int_{\mathbb{R}^n} g(|D^s v|) (|D^s(u_j-v)|+|D^sv|) \frac{\mathrm{d}y\,\mathrm{d}x}{|x-y|^n} \\
&\geq - C \|u_j-v\|_{W^{s, G}(\Omega, \Omega')}^{p-1} \|v\|_{W^{s, G}(\Omega, \Omega')} - C \left( \|v\|_{W^{s, G}(\Omega, \Omega')}^p \land \|v\|_{W^{s, G}(\Omega, \Omega')}^q \right) \\
&\quad - C \left( \|v\|_{W^{s, G}(\Omega, \Omega')}^{p-1} + T(\vartheta; x_0, d) \right) \|u_j-v\|_{W^{s, G}(\Omega, \Omega')}.
\end{align*}
Since $\|u_j-v\|_{W^{s, G}(\Omega, \Omega')} \to \infty$ as $\|u_j\|_{W^{s, G}(\Omega, \Omega')} \to \infty$, we conclude the coercivity of $\mathcal{A}$.
\end{proof}

\begin{corollary}\label{cor-obstacle}
Let $u$ be the solution to the obstacle problem in $\mathcal{K}_{\psi, \vartheta}(\Omega, \Omega')$. If $B_R =B_R(x_0) \subset \Omega$ is such that
	\begin{equation*}
		\essinf_{B_R}(u-\psi)>0,
	\end{equation*}
	then $u$ is a solution of $\mathcal{L}u=0$ in $B_R$. In particular, if $u$ is lower semicontinuous and $\psi$ is upper semicontinuous in $\Omega$, then $u$ is a solution of $\mathcal{L}u=0$ in $\{x \in \Omega: u(x)>\psi(x)\}$.
\end{corollary}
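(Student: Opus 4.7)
The plan is to use the standard variational trick: perturb the solution $u$ by a compactly supported test function in both directions and verify that the perturbed functions remain in the admissible class $\mathcal{K}_{\psi,\vartheta}(\Omega,\Omega')$, so that the variational inequality becomes a two-sided inequality and forces $\mathcal{E}(u,\varphi)=0$.

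First I would fix $\varphi \in C_c^\infty(B_R)$ (not necessarily signed) and set $\delta = \essinf_{B_R}(u-\psi) > 0$. For $0 < \varepsilon < \delta/\|\varphi\|_{L^\infty}$, I claim $u \pm \varepsilon \varphi \in \mathcal{K}_{\psi,\vartheta}(\Omega,\Omega')$. Indeed, outside $B_R$ the function $\varphi$ vanishes so $u \pm \varepsilon\varphi = u \geq \psi$ a.e., while inside $B_R$ we have $u \pm \varepsilon\varphi \geq \psi + \delta - \varepsilon\|\varphi\|_{L^\infty} \geq \psi$ a.e. Since $\varphi \in C_c^\infty(\Omega) \subset W_0^{s,G}(\Omega,\Omega')$, the boundary condition $(u \pm \varepsilon\varphi) - \vartheta = (u-\vartheta) \pm \varepsilon\varphi \in W_0^{s,G}(\Omega,\Omega')$ is preserved. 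Also, $u \pm \varepsilon\varphi \in W^{s,G}(\Omega,\Omega')$ is immediate.

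Plugging $v = u + \varepsilon \varphi$ into the variational inequality $\mathcal{E}(u,v-u) \geq 0$ yields $\varepsilon\mathcal{E}(u,\varphi) \geq 0$, and testing with $v = u - \varepsilon \varphi$ yields $-\varepsilon\mathcal{E}(u,\varphi) \geq 0$. Hence $\mathcal{E}(u,\varphi) = 0$ for every $\varphi \in C_c^\infty(B_R)$, which is exactly the weak formulation of $\mathcal{L}u = 0$ in $B_R$ (in fact, of both the supersolution and subsolution inequalities). The finiteness of $\mathcal{E}(u,\varphi)$ is guaranteed by \Cref{lem-finiteness}, since $u \in W^{s,G}(\Omega,\Omega') \subset W^{s,G}_{\mathrm{loc}}(\Omega)$ and, because $u$ is a supersolution (by \Cref{thm-obstacle}), we also have $u \in L^g_s(\mathbb{R}^n)$ by \Cref{lem-tail}.

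For the second assertion, the hypothesis that $u$ is lower semicontinuous and $\psi$ is upper semicontinuous in $\Omega$ makes $u-\psi$ lower semicontinuous. Hence the set $U = \{x \in \Omega : u(x) > \psi(x)\}$ is open, and for each $x_0 \in U$ one can find a ball $B_R(x_0) \Subset U$ on which $u - \psi$ is bounded below by $(u(x_0)-\psi(x_0))/2 > 0$, so the first part applies and yields $\mathcal{L}u = 0$ on $B_R(x_0)$. Since this holds on a neighborhood of every point of $U$, we conclude that $u$ solves $\mathcal{L}u = 0$ in $U$. The only mildly delicate point in the argument is ensuring that the essential infimum lower bound $\delta > 0$ in $B_R$ transfers to a pointwise lower bound strong enough to absorb the perturbation; this is handled by working with the essential supremum norm of $\varphi$ and using that admissibility is an a.e.\ condition.
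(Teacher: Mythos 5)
Your proof is correct and follows the standard two-sided perturbation argument, which is exactly the approach the paper invokes by referring to the proof of Corollary~1 in Korvenp\"a\"a--Kuusi--Palatucci \cite{KKP16}. One small remark: your detour through \Cref{thm-obstacle} and \Cref{lem-tail} to justify finiteness of $\mathcal{E}(u,\varphi)$ is valid but unnecessary, since the paper already notes immediately after the definition of a solution to the obstacle problem that $\mathcal{E}(u,w)$ is well defined for $u \in \mathcal{K}_{\psi,\vartheta}(\Omega,\Omega')$ and $w \in W^{s,G}_0(\Omega,\Omega')$, directly by \Cref{lem-finiteness}.
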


For the proofs of \Cref{thm-obstacle} and \Cref{cor-obstacle}, we refer the reader to the proofs of Theorem~1 and Corollary~1 in Korvenp\"a\"a--Kuusi--Palatucci~\cite{KKP16}.

\subsection{Local boundedness}

Solutions to the obstacle problems enjoy some regularity properties. This section is devoted to the the local boundedness of solutions up to the boundary. Notice that we interpret $\esssup_A = -\infty$ and $\essinf_A = \infty$ when $A=\emptyset$.

\begin{theorem}\label{thm-obs-bdd}
Let $u$ be the solution to the obstacle problem in $\mathcal{K}_{\psi, \vartheta}(\Omega, \Omega')$ and let $B_R=B_R(x_0)$ satisfy $B_R \cap \Omega \neq \emptyset$. Let $\varepsilon \in (0,1)$ and $p_0>0$.
\begin{enumerate}[(i)]
\item
If
\begin{equation}\label{eq-obs-M}
M:= \max \left\lbrace \esssup_{B_R \setminus \Omega} \vartheta, \esssup_{B_R \cap \Omega} \psi \right\rbrace < \infty,
\end{equation}
then
\begin{equation}\label{eq-obs-bdd-upper}
\esssup_{B_{R/2}}\,(u-k)_+ \leq \varepsilon\, \mathrm{Tail}_g((u-k)_+; x_0, R/2) + C \left( \fint_{B_R} (u-k)_+^{p_0} \,\mathrm{d}x \right)^{1/p_0}
\end{equation}
for all $k \geq M$, where $C=C(n, p, p_0, q, s, \Lambda, \varepsilon)>0$.
\item
If
\begin{equation}\label{eq-obs-m}
m:= \essinf_{B_R \setminus \Omega} \vartheta > -\infty,
\end{equation}
then
\begin{equation*}
\esssup_{B_{R/2}}\,(u-k)_- \leq \varepsilon\, \mathrm{Tail}_g((u-k)_-; x_0, R/2) + C \left( \fint_{B_R} (u-k)_-^{p_0} \,\mathrm{d}x \right)^{1/p_0}
\end{equation*}
for all $k \leq m$, where $C=C(n, p, p_0, q, s, \Lambda, \varepsilon)>0$.
\end{enumerate}
\end{theorem}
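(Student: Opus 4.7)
The strategy is to show that $(u-k)_{+}$ (for part (i)) and $(u-k)_{-}$ (for part (ii)) behave like subsolutions, thereby reducing both estimates to the Caccioppoli/Moser machinery of \Cref{sec-loc-bdd}. Part (ii) is cleaner, and I would treat it first. The obstacle constraint is preserved under pointwise increases of $u$, so for every nonnegative $\varphi \in W_{0}^{s, G}(\Omega, \Omega')$ the function $v = u + \varphi$ lies in $\mathcal{K}_{\psi, \vartheta}(\Omega, \Omega')$, and the variational inequality yields $\mathcal{E}(u, \varphi) \geq 0$. Hence $u$ is a genuine supersolution of $\mathcal{L}u = 0$ in $\Omega$, and equivalently $w := k - u \in W^{s, G}(\Omega, \Omega')$ is a subsolution. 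Combining $u - \vartheta \in W_{0}^{s, G}(\Omega, \Omega')$ with $\vartheta \geq m \geq k$ on $B_{R} \setminus \Omega$ yields $u \geq k$ a.e.\ on $B_{R} \setminus \Omega$, so $\esssup_{B_{R} \setminus \Omega} w_{+} = 0$. Applying \Cref{thm-loc-bdd-bdry} to $w$ with boundary level $M_{w} = 0$ gives $w_{M_{w}}^{+} = w_{+} = (u-k)_{-}$ and delivers the estimate of part (ii).

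For part (i) the obstacle obstructs pointwise decreases of $u$ and the argument is more delicate. Fix $k \geq M$. For every nonnegative $\varphi \in W_{0}^{s, G}(\Omega, \Omega')$ with $\varphi \leq (u-k)_{+}$ a.e., the function $v = u - \varphi$ lies in $\mathcal{K}_{\psi, \vartheta}(\Omega, \Omega')$: on $\Omega$ we have $v \geq \min(u, k) \geq \min(\psi, k) = \psi$ thanks to $\psi \leq M \leq k$, and $v - \vartheta = (u - \vartheta) - \varphi \in W_{0}^{s, G}(\Omega, \Omega')$. The variational inequality therefore yields $\mathcal{E}(u, \varphi) \leq 0$ for every such $\varphi$. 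I would then replicate \Cref{lem-Caccio1}(ii) with $\bar{u} := \max(u, k)$, $\tilde{v} := (\bar{u} + d)/R^{s}$, $l := (k + d)/R^{s}$, and the Caccioppoli test function $\Phi := (\bar{g}^{\beta}(\tilde{v}) - \bar{g}^{\beta}(l))\eta^{q}$; this $\Phi$ is supported in $\{u > k\} \cap B_{(R+r)/2}$ but need not satisfy $\Phi \leq (u-k)_{+}$ pointwise. The remedy is to test instead with the admissible truncation $\varphi := \min(\Phi, (u-k)_{+})$, split the resulting integral according to where the minimum is attained, and use \Cref{lem-ineq2} together with the identity $(u-k)_{+} = u - k$ on $\{u > k\}$ to recover \eqref{eq-Caccio1} with $u_{+}$ replaced by $(u-k)_{+}$, up to an error absorbable via Young's inequality \eqref{eq-alg}. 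The reverse H\"older inequality \Cref{lem-RHI-sub} and the Moser iteration in the proof of \Cref{thm-loc-bdd-bdry} then apply verbatim to produce \eqref{eq-obs-bdd-upper}.

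The main obstacle is this truncation step in part (i): one has to verify that the discrepancy between $\mathcal{E}(u, \Phi)$ and $\mathcal{E}(u, \min(\Phi, (u-k)_{+}))$, concentrated on $\{\Phi > (u-k)_{+}\}$, produces only lower-order contributions that fit into the right-hand side of the Caccioppoli estimate. This is the same flavor of algebraic bookkeeping already deployed in the proof of \Cref{lem-Caccio1}, applied now to the extra boundary set introduced by the truncation; once carried out, every other ingredient of the proof is a direct reuse of \Cref{sec-loc-bdd}.
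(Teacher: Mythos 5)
The paper's proof of part (i) does \emph{not} go through the Moser machinery at all. Instead, it plugs the De~Giorgi test function $\varphi=(u-k)_+\eta^q$ directly into the variational inequality: since $v=u-(u-k)_+\eta^q$ stays in $\mathcal{K}_{\psi,\vartheta}(\Omega,\Omega')$ whenever $k\geq M$, one obtains $\mathcal{E}(u,(u-k)_+\eta^q)\leq 0$ for every admissible $k$, which is exactly the level-set Caccioppoli estimate of Chaker--Kim--Weidner (Eq.~(3.10) in \cite{CKW22}). The $L^\infty$ bound \eqref{eq-est-above} is then obtained by De~Giorgi iteration over levels (Thm.~3.1 in \cite{CKW23}), and the $p_0$-power on the right is recovered afterwards by the same interpolation-and-covering argument used in \Cref{thm-loc-bdd-bdry}. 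Your route through \Cref{lem-Caccio1}, \Cref{lem-RHI-sub}, and Moser iteration is a genuinely different strategy, and there is a real obstruction to it that goes beyond ``algebraic bookkeeping'': the Moser test function $\Phi=(\bar g^\beta(\tilde v)-\bar g^\beta(l))\eta^q$ grows superlinearly in $u$ for $\beta$ bounded away from zero, while the obstacle only permits test functions bounded by $(u-k)_+$, which is linear. Truncating to $\min(\Phi,(u-k)_+)$ destroys the power-type structure on the set $\{\Phi>(u-k)_+\}$, where the test function degenerates to a De~Giorgi one; consequently the iterated Caccioppoli inequality you would obtain no longer has the scaling needed to drive the exponent $\gamma\chi^j\to\infty$, and \Cref{lem-RHI-sub} cannot be applied ``verbatim'' because it is proved for genuine subsolutions, a hypothesis that $u$, $\max(u,k)$, and $(u-k)_+$ do not satisfy. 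The paper avoids this entirely by using De~Giorgi, which is precisely the iteration compatible with the linear ceiling $(u-k)_+$.

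Your part (ii), reducing to \Cref{thm-loc-bdd-bdry} for the subsolution $w=k-u$, is a clean and legitimately different route (the paper instead repeats the part-(i) argument with $v=u+(u-k)_-\eta^q$). However, the step ``$u-\vartheta\in W^{s,G}_0(\Omega,\Omega')$ and $\vartheta\geq m\geq k$ on $B_R\setminus\Omega$ imply $u\geq k$ a.e.\ on $B_R\setminus\Omega$'' is not justified: membership in $W^{s,G}_0(\Omega,\Omega')$ is a weak attainment of boundary data and, as the paper itself warns in \Cref{rmk-obstacle} (citing Remark~2.2(ii) in \cite{KLL23}), it does \emph{not} coincide with pointwise a.e.\ equality $u=\vartheta$ outside $\Omega$ unless $\partial\Omega$ has extra regularity. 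Without $\esssup_{B_R\setminus\Omega}w_+=0$, \Cref{thm-loc-bdd-bdry} controls $w_M^+=\max\{w_+,M\}$ rather than $w_+=(u-k)_-$, and the extra constant $M>0$ does not drop out of the right-hand side. Testing the variational inequality directly, as the paper does, sidesteps this issue.
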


\begin{proof}
Let us first assume \eqref{eq-obs-M}. Let $B_\rho(y) \subset B_R$. Then for any $k \geq M$ and for any functions $\eta \in C^\infty_c(B_\rho(y))$ satisfying $\eta \in [0,1]$, we have $v=u-(u-k)_+ \eta^q \in \mathcal{K}_{\psi, \vartheta}(\Omega, \Omega')$, and hence $\mathcal{E}(u, (u-k)_+\eta^q) \leq 0$. Thus, $u$ satisfies a Caccioppoli-type estimate (see Equation~(3.10) in Chaker--Kim--Weidner~\cite{CKW22}) for all $k \geq M$, which is enough to deduce
\begin{equation}\label{eq-est-above}
\esssup_{B_{\rho/2}(y)}\, (u-k)_+ \leq \varepsilon_0 \mathrm{Tail}_g((u-k)_+; y, \rho/2) + C\rho^s G^{-1} \left( \fint_{B_{\rho}(y)} G\left( \frac{(u-k)_+}{\rho^s} \right) \,\mathrm{d}x \right),
\end{equation}
for all $k \geq M$ and $\varepsilon_0>0$, where $C>0$ is a constant depending only on $n$, $p$, $q$, $s$, $\Lambda$, and $\varepsilon_0$. See the proof of Theorem~3.1 in Chaker--Kim--Weidner~\cite{CKW23}. One can now follow the argument in the proof of \Cref{thm-loc-bdd-bdry} and use
\begin{equation*}
G^{-1} \left( \fint_{B_\rho(y)} G \left( \frac{(u-k)_+}{\rho^s} \right) \,\mathrm{d}x \right) \leq C \left( \fint_{B_\rho(y)} \left( \frac{(u-k)_+}{\rho^s} \right)^\alpha \,\mathrm{d}x \right)^{1/\alpha}
\end{equation*}
for large $\alpha>1$ (by modifying \Cref{lem-Lp}) instead of \eqref{eq-alpha} to deduce \eqref{eq-obs-bdd-upper} from \eqref{eq-est-above}.

If \eqref{eq-obs-m} holds, then we repeat the same argument with $v=u+(u-k)_- \eta^q \in \mathcal{K}_{\psi, \vartheta}(\Omega, \Omega')$ for $k \leq m$.
\end{proof}

\subsection{H\"older regularity}
We prove H\"older continuity of solutions to the obstacle problem. Let us begin with the interior estimates. Here, for an obstacle $\psi$, we define
\begin{align*}
	\omega_{\psi}(\rho)\equiv\omega_{\psi}(\rho, x_0):=\osc_{B_{\rho}(x_0) \cap \Omega}\psi \quad \text{for $\rho>0$}.
\end{align*}

\begin{theorem}\label{thm-Holder-int}
Suppose that $\psi$ is locally H\"older continuous in $\Omega$ or $\psi \equiv -\infty$. Then the solution $u$ to the obstacle problem in $\mathcal{K}_{\psi,\vartheta}(\Omega, \Omega')$ has a representative which is locally H\"older continuous in $\Omega$.
\end{theorem}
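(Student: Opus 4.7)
The plan is to establish geometric decay of the essential oscillation of $u$ on nested balls centered at interior points of $\Omega$, from which local H\"older continuity of a representative follows by a standard iteration. Fix $x_0 \in \Omega$ with $B_{R_0}(x_0) \Subset \Omega$, and for $r \in (0, R_0]$ set $M(r) = \esssup_{B_r(x_0)} u$, $m(r) = \essinf_{B_r(x_0)} u$, and $\omega(r) = M(r) - m(r)$. The goal is
\begin{equation*}
\omega(\sigma r) \leq \lambda\, \omega(r) + C r^\beta + C\,\mathrm{Tail}_g(u - \bar{u}_r; x_0, r)
\end{equation*}
for some $\sigma, \lambda \in (0, 1)$ and $\beta > 0$, with a normalizing constant $\bar{u}_r$; iterating over dyadic scales and controlling the tail via \Cref{lem-G} then gives H\"older continuity.

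When $\psi \equiv -\infty$, \Cref{cor-obstacle} shows $u$ solves $\mathcal{L}u = 0$ in $\Omega$, so both $u - m(r)$ and $M(r) - u$ are nonnegative supersolutions in $B_r(x_0)$. Since they sum to $\omega(r)$, one of them exceeds $\omega(r)/2$ on a subset of $B_{r/2}(x_0)$ of measure $\geq \tfrac{1}{2}|B_{r/2}|$. Applying the weak Harnack inequality \Cref{thm-WHI-int} to this function gives the oscillation reduction in this case.

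For the general case, when $|\{u - m(r) \geq \omega(r)/2\} \cap B_{r/2}(x_0)| \geq \tfrac{1}{2}|B_{r/2}|$, the same application of \Cref{thm-WHI-int} to the nonnegative supersolution $u - m(r)$ (supplied by \Cref{thm-obstacle}) still works. The reverse direction is the genuinely new ingredient: $u$ is not a subsolution, and the H\"older regularity of $\psi$ must be exploited. Suppose $|\{u < m(r) + \omega(r)/2\} \cap B_{r/2}(x_0)| \geq \tfrac{1}{2}|B_{r/2}|$. Pick any $y$ in this set, so $\psi(y) \leq u(y) < m(r) + \omega(r)/2$; H\"older continuity of $\psi$ yields
\begin{equation*}
\sup_{B_r(x_0) \cap \Omega} \psi \leq k := m(r) + \omega(r)/2 + C_\psi r^\beta.
\end{equation*}
For any $\eta \in C_c^\infty(B_r(x_0))$ with $0 \leq \eta \leq 1$, the function $v = u - \eta(u-k)_+$ lies in $\mathcal{K}_{\psi, \vartheta}(\Omega, \Omega')$: it equals $u$ outside $\mathrm{supp}\,\eta$, and wherever $u > k$ one has $v \geq k \geq \sup_{B_r \cap \Omega} \psi \geq \psi$. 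Testing the variational inequality for the obstacle problem against $v$ produces $\mathcal{E}(u, \eta(u-k)_+) \leq 0$, which is exactly the subsolution-type Caccioppoli inequality for $(u - k)_+$. Running the Moser/De Giorgi scheme from \Cref{sec-loc-bdd} on this inequality at successive truncation levels $k_j \nearrow M(r)$, using that $(u - k)_+$ vanishes on at least half of $B_{r/2}(x_0)$, yields $\esssup_{B_{r/4}(x_0)} u \leq k + (1-\delta)(M(r)-k)$ for some $\delta > 0$, hence $M(r/4) \leq m(r) + (1-c)\omega(r) + Cr^\beta + \varepsilon\,\mathrm{Tail}$ with $c > 0$. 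Together with the supersolution case, this gives the required oscillation estimate.

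The main technical obstacle is the simultaneous bookkeeping of nonlocal tail terms from \Cref{thm-WHI-int} and from the Moser bound above: each application of the dichotomy leaves a $\mathrm{Tail}_g$ error that must be controlled uniformly across scales. This is handled by absorbing the tail into a suitably weighted oscillation quantity, and exploiting the subhomogeneity $g(\lambda t) \leq \lambda^{q-1} g(t)$ for $\lambda \geq 1$ from \Cref{lem-G}\,(i) to sum the contributions over dyadic annuli. The resulting H\"older exponent is the minimum of $\beta$ and an intrinsic exponent depending only on $n, p, q, s, \Lambda$.
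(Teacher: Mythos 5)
Your proposal takes a genuinely different route from the paper's, and the ``low'' half of your dichotomy has a real gap.

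\textbf{Where you and the paper diverge.} The paper does not perform a measure-dichotomy at each scale. It splits globally at the fixed center $x_0$: either $\essinf_{B_R}(u-\psi)>0$ for \emph{some} ball $B_R(x_0)\subset\Omega$, in which case $u$ is a genuine solution in $B_R$ by \Cref{cor-obstacle} and H\"older continuity is imported directly from the solution theory; or $\essinf_{B_R}(u-\psi)=0$ for \emph{every} such ball. In the second case one sets $u_d=u-d$ with $d=\psi(x_0)-\omega_\psi(R)$, so that $u_d\geq0$ in $B_R$, and the crucial point is that the contact hypothesis gives an \emph{upper} bound on the essential infimum: $\essinf_{B_{2r}}u_d\leq C\omega_\psi(R)$. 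Feeding this into the weak Harnack inequality (\Cref{thm-WHI-int}) controls the $L^{g^\delta}$-average of $u_d$ by $\omega_\psi(R)$ plus tails, and then the obstacle $L^\infty$ estimate (\Cref{thm-obs-bdd} with $k=d+2\omega_\psi(R)$) converts that average bound into a supremum bound, giving the decay estimate \eqref{eq-claim-Holder} in one stroke. No isoperimetric/measure-reduction argument is needed.

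\textbf{The gap in your ``low'' case.} When $|\{u<m(r)+\omega(r)/2\}\cap B_{r/2}|\geq\tfrac12|B_{r/2}|$, you correctly observe that H\"older continuity of $\psi$ yields $\sup_{B_r\cap\Omega}\psi\leq k:=m(r)+\omega(r)/2+C_\psi r^\beta$, and that testing with $v=u-\eta(u-k)_+$ produces the Caccioppoli inequality for $(u-k)_+$ — this is exactly what underlies \Cref{thm-obs-bdd}. But the step from a measure bound of $\tfrac12$ to a genuine supremum improvement $\esssup_{B_{r/4}}u\leq k+(1-\delta)(M(r)-k)$ is not supplied. The $L^\infty$ bound of Moser type (\Cref{thm-loc-bdd-int}/\Cref{thm-obs-bdd}) gives
\begin{equation*}
\esssup_{B_{r/4}}(u-k)_+\leq\varepsilon\,\mathrm{Tail}_g\bigl((u-k)_+;x_0,r/4\bigr)+C(\varepsilon,p_0)\Bigl(\fint_{B_{r/2}}(u-k)_+^{p_0}\Bigr)^{1/p_0},
\end{equation*}
and the right-hand integral is only bounded by $2^{-1/p_0}(M(r)-k)$. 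The prefactor $C(\varepsilon,p_0)2^{-1/p_0}$ is not $<1$: as $p_0\to0$ the constant $C(\varepsilon,p_0)$ in the paper's proof blows up (it comes from the interpolation step \eqref{eq-alpha}--\eqref{eq-y-Lp}), and for fixed $p_0$ the factor $2^{-1/p_0}$ is just a fixed number, not small. To close the argument you would need a De Giorgi measure-reduction (isoperimetric) lemma to drive $|\{u>k_j\}|$ down from $\tfrac12$ to a small threshold across levels $k_j$, or a weak Harnack inequality applicable to $M(r)-u$; neither is available in the paper, since $M(r)-u$ is only a subsolution and the logarithmic/BMO lemma (\Cref{lem-log}) applies to positive supersolutions only. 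Your ``high'' case, applying \Cref{thm-WHI-int} to $u-m(r)$, is fine.

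\textbf{Takeaway.} The essential insight of the paper's proof — which your dichotomy misses — is that the contact condition $\essinf_{B_R}(u-\psi)=0$ at all scales gives a direct upper bound on the essential infimum of the normalized supersolution, bypassing any measure-theoretic iteration. If you want to keep a scale-by-scale structure, replace your measure dichotomy by the paper's contact dichotomy: whenever $\essinf_{B_r}(u-\psi)>0$, invoke solution regularity; otherwise $\essinf_{B_r}(u-\psi)=0$ and the infimum is controlled by $\omega_\psi(r)$.
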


\begin{proof}
Let us fix $x_0 \in \Omega$. If
\begin{equation*}
\essinf_{B_R} (u-\psi) > 0
\end{equation*}
for some $B_R=B_R(x_0) \subset \Omega$, then $u$ is a solution of $\mathcal{L}u=0$ in $B_R$ by \Cref{cor-obstacle}, and so is $u-\psi(x_0)$. Thus, by Theorem~1.2 in Chaker--Kim--Weidner~\cite{CKW22}
\begin{equation*}
\essosc_{B_r}u \leq C \left( \frac{r}{R} \right)^\alpha \left( \mathrm{Tail}_g(u-\psi(x_0); x_0, R/2) + \|u-\psi(x_0)\|_{L^\infty(B_{R/2})} \right)
\end{equation*}
for all $r \in (0, R/8]$.

Suppose now that
\begin{equation}\label{eq-alternative}
\essinf_{B_R} (u-\psi) = 0
\end{equation}
for every $B_R=B_R(x_0) \subset \Omega$. We claim that that there exist constants $\tau \in (0,1)$ and $C>0$, depending only on $n$, $p$, $q$, $s$, and $\Lambda$, such that
\begin{equation}\label{eq-claim-Holder}
\essosc_{B_{\tau R}}u + \mathrm{Tail}_g(u-\psi(x_0); x_0, \tau R) \leq \frac{1}{2} \left( \essosc_{B_R}u + \mathrm{Tail}_g(u-\psi(x_0); x_0, R) \right) + C\omega_\psi(R)
\end{equation}
whenever $B_R \subset \Omega$. To prove the claim, we consider a function $u_d:=u-d$, where $d=\psi(x_0)-\omega_\psi(R)$, which is nonnegative in $B_R$ by \eqref{eq-alternative}. Let $r \in (0, R/4]$. Since $u_d$ is a supersolution of $\mathcal{L}u_d=0$ in $B_{4r} \subset B_R$, \Cref{thm-WHI-int} provides
\begin{align*}
(4r)^s g^{-1} \left( \fint_{B_{2r}} g\left( \frac{u_d}{(4r)^{s}} \right) \,\mathrm{d}x \right)
&\leq C \essinf_{B_{2r}} u_d + C\, \mathrm{Tail}_{g}((u_d)_-; x_0, 4r) \\
&\leq C \omega_\psi(R) + C \left(\frac{r}{R} \right)^{\frac{s}{q-1}} \mathrm{Tail}_{g}(u_d; x_{0}, R).
\end{align*}
On the other hand, \Cref{thm-obs-bdd} with $k=d+2\omega_\psi(R)$ shows that
\begin{equation}\label{eq-sup-est}
\esssup_{B_r}\,u_d \leq 2\omega_\psi(R) + \varepsilon \,\mathrm{Tail}_g((u_d)_+; x_0, r) + C(\varepsilon) \left( \fint_{B_{2r}} u_d^{p_0} \,\mathrm{d}x \right)^{1/p_0}
\end{equation}
for any $p_0>0$ and $\varepsilon \in (0, 1)$. By taking sufficiently small $p_0$ (and using \Cref{lem-Lp}), we can combine two displays above. Then we obtain
\begin{equation*}
\essosc_{B_r} u \leq \esssup_{B_r}\,u_d \leq C\omega_\psi(R) + \varepsilon \,\mathrm{Tail}_g(u_d; x_0, r) + C \left(\frac{r}{R} \right)^{\frac{sq}{q-1}} \mathrm{Tail}_g(u_d; x_0, R).
\end{equation*}
Since
\begin{align*}
\mathrm{Tail}_g(u_d; x_0, r)
&\leq C\esssup_{B_R} u_d + C\left( \frac{r}{R} \right)^{\frac{sq}{q-1}} \mathrm{Tail}_g(u_d; x_0, R) \\
&\leq C\left( \essosc_{B_R} u + 2\omega_\psi(R) \right) + C\left( \frac{r}{R} \right)^{\frac{sq}{q-1}} \mathrm{Tail}_g(u_d; x_0, R),
\end{align*}
we deduce
\begin{equation*}
\essosc_{B_r} u \leq C \varepsilon \essosc_{B_R} u + C\omega_\psi(R) + C(\varepsilon) \left( \frac{r}{R} \right)^{\frac{sq}{q-1}} \mathrm{Tail}_g(u_d; x_0, R).
\end{equation*}
For any $\varepsilon' \in (0,1)$, we choose $\varepsilon$ and then $\tilde{\tau} \in (0,1/4 )$ accordingly so that $C\varepsilon \leq \varepsilon'$ and $C(\varepsilon) \tilde{\tau}^{\frac{sq}{q-1}} \leq \varepsilon'$. Then for $r=\tilde{\tau}R$ we have
\begin{equation}\label{eq-tilde-tau}
\essosc_{B_{\tilde{\tau}R}} u \leq \varepsilon' \left( \essosc_{B_R} u + \mathrm{Tail}_g(u-\psi(x_0); x_0, r) \right)+ C\omega_\psi(r).
\end{equation}
Now, for any $\tau \in (0, \tilde{\tau})$ we obtain from \eqref{eq-alternative} and \eqref{eq-tilde-tau} that
\begin{align*}
\mathrm{Tail}_g(u-\psi(x_0); x_0, \tau R)
&\leq C \essosc_{B_{\tilde{\tau}R}} u + C\omega_\psi(R) + C \left( \frac{\tau}{\tilde{\tau}} \right)^{\frac{sq}{q-1}} \mathrm{Tail}_g(u-\psi(x_0); x_0, \tilde{\tau}R) \\
&\leq C \varepsilon' \left( \essosc_{B_R} u + \mathrm{Tail}_g(u-\psi(x_0); x_0, R) \right) + C\omega_\psi(R) \\
&\quad + C \left( \frac{\tau}{\tilde{\tau}} \right)^{\frac{sq}{q-1}} \left( \essosc_{B_R}u + \omega_\psi(R) + \mathrm{Tail}_g(u-\psi(x_0); x_0, R) \right).
\end{align*}
Therefore, we arrive at the claim \eqref{eq-claim-Holder} by taking $\tau$ and $\varepsilon'$ so small that $(\tau/\tilde{\tau})^{sq/(q-1)} \leq \varepsilon'$ and $2C\varepsilon' \leq 1/2$.

The desired result follows by iterating \eqref{eq-claim-Holder}, using \eqref{eq-sup-est}, and recalling that $\psi$ is locally H\"older continuous.
\end{proof}

We move our attention to the H\"older continuity of solutions up to the boundary. To this end, we need to impose some regularity assumption on $\Omega$. We say that a measurable set $E \subset \mathbb{R}^n$ satisfies a \emph{measure density condition} if there exist $R_0>0$ and $\delta \in (0, 1)$ such that
\begin{equation*}
\inf_{R \in (0, R_0)} \frac{|E \cap B_R(x_0)|}{|B_R(x_0)|} \geq \delta
\end{equation*}
for every $x_0 \in \partial E$. Note that if $D$ and $\Omega$ are open sets such that $D \Subset \Omega$, there always exists an open set $U$ such that $D \Subset U \Subset \Omega$ with $\mathbb{R}^n \setminus U$ satisfying the measure density condition.

\begin{theorem}\label{thm-Holder-bdry}
Assume that $\mathbb{R}^n \setminus \Omega$ satisfies the measure density condition with $R_0>0$ and $\delta \in (0,1)$. Let $B_R=B_R(x_0)$ satisfy $B_R \cap \Omega \neq \emptyset$ and $R\leq R_0$. If $\vartheta \in \mathcal{K}_{\psi, \vartheta}(\Omega, \Omega')$ is H\"older continuous in $B_R \setminus \Omega$ and if $\psi$ is H\"older continuous in $B_R \cap \Omega$ or $h \equiv -\infty$, then the solution $u$ to the obstacle problem in $\mathcal{K}_{\psi,\vartheta}(\Omega, \Omega')$ has a representative which is H\"older continuous in $B_R$.
\end{theorem}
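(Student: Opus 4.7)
The plan is to adapt the argument of \Cref{thm-Holder-int} by systematically replacing the interior local boundedness and weak Harnack estimates with their boundary counterparts \Cref{thm-obs-bdd} and \Cref{thm-WHI-bdry}, while using the measure density condition on $\mathbb{R}^{n}\setminus\Omega$ to transfer the regularity of $\vartheta$ into pointwise oscillation control for $u$. For points $x_0\in B_R$ far from $\partial\Omega$, the desired H\"older continuity follows from \Cref{thm-Holder-int} directly; for points with $\mathrm{dist}(x_0,\partial\Omega)$ of the same order as the working radius $r$ (in particular for $x_0\in\partial\Omega\cap B_R$), I would establish an oscillation decay of the form
\begin{equation*}
\essosc_{B_{\tau r}(x_0)} u + \mathrm{Tail}_g(u-c;x_0,\tau r) \leq \tfrac{1}{2}\bigl(\essosc_{B_r(x_0)} u + \mathrm{Tail}_g(u-c;x_0,r)\bigr) + C\bigl(\omega_\psi(r) + \omega_\vartheta(r)\bigr)
\end{equation*}
for a fixed $\tau\in(0,1)$ and all sufficiently small $r$, where $\omega_\vartheta(r):=\essosc_{B_r(x_0)\setminus\Omega}\vartheta$ and $c$ is a reference value like $\min\{\psi(x_0),\vartheta(x_0)\}$. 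Iterating this displayed decay and combining with the interior estimate yields a H\"older representative on all of $B_R$.

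Following the interior proof, I would split into two alternatives. In the first, $\essinf_{B_r(x_0)\cap\Omega}(u-\psi)>0$ for some ball, so by \Cref{cor-obstacle} $u$ solves $\mathcal{L}u=0$ in $B_r(x_0)\cap\Omega$. The oscillation decay then reduces to boundary H\"older regularity for $\mathcal{L}$-solutions with H\"older boundary data, which I would derive by applying \Cref{thm-WHI-bdry} to the two nonnegative supersolutions $u-m(r)$ and $M(r)-u$ and combining with \Cref{thm-loc-bdd-bdry}, the measure density condition ensuring that the truncations $(u-m(r))_{m_*}^{-}$ and $(M(r)-u)_{m_*}^{-}$ carry nontrivial mass controlled by $\omega_\vartheta(r)$. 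In the second alternative, $\essinf_{B_r(x_0)\cap\Omega}(u-\psi)=0$ for every small $r$; I would then set $d:=\min\{\psi(x_0),\vartheta(x_0)\}-\omega_\psi(r)-\omega_\vartheta(r)$ so that $u-d\geq 0$ throughout $B_r(x_0)$, not merely in $\Omega$. Applying \Cref{thm-WHI-bdry} to the nonnegative supersolution $u-d$ gives a lower bound on $\essinf_{B_{\tau_2 r}}(u-d)$, while \Cref{thm-obs-bdd}(i) with $k=d+2\omega_\psi(r)+2\omega_\vartheta(r)$ provides the matching upper bound for $\esssup_{B_{r/2}}(u-d)$ after an absorbing step exactly as in the derivation of \eqref{eq-sup-est}.

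The main obstacle, compared with the interior theorem, is the dual role of $\psi$ and $\vartheta$ near $\partial\Omega$: the pointwise constraint $u\geq\psi$ inside $\Omega$ and the weak identity $u=\vartheta$ on $\mathbb{R}^{n}\setminus\Omega$ need not be compatible in a pointwise sense, so the shift $d$ must simultaneously respect both sides, which forces both $\omega_\psi(r)$ and $\omega_\vartheta(r)$ into the error term. This is where the measure density condition becomes essential: it guarantees $|B_r(x_0)\setminus\Omega|\geq\delta|B_r(x_0)|$ uniformly for $r<R_0$, so that the information ``$u=\vartheta\approx\vartheta(x_0)$ on a set of positive density'' propagates through the Moser iteration hidden inside \Cref{thm-WHI-bdry} and \Cref{thm-obs-bdd} into genuine pointwise control on the smaller ball. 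The nonlocal tails are absorbed by the same splitting into intermediate annuli as in the proof of \Cref{thm-Holder-int}, and the final iteration follows the standard pattern, producing a H\"older exponent depending on $n,p,q,s,\Lambda,\delta$ and the H\"older exponents of $\psi$ and $\vartheta$.
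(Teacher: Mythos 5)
Your proposal matches the paper's strategy: a dichotomy on whether $\essinf_{B_r}(u-\psi)$ vanishes for all small $r$, with the in-contact case handled by transplanting the proof of \Cref{thm-Holder-int} using \Cref{thm-WHI-bdry} and \Cref{thm-obs-bdd} in place of their interior versions, and the free (harmonic) case reduced to boundary H\"older regularity of $\mathcal{L}$-solutions via \Cref{thm-WHI-bdry} together with the measure density condition. Note however that the harmonic case, which you sketch in one clause, is where the paper does nearly all the work: rather than re-deriving a contraction of your displayed form, it runs an explicit two-sided induction constructing $M_j,m_j$ with $M_j-m_j=\lambda r_j^\alpha$, uses the measure density condition to lower-bound the WHI integral by $M_j-\widetilde{M}_j$ with $\widetilde{M}_j=\sup_{B^j\setminus\Omega}\vartheta$, and controls the tail of the truncation via the inductive hypothesis across dyadic annuli, choosing $\alpha$ small enough that the resulting series $S(\alpha)\to 0$ can be absorbed; your outline gestures at all of these ingredients but leaves the crucial tail bookkeeping and the smallness requirement on $\alpha$ unaddressed, so that step would need to be fleshed out for the argument to close.
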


\begin{proof}
If $\essinf_{B_r}(u-\psi)=0$ for every $r \in (0, R)$, then one can follow the lines of proof of \Cref{thm-Holder-int} and use \Cref{thm-WHI-bdry} instead of \Cref{thm-WHI-int} to conclude theorem. Thus, the remaining part of the proof focuses on the case
\begin{equation*}
\essinf_{B_{r_0}}(u-\psi) > 0 \quad\text{for some}~ r_0 \in (0, R).
\end{equation*}
Note that in this case $u$ is a solution of $\mathcal{L}u=0$ in $B_{r_0} \cap \Omega$ by \Cref{cor-obstacle}.

We claim that there exist constants $\alpha \in (0,1)$, $\lambda >0$, a non-increasing sequence $\{M_j\}_{j\geq0}$, and a non-decreasing sequence $\{m_j\}_{j \geq0}$ such that $m_j \leq \essinf_{B^j}u \leq \esssup_{B^j} u \leq M_j$ and $M_j-m_j=\lambda r_j^\alpha$, where $r_j=2^{-j}r_0$ and $B^j = B_{r_j}$ for $j \geq 0$. We argue by induction on $j$. For $j=0$, we choose $M_0=\esssup_{B^0}u$, $m_0=M_0-\lambda r_0^\alpha$, and
\begin{equation}\label{eq-lambda}
\lambda \geq 2r_0^{-\alpha} \|u\|_{L^\infty(B^0)}
\end{equation}
so that $\essinf_{B^0}u \geq m_0$.

Assume now that we have constructed sequences $\{M_j\}$ and $\{m_j\}$ up to the index $j$. Since $v:=M_j-u$ is a solution of $\mathcal{L}v=0$ in $B_{r_0} \cap \Omega$ such that $v \geq 0$ in $B^j$, \Cref{thm-WHI-bdry} shows that
\begin{equation*}
r_j^s g^{-1}\left( \fint_{B^{j+1}} g \left( r_j^{-s}w \right) \,\mathrm{d}x \right) \leq C \essinf_{B^{j+1}} w + C\,\mathrm{Tail}_g(w_-; x_0, r_j),
\end{equation*}
where $w=v_{M_j-\widetilde{M}_j}^-$ and $\widetilde{M}_j=\sup_{B^j \setminus \Omega}\vartheta$. On the one hand, we have from $w = M_j-\widetilde{M}_j$ in $B^{j+1} \setminus \Omega$ that
\begin{equation*}
r_j^s g^{-1}\left( \fint_{B^{j+1}} g \left( r_j^{-s}w \right) \,\mathrm{d}x \right) \geq r_j^s g^{-1}\left( \frac{|B^{j+1} \setminus \Omega|}{|B^{j+1}|} g \left( \frac{M_j-\widetilde{M}_j}{r_j^s} \right) \right) \geq c \left( M_j - \widetilde{M}_j \right)
\end{equation*}
for some $c = c(p, q, \delta)>0$, where we used the measure density condition and \Cref{lem-G}. On the other hand, we have
\begin{equation*}
\essinf_{B^{j+1}} w \leq M_j - M_{j+1},
\end{equation*}
and hence
\begin{equation}\label{eq-induction}
M_j - \widetilde{M}_j \leq C(M_j - M_{j+1}) + C \,\mathrm{Tail}_g(w_-; x_0, r_j).
\end{equation}
In order to estimate the tail term on the right-hand side of \eqref{eq-induction}, we use the inductive hypothesis; for $0 \leq k \leq j-1$, we have
\begin{align*}
&w_- \leq M_k - M_j \leq M_k-m_k -(M_j-m_j) \leq \lambda (r_k^\alpha - r_j^\alpha) \quad\text{in}~ B^k \setminus B^{k+1} ~\text{and} \\
&w_- \leq (M_j-u)_- \leq \|u\|_{L^\infty(B^0)} + |u| \quad\text{outside}~ B^0.
\end{align*}
Thus, we obtain $\mathrm{Tail}_g(w_-; x_0, r_j) \leq C(I_1+I_2)$, where
\begin{align*}
I_1 &= r_j^s g^{-1} \left( r_j^s \sum_{k=0}^{j-1} \int_{B^k \setminus B^{k+1}} g\left( \lambda \frac{r_k^\alpha - r_j^\alpha}{|x-x_0|^s} \right) \frac{\mathrm{d}x}{|x-x_0|^{n+s}} \right) \quad\text{and} \\
I_2 &= r_j^s g^{-1} \left( r_j^s \int_{\mathbb{R}^n \setminus B^0} g\left( \frac{\|u\|_{L^\infty(B^0)} + |u|}{|x-x_0|^s} \right) \frac{\mathrm{d}x}{|x-x_0|^{n+s}} \right).
\end{align*}
We observe that we have
\begin{align*}
I_1
&\leq C r_j^s g^{-1}\left( r_j^s \sum_{k=0}^{j-1} g\left( \lambda \frac{r_k^\alpha - r_j^\alpha}{r_{k+1}^s} \right) \frac{1}{r_{k+1}^s} \right) \\
&\leq C r_j^s g^{-1} \left( \sum_{k=0}^{j-1} g\left( \lambda r_j^{\alpha-s} \frac{2^{\alpha(j-k)} - 1}{2^{s(j-k)}} \right) \frac{1}{2^{s(j-k)}} \right) \\
&\leq C r_j^s g^{-1} \left( g( \lambda r_j^{\alpha-s}) \sum_{k=0}^{j-1} \frac{(2^{\alpha(j-k)} - 1)^{p-1}}{2^{sp(j-k)}} \right) \\
&\leq C S(\alpha) \lambda r_j^\alpha
\end{align*}
by using \Cref{lem-G}, where
\begin{equation*}
S(\alpha) = \left( \sum_{k=0}^\infty \frac{(2^{\alpha k} - 1)^{p-1}}{2^{spk}} \right)^{\frac{1}{p-1}} \lor \left( \sum_{k=0}^\infty \frac{(2^{\alpha k} - 1)^{p-1}}{2^{spk}} \right)^{\frac{1}{q-1}}.
\end{equation*}
Note that $S(\alpha) \to 0$ as $\alpha \to 0$. Moreover, we obtain
\begin{align*}
I_2
&\leq r_j^s g^{-1} \left( r_j^s \int_{\mathbb{R}^n \setminus B^0} g\left( \frac{\|u\|_{L^\infty(B^0)}}{|x-x_0|^s} \right) \frac{\mathrm{d}x}{|x-x_0|^{n+s}} + \left( \frac{r_j}{r_0} \right)^s T(u; x_0, r_0) \right) \\
&\leq C r_j^s g^{-1} \left( \left( \frac{r_j}{r_0} \right)^s g\left( \frac{\|u\|_{L^\infty(B^0)}}{r_0^s} \right) + \left( \frac{r_j}{r_0} \right)^s T(u; x_0, r_0) \right) \\
&\leq C (r_j/r_0)^{\frac{sq}{q-1}} \left( \|u\|_{L^\infty(B^0)} + \mathrm{Tail}(u; x_0, r_0) \right) \\
&\leq C \frac{\|u\|_{L^\infty(B^0)} + \mathrm{Tail}_g(u; x_0, r_0)}{r_0^\alpha} r_j^\alpha
\end{align*}
by using \Cref{lem-G} and assuming $\alpha < sq/(q-1)$. Therefore, we arrive at
\begin{equation*}
M_j - \widetilde{M}_j \leq C(M_j - M_{j+1}) + C \left( S(\alpha) \lambda + \frac{\|u\|_{L^\infty(B^0)} + \mathrm{Tail}_g(u; x_0, r_0)}{r_0^\alpha} \right) r_j^\alpha.
\end{equation*}
Similarly, by using $u-m_j$ we obtain
\begin{equation*}
\widetilde{m}_j-m_j \leq C (m_{j+1}-m_j) + C \left( S(\alpha) \lambda + \frac{\|u\|_{L^\infty(B^0)} + \mathrm{Tail}_g(u; x_0, r_0)}{r_0^\alpha} \right) r_j^\alpha,
\end{equation*}
where $\widetilde{m}_j = \inf_{B^j \setminus \Omega} \vartheta$. By addition we get
\begin{align*}
\essosc_{B^{j+1}}u
&\leq (1-\gamma) \lambda r_j^\alpha + \osc_{B^j \setminus \Omega}\vartheta + C \left( S(\alpha) \lambda + \frac{\|u\|_{L^\infty(B^0)} + \mathrm{Tail}_g(u; x_0, r_0)}{r_0^\alpha} \right) r_j^\alpha \\
&\leq  2^\alpha(1-\gamma+CS(\alpha)) \lambda r_{j+1}^\alpha + [\vartheta]_{C^\beta(\overline{B^0})} (2r_j)^\beta + C \frac{\|u\|_{L^\infty(B^0)} + \mathrm{Tail}_g(u; x_0, r_0)}{r_0^\alpha} r_{j+1}^\alpha
\end{align*}
for some $\gamma \in (0,1)$, where $\beta$ is the H\"older exponent of $\vartheta$. Assuming $\alpha < \beta$, we have $(2r_j)^\beta \leq 2^{\alpha+\beta} r_0^{\beta-\alpha} r_{j+1}^\alpha$. We now choose $\alpha \in (0, \min\{\frac{sq}{q-1}, \beta \})$ sufficiently small so that $2^\alpha(1-\gamma+CS(\alpha)) \leq 1-\gamma/2$ and then we set
\begin{equation*}
\lambda = \frac{2}{\gamma} \left( 2^{\alpha+\beta} r_0^\beta [\vartheta]_{C^\beta(\overline{B^0})} + C \|u\|_{L^\infty(B^0)} + C\,\mathrm{Tail}_g(u; x_0, r_0) \right) r_0^\alpha,
\end{equation*}
which is in accordance with \eqref{eq-lambda}. Then we have
\begin{equation*}
\essosc_{B^{j+1}}u \leq \lambda r_{j+1}^\alpha.
\end{equation*}
We may choose $M_{j+1}$ and $m_{j+1}$ so that
\begin{equation*}
m_j \leq m_{j+1} \leq \essinf_{B^{j+1}}u \leq \esssup_{B^{j+1}}u \leq M_{j+1} \leq M_j, \quad M_{j+1} - m_{j+1} = \lambda r_{j+1}^\alpha,
\end{equation*}
which completes the induction.
\end{proof}

A slight modification of the proof of \Cref{thm-Holder-int} (or \Cref{thm-Holder-bdry}) reveals that $u$ has a representative which is continuous in $\Omega$ (or $B_R$, respectively) if $\psi$ is continuous in $\Omega$ or $\psi \equiv -\infty$ (or $\vartheta \in \mathcal{K}_{\psi, \vartheta}(\Omega, \Omega')$ is continuous in $B_R \setminus \Omega $ and if $\psi$ is continuous in $B_R \cap \Omega$ or $h \equiv -\infty$).

\subsection{Other obstacle problems}\label{sec-other-obs}

As mentioned in \Cref{rmk-obstacle}, the results obtained in this section can be translated into those for other types of obstacle problems. For instance, we say that $u \in \mathcal{K}_{\psi, \vartheta}(\Omega)$, where $\mathcal{K}_{\psi, \vartheta}(\Omega)$ is defined as in \eqref{eq-admissible-V}, is a solution to the obstacle problem in $\mathcal{K}_{\psi, \vartheta}(\Omega)$ if $\mathcal{E}(u, v-u) \geq 0$ for all $v \in \mathcal{K}_{\psi, \vartheta}(\Omega)$. Then we have the following results.

\begin{theorem}\label{cor-1}
If $\mathcal{K}_{\psi, \vartheta}(\Omega)$ is non-empty, then there exists a unique solution $u$ to the obstacle problem in $\mathcal{K}_{\psi, \vartheta}(\Omega)$. Moreover, $u$ is a supersolution of $\mathcal{L}u=0$ in $\Omega$.
\end{theorem}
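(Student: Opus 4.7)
The plan is to carry out the monotone operator argument of Lemma~\ref{lem-A} directly on the space $V^{s,G}(\Omega)$ in place of $W^{s,G}(\Omega,\Omega')$, rather than attempt to reduce to Theorem~\ref{thm-obstacle} by picking an auxiliary $\Omega'$. Since functions in $V^{s,G}(\Omega)$ automatically belong to $L^g_s(\mathbb{R}^n)$, the energy $\mathcal{E}(u,w)$ is well defined for $u\in \mathcal{K}_{\psi,\vartheta}(\Omega)$ and $w\in V^{s,G}_0(\Omega)$, and by the third case of Lemma~\ref{lem-finiteness} (i.e.\ by applying~\eqref{eq-Holder1} with $E=\mathbb{R}^n$ and $[\,\cdot\,]=[\,\cdot\,]_{V^{s,G}(\Omega)}$) the operator $\mathcal{A}u:=\mathcal{E}(u,\cdot)$ is a bounded linear functional on $V^{s,G}_0(\Omega)$.

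I would then verify that $\mathcal{A}\colon \mathcal{K}_{\psi,\vartheta}(\Omega)\to(V^{s,G}_0(\Omega))'$ is monotone, weakly continuous, and coercive, so that Corollary~III.1.8 of Kinderlehrer--Stampacchia applied to the convex closed admissible set yields a unique solution. Monotonicity is immediate from the pointwise inequality~\eqref{eq-FTC}. Weak continuity follows word for word from the corresponding step in the proof of Lemma~\ref{lem-A}, with $\|\cdot\|_{W^{s,G}(\Omega,\Omega')}$ replaced by $\|\cdot\|_{V^{s,G}(\Omega)}$ and the tail contribution from~\eqref{eq-finiteness} collapsing since the $y$-integration is already over all of $\mathbb{R}^n$. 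Coercivity relies on the fractional Poincar\'e inequality (Theorem~\ref{thm-Poincare}); here one uses that $V^{s,G}_0(\Omega)\subset W^{s,G}_0(\Omega,\Omega')$ for any bounded $\Omega'\Supset\Omega$ together with $[\,\cdot\,]_{W^{s,G}(\Omega,\Omega')}\leq [\,\cdot\,]_{V^{s,G}(\Omega)}$, which transfers Poincar\'e to the desired space and lets the rest of the coercivity computation in Lemma~\ref{lem-A} proceed unchanged.

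For the supersolution claim, given any nonnegative $\varphi\in C_c^\infty(\Omega)$, the function $v:=u+\varphi$ satisfies $v\geq u\geq\psi$ a.e.\ in $\Omega$, while $v-\vartheta=(u-\vartheta)+\varphi\in V^{s,G}_0(\Omega)$ because $C_c^\infty(\Omega)\subset V^{s,G}_0(\Omega)$. Hence $v\in \mathcal{K}_{\psi,\vartheta}(\Omega)$, and the variational inequality $\mathcal{E}(u,v-u)\geq 0$ becomes $\mathcal{E}(u,\varphi)\geq 0$, which is exactly~\eqref{eq-supersolution}.

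The step I expect to require the most care is the coercivity estimate: one must make sure that the splitting of $\langle \mathcal{A}u_j-\mathcal{A}v,u_j-v\rangle$ into a ``good'' lower bound and an absorbable remainder still controls $\|u_j-v\|_{V^{s,G}(\Omega)}$, not merely its $W^{s,G}(\Omega,\Omega')$ counterpart. This is handled by the larger domain of integration built into the $V^{s,G}$ seminorm, together with the fact that the tail quantity appearing in~\eqref{eq-finiteness} is finite whenever $\vartheta\in V^{s,G}(\Omega)\subset L^g_s(\mathbb{R}^n)$, so no additional hypothesis beyond those carried over from Theorem~\ref{thm-obstacle} is needed.
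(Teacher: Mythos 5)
Your overall strategy — running the monotone operator argument directly in $V^{s,G}(\Omega)$, verifying monotonicity/weak continuity/coercivity of $\mathcal{A}\colon\mathcal{K}_{\psi,\vartheta}(\Omega)\to (V_0^{s,G}(\Omega))'$, and deriving the supersolution property by testing with $v=u+\varphi$ — matches the paper's. The supersolution step and the coercivity transfer via $V^{s,G}_0(\Omega)\subset W^{s,G}_0(\Omega,\Omega')$ are fine.

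However, there is a genuine gap in the weak continuity step. You propose to carry over the proof of Lemma~\ref{lem-A} ``word for word,'' but that proof invokes Lemma~\ref{lem-ineq5}, which explicitly requires the structural assumption~\eqref{eq-q-1} (that $t\mapsto g(t)/t^{q-1}$ is non-increasing). The statement you are proving does \emph{not} carry this hypothesis — and indeed the paper stresses immediately after stating it that~\eqref{eq-q-1} is \emph{not} required here, in contrast to Theorem~\ref{thm-obstacle}. Your closing remark that ``no additional hypothesis beyond those carried over from Theorem~\ref{thm-obstacle} is needed'' therefore imports~\eqref{eq-q-1} implicitly, so your argument only proves a weaker version of the corollary. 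The paper's proof instead replaces the pointwise estimate from Lemma~\ref{lem-ineq5} with a compactness argument: it passes to an a.e.\ convergent subsequence, observes that the $L^{G^*}(\Omega\times\mathbb{R}^n;|x-y|^{-n}\,\mathrm{d}y\,\mathrm{d}x)$-norms of $g(|D^s u_{j_i}|)\tfrac{D^s u_{j_i}}{|D^s u_{j_i}|}$ are uniformly bounded, and deduces weak $L^{G^*}$ convergence to $g(|D^su|)\tfrac{D^su}{|D^su|}$, which then passes through any fixed test function $w\in V^{s,G}_0(\Omega)$. This weak-compactness route is precisely what becomes available in the $V^{s,G}(\Omega)$ setting (the relevant dual pairing lives on $\Omega\times\mathbb{R}^n$ and the integrand pairs against an $L^G$ object globally), and it is the reason~\eqref{eq-q-1} can be dropped; you would need to substitute such an argument to prove the theorem as stated.
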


We point out that the condition \eqref{eq-q-1} is not required in \Cref{cor-1}.

\begin{proof}[Proof of \Cref{cor-1}]
Recall that the assumption \eqref{eq-q-1} is used only when we check the weak continuity of the mapping $\mathcal{A}$ in \Cref{lem-A}. Since the monotonicity and coercivity of $\mathcal{A}: \mathcal{K}_{\psi,\vartheta}(\Omega) \to (V_0^{s, G}(\Omega))'$ can be proved in the same way, let us provide the proof of the weak continuity. Suppose that a sequence $\{u_j\} \subset \mathcal{K}_{\psi, \vartheta}(\Omega)$ converges to $u \in \mathcal{K}_{\psi, \vartheta}(\Omega)$ in $V^{s, G}(\Omega)$. We choose a subsequence $\{u_{j_i}\}$ such that $u_{j_i}\to u$ a.e.\ in $\Omega$. Since $L^{G^\ast}(\Omega \times \mathbb{R}^n; |x-y|^{-n}\,\mathrm{d}y\,\mathrm{d}x)$-norms of $g(|D^su_{j_i}|)\frac{D^su_{j_i}}{|D^su_{j_i}|}$ are uniformly bounded, $g(|D^su_{j_i}|)\frac{D^su_{j_i}}{|D^su_{j_i}|}$ weakly converges to $g(|D^su|)\frac{D^su}{|D^su|}$ in $L^{G^\ast}(\Omega \times \mathbb{R}^n; |x-y|^{-n}\,\mathrm{d}y\,\mathrm{d}x)$. Since the weak limit is independent of the choice of the subsequence, $g(|D^su_j|)\frac{D^su_j}{|D^su_j|}$ weakly converges to $g(|D^su|)\frac{D^su}{|D^su|}$ in $L^{G^\ast}(\Omega \times \mathbb{R}^n; |x-y|^{-n}\,\mathrm{d}y\,\mathrm{d}x)$. Therefore, we have $\langle \mathcal{A}u_j-\mathcal{A}u, w \rangle \to 0$ as $j \to \infty$ for all $w \in V^{s, G}_0(\Omega)$.
\end{proof}

\begin{corollary}\label{cor-2}
Let $u$ be the solution to the obstacle problem in $\mathcal{K}_{\psi, \vartheta}(\Omega)$. If $B_R =B_R(x_0) \subset \Omega$ is such that
\begin{equation*}
\essinf_{B_R}(u-\psi)>0,
\end{equation*}
then $u$ is a solution of $\mathcal{L}u=0$ in $B_R$. In particular, if $u$ is lower semicontinuous and $\psi$ is upper semicontinuous in $\Omega$, then $u$ is a solution of $\mathcal{L}u=0$ in $\{x \in \Omega: u(x)>\psi(x)\}$.
\end{corollary}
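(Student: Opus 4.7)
\textbf{Proof proposal for \Cref{cor-2}.}
The plan is to mimic the argument used for \Cref{cor-obstacle}: locally in $B_R$ the obstacle constraint is inactive with a uniform gap, so arbitrary compactly supported perturbations of $u$ remain admissible, and the variational inequality collapses to the Euler--Lagrange equation. Concretely, set $\delta := \essinf_{B_R}(u-\psi) > 0$. Given any $\varphi \in C_c^\infty(B_R)$, I would show that $v_\varepsilon := u + \varepsilon \varphi$ lies in $\mathcal{K}_{\psi, \vartheta}(\Omega)$ for every $\varepsilon \in \mathbb{R}$ with $|\varepsilon| \leq \delta/(1+\|\varphi\|_{L^\infty})$, and then exploit that we may test with both signs.

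For admissibility, first note $v_\varepsilon - \vartheta = (u-\vartheta) + \varepsilon \varphi \in V^{s, G}_0(\Omega)$ since $C_c^\infty(\Omega) \subset V^{s, G}_0(\Omega)$ and $V^{s, G}_0(\Omega)$ is a linear subspace containing $u - \vartheta$. Membership in $V^{s, G}(\Omega)$ is preserved similarly. The obstacle inequality is immediate outside $B_R$, where $v_\varepsilon = u \geq \psi$, and inside $B_R$ it follows from $v_\varepsilon \geq u - |\varepsilon| \|\varphi\|_{L^\infty} \geq \psi + \delta - |\varepsilon| \|\varphi\|_{L^\infty} \geq \psi$. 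Testing the variational inequality with $v_\varepsilon$ gives $\varepsilon\,\mathcal{E}(u, \varphi) \geq 0$; since this holds for both signs of $\varepsilon$, one obtains $\mathcal{E}(u, \varphi) = 0$ for every $\varphi \in C_c^\infty(B_R)$, which is precisely the assertion that $u$ is a solution of $\mathcal{L} u = 0$ in $B_R$.

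For the \emph{In particular} statement, since $u$ is l.s.c.\ and $\psi$ is u.s.c., the function $u - \psi$ is l.s.c., so the coincidence-complement set $U := \{x \in \Omega : u(x) > \psi(x)\}$ is open. For any $x_0 \in U$, lower semicontinuity of $u - \psi$ at $x_0$ yields a ball $B_R(x_0) \Subset U$ on which $u - \psi > \tfrac{1}{2}(u(x_0) - \psi(x_0)) > 0$ pointwise, hence $\essinf_{B_R}(u-\psi) > 0$, and the first part of the corollary applies on each such ball. Being a solution is a local property, so $u$ is a solution of $\mathcal{L} u = 0$ in $U$.

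The main obstacle I anticipate is purely bookkeeping, namely verifying that $v_\varepsilon \in V^{s, G}(\Omega)$ and that $v_\varepsilon - \vartheta \in V^{s, G}_0(\Omega)$ in the Orlicz setting; this is routine since $\varphi \in C_c^\infty(\Omega)$ satisfies $\varrho_{V^{s, G}(\Omega)}(\varphi) < \infty$ and $V^{s, G}_0(\Omega)$ is a closed linear subspace, so no new estimate beyond \Cref{lem-finiteness} is needed. Everything else is identical to the proof of \Cref{cor-obstacle} outlined in \cite{KKP16}.
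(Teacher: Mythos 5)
Your proposal is correct and follows exactly the route the paper takes (via Corollary~1 in Korvenp\"a\"a--Kuusi--Palatucci~\cite{KKP16}, as the paper indicates for \Cref{cor-obstacle} and then for \Cref{cor-2} by swapping $W^{s,G}(\Omega,\Omega')$ for $V^{s,G}(\Omega)$): perturb $u$ by $\pm\varepsilon\varphi$ with $\varphi\in C_c^\infty(B_R)$, use the uniform gap $\essinf_{B_R}(u-\psi)>0$ to keep $u+\varepsilon\varphi$ admissible, and conclude $\mathcal{E}(u,\varphi)=0$ by linearity in the second slot. The admissibility bookkeeping in the $V^{s,G}(\Omega)$ setting and the openness of $\{u>\psi\}$ from l.s.c.\ of $u-\psi$ are handled correctly, and locality of the equation takes care of patching the balls together.
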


\begin{theorem}\label{cor-3}
Let $u$ be the solution to the obstacle problem in $\mathcal{K}_{\psi, \vartheta}(\Omega)$ and let $B_R=B_R(x_0) \subset \Omega'$ satisfy $B_R \cap \Omega \neq \emptyset$. If \eqref{eq-obs-M} (or \eqref{eq-obs-m}, respectively) holds, then $u$ is locally essentially bounded from above (or below, respectively) in $B_R$.
\end{theorem}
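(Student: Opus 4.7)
The plan is to mirror the proof of \Cref{thm-obs-bdd}, making only the minor adjustments needed to accommodate the admissible class $\mathcal{K}_{\psi,\vartheta}(\Omega)$. Note first that $V^{s,G}(\Omega) \subset L^G_s(\mathbb{R}^n) \subset L^g_s(\mathbb{R}^n)$, so the tail of $u$ is finite on every ball and the $\mathrm{Tail}_g$ quantities appearing below are meaningful. The key step is to verify that the natural one-sided competitors used in the proof of \Cref{thm-obs-bdd} are admissible in $\mathcal{K}_{\psi,\vartheta}(\Omega)$; once this is done, Moser's iteration from \Cref{sec-loc-bdd} applies without change.

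For part (i), fix $k \geq M$, a point $y \in B_R$, and a cutoff $\eta \in C_c^\infty(B_\rho(y))$ with $0\leq \eta \leq 1$ and $B_\rho(y)\subset B_R$. I will use the competitor
\begin{equation*}
v = u - (u-k)_+\eta^q.
\end{equation*}
Since $u-\vartheta \in V^{s,G}_0(\Omega)$, we have $u = \vartheta$ a.e.\ on $\mathbb{R}^n\setminus \Omega$, and the hypothesis $k \geq M \geq \esssup_{B_R\setminus \Omega}\vartheta$ forces $(u-k)_+ = 0$ a.e.\ on $B_R\setminus \Omega$. A standard truncation-and-mollification argument (truncating $u$ at some large level $L$ and then letting $L \to \infty$) therefore places $(u-k)_+\eta^q$ in $V^{s,G}_0(\Omega)$, so that $v - \vartheta \in V^{s,G}_0(\Omega)$. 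The obstacle constraint is verified pointwise: on $\{u \leq k\}$ we have $v = u \geq \psi$, while on $\{u>k\}$ we have $v \geq u - (u-k)_+ = k \geq M \geq \psi$ a.e.\ in $\Omega$. Hence $v \in \mathcal{K}_{\psi,\vartheta}(\Omega)$, and testing the variational inequality $\mathcal{E}(u,v-u) \geq 0$ yields $\mathcal{E}(u,(u-k)_+\eta^q) \leq 0$.

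This last inequality is precisely the one used in the proof of \Cref{thm-obs-bdd} to derive the Caccioppoli-type estimate of Chaker--Kim--Weidner~\cite{CKW22} and, subsequently, the bound \eqref{eq-est-above}. Applying Moser's iteration exactly as in the proof of \Cref{thm-loc-bdd-bdry}, with the modified $L^\alpha$-type inequality of \Cref{lem-Lp}, then yields the analogue of \eqref{eq-obs-bdd-upper}, which in particular gives local essential boundedness of $u$ from above in $B_R$. Part (ii) is symmetric: for $k \leq m$, the competitor $v = u + (u-k)_-\eta^q$ belongs to $\mathcal{K}_{\psi,\vartheta}(\Omega)$ since $v \geq u \geq \psi$ a.e.\ in $\Omega$ and $(u-k)_-$ vanishes a.e.\ on $B_R\setminus\Omega$ (because $u = \vartheta \geq m \geq k$ there), and the corresponding iteration produces local essential boundedness from below.

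The only subtlety lies in the verification that $(u-k)_+\eta^q$ actually belongs to $V^{s,G}_0(\Omega)$ when $\eta$ is allowed to be supported across $\partial\Omega\cap B_R$; this is where the thresholds $k\geq M$ and $k\leq m$ play their essential role, guaranteeing that the truncation vanishes a.e.\ outside $\Omega$ so that the density of $C^\infty_c(\Omega)$ in the zero-trace subspace can be invoked. Everything else in the proof is a routine transcription of the arguments already carried out for \Cref{thm-obs-bdd} and \Cref{thm-loc-bdd-bdry}.
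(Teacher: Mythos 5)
Your proof is correct and follows exactly the route the paper takes: the paper's own proof is the one-line remark that one should ``follow the lines of proofs of \Cref{cor-obstacle}, \Cref{thm-obs-bdd}, and \Cref{thm-Holder-int}'' after checking that the test functions belong to $\mathcal{K}_{\psi,\vartheta}(\Omega)$, ``which is obvious.'' You have simply spelled out that verification, correctly identifying that the thresholds $k\geq M$ and $k\leq m$ force the truncations $(u-k)_\pm\eta^q$ to vanish a.e.\ outside $\Omega$ so that they lie in $V^{s,G}_0(\Omega)$, after which Moser's iteration from \Cref{sec-loc-bdd} applies verbatim.
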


\begin{theorem}\label{cor-4}
If $\psi$ is continuous (locally H\"older continuous, respectively) or $\psi \equiv -\infty$, then the solution $u$ to the obstacle problem in $\mathcal{K}_{\psi,\vartheta}(\Omega)$ has a representative which is continuous (locally H\"older continuous, respectively) in $\Omega$.
\end{theorem}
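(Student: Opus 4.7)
The plan is to mirror the proof of \Cref{thm-Holder-int}, since its entire structure rests on three mechanisms, all of which transfer to the new admissible class $\mathcal{K}_{\psi,\vartheta}(\Omega)$: (a) $u$ is a supersolution of $\mathcal{L}u=0$ in $\Omega$ and an honest solution on $\{u>\psi\}$, (b) the interior weak Harnack inequality \Cref{thm-WHI-int}, and (c) an obstacle upper bound of the form \eqref{eq-obs-bdd-upper} for $(u-k)_+$ with tail term, valid for $k$ above the local supremum of $\psi$. Items (a) and (b) are already available through \Cref{cor-1}, \Cref{cor-2}, and \Cref{thm-WHI-int}, so the first task is to transplant (c) into the $V^{s,G}$-setting.

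To obtain (c), I would revisit the proof of \Cref{thm-obs-bdd}(i). The only place the admissible class intervenes is the test function $v = u - (u-k)_+\eta^q$, with $\eta \in C_c^\infty(B_\rho(y))$, $B_\rho(y) \Subset \Omega$, and $k \geq \esssup_{B_\rho(y)\cap\Omega}\psi$. This $v$ lies in $\mathcal{K}_{\psi,\vartheta}(\Omega)$: the correction $(u-k)_+\eta^q$ belongs to $W^{s,G}_0(B_\rho(y)) \hookrightarrow V^{s,G}_0(\Omega)$, hence $v - \vartheta \in V^{s,G}_0(\Omega)$; and the pointwise bound $v \geq \psi$ holds trivially (on $\{\eta>0\}$ because $v \geq k \geq \psi$, and outside because $v=u\geq\psi$). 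Since the rest of the proof of \Cref{thm-obs-bdd}(i) is purely local and merely reproduces the Moser iteration of \Cref{thm-loc-bdd-bdry}, the analogue of \eqref{eq-obs-bdd-upper} for solutions in $\mathcal{K}_{\psi,\vartheta}(\Omega)$ follows with no change.

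With (a), (b), (c) in place, the proof of \Cref{thm-Holder-int} transfers essentially verbatim. The dichotomy is preserved: if $\essinf_{B_R}(u-\psi) > 0$ on some ball $B_R \subset \Omega$, then $u$ solves $\mathcal{L}u=0$ there by \Cref{cor-2}, and the interior H\"older estimate from Chaker--Kim--Weidner~\cite{CKW22} yields the conclusion at $x_0$; otherwise $\essinf_{B_R}(u-\psi) = 0$ for every such ball, and combining \Cref{thm-WHI-int} applied to $u - \psi(x_0) + \omega_\psi(R)$ with the upper bound (c) produces the oscillation decay \eqref{eq-claim-Holder}. Iterating and tracking the modulus of continuity of $\psi$ yields a H\"older modulus when $\omega_\psi(r) \lesssim r^\beta$. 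In the merely continuous case, one replaces the power modulus $R^\beta$ by the general modulus $\omega_\psi$ and iterates the resulting non-quantitative oscillation estimate to conclude continuity at $x_0$.

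The main obstacle is bookkeeping rather than substantive analysis: one must verify that every test function carried over from the $W^{s,G}(\Omega,\Omega')$ setting still belongs to the narrower class $V^{s,G}(\Omega)$ in which we are now working, and that the tail integrals entering the Caccioppoli estimates remain finite. The latter is automatic from \Cref{lem-tail}, since supersolutions lie in $L_s^g(\mathbb{R}^n)$. Once these checks are recorded, no additional analytic input is needed, and \Cref{cor-4} follows.
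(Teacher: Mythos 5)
Your proof is correct and follows essentially the same route as the paper: the paper's own argument for \Cref{cor-4} simply says to repeat the proofs of \Cref{cor-obstacle}, \Cref{thm-obs-bdd}, and \Cref{thm-Holder-int}, checking only that test functions now lie in $\mathcal{K}_{\psi,\vartheta}(\Omega)$ rather than $\mathcal{K}_{\psi,\vartheta}(\Omega,\Omega')$. You spell out that check (the compactly supported correction $(u-k)_+\eta^q$ lands in $V^{s,G}_0(\Omega)$ because $u\in V^{s,G}(\Omega)$) and note the rest is interior machinery, which is exactly what the paper asserts.
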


\begin{proof}[Proofs of \Cref{cor-2}--\Cref{cor-4}]
One can just follow the lines of proofs of \Cref{cor-obstacle}, \Cref{thm-obs-bdd}, and \Cref{thm-Holder-int}. Indeed, the only difference is the function spaces for test functions. Namely, one has to check if test functions belong to $\mathcal{K}_{\psi, \vartheta}(\Omega)$, not $\mathcal{K}_{\psi, \vartheta}(\Omega, \Omega')$, which is obvious.
\end{proof}

\section{Properties of supersolutions}\label{sec-supersolution}

In this section, we provide some properties of supersolutions, which are crucial in the development of nonlocal nonlinear potential theory.

\subsection{Lower semicontinuity}

In Byun--Kim--Ok~\cite{BKO23} and Chaker--Kim--Weidner~\cite{CKW22}, it has been shown that solutions of $\mathcal{L}u=0$ in $\Omega$ have a representative which is H\"older continuous in $\Omega$. More generally, we proved in the previous section that the solution to the nonlocal obstacle problem with Orlicz growth has a representative which is (H\"older) continuous in $\Omega$ when the obstacle is (H\"older) continuous. For a supersolution of $\mathcal{L}u=0$ in $\Omega$, one can find a representative which is lower semicontinuous in $\Omega$.

\begin{theorem}
Let $u$ be a supersolution of $\mathcal{L}u=0$ in $\Omega$. Then
	\begin{equation*}
		u(x)=\essliminf_{y \to x}u(y) \quad \text{for a.e.~}x \in \Omega.
	\end{equation*}
	In particular, $u$ has a representative which is lower semicontinuous in $\Omega$.
\end{theorem}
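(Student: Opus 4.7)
Define $u_{*}(x) := \essliminf_{y \to x} u(y) = \lim_{r \to 0^{+}} \essinf_{B_{r}(x)} u$. By construction $u_{*}$ is lower semicontinuous on $\Omega$, so it suffices to prove $u = u_{*}$ almost everywhere. The inequality $u \geq u_{*}$ a.e.\ is the easy direction: since $u \in L^{1}_{\mathrm{loc}}(\Omega)$ (in particular because $u \in W^{s,G}_{\mathrm{loc}}(\Omega) \subset L^{G}_{\mathrm{loc}}(\Omega)$), the Lebesgue differentiation theorem gives, at every Lebesgue point $x_{0}$,
\[
u(x_{0}) = \lim_{r \to 0^{+}} \fint_{B_{r}(x_{0})} u \, dy \geq \lim_{r \to 0^{+}} \essinf_{B_{r}(x_{0})} u = u_{*}(x_{0}).
\]

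For the substantive direction, $u \leq u_{*}$ a.e., the plan is to apply the interior weak Harnack inequality (\Cref{thm-WHI-int}) to the nonnegative supersolution $v_{r} := u - m_{r}$ in $B_{r}(x_{0}) \Subset \Omega$, where $m_{r} := \essinf_{B_{r}(x_{0})} u$, and to let $r \to 0^{+}$. With $\tau_{1} = 1/4$, $\tau_{2} = 1/2$ and an admissible exponent $\delta$, combining \Cref{thm-WHI-int} with Jensen's inequality applied to a convex power of $g^{\delta}$ and with the comparison between Orlicz and polynomial averages from \Cref{lem-Lp}, I expect to recast the weak Harnack estimate in the form
\[
\fint_{B_{r/4}(x_{0})} v_{r} \, dy \leq C\,(m_{r/2} - m_{r}) + C\, \mathrm{Tail}_{g}\bigl((v_{r})_{-}; x_{0}, r\bigr).
\]
Passing $r \to 0^{+}$, the left-hand side converges to $u(x_{0}) - u_{*}(x_{0})$ at a Lebesgue point $x_{0}$, while the monotone convergence $m_{r} \nearrow u_{*}(x_{0})$ makes the first term on the right-hand side tend to zero. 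The desired conclusion $u(x_{0}) = u_{*}(x_{0})$ would then follow if the tail term could be absorbed.

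The main obstacle is precisely this tail analysis, since a direct computation shows that $\mathrm{Tail}_{g}((v_{r})_{-}; x_{0}, r)$ does \emph{not} vanish as $r \to 0$: the monotone pointwise limit $(v_{r})_{-} \uparrow (u_{*}(x_{0}) - u)_{+}$ can be nonzero on a large set, and the $r^{s}$ prefactor in the tail is exactly matched by the near-boundary growth of the defining integral. To resolve this I would argue by contradiction: assuming $|\{u > u_{*}\}| > 0$, choose rationals $a < b$ and a point $x_{0}$ which is simultaneously a Lebesgue point of $u$, a Lebesgue density point of $\{u > b\}$, and such that $u_{*}(x_{0}) < a$. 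On $\{u > b\} \cap B_{r/4}(x_{0})$ one has $v_{r} \geq b - a$, so the weak Harnack estimate forces the tail to dominate a quantity bounded away from zero; this information may then be combined with the logarithmic reverse-H\"older inequality
\[
\left( \fint_{B_{r}(x_{0})} (u+d)^{\varepsilon_{0}} \, dy \right)^{1/\varepsilon_{0}} \leq C \left( \fint_{B_{r}(x_{0})} (u+d)^{-\varepsilon_{0}} \, dy \right)^{-1/\varepsilon_{0}}
\]
from \Cref{lem-log} to exclude the simultaneous presence at $x_{0}$ of a large-density superlevel set $\{u > b\}$ and the positive-measure sublevel set $\{u < a\} \cap B_{r}(x_{0})$ guaranteed by $u_{*}(x_{0}) < a$. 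This contradiction would imply $|\{u > u_{*}\}| = 0$, concluding the proof.
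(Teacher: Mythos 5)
Your approach is genuinely different from the paper's. The paper's proof (deferred to \cite[Theorem~9]{KKP17}, with \Cref{thm-loc-bdd-int} replacing their local boundedness result) applies the \emph{local boundedness of subsolutions} to $w=(\lambda-u)_+$ with $\lambda$ chosen just above $u_*(x_0):=\essliminf_{y\to x_0}u(y)$. The tail of $w$ at a tiny scale $\rho$ is controlled by splitting at an intermediate radius $r$ (chosen so that $\esssup_{B_r}w$ is small, which is automatic once $\lambda$ is close to $u_*(x_0)$), and the free $\varepsilon$ in front of the tail in \eqref{eq-loc-bdd-int} absorbs the remainder. You instead apply the \emph{weak Harnack inequality} to $v_r=u-m_r$; both routes are plausible, and you correctly identify the tail term as the genuine obstruction.

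The gap is in how you close the contradiction. After deducing that $\mathrm{Tail}_g((v_r)_-;x_0,r)$ must stay bounded away from zero, you propose to combine this with \Cref{lem-log} to rule out the coexistence of a density point of $\{u>b\}$ with $u_*(x_0)<a$. But these two features are \emph{not} contradictory: the sublevel set $\{u<a\}\cap B_r$ can have positive (but relatively vanishing) measure while $\{u>b\}$ has density one, and the logarithmic reverse H\"older estimate does not exclude this configuration. What actually closes the argument is a direct estimate of the tail using the density information, which you never carry out: since $(v_r)_-=(m_r-u)_+$ is supported on $\{u<m_r\}\subset\{u\leq b\}$ and $x_0$ is a density point of $\{u>b\}$, the near-field annular contribution to $T((v_r)_-;x_0,r)$ is controlled by
\begin{equation*}
	r^s\int_{(B_\rho\setminus B_r)\cap\{u\leq b\}}g\!\left(\frac{m_r-u(y)}{|y-x_0|^s}\right)\frac{\mathrm{d}y}{|y-x_0|^{n+s}}
	\lesssim g\!\left(\frac{u_*(x_0)}{r^s}\right)\sum_{k\geq 0}2^{-ksp}\,\frac{|\{u\leq b\}\cap B_{2^kr}|}{|B_{2^kr}|},
\end{equation*}
and the last sum tends to $0$ by the density point property (together with the fact that the middle and far parts carry the damping factor $(r/\rho)^s$). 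This yields $T((v_r)_-;x_0,r)^{\delta}=o\bigl(g^{\delta}((b-a)/r^s)\bigr)$, contradicting the lower bound on the left-hand side of \eqref{eq-WHI-int}; this is the step your proposal is missing. (A secondary issue: the $L^1$-reduction via \Cref{lem-Lp} is used in the wrong direction — \Cref{lem-Lp} bounds the Orlicz mean \emph{above} by a power mean, which does not yield $\fint v_r\leq\cdots$ from \eqref{eq-WHI-int}; one should work directly with the $g^{\delta}$-averages as above.)
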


\begin{proof}
The proof is essentially the same as in Theorem~9 in Korvenp\"a\"a--Kuusi--Palatucci~\cite{KKP17}. One can utilize \Cref{thm-loc-bdd-int} instead of \cite[Theorem~4]{KKP17} in the proof.
\end{proof}

\subsection{Comparison principle}

Supersolutions and subsolutions satisfy the following form of the comparison principle.

\begin{lemma}[Comparison principle]\label{lem-comparison}
Let $\Omega, \Omega'$ be open subsets of $\mathbb{R}^n$ such that $\Omega \Subset \Omega'$. Let $u \in W^{s, G}(\Omega, \Omega')$ be a supersolution of $\mathcal{L}u=0$ in $\Omega$ and $v \in W^{s, G}(\Omega, \Omega')$ be a subsolution of $\mathcal{L}v=0$ in $\Omega$ such that $(u-v)_- \in W^{s, G}_0(\Omega, \Omega')$. Then $u \geq v$ a.e.\ in $\mathbb{R}^n$.
\end{lemma}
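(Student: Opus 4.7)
The plan is to test both the supersolution inequality for $u$ and the subsolution inequality for $v$ with the nonnegative function $\varphi = (u-v)_-$. By hypothesis $\varphi \in W^{s,G}_0(\Omega, \Omega')$, so the (unlabeled) extension lemma stated between \Cref{lem-finiteness} and \Cref{lem-tail} justifies using $\varphi$ as a test function, giving $\mathcal{E}(u, \varphi) \geq 0$; applying the same lemma to the supersolution $-v$ produces $\mathcal{E}(v, \varphi) \leq 0$. Writing $\phi(t) := g(|t|)\operatorname{sgn}(t)$, which is strictly increasing on $\mathbb{R}$ since $g$ is strictly increasing on $[0,\infty)$, subtraction yields
\[
I := \iint_{\mathbb{R}^n \times \mathbb{R}^n} \bigl[\phi(D^s u) - \phi(D^s v)\bigr]\, D^s\varphi \,\frac{k(x,y)}{|x-y|^n}\,\mathrm{d}y\,\mathrm{d}x \;\geq\; 0.
\]

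The core of the argument is a pointwise sign analysis of the integrand, partitioned by the signs of $(u-v)(x)$ and $(u-v)(y)$. On $\{u \geq v\} \times \{u \geq v\}$ one has $D^s\varphi = 0$; on $\{u < v\} \times \{u < v\}$ the identity $D^s\varphi = -D^s(u-v)$ combined with the monotonicity inequality \eqref{eq-FTC} gives integrand $\leq 0$; on either mixed region, $D^s u - D^s v$ carries sign opposite to $D^s\varphi$, and the \emph{strict} monotonicity of $\phi$ makes the integrand strictly negative pointwise. Since the integrand is pointwise $\leq 0$ yet $I \geq 0$, it must vanish a.e., forcing each mixed region to have Lebesgue measure zero. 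By Fubini--Tonelli applied on any ball $B_R \supset \overline{\Omega'}$, this means $|\{u \geq v\} \cap B_R| \cdot |\{u < v\} \cap B_R| = 0$. The boundary condition $\varphi \in W^{s,G}_0(\Omega, \Omega')$ forces $u \geq v$ a.e.\ on $\Omega' \setminus \Omega$, which has positive measure, so $|\{u \geq v\} \cap B_R| > 0$; therefore $|\{u < v\} \cap B_R| = 0$ for every sufficiently large $R$, i.e., $u \geq v$ a.e.\ in $\mathbb{R}^n$.

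The main obstacle is the passage from ``integrand vanishes a.e.'' to an honest a.e.\ comparison of $u$ and $v$. On the same-sign negative region $\{u<v\}\times\{u<v\}$, vanishing of the integrand together with strict monotonicity of $\phi$ only yields $D^s u = D^s v$ a.e., which says $u - v$ is essentially constant on that set and does not by itself produce $u \geq v$. It is precisely the strict negativity of the integrand on the mixed regions---together with the boundary data carried by $\varphi \in W^{s,G}_0(\Omega, \Omega')$, which rules out the alternative $u < v$ a.e.\ everywhere---that breaks the symmetry and delivers the desired comparison.
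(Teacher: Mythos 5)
Your proof is correct and follows the paper's approach up to and including the sign-decomposition of $\mathcal{E}(u,\varphi)-\mathcal{E}(v,\varphi)$ with $\varphi=(u-v)_-$ and the observation that each piece is nonpositive. Where you diverge is the concluding step, and in fact your version is the more careful one. The paper declares that the vanishing of $I_1$ (the integral over $\{u<v\}\times\{u<v\}$) already gives $|\{u<v\}|=0$, but, as you point out in your last paragraph, $I_1=0$ combined with strict monotonicity of $\phi(t)=g(|t|)\operatorname{sgn}(t)$ only yields $D^su=D^sv$ a.e.\ on that square, i.e.\ that $u-v$ is essentially constant on $\{u<v\}$; this does not by itself force the set to be null (for instance it is perfectly consistent, at the level of this single equation, with $(u-v)_-$ being a nonzero multiple of an indicator function). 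Your fix is the right one: use instead the vanishing of the mixed-region integrals, whose integrands are \emph{strictly} negative on $\{u<v\}\times\{u\geq v\}$ because $\varphi(x)>0$, $k\geq\Lambda^{-1}$, and $\phi(D^su)<\phi(D^sv)$ there; this forces $|\{u<v\}|\cdot|\{u\geq v\}|=0$ by Tonelli, and the alternative $|\{u\geq v\}|=0$ is excluded because $(u-v)_-\in W^{s,G}_0(\Omega,\Omega')$ entails $(u-v)_-=0$ a.e.\ on $\Omega'\setminus\Omega$ (a set of positive measure since $\Omega\Subset\Omega'$), as one checks by passing to the limit through an approximating sequence in $C_c^\infty(\Omega)$. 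So the two proofs share the same skeleton, but your closing argument supplies the justification that the paper's two-line conclusion from ``$I_1=0$'' leaves implicit.
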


\begin{proof}
By using $\varphi:=(u-v)_-$ as a test function, we obtain
\begin{equation*}
0 \leq \mathcal{E}(u, \varphi) - \mathcal{E}(v, \varphi) = I_1+I_2+I_3,
\end{equation*}
where
\begin{align*}
I_1 &= \int_{\{u < v\}}\int_{\{u < v\}} \left( g(|D^su|)\frac{D^su}{|D^su|} - g(|D^sv|)\frac{D^sv}{|D^sv|} \right) D^s\varphi \frac{k(x, y)}{|x-y|^n} \,\mathrm{d}y \,\mathrm{d}x, \\
I_2 &= \int_{\{u < v\}}\int_{\{u \geq v\}} \left( g(|D^su|)\frac{D^su}{|D^su|} - g(|D^sv|)\frac{D^sv}{|D^sv|} \right) \frac{\varphi(x)}{|x-y|^s} \frac{k(x, y)}{|x-y|^n} \,\mathrm{d}y \,\mathrm{d}x, \\
I_3 &= \int_{\{u \geq v\}}\int_{\{u < v\}} \left( g(|D^su|)\frac{D^su}{|D^su|} - g(|D^sv|)\frac{D^sv}{|D^sv|} \right) \frac{-\varphi(y)}{|x-y|^s} \frac{k(x, y)}{|x-y|^n} \,\mathrm{d}y \,\mathrm{d}x.
\end{align*}
Since $D^s\varphi = -(D^su-D^sv)$ on $\{u<v\} \times \{u<v\}$, \eqref{eq-FTC} yields $I_1 \leq 0$. If $u(x)<v(x)$ and $u(y)\geq v(y)$, then $u(x)-u(y)<v(x)-v(y)$ and hence the integrand of $I_2$ is non-positive. This shows that $I_2 \leq 0$, and similarly $I_3 \leq 0$. Therefore, each integral term must vanish and, in particular, $I_1=0$. In other words, we have $|\{u<v\}|=0$ and so $u \geq v$ a.e.\ in $\mathbb{R}^n$.
\end{proof}

\subsection{Convergence results}

This section is devoted to convergence results of supersolutions. We provide some sufficient conditions for the pointwise limit of a sequence of supersolutions to be a supersolution again.

Before we state the convergence results, let us establish the following energy estimate.

\begin{lemma}\label{lem-energy}
Let $M>0$ and $h \in L^g_s(\mathbb{R}^n)$ satisfy $h \leq M$ a.e.\ in $\Omega$. Let $u$ be a supersolution of $\mathcal{L}u=0$ in $\Omega$ such that $u \geq h$ a.e.\ in $\mathbb{R}^n$ and $u \leq M$ a.e.\ in $\Omega$. For each open set $D \Subset \Omega$, there exists a constant $C=C(n, p, q, s, \Lambda, M, h, \Omega, D)$ such that
\begin{equation*}
\int_D\int_D G(|D^su|) \frac{\mathrm{d}y\,\mathrm{d}x}{|x-y|^n} \leq C.
\end{equation*}
\end{lemma}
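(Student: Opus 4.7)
The plan is to establish a Caccioppoli-type energy estimate by testing the supersolution inequality against a truncated cutoff. Fix $D\Subset D_1\Subset\Omega$ and a smooth cutoff $\eta\in C_c^\infty(D_1)$ with $\eta\equiv 1$ on $D$ and $|\nabla\eta|\leq C$. Since $u\in W^{s,G}_{\mathrm{loc}}(\Omega)\cap L^g_s(\mathbb{R}^n)$ (the latter thanks to \Cref{lem-tail}), the approximation lemma following \Cref{lem-finiteness} shows that
\[
\varphi=(M-u)_+\,\eta^q=(M-\tilde u)\,\eta^q,\qquad \tilde u:=\min(u,M),
\]
is a valid nonnegative test function. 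The key point of the truncation is that $\min(h,M)\leq\tilde u\leq M$ \emph{everywhere} in $\mathbb{R}^n$, so that all nonlocal tail contributions in the sequel will depend only on the given data $M$ and $h$.

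Testing $\mathcal{E}(u,\varphi)\geq 0$ and expanding
\[
D^s\varphi=-\bar\eta^q\, D^s\tilde u+(M-\bar{\tilde u})\,D^s(\eta^q),\qquad \bar\eta^q=\tfrac{\eta^q(x)+\eta^q(y)}{2},\ \bar{\tilde u}=\tfrac{\tilde u(x)+\tilde u(y)}{2},
\]
a pointwise sign analysis gives $g(|D^s u|)\tfrac{D^s u}{|D^s u|}\,D^s\tilde u=g(|D^s u|)\,|D^s\tilde u|\geq 0$, with equality $|D^s\tilde u|=|D^s u|$ wherever $u(x),u(y)\leq M$. On $D\times D$ we have $\bar\eta^q=1$ and $\tilde u=u$, and the growth condition $g(t)t\geq p\,G(t)$ in \eqref{eq-pq} reduces the matter to estimating
\[
\tfrac{p}{\Lambda}\int_D\!\int_D G(|D^s u|)\,\frac{dy\,dx}{|x-y|^n}\leq \iint (M-\bar{\tilde u})\,g(|D^s u|)\,\tfrac{D^s u}{|D^s u|}\,D^s(\eta^q)\,\frac{k(x,y)\,dy\,dx}{|x-y|^n}.
\]

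For the right-hand side, I would use $|D^s(\eta^q)|\leq q\,\bar\eta^{q-1}|D^s\eta|$ with $\bar\eta=\eta(x)\vee\eta(y)$, together with the pointwise bound $0\leq M-\bar{\tilde u}\leq M+\tfrac12(h_-(x)+h_-(y))$ coming from $\tilde u\geq\min(h,M)$. Applying Young's inequality in the form $ab\leq \varepsilon^{q/(q-1)}G^*(a)+\varepsilon^{-q}G(b)$ with $a=g(|D^s u|)\bar\eta^{q-1}$ and $b=(M-\bar{\tilde u})|D^s\eta|$, the Orlicz identity
\[
G^*\!\bigl(g(|D^s u|)\,\bar\eta^{q-1}\bigr)\leq C\,\bar\eta^q\,G(|D^s u|)
\]
--obtained by combining \Cref{lem-H} with the sharp scaling $G^*(\lambda t)\leq \lambda^{q/(q-1)}G^*(t)$ for $\lambda\leq 1$, itself an instance of \Cref{lem-G} applied to the Young function $G^*$ (whose lower growth exponent is $q/(q-1)$)--produces a term of the form $C\varepsilon^{q/(q-1)}\,\bar\eta^q G(|D^s u|)$ which, for $\varepsilon$ small, is absorbed into the left-hand side, with a standard nested-cutoff iteration $D=D_0\Subset D_1\Subset\dotsb$ if the absorption has to be carried out on the full support of $\bar\eta^q$.

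It then remains to estimate $\iint G\bigl((M-\bar{\tilde u})\,|D^s\eta|\bigr)k(x,y)\,dy\,dx/|x-y|^n$ by splitting into local and nonlocal regions. On $D_1\times D_1$ the bound $|D^s\eta|\leq C|x-y|^{1-s}$ together with $M-\bar{\tilde u}\leq M+\tfrac12(h_-(x)+h_-(y))$ reduces the problem to the local integrability of $|x-y|^{(1-s)p-n}$ against $G(M+h_-)$, which follows from $h\in L^g_s(\mathbb{R}^n)$. On the nonlocal region, $\eta(y)=0$ gives $|D^s\eta|\leq\eta(x)/|x-y|^s$, and the everywhere bounds $\tilde u\leq M$ and $\tilde u_-\leq h_-$ reduce the contribution to a convergent tail of $h_-$ controlled by $\|h\|_{L^g_s}$. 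The main obstacles are the absorption step, whose sharpness hinges on the precise $G^*$-scaling from \Cref{lem-G}, and the choice of the truncation $\tilde u=\min(u,M)$: without it, the upper tail of $u$ outside $\Omega$--which is not constrained by the hypothesis--would escape any bound in terms of the data $M$ and $h$ alone.
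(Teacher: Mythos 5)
Your Caccioppoli strategy via the test function $(M-\tilde u)\eta^q$, $\tilde u=\min(u,M)$, is close in spirit to the second of the two auxiliary lemmas the paper proves, but the argument as written cannot close, and the reason is structural: the paper precedes the Caccioppoli estimate by a lemma establishing $\essinf_D u \geq -C(n,p,q,s,\Lambda,h,\Omega,D)$ (proved by applying \Cref{thm-loc-bdd-int} to the nonnegative subsolution $u_-$, together with a covering argument). Your proposal has no analogue of this step, and every finiteness claim in your final estimates secretly needs it.

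Concretely, two things go wrong. First, you assert that the local contribution
$\iint_{D_1\times D_1} G\bigl((M-\bar{\tilde u})|D^s\eta|\bigr)\,|x-y|^{-n}\,dy\,dx$
reduces to ``local integrability of $|x-y|^{(1-s)p-n}$ against $G(M+h_-)$, which follows from $h\in L^g_s(\mathbb{R}^n)$.'' This is false: $h\in L^g_s(\mathbb{R}^n)\subset L^g_{\mathrm{loc}}(\mathbb{R}^n)$ gives $\int_{D_1} g(h_-)\,dx<\infty$, not $\int_{D_1}G(h_-)\,dx<\infty$. Since $G(t)\sim t g(t)$ (by \eqref{eq-pq}), the latter is a strictly stronger condition; for $G(t)=t^p$ it is the gap between $L^{p-1}_{\mathrm{loc}}$ and $L^p_{\mathrm{loc}}$, and one can easily produce $h\in L^{p-1}_{\mathrm{loc}}\setminus L^p_{\mathrm{loc}}$. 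The same issue affects the nonlocal contribution: from $G((M-\bar{\tilde u})\eta(x)/|x-y|^s)$ you inherit a term like $\int_{\mathbb{R}^n\setminus D_1} G\bigl(h_-(y)/|x-y|^s\bigr)\,|x-y|^{-n}\,dy$, which is a ``$G$-tail'' of $h_-$, while $L^g_s(\mathbb{R}^n)$ only controls the $g$-tail. Second, the absorption step via Young's inequality cannot be carried out on the cross region $D_1\times(\mathbb{R}^n\setminus D_1)$: there $G^*\bigl(g(|D^su|)\hat\eta^{q-1}\bigr)\sim \hat\eta^q\,G(|D^su|)$ involves $G\bigl(|u(y)|/|x-y|^s\bigr)$ for $y$ far away, which is a $G$-tail of $u$ and is not a priori finite, and the left-hand side $g(|D^su|)\,|D^s\tilde u|\,\bar\eta^q$ cannot dominate it because $|D^s\tilde u|\ll|D^su|$ precisely when $u(y)>M$ outside $\Omega$. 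The cross term must be estimated directly, as the paper does in the bound for $I_2$ (where the factor kept next to $g$ is $(2H-u(x))/|x-y|^s$, involving only the local point $x$, so only a $g$-tail of $u_-$ appears).

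In short, the truncation $\tilde u=\min(u,M)$ neatly handles the upper tail, but the lower tail of $u$ in $D$ is not an input; you must first prove a pointwise lower bound $\essinf_{D_1}u\geq -C$ (this is the paper's first auxiliary lemma, and it uses the weak-Harnack/local-boundedness machinery from \Cref{sec-loc-est}), and only then does the Caccioppoli constant become a function of the data $(n,p,q,s,\Lambda,M,h,\Omega,D)$ rather than of $\int_{D_1}G(u_-)$.
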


To prove \Cref{lem-energy}, it is enough to prove the following two lemmas. We refer the reader to Lemma~5 in Korvenp\"a\"a--Kuusi--Palatucci~\cite{KKP17} for the proof of \Cref{lem-energy}.

\begin{lemma}
Let $u$ be a supersolution of $\mathcal{L}u=0$ in $\Omega$ and let $h \in L^g_s(\mathbb{R}^n)$. Assume that $u \geq h$ a.e.\ in $\mathbb{R}^n$ and $u \leq 0$ a.e.\ in $\Omega$. For each open set $D \Subset \Omega$, there exists a constant $C=C(n, p, q, s, \Lambda, h, \Omega, D)>0$ such that 
	\begin{equation}\label{eq-apriori1}
		\essinf_D u \geq -C.
	\end{equation}
\end{lemma}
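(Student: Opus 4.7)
The plan is to apply the local boundedness for subsolutions (\Cref{thm-loc-bdd-int}) to $v := -u$, which, by \Cref{def-supersolution}, is a subsolution of $\mathcal{L}v = 0$ in $\Omega$; its negative part $(-u)_- = u_+$ lies in $L^g_s(\mathbb{R}^n)$ because $u \in L^g_s(\mathbb{R}^n)$ by \Cref{lem-tail}. The pointwise hypothesis $u \geq h$ a.e.\ on $\mathbb{R}^n$ translates into $v \leq -h$, whence $v_+ \leq h_-$ a.e.\ on $\mathbb{R}^n$. Moreover, since $u \leq 0$ a.e.\ in $\Omega$, on any ball $B_R = B_R(x_0) \Subset \Omega$ we have $v_+ = v = -u$ a.e.

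For such a ball, \Cref{thm-loc-bdd-int} applied to $v$ with $\varepsilon = 1$ and $p_0 = p-1$ yields
\begin{equation*}
\esssup_{B_{R/2}}\,(-u) \;\leq\; \mathrm{Tail}_g(v_+; x_0, R/2) + C \left( \fint_{B_R} v_+^{p-1} \,\mathrm{d}x \right)^{1/(p-1)}.
\end{equation*}
Since $g$ is non-decreasing and $v_+ \leq h_-$, the tail is dominated by $\mathrm{Tail}_g(h_-; x_0, R/2)$, which is finite because $h \in L^g_s(\mathbb{R}^n)$. For the local term, the growth condition $tg(t) \geq pG(t)$ together with $G(t) \geq t^p$ for $t \geq 1$ forces $g(t) \geq p t^{p-1}$ for $t \geq 1$, so $h \in L^g_{\mathrm{loc}}(\mathbb{R}^n)$ implies $h_- \in L^{p-1}_{\mathrm{loc}}(\mathbb{R}^n)$, and the integral is finite.

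To pass from a single ball to the compact set $\overline{D}$, I would use a standard compactness argument: cover $\overline{D}$ by finitely many balls $B_{R_i/2}(x_i)$ with $B_{R_i}(x_i) \Subset \Omega$ and take the maximum of the resulting finite collection of bounds. The resulting constant depends only on $n$, $p$, $q$, $s$, $\Lambda$, $h$, $\Omega$, and $D$, which yields $\essinf_D u \geq -C$ as required.

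I do not anticipate a serious obstacle: the lemma is essentially a direct application of \Cref{thm-loc-bdd-int} once one recognizes that $v = -u$ is a subsolution and notices that both the tail and the local $L^{p-1}$ norm of $v_+$ are controlled globally by the corresponding quantities for $h_-$, which are finite by the assumption $h \in L^g_s(\mathbb{R}^n)$.
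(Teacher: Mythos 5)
Your proof is correct and follows essentially the same route as the paper: apply the interior local boundedness estimate (\Cref{thm-loc-bdd-int}) to a nonnegative subsolution that equals $-u$ on $\Omega$, dominate both the nonlocal tail and the local average by the corresponding quantities for $h_-$ (which are finite since $h\in L^g_s(\mathbb{R}^n)$), and finish with a finite covering of $\overline{D}$. One small remark: the invocation of \Cref{lem-tail} to establish $(-u)_-=u_+\in L^g_s(\mathbb{R}^n)$ is unnecessary here — by \Cref{def-supersolution}, $v=-u$ is a subsolution as soon as $u$ is a supersolution, and the tail that actually enters \Cref{thm-loc-bdd-int} is $\mathrm{Tail}_g(v_+;\cdot,\cdot)=\mathrm{Tail}_g(u_-;\cdot,\cdot)$, with $u_-\in L^g_s(\mathbb{R}^n)$ by definition. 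Also, the paper works with $u_-$ as the subsolution (noting $u_-=-u$ on $\Omega$ by the sign assumption) and bounds the local term via an Orlicz average $g^{-1}\bigl(\fint g(u_-)\bigr)$, whereas you keep the $L^{p_0}$-average form with $p_0=p-1$ and justify its finiteness via $g(t)\gtrsim t^{p-1}$ for $t\geq 1$; both variants work and yield the same conclusion.
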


\begin{proof}
We claim that
\begin{equation}\label{eq-claim-apriori1}
\esssup_{B_R(x_0)} u_- \leq C_0
\end{equation}
whenever $B_{2R}(x_0) \subset \Omega$, where $C_0$ is a constant depending only on $n$, $p$, $q$, $s$, $\Lambda$, and $h$. Once we prove the claim \eqref{eq-claim-apriori1}, the desired estimate \eqref{eq-apriori1} follows by a covering argument. Indeed, if we cover $D$ by finitely many balls $B_{R_i}(x_i)$, $i=1, \dots, N$, with $B_{2R_i}(x_i) \subset \Omega$, then
\begin{equation*}
\essinf_D u \geq -\max_{1 \leq i \leq N} \esssup_{B_{R_i}(x_i)}u_- \geq -NC_0,
\end{equation*}
where we used the assumption that $u \leq 0$ a.e.\ in $\Omega$.

Let us prove the claim \eqref{eq-claim-apriori1}. Since $u_-$ is a nonnegative subsolution in $\Omega$, \Cref{thm-loc-bdd-int} with $\varepsilon=1$ shows that
\begin{equation*}
\esssup_{B_R(x_0)} u_- \leq \mathrm{Tail}_g(u_-; x_0, R) + C g^{-1}\left( \fint_{B_{2R}(x_0)} g(u_-) \,\mathrm{d}x \right).
\end{equation*}
Since $u \geq h$ a.e.\ in $\mathbb{R}^n$, the right-hand side of the display above is bounded by
\begin{equation*}
\mathrm{Tail}_g(h_-; x_0, R) + C g^{-1}\left( \fint_{B_{2R}(x_0)} g(h_-) \,\mathrm{d}x \right),
\end{equation*}
which is finite by the fact that $h_- \in L^g_s(\mathbb{R}^n)$.
\end{proof}

\begin{lemma}
Let $u$ be a supersolution of $\mathcal{L}u=0$ in $B_{2R}=B_{2R}(x_0)$ which is essentially bounded from above in $B_{3R/2}$. Then there exists a constant $C=C(n, p, q, s, \Lambda)>0$ such that
	\begin{equation*}
		\fint_{B_R}\int_{B_R}G(|D^su|) \frac{\mathrm{d}y\,\mathrm{d}x}{|x-y|^n} \leq C G\left(\frac{H}{R^s}\right),
	\end{equation*}
where
\begin{equation*}
H:=\sup_{B_{3R/2}}u_+ + R^sG^{-1}\left(\fint_{B_{3R/2}}G\left(\frac{u_-}{R^s}\right)\,\mathrm{d}x\right) + \mathrm{Tail}_g(u_-; x_0, 3R/2).
\end{equation*}
\end{lemma}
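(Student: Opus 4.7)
The plan is to establish a Caccioppoli-type energy estimate by testing the supersolution inequality against $\varphi=\eta^q\tilde{u}$, where $\tilde{u}:=H-u$ is nonnegative on $B_{3R/2}$. Set $M=\sup_{B_{3R/2}}u_+$, $K=R^sG^{-1}\bigl(\fint_{B_{3R/2}}G(u_-/R^s)\,\mathrm{d}x\bigr)$, and $T=\mathrm{Tail}_g(u_-;x_0,3R/2)$, so that $H=M+K+T$. Since $u\leq M\leq H$ a.e.\ in $B_{3R/2}$ we have $\tilde{u}\geq K+T\geq 0$ there, and pointwise $\tilde{u}\leq H+u_-$ on $B_{3R/2}$. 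For $R\leq\rho<\rho'\leq 3R/2$ I would fix a cutoff $\eta\in C_c^\infty(B_{(\rho+\rho')/2})$ with $\eta\equiv 1$ on $B_\rho$, $|\nabla\eta|\leq C/(\rho'-\rho)$, and use $\varphi=\eta^q\tilde{u}\geq 0$ as test function in \eqref{eq-supersolution}.

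Splitting the integral into a local piece $I_1$ on $B_{\rho'}\times B_{\rho'}$ and a tail piece $I_2$ on $B_{(\rho+\rho')/2}\times(\mathbb{R}^n\setminus B_{\rho'})$, so that $0\leq I_1+2I_2$, I would write $D^s\varphi=-\eta^q(x)D^su+\tilde{u}(y)D^s(\eta^q)$ and symmetrize in $(x,y)$. The local integrand then decomposes into a leading negative term $-\tfrac12(\eta^q(x)+\eta^q(y))g(|D^su|)|D^su|$ and a mixed error involving $D^s(\eta^q)$. Combined with $g(t)t\geq pG(t)$ from \eqref{eq-pq} and $\eta\equiv 1$ on $B_\rho$, the former produces the energy contribution $-\tfrac{p}{\Lambda}\int_{B_\rho^2}G(|D^su|)/|x-y|^n$. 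The mixed error will be handled via the Young-type inequality \eqref{eq-alg} with $a=|D^su|$ and $b$ a constant times $(\tilde{u}(x)+\tilde{u}(y))|D^s(\eta^q)|$, together with the pointwise bound $|D^s(\eta^q)|\leq C|x-y|^{1-s}/(\rho'-\rho)$, the scaling properties in \Cref{lem-G}, the splitting $\tilde{u}\leq H+u_-$, and the defining identity $\fint_{B_{3R/2}}G(u_-/R^s)\,\mathrm{d}x=G(K/R^s)\leq G(H/R^s)$. This produces a contribution bounded by $\varepsilon\int_{B_{\rho'}^2}G(|D^su|)/|x-y|^n+C(\varepsilon)(R/(\rho'-\rho))^qR^nG(H/R^s)$.

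The main obstacle is controlling the tail piece $I_2$: since $u_+$ is only bounded on $B_{3R/2}$, a sign argument is needed. On $\{u(y)\geq u(x)\}$ the integrand $g(|D^su|)\mathrm{sign}(D^su)\,\eta^q(x)\tilde{u}(x)/|x-y|^s$ is nonpositive and may be discarded; on $\{u(y)<u(x)\}$ the estimate $u(x)-u(y)\leq M+u_-(y)$, valid since $u(x)\leq M$ on $\mathrm{supp}\,\eta$, together with \Cref{lem-G} gives $g(|D^su|)\leq C[g(M/|x-y|^s)+g(u_-(y)/|x-y|^s)]$. Integrating first in $y$ (splitting $\mathbb{R}^n\setminus B_{\rho'}$ at radius $3R/2$ to recover $T$), using $|x-y|\geq(\rho'-\rho)/2$ for $x\in\mathrm{supp}\,\eta$, $y\notin B_{\rho'}$, and applying Young's inequality together with \Cref{lem-H} to obtain $\int_{B_{3R/2}}g(u_-/R^s)\,\mathrm{d}y\leq CR^ng(H/R^s)$, I would derive $I_2\leq C(R/(\rho'-\rho))^{n+sq}R^nG(H/R^s)$, after also using Jensen's inequality for the convex function $G$ to get $\int_{B_{3R/2}}\tilde{u}\,\mathrm{d}x\leq CR^nH$ and the relation $g(H/R^s)\cdot H/R^s\sim G(H/R^s)$. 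Collecting all pieces and choosing $\varepsilon$ small yields
\begin{equation*}
\int_{B_\rho}\int_{B_\rho}G(|D^su|)\frac{\mathrm{d}y\,\mathrm{d}x}{|x-y|^n}\leq\tfrac12\int_{B_{\rho'}}\int_{B_{\rho'}}G(|D^su|)\frac{\mathrm{d}y\,\mathrm{d}x}{|x-y|^n}+C\left(\frac{R}{\rho'-\rho}\right)^{n+q}R^nG(H/R^s).
\end{equation*}
A standard iteration argument (see Lemma~4.11 in Cozzi~\cite{Coz17}) then absorbs the first right-hand side term, and division by $|B_R|\sim R^n$ yields the claim.
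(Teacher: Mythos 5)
Your proof is correct and follows the same basic skeleton (test against $\eta^q\cdot(\text{const}\cdot H-u)$, split into local and tail pieces, bound the local piece by $-c\int_{B_\rho^2}G(|D^su|)$ plus cutoff errors, and bound the tail using $u\leq M$ on $\operatorname{supp}\eta$), but the way the local error is closed differs from the paper. The paper sets $w=2H-u$ with a single fixed cutoff $\eta\in C_c^\infty(B_{5R/4})$, $\eta\equiv1$ on $B_R$, and invokes the pointwise algebraic inequality \Cref{lem-ineq6}, which absorbs the cutoff error into the good term \emph{pointwise} via the factor $(\eta(x)\wedge\eta(y))^q$ and the two-case split $\eta(x)/\eta(y)\in[1/2,2]$ versus not; this yields the estimate on $B_R$ in one shot with no iteration. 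You instead use the Young-type inequality \eqref{eq-alg} to split off $\varepsilon\,g(|D^su|)|D^su|$, which produces a Caccioppoli inequality with $\varepsilon\int_{B_{\rho'}^2}G$ on the right-hand side for a family of intermediate radii $R\leq\rho<\rho'\leq 3R/2$, and then closes with the standard iteration lemma (Cozzi, Lemma~4.11). Both routes are valid given that $u\in W^{s,G}_{\mathrm{loc}}$ makes the iterated quantity finite a priori; your route is slightly more elementary in that it avoids the case analysis of the appendix lemma, at the cost of needing the iteration machinery. One small bookkeeping remark: when you bound the portion of $I_2$ coming from $y\notin B_{3R/2}$, you should record the geometric inequality $|x-y|\geq c\,\tfrac{\rho'-\rho}{R}\,|y-x_0|$ for $x\in B_{(\rho+\rho')/2}$ and $y\notin B_{\rho'}$ explicitly, since the naive $|x-y|\geq(\rho'-\rho)/2$ is not enough to compare against the tail integral centered at $x_0$; with that inserted your estimate $I_2\leq C(R/(\rho'-\rho))^{n+sq}R^nG(H/R^s)$ is correct.
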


\begin{proof}
Let $\eta \in C_c^{\infty}(B_{5R/4})$ be a cut-off function such that $0 \leq \eta \leq 1$, $\eta =1$ in $B_R$, and $|D \eta| \leq C/R$. Let us define a function $w:=2H-u$, which is nonnegative in $B_{3R/2}$. By using $\varphi:=w\eta^q$ as a test function, we obtain
\begin{align*}
0
&\leq \int_{\mathbb{R}^n}\int_{\mathbb{R}^n} g(|D^su|) \frac{D^su}{|D^su|} D^s\varphi \frac{k(x, y)}{|x-y|^n} \,\mathrm{d}y\, \mathrm{d}x \\
&= - \int_{B_{3R/2}}\int_{B_{3R/2}} g(|D^sw|) \frac{D^sw}{|D^sw|} D^s\varphi \frac{k(x, y)}{|x-y|^n} \,\mathrm{d}y\, \mathrm{d}x \\
&\quad + 2\int_{B_{3R/2}}\int_{\mathbb{R}^n \setminus B_{3R/2}} g(|D^su|) \frac{D^su}{|D^su|} \frac{(2H-u(x))\eta^q(x)}{|x-y|^s} \frac{k(x, y)}{|x-y|^n} \,\mathrm{d}y\, \mathrm{d}x \\
&=: I_1+I_2.
\end{align*}
To estimate the term $I_1$, we apply \Cref{lem-ineq6} to have
\begin{align*}
	I_1
	&\leq - c \int_{B_{3R/2}} \int_{B_{3R/2}} G(|D^su|) (\eta(x) \land \eta(y))^{q} \frac{\mathrm{d}y \,\mathrm{d}x}{|x-y|^{n}} \\
	&\quad + C \int_{B_{3R/2}} G\left(\frac{w(x)}{R^s}\right) \int_{B_{3R/2}} ((R^s|D^s\eta|)^p + (R^s|D^s\eta|)^q) \frac{\mathrm{d}y \,\mathrm{d}x}{|x-y|^{n}} \\
	&\leq - c \int_{B_R} \int_{B_R} G(|D^su|) \frac{\mathrm{d}y \,\mathrm{d}x}{|x-y|^{n}} + C \int_{B_{3R/2}} G\left(\frac{w(x)}{R^s}\right) \,\mathrm{d}x.
\end{align*}
Since $w \leq 2H+u_-$, we arrive at
\begin{equation*}
I_1 \leq - c \int_{B_R} \int_{B_R} G(|D^su|) \frac{\mathrm{d}y \,\mathrm{d}x}{|x-y|^{n}} + C R^n G\left(\frac{H}{R^s}\right).
\end{equation*}

Let us next estimate the term $I_2$ as follows:
\begin{align*}
I_2
&\leq 2\Lambda \int_{B_{5R/4}}\int_{\mathbb{R}^n \setminus B_{3R/2}} g\left( \frac{(u(x)-u(y))_+}{|x-y|^s} \right) \frac{2H-u(x)}{|x-y|^s} \frac{\mathrm{d}y\, \mathrm{d}x}{|x-y|^n} \\
&\leq C \left( \int_{B_{5R/4}} (2H+u_-(x)) \,\mathrm{d}x \right) \int_{\mathbb{R}^n \setminus B_{3R/2}} g\left( \frac{H+u_-(y)}{|y-x_0|^s} \right) \frac{\mathrm{d}y}{|y-x_0|^{n+s}}.
\end{align*}
By Jensen's inequality, we have
\begin{equation*}
	\fint_{B_{5R/4}}u_-(x)\,\mathrm{d}x \leq CR^s G^{-1}\left(\fint_{B_{3R/2}}G\left(\frac{u_-(x)}{R^s}\right)\,\mathrm{d}x\right)\leq CH,
\end{equation*}
Moreover, we obtain
\begin{align*}
\int_{\mathbb{R}^n \setminus B_{3R/2}} g\left( \frac{H+u_-(y)}{|y-x_0|^s} \right) \frac{\mathrm{d}y}{|y-x_0|^{n+s}}
&\leq \frac{C}{R^s} g\left( \frac{H}{R^s} \right) + \frac{C}{R^s} g\left( \frac{\mathrm{Tail}_g(u_-; x_0, 3R/2)}{R^s} \right) \\
&\leq \frac{C}{R^s} g\left( \frac{H}{R^s} \right),
\end{align*}
and hence
\begin{equation*}
I_2 \leq CR^n G\left( \frac{H}{R^s} \right).
\end{equation*}
Combining the estimates for $I_1$ and $I_2$ finishes the proof.
\end{proof}

We now move our attention to the convergence results for supersolutions. Note that we need the assumption \eqref{eq-q-1} in the following theorem.

\begin{theorem}\label{thm-conv}
Assume \eqref{eq-q-1}. Let $h_1, h_2 \in L^{p-1}_{sp}(\mathbb{R}^n)$ be such that $h_1 \leq h_2$ a.e.\ in $\mathbb{R}^n$. Let $\{u_j\}$ be an increasing sequence of supersolutions of $\mathcal{L}u=0$ in $\Omega$ such that $h_1 \leq u_j \leq h_2$ a.e.\ in $\mathbb{R}^n$ and that $\{u_j\}$ is uniformly locally essentially bounded from above in $\Omega$. If $u_j$ converges to a function $u$ pointwise almost everywhere in $\Omega$, then $u$ is a supersolution of $\mathcal{L}u=0$ in $\Omega$.
\end{theorem}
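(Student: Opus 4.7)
The plan is to verify the two requirements of \Cref{def-supersolution} for the pointwise limit $u$ and then pass to the limit in the weak inequality \eqref{eq-supersolution} against an arbitrary nonnegative test function $\varphi \in C_c^\infty(\Omega)$. For the function-space memberships: $u_- \in L^g_s(\mathbb{R}^n)$ follows from $h_1 \le u$ a.e.\ and the hypothesis on $h_1$ (combined with \Cref{lem-G} to pass between the $L^{p-1}_{sp}$ and $L^g_s$ tails). For local Sobolev regularity, fix $D \Subset \Omega$; using the uniform upper bound of $\{u_j\}$ on $D$ together with the lower bound $u_j \geq h_1 \in L^g_s(\mathbb{R}^n)$, \Cref{lem-energy} applied to each $u_j$ gives
\[
\sup_j \int_D \int_D G(|D^s u_j|) \frac{\mathrm{d}y\,\mathrm{d}x}{|x-y|^n} \leq C(D),
\]
so Fatou (applied to the modular, using a.e.\ convergence $D^s u_j \to D^s u$) yields $u \in W^{s, G}(D)$ and, by arbitrariness of $D$, $u \in W^{s,G}_{\mathrm{loc}}(\Omega)$.

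For the weak inequality, fix a nonnegative $\varphi \in C_c^\infty(\Omega)$ and pick $D$ with $\supp \varphi \Subset D \Subset \Omega$. Split $\mathcal{E}(u_j, \varphi) = I_j^{\mathrm{loc}} + 2 I_j^{\mathrm{tail}}$, where $I_j^{\mathrm{loc}}$ is the integral over $D \times D$ and $I_j^{\mathrm{tail}}$ over $D \times (\mathbb{R}^n \setminus D)$. On $D \times D$, the pointwise a.e.\ convergence $u_j \to u$ in $\Omega$ implies $D^s u_j \to D^s u$ a.e., so continuity of the nonlinearity yields a.e.\ convergence of $g(|D^s u_j|) D^s u_j/|D^s u_j|$ to $g(|D^s u|) D^s u/|D^s u|$. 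The modular bound from the previous step, combined with the conjugacy $G^\ast(g(t)) \sim G(t)$ of \Cref{lem-H}, shows this sequence is uniformly bounded in $L^{G^\ast}(D \times D; |x-y|^{-n}\mathrm{d}y\,\mathrm{d}x)$; reflexivity and uniqueness of the a.e.\ limit then produce weak $L^{G^\ast}$-convergence, and testing against $D^s\varphi \cdot k/|x-y|^n \in L^G$ passes the limit in $I_j^{\mathrm{loc}}$. For the tail, the bound
\[
g(|D^s u_j|) \leq C g\!\left( \frac{\|u_j\|_{L^\infty(D)} + |h_1(y)| + |h_2(y)|}{|x-y|^s} \right)
\]
from \Cref{lem-G}, together with $h_1, h_2$ belonging to the assumed tail space and the boundedness of $\varphi$, furnishes a $j$-uniform integrable majorant on $D \times (\mathbb{R}^n \setminus D)$; dominated convergence gives $I_j^{\mathrm{tail}} \to I^{\mathrm{tail}}$. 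Adding the two limits and using $\mathcal{E}(u_j, \varphi) \geq 0$ for each $j$ concludes $\mathcal{E}(u, \varphi) \geq 0$, so $u$ is a supersolution.

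The main obstacle is the weak $L^{G^\ast}$-identification in the local part, since a.e.\ convergence alone does not give weak convergence of a nonlinear quantity. The resolution combines the uniform modular bound from \Cref{lem-energy}, the conjugacy $G^\ast \circ g \sim G$ from \Cref{lem-H}, and uniqueness of the limit once a weakly convergent subsequence is fixed by the a.e.\ limit. The standing hypothesis \eqref{eq-q-1} enters precisely here: it guarantees that $g(|D^s u_j|)\,\mathrm{sgn}(D^s u_j)$ is genuinely controlled in $L^{G^\ast}$ by the $L^G$ bound on $D^s u_j$ uniformly in $j$, and sidesteps a Minty-type monotonicity argument (whose nonlocal version would require much more care because test functions supported in $\Omega$ couple to the exterior). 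The monotonicity of $\{u_j\}$ is used only to ensure that the pointwise limit $u$ is unambiguous and that $u_- \leq (h_1)_-$, which underlies the tail estimate; it is not needed in a Minty role.
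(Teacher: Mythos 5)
Your argument for passing the limit in $\mathcal{E}(u_j,\varphi)$ takes a genuinely different route from the paper. The paper's proof splits the local part over the set $A_\theta := \{x \in D : |u_j(x)-u(x)|<\theta\}$ and its complement, and estimates the difference of nonlinear fluxes directly via the algebraic inequality of \Cref{lem-ineq5} — this is the sole place where the standing hypothesis \eqref{eq-q-1} enters, as that lemma is proved under \eqref{eq-q-1}. You instead use a soft compactness argument: uniform $L^{G^\ast}$ bound on $g(|D^su_j|)\,D^su_j/|D^su_j|$ over $D\times D$ from \Cref{lem-energy} and \Cref{lem-H}, plus reflexivity of $L^{G^\ast}$ and identification of the weak limit from a.e.\ convergence via a Vitali/uniform-integrability argument on the $\sigma$-finite measure $|x-y|^{-n}\,\mathrm{d}y\,\mathrm{d}x$. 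Both are correct, and your route is arguably cleaner: it avoids the $\theta$-splitting entirely and is exactly the argument the paper uses in \Cref{cor-1}.

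That said, your explanation of where \eqref{eq-q-1} enters is wrong. You claim \eqref{eq-q-1} is needed to get the $L^{G^\ast}$ control on $g(|D^su_j|)\,\mathrm{sgn}(D^su_j)$, but \Cref{lem-H} yields $G^\ast(g(t))\sim G(t)$ with no appeal to \eqref{eq-q-1} — this only uses the basic growth assumption \eqref{eq-pq}. In fact, the paper explicitly remarks after \Cref{cor-1} (whose proof is identical in spirit to yours) that \eqref{eq-q-1} is \emph{not} required there. So if you fill in your weak-convergence step carefully (the measure $|x-y|^{-n}\,\mathrm{d}y\,\mathrm{d}x$ is infinite near the diagonal, so identification of the weak limit must proceed by exhausting with the finite-measure sets $\{|x-y|>1/k\}$ and using de la Vall\'ee-Poussin for uniform integrability), your proof would actually establish the theorem without \eqref{eq-q-1} — a mild strengthening. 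You should also note explicitly that $\|u_j\|_{L^\infty(D)}$ is uniformly bounded: the uniform upper bound is given by hypothesis, and the uniform lower bound follows from the lemma preceding \Cref{lem-energy} applied with $h=h_1$, since supersolutions bounded above with an $L^g_s$ lower barrier are locally bounded below.
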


\begin{proof}
We first observe that $u \in W^{s, G}_\mathrm{loc}(\Omega) \cap L^g_s(\mathbb{R}^n)$. Indeed, for each open sets $D_2 \Subset D_1 \Subset \Omega$ the function $u_j$ is uniformly essentially bounded from above in $D_1$. We may assume that $u_j \leq 0$ in $D_1$ for all $j$. Note that
\begin{equation*}
\int_{D_2} G(|u_j|) \,\mathrm{d}x \leq \int_{D_2} G(|u_1|) \,\mathrm{d}x \leq C \quad\text{uniformly in}~j.
\end{equation*}
Moreover, by \Cref{lem-energy}
\begin{equation}\label{eq-apriori}
\int_{D_2} \int_{D_2} G(|D^su_j|) \frac{\mathrm{d}y\,\mathrm{d}x}{|x-y|^n} \leq C \quad\text{uniformly in}~j.
\end{equation}
Thus, Fatou's lemma shows that $u \in W^{s, G}(D_2)$. Moreover, it follows from $u_j \to u$ a.e.\ in $\mathbb{R}^n$ that $h_1 \leq u \leq h_2$ a.e.\ in $\mathbb{R}^n$, meaning that $u \in L^g_s(\mathbb{R}^n)$.

Since $\mathcal{E}(u_j, \varphi) \geq 0$ for all nonnegative functions $\varphi \in C^\infty_c(\Omega)$, it is enough to prove that $\mathcal{E}(u_j, \varphi) \to \mathcal{E}(u, \varphi)$ as $j \to \infty$. We may assume that $\mathrm{supp}\,\varphi=D_3 \Subset D_2$. For $\theta \in (0,1)$ we define a set $A_\theta:=\{x \in D_2 : |u_j(x)-u(x)| <\theta\}$. We write
\begin{align*}
\mathcal{E}(u_j, \varphi) - \mathcal{E}(u, \varphi)
&= \int_{A_\theta} \int_{A_\theta} \left( g(|D^su_j|) \frac{D^su_j}{|D^su_j|} - g(|D^su|) \frac{D^su}{|D^su|} \right) D^s\varphi \frac{k(x, y)}{|x-y|^n} \,\mathrm{d}y \,\mathrm{d}x \\
&\quad + \iint_{D_2^2 \setminus A_\theta^2} \left( g(|D^su_j|) \frac{D^su_j}{|D^su_j|} - g(|D^su|) \frac{D^su}{|D^su|} \right) D^s\varphi \frac{k(x, y)}{|x-y|^n} \,\mathrm{d}y \,\mathrm{d}x \\
&\quad + 2\int_{D_2} \int_{\mathbb{R}^n \setminus D_2} \left( g(|D^su_j|) \frac{D^su_j}{|D^su_j|} - g(|D^su|) \frac{D^su}{|D^su|} \right) D^s\varphi \frac{k(x, y)}{|x-y|^n} \,\mathrm{d}y \,\mathrm{d}x \\
&= I_1 + I_2 + I_3.
\end{align*}
Let $\delta \in (0, \frac{1}{q-1}]$ and apply \Cref{lem-ineq5} with $a=D^su_j$ and $b=D^su$, then we have
\begin{equation*}
|I_1| \leq \frac{C}{\delta} g^\delta(\theta) \int_{A_\theta} \int_{A_\theta} \left( g^{1-\delta}(|D^su_j|) + g^{1-\delta}(|D^su|) \right) \frac{|D^s\varphi|}{|x-y|^{(q-1)\delta s}} \frac{\mathrm{d}y \,\mathrm{d}x}{|x-y|^n}.
\end{equation*}
We define a Young function $H(t) = G^\ast(t^{1/(1-\delta)})$. By applying Young's inequality \Cref{lem-H}, we obtain
\begin{equation*}
|I_1| \leq Cg^\delta(\theta) \int_{D_2} \int_{D_2} \left( G(|D^su_j|) + G(|D^su|) + H^\ast\left( \frac{|\varphi(x)-\varphi(y)|}{|x-y|^{s+(q-1)\delta s}} \right) \right) \frac{\mathrm{d}y \,\mathrm{d}x}{|x-y|^n}.
\end{equation*}
By recalling \eqref{eq-apriori} and taking $\delta$ sufficiently small so that $s+(q-1)\delta s < 1$, we obtain $|I_1| \leq C g^\delta(\theta)$.

For $I_2$, we have by means of Young's inequality
\begin{equation*}
|I_2| \leq \varepsilon \iint_{D_2^2 \setminus A_\theta^2} (G(|D^su_j|) + G(|D^su|)) \frac{\mathrm{d}y \,\mathrm{d}x}{|x-y|^n} + C(\varepsilon) \iint_{D_2^2 \setminus A_\theta^2} G(|D^s\varphi|) \frac{\mathrm{d}y \,\mathrm{d}x}{|x-y|^n}
\end{equation*}
for any $\varepsilon>0$. Since $\lim_{j \to \infty}|D_2^2 \setminus A_\theta^2|=0$ due to the pointwise convergence of $u_j$, we have $\limsup_{j \to \infty} |I_2| \leq C\varepsilon$. Therefore, by sending $\varepsilon \to 0$ and then $\theta \to 0$, we conclude
\begin{equation}\label{eq-conv-I12}
\limsup_{j \to \infty} (|I_1|+|I_2|) = 0.
\end{equation}

Let us next investigate $I_3$. We observe that the integrand in $I_3$ is bounded by 
\begin{equation*}
C \sum_{i=1}^2 \left( g\left( \frac{|h_i(x)|}{|x-y|^s} \right) + g\left( \frac{|h_i(y)|}{|x-y|^s} \right) \right) \frac{\varphi(x)}{|x-y|^{n+s}}.
\end{equation*}
Note also that
\begin{equation*}
\int_{D_2} \int_{\mathbb{R}^n \setminus D_2} g\left( \frac{|h_i(x)|}{|x-y|^s} \right) \frac{\varphi(x)}{|x-y|^{n+s}} \,\mathrm{d}y \,\mathrm{d}x \leq C\int_{D_3} g(|h_i|) \,\mathrm{d}x
\end{equation*}
for some $C=C(n, p, q, s, \mathrm{dist}(D_3, \partial D_2))>0$ and
\begin{align*}
\int_{D_2} \int_{\mathbb{R}^n \setminus D_2} g\left( \frac{|h_i(y)|}{|x-y|^s} \right) \frac{\varphi(x)}{|x-y|^{n+s}} \,\mathrm{d}y \,\mathrm{d}x
&\leq C \int_{\mathbb{R}^n} g\left( \frac{|h_i(y)|}{(1+|y|)^s} \right) \frac{\mathrm{d}y}{(1+|y|)^{n+s}}
\end{align*}
for some $C=C(n, p, q, s, \mathrm{dist}(D_3, \partial D_2), \mathrm{diam}(D_3))>0$. Since $h_i \in L^g_s(\mathbb{R}^n)$, we can apply the dominated convergence theorem to conclude that
\begin{equation}\label{eq-conv-I3}
\lim_{j \to \infty} |I_3| = 0.
\end{equation}
It follows from \eqref{eq-conv-I12} and \eqref{eq-conv-I3} that $\mathcal{E}(u_j, \varphi) \to \mathcal{E}(u, \varphi)$ as $j \to \infty$.
\end{proof}

The boundedness assumption in \Cref{thm-conv} can be removed if we assume that the sequence is increasing.

\begin{corollary}\label{cor-conv}
Assume \eqref{eq-q-1}. Let $\{u_j\}$ be an increasing sequence of supersolutions of $\mathcal{L}u=0$ in $\Omega$ such that $u_j$ converges a.e.\ in $\mathbb{R}^n$ to a function $u \in W^{s, G}_{\mathrm{loc}} \cap L_s^g(\mathbb{R}^n)$ as $j \to \infty$. Then $u$ is a supersolution in $\Omega$ as well.
\end{corollary}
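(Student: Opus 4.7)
The plan is to reduce to \Cref{thm-conv} via a truncation argument, since the only hypothesis of \Cref{thm-conv} that may fail here is the uniform local essential upper boundedness of $\{u_j\}$.

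First, I would establish a truncation lemma: for any constant $\ell > 0$ and any supersolution $v$ of $\mathcal{L}u = 0$ in $\Omega$, the function $v^{(\ell)} := \min\{v, \ell\}$ is again a supersolution. In the fractional $p$-Laplacian case this is a well-known fact (cf.~Korvenp\"a\"a--Kuusi--Palatucci~\cite{KKP17}), proved by a direct test-function argument that exploits the monotonicity $(g(|a|)\,a/|a| - g(|b|)\,b/|b|)(a-b) \geq 0$ from \eqref{eq-FTC}; extending this to the Orlicz setting should require only minor adjustments in the algebraic manipulations, using \Cref{lem-G} in place of the polynomial growth of $t^{p-1}$. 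I expect this truncation lemma to be the main technical obstacle, as it is the only ingredient not already prepared in the excerpt.

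Second, with the truncation lemma in hand, fix $\ell > 0$ and set $u_j^{(\ell)} := \min\{u_j, \ell\}$. This is an increasing sequence of supersolutions with $\min\{u_1, \ell\} \leq u_j^{(\ell)} \leq \ell$ a.e.\ in $\mathbb{R}^n$. Both envelopes lie in $L^g_s(\mathbb{R}^n)$ (for $u_1$ by \Cref{lem-tail}, and for the constant $\ell$ trivially), and $u_j^{(\ell)} \leq \ell$ gives the required uniform local essential upper boundedness. \Cref{thm-conv} applied to $\{u_j^{(\ell)}\}_j$ then yields that its pointwise limit $u^{(\ell)} := \min\{u, \ell\}$ is a supersolution of $\mathcal{L}u = 0$ in $\Omega$ for every $\ell > 0$.

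Third, I would send $\ell \to \infty$. The family $\{u^{(\ell)}\}_\ell$ is increasing with pointwise limit $u$, and $u_1 \leq u^{(\ell)} \leq u$ on $\mathbb{R}^n$ with $u_1, u \in L^g_s(\mathbb{R}^n)$. The uniform local upper bound $\ell$ now blows up, but at this stage it is not needed: since $t \mapsto \min\{t, \ell\}$ is $1$-Lipschitz, we have $|D^s u^{(\ell)}| \leq |D^s u|$ pointwise, so the assumption $u \in W^{s,G}_{\text{loc}}(\Omega)$ directly supplies the uniform $W^{s,G}_{\text{loc}}$ energy bound for $\{u^{(\ell)}\}_\ell$. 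This bound substitutes for the estimate that \Cref{thm-conv} obtains via \Cref{lem-energy} from the upper bound, and with it the same three-piece decomposition $\mathcal{E}(u^{(\ell)}, \varphi) - \mathcal{E}(u, \varphi) = I_1 + I_2 + I_3$ used in the proof of \Cref{thm-conv} goes through: $I_1$ and $I_2$ are controlled via \Cref{lem-H}, Young's inequality, and the uniform interior energy bound; $I_3$ is controlled by dominated convergence with the $L^g_s$-envelopes $u_1, u$. Passing to the limit $\ell \to \infty$ in $\mathcal{E}(u^{(\ell)}, \varphi) \geq 0$ gives $\mathcal{E}(u, \varphi) \geq 0$ for every nonnegative $\varphi \in C_c^\infty(\Omega)$, completing the proof.
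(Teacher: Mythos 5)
Your proposal is correct and follows essentially the same route as the paper: truncate to $u_j \wedge k$ and $u \wedge k$, invoke the fact that truncations of supersolutions are supersolutions (the paper likewise defers this to Lemma~7 of Korvenp\"a\"a--Kuusi--Palatucci~\cite{KKP17}), apply \Cref{thm-conv} at each level $k$, and then let $k \to \infty$. The only difference is in the last step: you propose rerunning the $I_1+I_2+I_3$ decomposition from the proof of \Cref{thm-conv} using the uniform energy bound coming from $|D^s u^{(\ell)}| \leq |D^s u|$, whereas the paper observes that this same $1$-Lipschitz bound gives a direct pointwise domination $\bigl|g(|D^su_k|)\tfrac{D^su_k}{|D^su_k|}D^s\varphi\bigr| \leq g(|D^su|)|D^s\varphi| \in L^1(\mathbb{R}^{2n}; |x-y|^{-n}\,\mathrm{d}y\,\mathrm{d}x)$ (integrability being exactly what \Cref{lem-finiteness} establishes for $u \in W^{s,G}_{\mathrm{loc}}\cap L^g_s$), so that a single application of dominated convergence gives $\mathcal{E}(u_k,\varphi) \to \mathcal{E}(u,\varphi)$ and hence $\mathcal{E}(u,\varphi)\geq 0$. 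Your version works, but this shortcut avoids having to redo the threefold splitting and is worth noticing.
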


\begin{proof}
For each $M>0$, we define $u_k:=u\land k$ and $u_{j, k}:=u_j \land k$. Then $u_{j, k}$ is a supersolution of $\mathcal{L}u_{j, k}=0$ in $\Omega$. To prove this, one can follow the lines of proof of Lemma~7 in Korvenp\"a\"a--Kuusi--Palatucci~\cite{KKP17}. We omit the proof because it is essentially the same. Moreover, since $u_{1, k} \leq u_{j, k} \leq k$ for any $j \in \mathbb{N}$ and $u_{j, k} \to u_k$ a.e.\ in $\mathbb{R}^n$ as $j \to \infty$, \Cref{thm-conv} shows that $u_k$ is a supersolution of $\mathcal{L}u_k=0$ in $\Omega$, namely $\mathcal{E}(u_k, \varphi) \geq 0$ for any nonnegative function $\varphi \in C^\infty_c(\Omega)$. Since
\begin{equation*}
\left| g(|D^su_k|) \frac{D^su_k}{|D^su_k|} D^s\varphi \right| \leq g(|D^su|) |D^s\varphi| \in L^1(\mathbb{R}^{2n}; |x-y|^{-n})
\end{equation*}
for all $k>0$, we employ the dominated convergence theorem to arrive at the conclusion.
\end{proof}

\section{Properties of superharmonic functions}\label{sec-superharmonic}

As explained in the introduction, superharmonic functions play a fundamental role in nonlocal nonlinear potential theory. We investigate the relation between supersolutions and superharmoinc functions, and study some properties of superharmonic functions.

\subsection{Relation between supersolutions and superharmonic functions}

Supersolutions and superharmonic functions are closely related to each other. We prove that under certain circumstances supersolutions are superharmonic functions and vice versa.

\begin{theorem}\label{thm-relation}
\begin{enumerate}[(i)]
\item
Let $u$ be a supersolution of $\mathcal{L}u=0$ in $\Omega$ that is lower semicontinuous in $\Omega$ and satisfies
\begin{equation}\label{eq-v-lsc}
u(x)=\essliminf_{y \to x}u(y) \quad \text{for every}~ x \in \Omega.
\end{equation}
Then $u$ is superharmonic in $\Omega$.
\item
Let $u$ be a superharmonic function in $\Omega$. If $u$ is locally bounded from above in $\Omega$ or $W^{s, G}_{\mathrm{loc}}(\Omega)$, then $u$ is a supersolution of $\mathcal{L}u=0$ in $\Omega$.
\end{enumerate}
\end{theorem}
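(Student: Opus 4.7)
For part (i), conditions (ii) and (iv) of \Cref{def-superharmonic} are assumed, and $u < \infty$ a.e.\ in $\mathbb{R}^n$ follows from $u \in W^{s,G}_{\mathrm{loc}}(\Omega) \subset L^1_{\mathrm{loc}}(\Omega)$. The remaining half, $u > -\infty$ everywhere in $\Omega$, follows from \eqref{eq-v-lsc} combined with a weak Harnack-type lower bound on $\essinf_{B_r} u$ obtained from \Cref{thm-WHI-int} applied to a suitable shift of $u$, mirroring Lemma~10 in Korvenp\"a\"a--Kuusi--Palatucci~\cite{KKP17}. Thus the substantive task is condition (iii).

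Given $D \Subset \Omega$ and a test solution $v$ as in \Cref{def-superharmonic}(iii), I would fix $\varepsilon > 0$ and set $\tilde v := v - \varepsilon$, which still solves $\mathcal{L}\tilde v = 0$ in $D$. Lower semicontinuity of $u$, continuity of $v$ on $\overline{D}$, and $u \geq v$ on $\partial D$ imply that $K_\varepsilon := \{u \leq v - \varepsilon\} \cap \overline{D}$ is relatively closed in $\overline{D}$ and disjoint from $\partial D$, hence $K_\varepsilon \Subset D$. Choose nested open sets $K_\varepsilon \Subset D_1 \Subset D_2 \Subset D$. The function $(u - \tilde v)_- = (v - \varepsilon - u)_+$ is supported in $K_\varepsilon$ (it vanishes a.e.\ on $\mathbb{R}^n \setminus D$ because $u \geq v$ there), so $(u - \tilde v)_- \in W^{s,G}_0(D_1, D_2)$. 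The comparison principle \Cref{lem-comparison} applied on $(D_1, D_2)$ then yields $u \geq v - \varepsilon$ a.e.\ in $\mathbb{R}^n$. Using \eqref{eq-v-lsc} together with continuity of $v$, this upgrades to $u(x) \geq v(x) - \varepsilon$ for every $x \in D$, and sending $\varepsilon \downarrow 0$ gives $u \geq v$ in $D$.

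For part (ii), \Cref{rmk-superharmonic}(ii) gives local lower boundedness of $u$, so in either hypothesis the truncations $u_k := \min\{u, k\}$ are superharmonic by \Cref{rmk-superharmonic}(i) and locally bounded. Once each $u_k$ is shown to be a supersolution, the monotone limit $u$ is a supersolution via \Cref{cor-conv} in the $W^{s,G}_{\mathrm{loc}}$ case, or trivially in the locally-bounded-above case where $u_k \equiv u$ on each compact set for large $k$. Thus I may assume $u$ is superharmonic and locally bounded on a fixed $D \Subset D' \Subset \Omega$. Following the scheme of Theorem~12 in \cite{KKP17}, I would pick a decreasing sequence $\psi_j$ of continuous functions with $\psi_j \downarrow u$ in $D$ and a bounded $\vartheta \in W^{s,G}(D, D') \cap L^g_s(\mathbb{R}^n)$ agreeing with $u$ on $D' \setminus D$, and solve the obstacle problem in $\mathcal{K}_{\psi_j, \vartheta}(D, D')$ via \Cref{thm-obstacle} to obtain solutions $w_j$. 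Each $w_j$ is a supersolution, is continuous on $\{w_j > \psi_j\} \cap D$ and solves $\mathcal{L}w_j = 0$ there by \Cref{cor-obstacle} and \Cref{thm-Holder-int}, and satisfies $w_j \geq \psi_j \geq u$. The superharmonicity of $u$ applied against $w_j$ (after exhausting $\{w_j > \psi_j\}$ by regular subdomains) forces $u \geq w_j$; combined with $w_j \geq \psi_j \geq u$ and $\psi_j \downarrow u$, we conclude $w_j \equiv u$ on $D$, so $u$ is a supersolution in $D$, and hence in $\Omega$.

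The hardest step will be the comparison $u \geq w_j$ in part (ii), because \Cref{def-superharmonic}(iii) requires a test solution continuous up to the boundary of the comparison domain with $v_+ \in L^\infty(\mathbb{R}^n)$, whereas $w_j$ is a priori continuous only inside $\{w_j > \psi_j\} \cap D$. Overcoming this requires an exhaustion of $\{w_j > \psi_j\}$ by regular open subdomains on which $w_j$ is continuous up to the boundary, together with a careful extension/truncation of $w_j$ to secure the $L^\infty$ condition on the positive part, exactly as in the proof of Theorem~12 of \cite{KKP17}. A secondary subtlety is that \Cref{cor-conv} requires \eqref{eq-q-1}; in the locally bounded case this can be avoided by using uniform $L^\infty$-bounds and the $W^{s,G}_{\mathrm{loc}}$-estimates of \Cref{lem-energy} to invoke dominated convergence directly in $\mathcal{E}(u_k, \varphi)$ instead.
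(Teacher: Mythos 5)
Part (i) of your argument is essentially the paper's proof: fix $\varepsilon>0$, truncate by $\varepsilon$, observe that $K_\varepsilon:=\{u\le v-\varepsilon\}\cap\overline D$ is compact and misses $\partial D$, apply the comparison principle (\Cref{lem-comparison}) to $u$ and $v-\varepsilon$ on a nested pair of subdomains, then upgrade the a.e.\ inequality to an everywhere one using \eqref{eq-v-lsc} and continuity of $v$, and let $\varepsilon\to0$. Your only deviation is to take $K_\varepsilon\Subset D_1\Subset D_2\Subset D$ and apply comparison on $(D_1,D_2)$ instead of, as the paper does, picking $D_1$ with Lipschitz boundary and using $(D_1,\Omega)$; both are legitimate, though the membership $(u-v+\varepsilon)_-\in W^{s,G}_0(D_1,D_2)$ deserves a word (the paper invokes a remark from \cite{KLL23}).

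Part (ii) contains a genuine error in the approximation scheme. A superharmonic function is lower semicontinuous, hence on a compact set it is an \emph{increasing} limit of continuous functions; the paper's \Cref{lem-approx} accordingly takes $\psi_j\uparrow u$. Your choice of a decreasing sequence $\psi_j\downarrow u$ is in general impossible. This error propagates: with the (wrong) chain $w_j\ge\psi_j\ge u$, testing superharmonicity of $u$ against $w_j$ to get $u\ge w_j$ would give $w_j\equiv u$ for each fixed $j$, so the limit is vacuous; in the correct version one has $w_j\ge\psi_j$ but only $w_j\le u$ (via the comparison with superharmonicity), hence $w_j\uparrow u$, and one must then invoke a genuine convergence theorem (\Cref{thm-conv} or \Cref{cor-conv}), not an identity. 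A second gap: the claim that the locally-bounded-above case is ``trivial'' because $u_k\equiv u$ on each compact set is false. The bilinear form $\mathcal E(u_k,\varphi)$ depends on $u_k$ on all of $\mathbb R^n$, not only on $\mathrm{supp}\,\varphi$, and $u_k\ne u$ outside any fixed compact set when $u$ is unbounded; one still needs a dominated/monotone convergence argument such as \Cref{cor-conv}, which in turn requires $u\in W^{s,G}_{\mathrm{loc}}(\Omega)\cap L^g_s(\mathbb R^n)$ and must therefore be supplied. Finally, you do not address the bootstrap needed to remove the a priori assumption $u\in L^g_s(\mathbb R^n)$: the paper first applies \Cref{lem-relation} to the bounded truncations $u_k$ (which do belong to $L^g_s$ since $u_-\in L^g_s$ by definition), then uses the weak Harnack inequality and Fatou (\Cref{thm-integrability}) to deduce $u\in L^g_s(\mathbb R^n)$, and only then applies \Cref{lem-relation} to $u$ itself. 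Your last paragraph correctly flags the difficulty of verifying the comparison via \Cref{def-superharmonic}(iii), but the overall scheme has to be redirected before that step even arises.
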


\begin{proof}[Proof of \Cref{thm-relation} (i)]
It is enough to check that \Cref{def-superharmonic}~(iii) holds. Let $D\Subset \Omega$ be an open set and $v \in C(\overline{D})$ a solution of $\mathcal{L}v=0$ in $D$ with $v_+ \in L^\infty(\mathbb{R}^n)$ such that $u \geq v$ on $\partial D$ and a.e.\ on $\mathbb{R}^n \setminus D$. For $\varepsilon>0$, we consider a compact set $K_\varepsilon:=\{v-\varepsilon \geq u \} \cap \overline{D}$, which satisfies $K_\varepsilon \cap \partial D = \emptyset$. Let $D_1$ be an open subset of $\mathbb{R}^n$ with Lipschitz boundary such that $K_\varepsilon \subset D_1 \Subset D$. Since $u \geq v-\varepsilon$ a.e.\ in $\mathbb{R}^n \setminus D_1$ and $\partial D_1$ is Lipschitz, we have $(u-v+\varepsilon)_- \in W^{s, G}_0(D_1, \Omega)$, see Remark~2.2~(ii) in Kim--Lee--Lee~\cite{KLL23}. Thus, \Cref{lem-comparison} yields $u \geq v-\varepsilon$ a.e.\ in $\mathbb{R}^n$. The desired result $u \geq v$ in $D$ follows from the assumption \eqref{eq-v-lsc} and continuity of $v$. Indeed, for $x \in D$ there exists $r>0$ such that $B_r(x) \subset D$ and
\begin{equation*}
u(x) \geq \essinf_{B_r(x)}u-\varepsilon \geq \inf_{B_r(x)} v - 2\varepsilon \geq v(x) - 3\varepsilon.
\end{equation*}
Since $\varepsilon>0$ and $x \in D$ were arbitrary, we conclude $u\geq v$ in $D$.
\end{proof}

The following lemma shows that one can approximate a superharmonic function by continuous supersolutions.

\begin{lemma}\label{lem-approx}
Let $u$ be a superharmonic function in $\Omega$ and let $D \Subset \Omega$ be an open set. There exists a sequence $\{u_j\}$ of increasing supersolutions of $\mathcal{L}u_j=0$ in $D$ such that $u_j \in C(\overline{D})$ and $u_j \to u$ pointwise in $\mathbb{R}^n$.
\end{lemma}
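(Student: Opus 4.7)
The plan is to realize each $u_j$ as the solution of a nonlocal obstacle problem whose obstacle and boundary datum are a common Lipschitz approximation of $u$ from below. Choose intermediate open sets $D \Subset D' \Subset D'' \Subset \Omega$ with $D'$ having Lipschitz boundary, so that $\mathbb{R}^n \setminus D'$ satisfies the measure density condition. Since $u$ is lower semicontinuous in $\Omega$ and $u_{-} \in L^g_s(\mathbb{R}^n)$, one can construct an increasing sequence of bounded Lipschitz functions $\phi_j \colon \mathbb{R}^n \to \mathbb{R}$ with $\phi_j \le u$ pointwise and $\phi_j \to u$ pointwise on $\mathbb{R}^n$; in particular $\phi_j \in W^{s,G}(D', D'') \cap L^g_s(\mathbb{R}^n)$. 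For each $j$, let $u_j$ denote the unique solution of the obstacle problem in $\mathcal{K}_{\phi_j,\phi_j}(D',D'')$, which is nonempty since $\phi_j$ itself belongs to it; by \Cref{thm-obstacle}, $u_j$ is a supersolution of $\mathcal{L}u_j=0$ in $D'$, hence in $D$. One may extend $u_j$ to $\mathbb{R}^n$ by setting $u_j = \phi_j$ on $\mathbb{R}^n \setminus D'$, which is consistent with the boundary condition $u_j - \phi_j \in W^{s,G}_0(D',D'')$.

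The continuity of $u_j$ on $\overline{D}$ comes from combining the H\"older estimates proved earlier: \Cref{thm-Holder-int} applied with the Lipschitz obstacle $\phi_j$ at interior points of $D'$, and \Cref{thm-Holder-bdry} applied with the same Lipschitz boundary datum and the measure density condition on $\mathbb{R}^n \setminus D'$ at boundary points of $\partial D'$. Because $\phi_j$ is bounded and $u_j$ is H\"older continuous on $\overline{D'}$, the extended $u_j$ is bounded on $\mathbb{R}^n$, so $(u_j)_+ \in L^\infty(\mathbb{R}^n)$.

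The main obstacle is the comparison $u_j \le u$ on $\mathbb{R}^n$, needed both to sandwich the sequence and to get monotonicity. By \Cref{cor-obstacle}, $u_j$ solves $\mathcal{L}u_j=0$ only on the open noncoincidence set $U_j := \{x \in D' : u_j(x) > \phi_j(x)\}$, not on all of $D'$ a priori. Outside $U_j$ one has $u_j = \phi_j \le u$, and by continuity $u_j = \phi_j \le u$ on $\partial U_j$. Since $U_j \Subset \Omega$, $u_j \in C(\overline{U_j})$, and $(u_j)_+ \in L^\infty(\mathbb{R}^n)$, \Cref{def-superharmonic}(iii) applied to the superharmonic $u$ with $v = u_j$ on $U_j$ yields $u \ge u_j$ in $U_j$; combined with the bound on $\mathbb{R}^n \setminus U_j$ this gives $u \ge u_j$ on $\mathbb{R}^n$. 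Monotonicity $u_j \le u_{j+1}$ follows from the analogous argument on the open set $\{u_j > u_{j+1}\}$: the chain $u_j > u_{j+1} \ge \phi_{j+1} \ge \phi_j$ places us inside the noncoincidence set of $u_j$, where $u_j$ is a genuine solution, so the comparison principle \Cref{lem-comparison} applied to $u_j$ and $u_{j+1}$ forces this set to be empty.

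With these pieces in place the convergence is immediate: on $D'$ the sandwich $\phi_j \le u_j \le u$ together with $\phi_j \nearrow u$ gives $u_j \to u$ pointwise, while on $\mathbb{R}^n \setminus D'$ the identity $u_j = \phi_j$ gives the same. I expect the most delicate technical point to be the construction of the Lipschitz approximants $\phi_j$ with all the required global properties (pointwise bound $\phi_j \le u$ beyond $\Omega$, pointwise convergence, tail integrability) starting only from the lower semicontinuity of $u$ in $\Omega$ and $u_{-} \in L^g_s(\mathbb{R}^n)$, and then making the comparison step rigorous as a pointwise (not merely a.e.) inequality, which is what \Cref{def-superharmonic}(iii) demands when invoked on $U_j$.
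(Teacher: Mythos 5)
Your overall strategy matches the paper's: construct approximating data, solve obstacle problems in a pair of intermediate domains, invoke the H\"older estimates (interior and boundary) for continuity, then use the superharmonicity definition and comparison to obtain the sandwich and monotonicity. However there are two genuine gaps.

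\textbf{The global Lipschitz approximation does not exist.} You require bounded Lipschitz $\phi_j \colon \mathbb{R}^n \to \mathbb{R}$ with $\phi_j \le u$ \emph{pointwise on all of} $\mathbb{R}^n$ and $\phi_j \to u$ \emph{pointwise on all of} $\mathbb{R}^n$. Outside $\Omega$ the superharmonic function $u$ is merely measurable (possibly $-\infty$ on a null set), so neither the pointwise lower bound nor pointwise convergence can be achieved by real-valued continuous approximants there. You flag this as ``the most delicate technical point,'' but it is actually a request that cannot be met, not merely a technical nuisance. The paper avoids it by approximating $u$ from below by smooth $\psi_j$ only on an intermediate set $U$ with $D \Subset D_0 \Subset U \Subset \Omega$ (where lower semicontinuity is available, via the analogue of Lemma~8 in \cite{KKP17}) and then setting
\begin{equation*}
\vartheta_j = \psi_j \text{ on } U, \qquad \vartheta_j = \min\{j, u\} \text{ on } \mathbb{R}^n \setminus U.
\end{equation*}
This $\vartheta_j$ lies in $W^{s,G}(D_0,U) \cap L^g_s(\mathbb{R}^n)$, is bounded above by $j$, satisfies $\vartheta_j \le u$ and $\vartheta_j \nearrow u$ pointwise everywhere, yet never needs any regularity of $u$ outside $\Omega$.

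\textbf{The monotonicity step is not justified.} To get $u_j \le u_{j+1}$ you apply the comparison principle (\Cref{lem-comparison}) on the open set $V := \{u_j > u_{j+1}\}$, where $u_j$ solves and $u_{j+1}$ is a supersolution. But \Cref{lem-comparison} requires $(u_{j+1} - u_j)_- \in W^{s,G}_0(V, V')$, and this does not follow from the a.e.\ inequality $u_{j+1} \ge u_j$ off $V$ alone: as the paper notes when it uses the analogous step (referencing Remark~2.2~(ii) in \cite{KLL23}), one needs the domain to have, say, Lipschitz boundary, and the noncoincidence set or $V$ has no such regularity. The paper instead proves $u_j$ is superharmonic (via \Cref{thm-relation}~(i), applicable because the obstacle solution is lower semicontinuous), and then compares $u_j$ and $u_{j+1}$ via \Cref{def-superharmonic}~(iii) for $u_{j+1}$ on the set $A_{j,\varepsilon} = D_0 \cap \{u_j > \vartheta_j + \varepsilon\}$; the $\varepsilon$-margin guarantees $A_{j,\varepsilon} \Subset D_0$, and \Cref{def-superharmonic}~(iii) only requires the pointwise/a.e.\ boundary comparison, circumventing any $W^{s,G}_0$ verification. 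Notably, you already use this exact pattern correctly when showing $u_j \le u$; the same reasoning, applied to $u_{j+1}$ in place of $u$ after you establish that $u_{j+1}$ is superharmonic, closes the gap. Your write-up is missing the explicit step that each $u_j$ is itself superharmonic.
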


\begin{proof}
Let us choose two open sets $D_0$ and $U$ such that $D \Subset D_0 \Subset U \Subset \Omega$ and that $\mathbb{R}^n \setminus D_0$ satisfies the measure density condition. Note that Lemma~8 in Korvenp\"a\"a--Kuusi--Palatucci~\cite{KKP17} shows that there exists an increasing sequence of smooth functions $\{\psi_j\} \subset C^{\infty}(\overline{U})$ which converges to $u$ pointwise in $U$. We point out that this fact is proved for an $(s, p)$-superharmonic function $u$, but its proof only uses the lower semicontinuity and the local boundedness from below, see \Cref{rmk-superharmonic} (ii). We define
	\begin{equation*}
		\vartheta_j(x) :=
		\begin{cases}
			\psi_j(x) &\text{for}~x \in U, \\
			\min\{j, u(x)\} &\text{for}~x \in \mathbb{R}^n \setminus U.
		\end{cases}
	\end{equation*}
It follows from the smoothness of $\psi_j$ and the fact that $u_- \in L_s^g(\mathbb{R}^n)$ that $\vartheta_j \in W^{s, G}(D_0, U) \cap L^g_s(\mathbb{R}^n)$. Note that $\vartheta_j \in \mathcal{K}_{\vartheta_j, \vartheta_j}(D_0, U)$. Thus, by \Cref{thm-obstacle}, there exists a unique solution $u_j \in \mathcal{K}_{\vartheta_j, \vartheta_j}(D_0, U)$ to the obstacle problem and it is a supersolution of $\mathcal{L}u_j=0$ in $D_0$ .An application of \Cref{thm-Holder-bdry} yields that $u_j$ belongs to $C(\overline{D_0})$. Moreover, by \Cref{thm-relation} (i), $u_j$ is superharmonic in $\Omega$.

We claim that $\{u_j\}$ is an increasing sequence. Indeed, for $A_{j, \varepsilon}:=D_0 \cap \{u_j > \vartheta_j+\varepsilon\}$ with $\varepsilon>0$, we see that $u_j$ is a solution of $\mathcal{L}u_j=0$ in $A_{j, \varepsilon}$ by \Cref{cor-obstacle}. Moreover, we have $A_{j, \varepsilon} \Subset D_0$, $u_{j+1}+\varepsilon \geq u_j$ on $\overline{D}_0 \setminus A_{j, \varepsilon}$, and $u_{j+1}+\varepsilon \geq u_j$ a.e.\ in $\mathbb{R}^n \setminus D_0$. Thus, by \Cref{def-superharmonic} (iii) for $u_{j+1}$, we deduce that $u_{j+1}+\varepsilon \geq u_j$ in $D_0$. The claim follows by letting $\varepsilon \to 0$.

Similarly, we have $u_j \leq u$. Since $\vartheta_j$ converges pointwise to $u$, we conclude that $u_j$ also converges pointwise to $u$.
\end{proof}

The following lemma shows the second part of \Cref{thm-relation} under an additional assumption $u \in L^g_s(\mathbb{R}^n)$. The full proof of \Cref{thm-relation} (ii) without this additional assumption follows by combining \Cref{lem-relation} and \Cref{thm-integrability}. 

\begin{lemma}\label{lem-relation}
Assume \eqref{eq-q-1}. Let $u \in L^g_s(\mathbb{R}^n)$ be a superharmonic function in $\Omega$. If $u$ is locally bounded from above in $\Omega$ or $W^{s, G}_{\mathrm{loc}}(\Omega)$, then $u$ is a supersolution of $\mathcal{L}u=0$ in $\Omega$.
\end{lemma}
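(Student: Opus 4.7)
The plan is to invoke the approximation provided by \Cref{lem-approx} together with the convergence result \Cref{cor-conv}. Fix an arbitrary open set $D \Subset \Omega$; since being a supersolution is a local property, it suffices to show that $u$ is a supersolution of $\mathcal{L}u=0$ in $D$. By \Cref{lem-approx}, there is an increasing sequence $\{u_j\}$ of continuous supersolutions of $\mathcal{L}u_j=0$ in $D$ with $u_j \to u$ pointwise in $\mathbb{R}^n$.

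Because $u \in L^g_s(\mathbb{R}^n)$ by hypothesis, \Cref{cor-conv} will yield the conclusion as soon as we establish $u \in W^{s,G}_{\mathrm{loc}}(\Omega)$. If this membership is part of the hypothesis, there is nothing more to do. Assume instead that $u$ is locally bounded from above in $\Omega$. Fix open sets $D' \Subset D'' \Subset D$ and choose $M>0$ so that $u \leq M$ a.e.\ in $D''$. Since $u_1$ is a supersolution of $\mathcal{L}u_1=0$ in $D$, \Cref{lem-tail} gives $u_1 \in L^g_s(\mathbb{R}^n)$; after enlarging $M$ if necessary, we have the uniform sandwich
\begin{equation*}
u_1 \leq u_j \leq u \leq M \quad \text{a.e.\ in } D'' \qquad \text{and} \qquad u_j \geq u_1 \quad \text{a.e.\ in } \mathbb{R}^n.
\end{equation*}
Applying \Cref{lem-energy} to each $u_j$ in $D''$ (with the lower barrier $h=u_1$) therefore yields
\begin{equation*}
\int_{D'} \int_{D'} G(|D^s u_j|) \frac{\mathrm{d}y\,\mathrm{d}x}{|x-y|^n} \leq C
\end{equation*}
with $C$ independent of $j$. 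Fatou's lemma then gives $u \in W^{s,G}(D')$, and the arbitrariness of $D'$ yields $u \in W^{s,G}_{\mathrm{loc}}(\Omega)$.

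With $u \in W^{s,G}_{\mathrm{loc}}(\Omega) \cap L^g_s(\mathbb{R}^n)$ and an increasing sequence $\{u_j\}$ of supersolutions in $D$ converging pointwise to $u$ on $\mathbb{R}^n$, \Cref{cor-conv} concludes that $u$ is a supersolution of $\mathcal{L}u=0$ in $D$, which completes the proof. The main technical step is obtaining the uniform energy bound on $\{u_j\}$ under the \emph{locally bounded above} alternative; this relies on the two-sided barrier built into \Cref{lem-approx} --- an increasing family sandwiched from below by the continuous supersolution $u_1$ --- combined with \Cref{lem-energy}, which is precisely why it is crucial that $u_1 \in L^g_s(\mathbb{R}^n)$.
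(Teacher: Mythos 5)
Your proof is correct and rests on the same two pillars as the paper's --- the approximation by continuous supersolutions (\Cref{lem-approx}) and the convergence results of \Cref{sec-supersolution} --- but you organize the two alternatives differently. For the locally-bounded-from-above case, the paper simply notes that $u_1 \leq u_j \leq u$ with $u_1, u \in L^g_s(\mathbb{R}^n)$ and that the sequence $\{u_j\}$ is uniformly locally bounded from above (since $u$ is), and then invokes \Cref{thm-conv} directly. Your route first manually re-derives the uniform energy bound via \Cref{lem-energy} and Fatou to get $u \in W^{s,G}_{\mathrm{loc}}(\Omega)$, and then applies \Cref{cor-conv}; this works, but it recapitulates the a priori estimate that the proof of \Cref{thm-conv} already establishes internally (the step leading to \eqref{eq-apriori}), so it is slightly more work than necessary. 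In addition, your Fatou step only controls the Gagliardo part of $\|u\|_{W^{s,G}(D')}$; you should also observe that $u \in L^G(D')$, which follows by dominated convergence from $|u_j| \le |u_1| \lor M$ on $D'$ together with $u_1 \in W^{s,G}_{\mathrm{loc}}(D) \subset L^G_{\mathrm{loc}}(D)$. For the $W^{s,G}_{\mathrm{loc}}$ case, conversely, your route is a little cleaner than the paper's: you apply \Cref{cor-conv} directly to the sequence $\{u_j\}$, whereas the paper first truncates $u$ at level $k$, reduces $u \land k$ to the bounded-from-above case, and then applies \Cref{cor-conv} to $\{u \land k\}$. Both variants are valid, and the net effort is essentially the same.
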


\begin{proof}
Let $u \in L^g_s(\mathbb{R}^n)$ be a superharmonic function in $\Omega$ which is locally bounded from above. Let $D \Subset \Omega$ be an open set and $\{u_j\}$ be the sequence constructed in \Cref{lem-approx}. Since $u_j$ satisfies $u_1 \leq u_j \leq u$ with $u_1, u \in L_s^g(\mathbb{R}^n)$ and $u$ is bounded from above in $D$, \Cref{thm-conv} shows that $u$ is a supersolution of $\mathcal{L}u=0$ in $D$. Since $D \Subset \Omega$ was arbitrarily chosen, we conclude that $u$ is a supersolution of $\mathcal{L}u=0$ in $\Omega$.

Let us next assume that $u \in W^{s, G}_{\mathrm{loc}}(\Omega) \cap L^g_s(\mathbb{R}^n)$ is superharmonic in $\Omega$. For $k>0$, we define $u_k:=u \land k$. By the previous result, $u_k$ is a supersolution of $\mathcal{L}u=0$ in $\Omega$. Since $\{u_k\}$ is an increasing sequence and $u_k$ converges to $u$ pointwise in $\mathbb{R}^n$, an application of \Cref{cor-conv} finishes the proof.
\end{proof}

\subsection{Pointwise behaviors}

$\mathcal{L}$-superharmonic functions share the same pointwise behaviors with $(s, p)$-superharmonic functions. We collect some properties of $\mathcal{L}$-superharmonic functions in this section.

\begin{lemma}\label{lem-zero}
Let $u$ be superharmonic in $\Omega$. If $u=0$ a.e.\ in $\Omega$, then $u=0$ in $\Omega$.
\end{lemma}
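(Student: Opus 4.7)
The plan is to prove $u \equiv 0$ on $\Omega$ by establishing both pointwise inequalities $u \leq 0$ and $u \geq 0$. The first is the easy half: if some $x_0 \in \Omega$ had $u(x_0) > 0$, lower semicontinuity would furnish a ball around $x_0$ on which $u > u(x_0)/2 > 0$, contradicting $u = 0$ a.e.

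For $u \geq 0$, suppose toward contradiction that $c := -u(x_0) > 0$ for some $x_0 \in \Omega$, and fix $B_R(x_0) \Subset D \Subset \Omega$. By \Cref{lem-approx} there is an increasing sequence $\{u_j\} \subset C(\overline{D})$ of supersolutions of $\mathcal{L}u_j = 0$ in $D$ converging pointwise on $\mathbb{R}^n$ to $u$. Monotonicity together with the already established $u \leq 0$ on $\Omega$ forces $u_j \leq 0$ on $\overline{D}$, and since $u_j$ is a supersolution, $-u_j$ is a (nonnegative) subsolution on $D$. Applying \Cref{thm-loc-bdd-int} to $-u_j$ on $B_R(x_0)$ with $p_0 = 1$ produces, for every $\varepsilon > 0$,
\begin{equation*}
\sup_{B_{R/2}}(-u_j) \leq \varepsilon\, \mathrm{Tail}_g((u_j)_-; x_0, R/2) + C(\varepsilon) \fint_{B_R} |u_j|\,\mathrm{d}x.
\end{equation*}

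The crux is to bound the right-hand side uniformly in $j$ and then pass to the limit. Monotonicity gives $(u_j)_- \leq (u_1)_-$; since the continuous supersolution $u_1$ is itself superharmonic by \Cref{thm-relation} (i), condition (iv) of \Cref{def-superharmonic} yields $(u_1)_- \in L^g_s(\mathbb{R}^n)$, so $T_0 := \mathrm{Tail}_g((u_1)_-; x_0, R/2) < \infty$ is a finite $j$-independent constant. The averaged integral tends to zero by dominated convergence: $u_j \to u = 0$ a.e.\ on $B_R$, and the dominant $\|u_1\|_{L^\infty(\overline{B_R})}$ works because $|u_j| = -u_j \leq -u_1 = |u_1|$ on $B_R$. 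Choosing $\varepsilon = c/(4T_0)$ (or any positive $\varepsilon$ when $T_0 = 0$) and letting $j \to \infty$, the right-hand side is ultimately at most $c/4 + c/4 = c/2$, whereas $\sup_{B_{R/2}}(-u_j) \geq -u_j(x_0) \to c$; this contradiction delivers $u(x_0) \geq 0$ and completes the proof.

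The main obstacle, and the reason one must pass through the approximation \Cref{lem-approx}, is that a direct application of the comparison axiom \Cref{def-superharmonic} (iii) with the candidate $v \equiv -\varepsilon$ requires $u \geq -\varepsilon$ a.e.\ on all of $\mathbb{R}^n \setminus D$, which the hypothesis $u = 0$ a.e.\ in $\Omega$ does not supply outside $\Omega$. The tail term in \Cref{thm-loc-bdd-int} precisely absorbs this missing nonlocal information uniformly in the approximating sequence, which is why the quantitative interior estimate is the right tool here.
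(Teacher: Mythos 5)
Your argument is correct and follows the route the paper itself intends (it simply cites Lemma~10 of Korvenp\"a\"a--Kuusi--Palatucci): lower semicontinuity yields $u\leq 0$ in $\Omega$, and the increasing sequence of continuous supersolutions from \Cref{lem-approx}, fed into the interior local boundedness estimate \Cref{thm-loc-bdd-int} for the nonnegative subsolutions $-u_j$ with the uniform tail bound $\mathrm{Tail}_g((u_j)_-;x_0,R/2)\leq\mathrm{Tail}_g((u_1)_-;x_0,R/2)$ and dominated convergence for the averaged term, rules out $u(x_0)<0$. The only caveat worth recording is that your proof, by invoking \Cref{lem-approx} (and hence \Cref{thm-obstacle}), silently inherits the structural hypothesis \eqref{eq-q-1}, which is not listed in the statement of \Cref{lem-zero}; this is a bookkeeping inconsistency already latent in the paper's own treatment of this subsection and not a defect in your reasoning.
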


\begin{theorem}\label{thm-pointwise}
Let $u$ be superharmonic in $\Omega$, then
\begin{equation*}
u(x)=\liminf_{y \to x}u(y)=\essliminf_{y \to x}u(y) \quad \text{for every}~ x\in \Omega.
\end{equation*}
In particular, $\inf_Du=\essinf_Du$ for any open set $D \Subset \Omega$.
\end{theorem}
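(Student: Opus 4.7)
The plan is to get the easy direction $u(x) \leq \liminf_{y\to x} u(y) \leq \essliminf_{y\to x}u(y)$ from the lower semicontinuity of $u$ built into \Cref{def-superharmonic}(ii) together with the trivial bound $\liminf \leq \essliminf$, and then to derive the reverse inequality $\essliminf_{y\to x}u(y) \leq u(x)$ by reducing it, via a min-with-constant trick, to \Cref{lem-zero}.

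The key intermediate claim I would establish is the following local ``a.e.\ implies everywhere'' principle: if $u$ is superharmonic in $\Omega$, $U \Subset \Omega$ is open, and $u \geq c$ almost everywhere in $U$ for some $c \in \mathbb{R}$, then $u \geq c$ pointwise in $U$. The argument would run as follows. The constant function $c$ is itself superharmonic (one can check \Cref{def-superharmonic} directly, or invoke \Cref{thm-relation}(i) since constants are continuous solutions trivially satisfying $c(x)=\essliminf_{y\to x}c(y)$), so by \Cref{rmk-superharmonic}(i) the pointwise minimum $\min\{u,c\}$ is superharmonic in $\Omega$; because $\mathcal{L}$ depends on $u$ only through differences, subtracting $c$ preserves superharmonicity, making $w:=\min\{u,c\}-c$ superharmonic in $\Omega$, hence in $U$. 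The hypothesis $u \geq c$ a.e.\ in $U$ translates to $w=0$ a.e.\ in $U$, and \Cref{lem-zero} (applied with $U$ in place of $\Omega$) forces $w\equiv 0$ in $U$, which is precisely $u \geq c$ pointwise in $U$.

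Armed with this principle, the rest is quick. Fix $x\in\Omega$ and $r>0$ with $B_r(x)\Subset\Omega$. By \Cref{rmk-superharmonic}(ii) the number $c_r:=\essinf_{B_r(x)}u$ is finite, and the principle gives $u \geq c_r$ everywhere in $B_r(x)$; in particular $u(x) \geq c_r$. Letting $r \to 0$ yields $u(x) \geq \essliminf_{y\to x}u(y)$, which together with the lsc chain forces all three quantities to coincide. The ``in particular'' statement is obtained by the same principle applied with $U = D$ and $c = \essinf_D u$: one gets $u \geq \essinf_D u$ pointwise in $D$, hence $\inf_D u \geq \essinf_D u$, and the reverse inequality is trivial.

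The only real obstacle I foresee is the routine bookkeeping that $\min\{u,c\}$ fits \Cref{def-superharmonic}, most notably that $(\min\{u,c\})_- \in L^g_s(\mathbb{R}^n)$; this follows from $u_- \in L^g_s(\mathbb{R}^n)$, the integrability of constants against the tail weight (using $g(t) \sim t^{p-1}$ as $t \to 0$ from \eqref{eq-pq}), and the sub-additivity of $g$ from \Cref{lem-G}(iii). Beyond that, the proof reduces to a one-paragraph application of \Cref{lem-zero}, and no further substantive difficulty is expected.
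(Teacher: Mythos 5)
Your proposal is correct, and it takes essentially the same route as the paper, which simply defers to Theorem~13 of Korvenp\"a\"a--Kuusi--Palatucci~\cite{KKP17}: the reverse inequality $u(x)\geq\essliminf_{y\to x}u(y)$ is obtained by applying \Cref{lem-zero} to $\min\{u,c\}-c$ with $c=\essinf_{B_r(x)}u$ and letting $r\to 0$, exactly as you describe. The bookkeeping you flag (constants are superharmonic, $\min$ of superharmonic functions is superharmonic via \Cref{rmk-superharmonic}(i), adding a constant preserves superharmonicity, and the tail condition for $(\min\{u,c\})_-$ via \Cref{lem-G}) is all routine and goes through as you indicate.
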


We refer the reader to Lemma~10~and~Theorem~13 in Korvenp\"a\"a--Kuusi--Palatucci~\cite{KKP17} for the proofs of \Cref{lem-zero} and \Cref{thm-pointwise}, respectively.

\subsection{Integrability}

Superharmonic functions do not belong to $W^{s, G}_{\mathrm{loc}}(\Omega)$ in general even in the standard growth case $G(t)=t^p$ as the fundamental solution shows. Note that the assumptions in \Cref{thm-relation} (ii) exclude the fundamental solution. Nevertheless, superharmonic functions have lower integrability.

\begin{theorem}\label{thm-integrability}
Let
\begin{equation}\label{eq-order}
\delta \in 
\begin{cases}
(0, \frac{n}{n-sp}) &\text{if}~ sp<n, \\
(0, \infty) &\text{if}~ sp \geq n,
\end{cases}
\quad \sigma \in (0, s), \quad\text{and}\quad \alpha \in (0, \min\{\tfrac{n}{n-sp/q}, \tfrac{q}{q-1} \}).
\end{equation}
Then any superharmonic function belongs to $L^{g^\delta}_{\mathrm{loc}}(\Omega) \cap W^{\sigma, g^{\alpha}}_{\mathrm{loc}}(\Omega) \cap L^g_s(\mathbb{R}^n)$.
\end{theorem}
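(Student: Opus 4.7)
The plan is to combine the approximation result of \Cref{lem-approx} with the three main a priori estimates already available for supersolutions---the weak Harnack inequality \Cref{thm-WHI-int}, the Caccioppoli estimates of \Cref{sec-Caccioppoli}, and the tail estimate \Cref{lem-tail}---and to pass to the limit via Fatou's lemma. Fix an open set $D \Subset \Omega$. By \Cref{rmk-superharmonic}~(ii) the superharmonic function $u$ is locally bounded from below on $D$, so after adding a large constant (which preserves superharmonicity and only perturbs $u$ by a bounded term) I may assume $u \geq 1$ on a neighborhood of $\overline{D}$. Choose open sets with $D \Subset D_0 \Subset U \Subset \Omega$ and invoke \Cref{lem-approx} on $D_0$ to obtain an increasing sequence of continuous supersolutions $u_j \in C(\overline{D_0})$ with $u_j \uparrow u$ pointwise in $\mathbb{R}^n$. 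From the construction in \Cref{lem-approx}, $u_j$ equals a smooth $\psi_j \uparrow u$ on $U \setminus D_0$ and equals $\min(j, u)$ outside $U$; this gives uniform-in-$j$ local upper bounds on $(u_j)_-$ (via $\psi_1$) and uniform tail bounds (via $u_- \in L_s^g(\mathbb{R}^n)$).

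For the $L^{g^\delta}_{\mathrm{loc}}$ assertion, I cover $D$ by balls $B_R$ with $B_R \Subset D_0$ on which $u_j \geq 0$ and apply \Cref{thm-WHI-int}:
\begin{equation*}
\fint_{B_{R/2}} g^\delta\!\left(\frac{u_j}{R^s}\right) \mathrm{d}x \leq C\, g^\delta\!\left(\essinf_{B_{3R/4}} \frac{u_j}{R^s}\right) + C\, g^\delta\!\left(\frac{\mathrm{Tail}_g((u_j)_-; x_0, R)}{R^s}\right),
\end{equation*}
whose right-hand side is uniformly bounded in $j$ by $\essinf_{B_{3R/4}} u_j \leq \essinf_{B_{3R/4}} u < \infty$ and by the tail control above; Fatou transfers the bound to $u$. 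For the $W^{\sigma, g^\alpha}_{\mathrm{loc}}$ assertion, I apply \Cref{lem-Caccio2} to $v_j = (u_j+d)/R^s$ with $d > 0$ small and $\gamma = 1 + \beta \in (0,1)$, so that $\int_{B_R} \bar{g}^\gamma(v_j) \,\mathrm{d}x$ is uniformly bounded by the previous step (case $\delta = \gamma$). Following the manipulation used in the proof of \Cref{lem-RHI-super}, one extracts a uniform $W^{\sigma, p}_{\mathrm{loc}}$-bound on $\bar{g}^{\gamma/p}(v_j)$, and by the fractional Sobolev embedding a uniform $L^1$-bound on $\bar{g}^{\gamma\chi}(v_j)$ with $\chi = n/(n-\sigma p)$ (when $sp < n$). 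The pointwise inequality
\begin{equation*}
g^\alpha(|D^\sigma u_j|) \leq C\, |x-y|^{(s-\sigma)(p-1)\alpha} g^\alpha(|D^s u_j|),
\end{equation*}
valid on bounded sets by \Cref{lem-G}, combined with Hölder's inequality (balancing $g^\alpha(|D^s u_j|)$ against an appropriate power of $v_j$ obtained above) and with the growth bound $t g(t) \sim G(t)$, yields a uniform-in-$j$ bound on $\iint_{D\times D} g^\alpha(|D^\sigma u_j|) |x-y|^{-n} \mathrm{d}y\, \mathrm{d}x$. The admissible range $\alpha < \min\{n/(n-sp/q),\, q/(q-1)\}$ emerges by matching the Hölder exponent to the Sobolev threshold and by closing up the $g$-growth factors. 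A final application of Fatou yields the bound for $u$.

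Finally, for the $L^g_s(\mathbb{R}^n)$ assertion, since $u_- \in L^g_s(\mathbb{R}^n)$ is built into \Cref{def-superharmonic}, only $u_+$ needs to be controlled. I pick $\sigma \in (0, s)$ satisfying the admissibility condition \eqref{eq-sigma} and apply \Cref{lem-tail} to each $u_j$ on a ball $B_R \Subset D_0$; its three right-hand terms are uniformly bounded in $j$ by the uniform bound on $\iint g(|D^\sigma u_j|) |x-y|^{-n}\, \mathrm{d}y\, \mathrm{d}x$ just obtained (the case $\alpha = 1$, which lies in the admissible range), by the $L^g_{\mathrm{loc}}$-bound on $u_j$ (the case $\delta = 1$, likewise admissible), and by the uniform control of $\mathrm{Tail}_g((u_j)_-; x_0, R)$ already established. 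One more Fatou step gives $\mathrm{Tail}_g(u_+; x_0, R) < \infty$. The main obstacle in this programme is the $W^{\sigma, g^\alpha}$ step: the Caccioppoli estimates naturally control a weighted $G(|D^s\cdot|)$-energy, and extracting from it a $g^\alpha(|D^\sigma\cdot|)$-energy with the sharp range of exponents requires a careful interpolation via the $g$-growth bounds and the fractional Sobolev embedding, with the asymmetric threshold $\min\{n/(n-sp/q),\, q/(q-1)\}$ encoding these two competing constraints.
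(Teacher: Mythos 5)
Your overall architecture is sound and matches the paper's at a high level: approximate $u$ by a sequence of supersolutions, apply the available a~priori estimates (weak Harnack, Caccioppoli, tail lemma) uniformly in the approximation parameter, and pass to the limit with Fatou's lemma. However, there is a genuine gap in the central step, and one avoidable complication.

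The gap is exactly where you flag it: the $W^{\sigma, g^\alpha}_{\mathrm{loc}}$ membership. The paper isolates this step as a separate result, \Cref{lem-integrability}, whose proof is a careful but nontrivial piece of Orlicz interpolation. Starting from the Caccioppoli estimate
\begin{equation*}
\fint_{B_R}\int_{B_R} G(|D^s u|)\left(\fint_{v(y)}^{v(x)}\frac{\bar g^\beta(t)}{t}\,\mathrm{d}t\right)\frac{\mathrm{d}y\,\mathrm{d}x}{|x-y|^n}\leq C\bar g^\gamma(d/R^s)
\end{equation*}
(where $\gamma=1+\beta\in(0,1)$), the paper establishes a pointwise claim bounding $g^\alpha(R^{\sigma-s}|D^\sigma u|)/\bar g^\alpha(d/R^s)$ by the Caccioppoli integrand plus a remainder involving $\bar g(v(x)\lor v(y))$ and $|x-y|/R$ to carefully matched powers. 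This is achieved by a Young inequality with the conjugate function $H^\ast$ of $H(t)=G^\ast(t^{1/\alpha})$, combined with the equivalence $G^\ast(g(t))\sim G(t)$ and the growth bounds $\lambda^{p/(p-\alpha(p-1))}H^\ast(t)\lesssim H^\ast(\lambda t)\lesssim\lambda^{q/(q-\alpha(q-1))}H^\ast(t)$ for $\lambda\geq1$. The admissible range $\alpha<\min\{n/(n-sp/q),\,q/(q-1)\}$ drops out of this computation together with a further application of the weak Harnack inequality with exponent $\alpha(1-(q-1)\beta)/(q-\alpha(q-1))<n/(n-sp)$. Your sketch correctly identifies the ingredients (the gain $|x-y|^{(s-\sigma)(p-1)\alpha}$ from \Cref{lem-G}, and the need to balance against the $G$-energy), but ``H\"older's inequality (balancing $g^\alpha(|D^s u_j|)$ against an appropriate power of $v_j$)'' does not specify the exponents, the conjugate function, or why $\alpha$ must be below each of the two competing thresholds. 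As you candidly note, this is ``the main obstacle''; in its current form it is a plan, not a proof. Note also that your $L^g_s(\mathbb{R}^n)$ step is downstream of this gap: controlling the first term of \Cref{lem-tail} uses precisely the $\alpha=1$ case of the $W^{\sigma, g^\alpha}$ bound, so that assertion inherits the incompleteness.

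A secondary point concerns the choice of approximation. You route through \Cref{lem-approx}, building continuous supersolutions $u_j\uparrow u$ via obstacle problems. This works, but it is heavier than necessary and requires checking that the $u_j$ remain nonnegative on the balls where the weak Harnack is applied. The paper instead truncates: $u_k=u\land k$ is still superharmonic (minimum of two superharmonic functions, \Cref{rmk-superharmonic}), bounded above, and in $L^g_s(\mathbb{R}^n)$, so \Cref{lem-relation} directly makes it a supersolution; after applying each a~priori estimate to $u_k$, Fatou as $k\to\infty$ finishes. This truncation route avoids the entire obstacle-problem machinery at this stage of the argument and gives a cleaner passage to the limit.
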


Note that the orders of differentiability and integrability given in \eqref{eq-order} are optimal in the sense that \Cref{thm-integrability} recovers \cite[Theorem~1~(ii)]{KKP17} in the case $G(t)=t^p$.

\begin{lemma}\label{lem-integrability}
Let $\sigma \in (0,s)$ and $0<\alpha<\min\{ \frac{n}{n-sp/q}, \frac{q}{q-1} \}$. Let $u$ be a supersolution of $\mathcal{L}u=0$ in $B_{4R}=B_{4R}(x_0)$ such that $u \geq 0$ in $B_{4R}$. There exists a constant $C=C(n, p, q, s, \sigma, \alpha, \Lambda)>0$ such that
\begin{equation*}
\fint_{B_R}\int_{B_R} g^\alpha(R^{\sigma-s}|D^\sigma u|) \frac{\mathrm{d}y\,\mathrm{d}x}{|x-y|^n} \leq Cg^\alpha\left(\essinf_{B_R}\frac{u}{R^s}\right) + Cg^\alpha\left( \frac{\mathrm{Tail}_g(u_-; x_0, 4R)}{R^s} \right).
\end{equation*}
\end{lemma}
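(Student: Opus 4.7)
The strategy combines a Caccioppoli-type estimate for the $g^\alpha$-energy of the fractional gradient of $u$ with the weak Harnack inequality. The argument parallels the one used in Moser iteration: first transfer information about $u$ to information about a bounded-below shifted function $\bar u = u + d$, then control the left-hand side via Caccioppoli, and finally close the loop with the weak Harnack inequality.

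I would begin by normalizing. Set $d := \mathrm{Tail}_g(u_-; x_0, 4R)$ and $\bar u := u + d$, $v := \bar u / R^s$, so that $v \geq d/R^s > 0$ a.e.\ in $B_{4R}$, and $T(\bar u_-;x_0, 2R) \leq C g(d/R^s)$ by the choice of $d$ together with \Cref{lem-G}. The eventual goal is to prove the intermediate estimate
\begin{equation*}
\fint_{B_R}\int_{B_R} g^\alpha\bigl(R^{\sigma-s}|D^\sigma \bar u|\bigr)\,\frac{\mathrm{d}y\,\mathrm{d}x}{|x-y|^n} \leq C \fint_{B_{2R}} \bar g^\alpha(v)\,\mathrm{d}x,
\end{equation*}
after which \Cref{thm-WHI-int} applied with $\delta = \alpha$ on the pair $(B_{2R}, B_R)$ (admissible since $\alpha < n/(n-sp/q) \leq n/(n-sp)$ when $sp<n$) immediately controls the right-hand side by $\bar g^\alpha(\essinf_{B_R} \bar u/R^s)$ plus a tail term, which after using $\essinf \bar u = \essinf u + d$ and the equivalence $\bar g \sim g$ (see \eqref{eq-bar-g-comp}) gives the stated bound.

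For the intermediate estimate I would first apply the pointwise bound
\begin{equation*}
g^\alpha\bigl(R^{\sigma-s}|D^\sigma \bar u|\bigr) = g^\alpha\!\left(\bigl(|x-y|/R\bigr)^{s-\sigma}|D^s\bar u|\right) \leq C\,\bigl(|x-y|/R\bigr)^{\alpha(s-\sigma)(p-1)}\,g^\alpha(|D^s\bar u|),
\end{equation*}
which follows from \Cref{lem-G}(ii) since $(|x-y|/R)^{s-\sigma}\leq 1$ on $B_R \times B_R$. Then I would split $g^\alpha(|D^s\bar u|)$ via a Young-type inequality of the form
\begin{equation*}
g^\alpha(|D^s\bar u|) \leq \varepsilon\, G(|D^s\bar u|)\,\frac{\bar g^{\alpha-1}(v)}{v} + C\,\bar g^\alpha(v).
\end{equation*}
Writing $\lambda := |D^s\bar u|/v$ and using \Cref{lem-G}(i)--(ii), this reduces to the scalar inequality $\lambda^{\alpha(q-1)} \leq \varepsilon \lambda^q + C$ (and its $\lambda<1$ counterpart), which is Young's inequality and is valid precisely because $\alpha < q/(q-1)$. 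Integrating the resulting pointwise bound against the weight $(|x-y|/R)^{\alpha(s-\sigma)(p-1)}/|x-y|^n$, the ``elementary'' $\bar g^\alpha(v)$ term is handled by local integrability of the weight (which holds since $\alpha(s-\sigma)(p-1) > 0$), whereas the first term matches exactly the left-hand side of the Caccioppoli estimate \Cref{lem-Caccio2} (with $\gamma = \alpha$ when $\alpha < 1$). When $\alpha \in [1, q/(q-1))$, I would apply \Cref{lem-Caccio2} with some $\gamma_0 \in (0,1)$ and then iterate the reverse H\"older inequality \Cref{lem-RHI-super}(i) a finite number of times to raise the exponent to $\alpha$, picking $\sigma$ close enough to $s$ so that $\chi = n/(n-\sigma p)$ is large. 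The combined constraints of this iteration and the fractional Sobolev embedding used inside \Cref{lem-RHI-super} are what produce the sharp upper bound $n/(n - sp/q)$.

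\textbf{Main obstacle.} The technical heart is the Young-type pointwise inequality bounding $g^\alpha(|D^s\bar u|)$ by a multiple of the Caccioppoli integrand plus $\bar g^\alpha(v)$: it must be compatible with the two-sided Orlicz growth \eqref{eq-pq}, and the case analysis at $\lambda = |D^s \bar u|/v$ being large vs.\ small is where the $\alpha < q/(q-1)$ constraint arises. A secondary subtlety is tracking constants through the reverse H\"older iteration so that the final admissible range for $\alpha$ is exactly $(0, n/(n-sp/q))$ in the case $sp<n$; this requires choosing $\gamma_0$ and $\sigma$ in a linked way so that the iteration terminates at $\alpha$ with a finite constant. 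The case $sp \geq n$ is easier since $\chi$ in \Cref{lem-RHI-super} can be taken arbitrarily large.
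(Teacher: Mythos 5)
Your Young-type split $g^\alpha(|D^s\bar u|) \leq \varepsilon\,G(|D^s\bar u|)\,\bar g^{\alpha-1}(v)/v + C\,\bar g^\alpha(v)$ is a correct pointwise inequality for $\alpha < q/(q-1)$, and for $\alpha < 1$ the rest of the chain (scaling from $D^\sigma$ to $D^s$, matching the first Young term against \Cref{lem-Caccio2} with $\gamma = \alpha < 1$ after evaluating $v$ at $v(x)\vee v(y)$, then \Cref{thm-WHI-int} with $\delta = \alpha$) closes the argument and is a genuinely simpler route than the paper's for that subrange.

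The gap is at $\alpha \in [1, q/(q-1))$. Your decomposition forces the Caccioppoli exponent $\gamma = 1 + \beta = \alpha \geq 1$, but \Cref{lem-Caccio2} only holds for $\gamma < 1$; \Cref{lem-Caccio1} (with $\gamma > 1$) is for subsolutions and carries a different tail term. Your proposed fix --- applying \Cref{lem-Caccio2} with some $\gamma_0 \in (0,1)$ and then iterating \Cref{lem-RHI-super} --- does not address this: the reverse H\"older inequality bootstraps $\fint \bar g^\gamma(v)\,\mathrm{d}x$, a zeroth-order quantity, and gives no way to upgrade a Caccioppoli bound on the $G(|D^s\bar u|)$-weighted energy from exponent $\gamma_0 < 1$ to $\alpha \geq 1$. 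The paper circumvents this by keeping the Caccioppoli exponent $\gamma = 1+\beta$ with $\beta \in (-1,0)$ free of $\alpha$ and running Young's inequality with the conjugate pair $(H,H^*)$ for $H(t) = G^*(t^{1/\alpha})$, arranged so that the Caccioppoli factor $A = \fint_{v(y)}^{v(x)} \bar g^\beta(t)/t\,\mathrm{d}t$ multiplies the split from the outside rather than being its byproduct. The elementary term then carries the exponent $\frac{\alpha(1-(q-1)\beta)}{q-\alpha(q-1)}$, which exceeds $\alpha$ when $\alpha > 1$, and requiring it to be $< n/(n-sp)$ is exactly $\alpha < n/(n-sp/q)$. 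In your outline the weak Harnack step is taken with $\delta = \alpha$, which only imposes $\alpha < n/(n-sp)$, so the sharp threshold $n/(n-sp/q)$ never actually appears; the final sentence attributing it to the Sobolev embedding and the iteration is not substantiated by the preceding steps.
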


\begin{proof}
For $\varepsilon>0$, we let
\begin{equation*}
d=\essinf_{B_R}u+\mathrm{Tail}_g(u_-; x_0, 4R)+\varepsilon>0.
\end{equation*}
By applying \Cref{lem-Caccio2} with $v=(u+d)/(4R)^s$ and $\gamma=1+\beta \in (0,1)$, and then using \Cref{thm-WHI-int}, we have
\begin{equation}\label{eq-int-caccio}
\fint_{B_R} \int_{B_R} G(|D^su|) A \frac{\mathrm{d}y \,\mathrm{d}x}{|x-y|^{n}}\leq C \fint_{B_{2R}} \bar{g}^{\gamma}(v) \,\mathrm{d}x \leq C \bar{g}^{\gamma}(d/R^s),
\end{equation}
where $A= \fint_{v(y)}^{v(x)} \frac{\bar{g}^{\beta}(t)}{t} \,\mathrm{d}t$.

We claim that
\begin{equation}\label{eq-int-claim}
\frac{g^\alpha(R^{\sigma-s}|D^\sigma u|)}{\bar{g}^\alpha(d/R^s)}
\lesssim \frac{G(|D^su|)A}{\bar{g}^\gamma(d/R^s)} + \left( \frac{\bar{g}(v(x) \lor v(y))}{\bar{g}(d/R^s)} \right)^{\frac{\alpha(1-(p-1)\beta)}{p-\alpha(p-1)}} \left( \frac{|x-y|}{R} \right)^{\frac{\alpha(s-\sigma)p(p-1)}{p-\alpha(p-1)}}.
\end{equation}
Once we prove the claim, then it follows from \eqref{eq-int-caccio} the assumption $0<\alpha<\frac{q}{q-1}\leq \frac{p}{p-1}$ that
\begin{equation*}
\fint_{B_R}\int_{B_R} \frac{g^\alpha(R^{\sigma-s}|D^\sigma u|)}{\bar{g}^\alpha(d/R^s)} \frac{\mathrm{d}y\,\mathrm{d}x}{|x-y|^n} \leq C + C \fint_{B_R} \left( \frac{\bar{g}(v)}{\bar{g}(d/R^s)} \right)^{\frac{\alpha(1-(q-1)\beta)}{q-\alpha(q-1)}} \,\mathrm{d}x.
\end{equation*}
Moreover, since $\alpha < \frac{n}{n-sp/q} \iff \frac{\alpha}{q-\alpha(q-1)} < \frac{n}{n-sp}$, we can choose $\beta \in (-1, 0)$ sufficiently close to 0 so that
\begin{equation*}
\frac{\alpha(1-(q-1)\beta)}{q-\alpha(q-1)} < \frac{n}{n-sp},
\end{equation*}
which allows us to apply \Cref{thm-WHI-int}. This finishes the proof.

Let us now prove the claim. We may assume without loss of generality that $v(x) > v(y)$. We define $H(t) = G^\ast(t^{1/\alpha})$, then the same computation as in the proof of \Cref{lem-H} shows that
\begin{equation}\label{eq-H-ast}
H^\ast\left( \frac{G(t)}{g^\alpha(t)} \right) \sim G(t).
\end{equation}
Moreover, we have $(H^\ast)^{-1}(t) \sim t^{1-\alpha}(G^{-1}(t))^\alpha$, and hence it follows from
\begin{align*}
&\lambda^{\frac{q-\alpha(q-1)}{q}} t^{1-\alpha} (G^{-1}(t))^\alpha \lesssim (\lambda t)^{1-\alpha} (G^{-1}(\lambda t))^\alpha \lesssim \lambda^{\frac{p-\alpha(p-1)}{p}} t^{1-\alpha} (G^{-1}(t))^\alpha, \quad\lambda \geq1, \\
&\lambda^{\frac{p-\alpha(p-1)}{p}} t^{1-\alpha} (G^{-1}(t))^\alpha \lesssim (\lambda t)^{1-\alpha} (G^{-1}(\lambda t))^\alpha \lesssim \lambda^{\frac{q-\alpha(q-1)}{q}} t^{1-\alpha} (G^{-1}(t))^\alpha, \quad\lambda \leq 1,
\end{align*}
that
\begin{align}\label{eq-H-ast-pq}
\begin{split}
&\lambda^{\frac{p}{p-\alpha(p-1)}} H^\ast(t) \lesssim H^\ast(\lambda t) \lesssim \lambda^{\frac{q}{q-\alpha(q-1)}} H^\ast(t), \quad \lambda \geq 1, \\
&\lambda^{\frac{q}{q-\alpha(q-1)}} H^\ast(t) \lesssim H^\ast(\lambda t) \lesssim \lambda^{\frac{p}{p-\alpha(p-1)}} H^\ast(t), \quad \lambda \leq 1.
\end{split}
\end{align}
By using \Cref{lem-G}, Young's inequality, and \eqref{eq-H-ast-pq}, we have
\begin{align*}
\frac{g^\alpha(R^{\sigma-s}|D^\sigma u|)}{\bar{g}^\alpha(d/R^s)}
&\leq \frac{C}{\bar{g}^\alpha(d/R^s)} g^\alpha(|D^su|) \left( \frac{|x-y|}{R}\right)^{\alpha(s-\sigma)(p-1)} \\
&\leq \frac{C}{\bar{g}^\gamma(d/R^s)} A \left( H(g^\alpha(|D^su|)) + H^\ast \left( \frac{(|x-y|/R)^{\alpha(s-\sigma)(p-1)}}{\bar{g}^{\alpha-\gamma}(d/R^s)A} \right) \right) \\
&\leq C \frac{G(|D^su|)A}{\bar{g}^\gamma(d/R^s)} + C \frac{A}{\bar{g}^\gamma(d/R^s)} H^\ast \left( \frac{1}{\bar{g}^{\alpha-\gamma}(d/R^s)A} \right) \left( \frac{|x-y|}{R} \right)^{\frac{\alpha(s-\sigma)p(p-1)}{p-\alpha(p-1)}}.
\end{align*}
We may assume that $\alpha \geq 1$ because the case $\alpha<1$ can be deduced by using H\"older's inequality. Since $A^{-1} \leq \bar{g}^{1-\gamma}(v(x)) v(x)$ and $\alpha \geq 1 > \gamma$, we obtain
\begin{align*}
H^\ast \left( \frac{1}{\bar{g}^{\alpha-\gamma}(d/R^s)A} \right)
&\leq C \left( \frac{1}{A\bar{g}^{1-\gamma}(v(x)) v(x)} \right)^{\frac{p}{p-\alpha(p-1)}} H^\ast \left( \frac{\bar{g}^{\alpha-\gamma}(v(x))}{\bar{g}^{\alpha-\gamma}(d/R^s)} \frac{G(v(x))}{\bar{g}^\alpha(v(x))} \right) \\
&\leq C \left( \frac{1}{A\bar{g}^{1-\gamma}(v(x)) v(x)} \right)^{\frac{p}{p-\alpha(p-1)}} \left( \frac{\bar{g}(v(x))}{\bar{g}(d/R^s)} \right)^{\frac{(\alpha-\gamma)q}{q-\alpha(q-1)}} G(v(x))
\end{align*}
by using \eqref{eq-H-ast-pq} and \eqref{eq-H-ast}. Thus, by using $A^{-1} \leq \bar{g}^{1-\gamma}(v(x))v(x)$ again, we deduce
\begin{equation*}
\frac{A}{\bar{g}^\gamma(d/R^s)} H^\ast \left( \frac{1}{\bar{g}^{\alpha-\gamma}(d/R^s)A} \right) \leq C \frac{\bar{g}^\gamma(v(x))}{\bar{g}^\gamma(d/R^s)} \left( \frac{\bar{g}(v(x))}{\bar{g}(d/R^s)} \right)^{\frac{(\alpha-\gamma)q}{q-\alpha(q-1)}},
\end{equation*}
which leads us to the claim \eqref{eq-int-claim}.
\end{proof}

\begin{proof}[Proof of \Cref{thm-integrability}]
Let $B_{4R}=B_{4R}(x_0) \subset \Omega$. Since $u$ is locally bounded from below, we may assume that $u$ is nonnegative in $B_{4R}$. If we let $u_k:=u \land k$ for $k>0$, then $u_k$ is bounded from above and $u_k \in L^g_s(\mathbb{R}^n)$, and hence by \Cref{lem-relation} $u_k$ is a supersolution of $\mathcal{L}u=0$ in $\Omega$. An application of \Cref{thm-WHI-int} to $u_k$ yields
\begin{equation*}
\fint_{B_R} g^\delta \left( \frac{u_k}{R^s} \right) \,\mathrm{d}x \leq C g^\delta\left( \essinf_{B_R} \frac{u_k}{R^s} \right) + C g^\delta \left( \frac{\mathrm{Tail}_g((u_k)_-; x_0, 4R)}{R^s} \right).
\end{equation*}
By letting $k \to \infty$ and employing Fatou's lemma, we arrive at
	\begin{equation*}
		\fint_{B_R} g^\delta \left( \frac{u}{R^{s}} \right) \,\mathrm{d}x  \leq Cg^\delta\left(\essinf_{B_R} \frac{u}{R^s}\right) + Cg^\delta \left( \frac{\mathrm{Tail}_g(u_-; x_0, 4R)}{R^s} \right) < \infty,
	\end{equation*}
which shows that $u \in L^{g^\delta}_{\mathrm{loc}}(\Omega)$.

In a similar way, by applying \Cref{lem-integrability} for $u_k$, letting $k \to \infty$, and using Fatou's lemma, we obtain
\begin{equation*}
\fint_{B_R}\int_{B_R} g^\alpha(R^{\sigma-s}|D^\sigma u|) \frac{\mathrm{d}y\,\mathrm{d}x}{|x-y|^n} \leq Cg^\alpha\left(\essinf_{B_R}\frac{u}{R^s}\right) + Cg^\alpha\left( \frac{\mathrm{Tail}_g(u_-; x_0, 4R)}{R^s} \right).
\end{equation*}
Thus, $u \in W^{\sigma, g^{\alpha}}_{\mathrm{loc}}(\Omega)$. 

Finally, by applying \Cref{lem-tail} to $u_k$ with $\sigma \in (0,s)$ satisfying $\sigma \geq (sq-1)/(q-1)$, taking $k \to \infty$, and using Fatou's lemma, we also obtain $u \in L^g_s(\mathbb{R}^n)$.
\end{proof}

\begin{appendix}

\section{Algebraic inequalities}\label{sec-appendix}

In this section we provide several algebraic inequalities.

\begin{lemma}\label{lem-ineq1}
For any $a, b \in \mathbb{R}$, it holds that
\begin{equation}\label{eq-alg-tail}
g(|a-b|)\frac{a-b}{|a-b|} \leq 2g(a_+ + b_-) - \frac{p}{q}2^{1-q} g(b_+).
\end{equation}
\end{lemma}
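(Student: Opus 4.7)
The plan is to do a case analysis based on the sign of $a-b$ and on the signs of $a,b$ themselves, reducing everything to the sub-additivity property of $g$ already recorded in \Cref{lem-G}(iii).

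First I would split into the easy case $a \geq b$ and the harder case $a < b$. In the case $a \geq b$ the left-hand side equals $g(a-b)$, and I would check that $a-b \leq a_+ + b_-$ in all three sign configurations ($a,b \geq 0$; $a \geq 0 > b$, where one gets equality; $0 > a \geq b$). Combined with monotonicity of $g$, this gives $g(a-b) \leq g(a_++b_-) \leq 2g(a_++b_-)$, and the leftover term $-\frac{p}{q}2^{1-q}g(b_+)$ is nonpositive and only contributes when $b_+ > 0$, i.e.\ when $b > 0$; then $a \geq b > 0$ gives $a_++b_- = a \geq b = b_+$, so monotonicity again yields $g(a_++b_-) \geq g(b_+) \geq \frac{p}{q}2^{1-q}g(b_+)$. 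This settles $a \geq b$.

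In the case $a < b$ the left-hand side is $-g(b-a) \leq 0$, so the target reduces to
\[
\tfrac{p}{q}2^{1-q}g(b_+) \leq 2g(a_++b_-) + g(b-a).
\]
The subcase $a < b < 0$ is trivial since then $b_+ = 0$. The subcase $a < 0 \leq b$ follows from $b - a = b + |a| \geq b$ together with monotonicity: $g(b-a) \geq g(b) \geq \frac{p}{q}2^{1-q}g(b)$.

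The only genuinely nontrivial step is the subcase $0 \leq a < b$, where $a_+ = a$, $b_- = 0$, $b_+ = b$, and the inequality becomes
\[
\tfrac{p}{q}2^{1-q}g(b) \leq 2g(a) + g(b-a).
\]
I would obtain this from the quasi-subadditivity bound $g(t+t') \leq \frac{q}{p}2^{q-1}(g(t)+g(t'))$ of \Cref{lem-G}(iii), applied to $t=a$, $t'=b-a$, which gives $g(b) \leq \frac{q}{p}2^{q-1}(g(a)+g(b-a))$; rearranging yields exactly the desired inequality (even with $g(a)$ in place of $2g(a)$). This is the only place where the constant $\frac{p}{q}2^{1-q}$ is used, which explains its appearance in \eqref{eq-alg-tail}. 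The main obstacle, if any, is just being careful with the sign bookkeeping across the six sub-cases; no deeper tool than monotonicity and \Cref{lem-G}(iii) is needed.
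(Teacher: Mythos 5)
Your proof is correct and takes essentially the same route as the paper's: split on the sign of $a-b$, reduce to the key sub-case $0\leq a<b$, and dispatch it via the quasi-subadditivity bound $g(t+t')\leq \tfrac{q}{p}2^{q-1}(g(t)+g(t'))$ from \Cref{lem-G}(iii), with the remaining sub-cases handled by monotonicity and the observation $\tfrac{p}{q}2^{1-q}\leq 1$. The only difference is cosmetic: the paper compresses your three sub-cases of $a\geq b$ into ``obvious when $b\leq 0$,'' and your two non-trivial sub-cases of $a<b$ into ``follows from \Cref{lem-G}.''
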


\begin{proof}
If $a<b$, then we need to prove
\begin{equation*}
g(b_+) \leq \frac{q}{p}2^{q-1} (g(b-a)+2g(a_++b_-)),
\end{equation*}
which is trivial when $b \leq 0$. The case $b>0$ simply follows from \Cref{lem-G}. If $a \geq b$, then we may assume that $b>0$ since otherwise \eqref{eq-alg-tail} is obvious. We then have $g(a-b) + \frac{p}{q}2^{1-q}g(b) \leq 2g(a_+)$ since $\frac{p}{q}2^{1-q} \leq 1$.
\end{proof}

The following lemma is an Orlicz version of Lemma~A.1 in Kim--Lee--Lee~\cite{KLL23}.

\begin{lemma}\label{lem-ineq2}
Let $\gamma, \beta \in \mathbb{R} \setminus \{0\}$ be such that $\gamma=\beta+1$ and let $d>0$. Let $\bar{u}$ and $\varphi$ be defined as in \eqref{eq-bar-u} and \eqref{eq-varphi}, respectively, and $v=(\bar{u}+d)/R^s$. There exist constants $c, C>0$, depending only on $p$ and $q$, such that
\begin{align}\label{eq-ineq1}
\begin{split}
\frac{1}{\beta} g(|D^s\bar{u}|) \frac{D^s\bar{u}}{|D^s\bar{u}|} D^s\varphi
&\geq \frac{c}{R^s} G(|D^s\bar{u}|) \left( \fint_{v(y)}^{v(x)} \frac{\bar{g}^{\beta}(t)}{t} \,\mathrm{d}t \right) (\eta(x) \land \eta(y))^q \\
&\quad - \frac{C}{R^s} \left( \frac{1}{|\beta|} \lor \frac{1}{|\beta|^q} \right) (\bar{g}^{\gamma}(v(x)) \lor \bar{g}^{\gamma}(v(y))) ((R^s |D^s\eta|)^p \lor (R^s |D^s\eta|)^q)
\end{split}
\end{align}
for all $x, y \in B_R=B_R(x_0)$, where $\bar{g}$ is defined as in \eqref{eq-bar-g}.
\end{lemma}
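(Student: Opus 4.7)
The inequality is a pointwise algebraic statement, and my plan is to expand the fractional difference $D^s\varphi$ via a product-rule-type decomposition, isolate a main term matching the positive term on the right, and control the remainder by weighted Young-type inequalities in Orlicz form. Since the left-hand side of \eqref{eq-ineq1} is symmetric under swapping $x$ and $y$, I may assume $\eta(x)\geq\eta(y)$, so that $\eta(x)\land\eta(y)=\eta(y)$. Setting $\Phi(t)=\bar{g}^\beta(t)$, I decompose
\begin{equation*}
\varphi(x)-\varphi(y) = \bigl(\Phi(v(x))-\Phi(v(y))\bigr)\eta^q(y) + \bigl(\Phi(v(x))-\bar{g}^\beta(l)\bigr)(\eta^q(x)-\eta^q(y)),
\end{equation*}
which pairs the $v$-increment with the smaller weight $\eta^q(y)$.

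For the first summand I apply the fundamental theorem of calculus,
\begin{equation*}
\frac{1}{\beta}\bigl(\Phi(v(x))-\Phi(v(y))\bigr) = \int_{v(y)}^{v(x)} \bar{g}^{\beta-1}(t)\bar{g}'(t)\,\mathrm{d}t,
\end{equation*}
whose integrand is comparable to $\bar{g}^\beta(t)/t$ with constants in $[p-1,q-1]$ thanks to \eqref{eq-bar-g-pq}. The identity $v(x)-v(y)=|x-y|^s D^s\bar u/R^s$, sign matching between the integral and $D^s\bar u$, and the inequality $tg(t)\geq pG(t)$ from \eqref{eq-pq} together yield that the first summand, after multiplication by $\frac{1}{\beta}g(|D^s\bar u|)\frac{D^s\bar u}{|D^s\bar u|}\frac{1}{|x-y|^s}$, is bounded below by $\frac{c}{R^s}G(|D^s\bar u|)A(\eta(x)\land\eta(y))^q$ with $c=p(p-1)$. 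This is precisely the main term on the right of \eqref{eq-ineq1}.

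For the second summand, I bound
\begin{equation*}
|\eta^q(x)-\eta^q(y)| \leq C\bigl((\eta(x)\land\eta(y))^{q-1}|\eta(x)-\eta(y)| + |\eta(x)-\eta(y)|^q\bigr),
\end{equation*}
using the elementary estimate $(a+b)^q-a^q\leq C(a^{q-1}b+b^q)$ for $a,b\geq 0$, and I control $|\Phi(v(x))-\bar{g}^\beta(l)|\leq \bar{g}^\beta(v(x))+\bar{g}^\beta(l)\lesssim \bar{g}^\beta(v(x))\lor \bar{g}^\beta(v(y))$ by case analysis on the sign of $\beta$ and the relative positions of $v(x), v(y), l$, invoking \Cref{lem-G} to transfer $\bar g^\beta(l)$ to $\bar g^\beta(v)$ when necessary. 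Each of the two resulting pieces of the remainder is then absorbed by a weighted Young-type inequality (\eqref{eq-alg} together with $G^\ast(g(t))\sim G(t)$ from \Cref{lem-H}) with weight $W=A(\eta(x)\land\eta(y))^q/R^s$: a small multiple of the main term is produced, and the leftover is of the desired form.

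The principal difficulty is calibrating Young's inequality so that the weight $W$ appears exactly on the small side, converting the extra $\bar{g}^\beta(v)$ factor coming from $|\Psi|$ into $\bar{g}^\gamma(v)=\bar{g}^{\beta+1}(v)$ on the remainder side. The appearance of $(R^s|D^s\eta|)^p\lor(R^s|D^s\eta|)^q$ and $1/|\beta|\lor 1/|\beta|^q$ in \eqref{eq-ineq1} reflects the non-homogeneity of $G$: when $R^s|D^s\eta|\geq 1$ \Cref{lem-G} produces the $q$-th power, otherwise the $p$-th power, so the two cases must be combined at the cost of taking the maximum. This is the same mechanism as in the proof of Lemma~A.1 in Kim--Lee--Lee~\cite{KLL23} for the polynomial case $G(t)=t^p$.
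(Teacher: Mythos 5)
Your overall skeleton matches the paper's: split $D^s\varphi$ via a product-rule decomposition, extract the main term from the fundamental theorem of calculus together with \eqref{eq-bar-g-pq} and $tg(t)\geq pG(t)$, then absorb the remainder using the weighted Young-type inequality \eqref{eq-alg}. Indeed, your decomposition (pairing the $v$-increment with $\eta^q(y)$) is literally one of the two that the paper uses. However, there are genuine gaps.

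The bound $|\Phi(v(x))-\bar g^\beta(l)|\lesssim\bar g^\beta(v(x))\lor\bar g^\beta(v(y))$ fails when $\beta<0$. In the applications ($\bar u\geq 0$, $l=d/R^s$) one has $v(x),v(y)\geq l$; since $\bar g^\beta$ is then decreasing, $\bar g^\beta(l)\geq\bar g^\beta(v(x))\lor\bar g^\beta(v(y))$, and in fact $\bar g^\beta(l)/\bar g^\beta(v)\sim\bigl(\bar g(v)/\bar g(l)\bigr)^{|\beta|}\geq(v/l)^{(p-1)|\beta|}$ can be arbitrarily large. There is no way for \Cref{lem-G} to ``transfer $\bar g^\beta(l)$ to $\bar g^\beta(v)$'' because $l$ and $v$ are not comparable. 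The paper never makes such a bound; it keeps the error term in the form $\frac{\bar g^\beta(v(\cdot))}{v(\cdot)}v(x)$.

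More fundamentally, the absorption step is not set up correctly. After \eqref{eq-alg} the $\varepsilon$-piece is $\varepsilon\,g(|D^s\bar u|)|D^s\bar u|\cdot W'\cdot(\eta(x)\land\eta(y))^q$ where $W'$ is whatever weight multiplies the error; for this to be hidden in $G(|D^s\bar u|)\,A\,(\eta(x)\land\eta(y))^q$ with $A=\fint_{v(y)}^{v(x)}\bar g^\beta(t)/t\,\mathrm dt$, you need $W'\lesssim A$. The paper secures this by carrying out \emph{both} decompositions (pairing with $\eta^q(x)$ and with $\eta^q(y)$) and taking the minimum, so that $W'=\frac{\bar g^\beta(v(x))}{v(x)}\land\frac{\bar g^\beta(v(y))}{v(y)}$, the smaller endpoint value of $t\mapsto\bar g^\beta(t)/t$ on the interval, which is $\leq A$. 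With only your single decomposition the relevant factor is a fixed endpoint value, which — depending on the sign of $D^s\bar u$ and the monotonicity of $\bar g^\beta(t)/t$ — may be the larger endpoint, exceeding $A$, and then the absorption fails. Your ``weighted Young with $W=A(\eta(x)\land\eta(y))^q/R^s$'' would need exactly this comparison, but it is not established. Finally, the paper also distinguishes the regime $\eta(x)/\eta(y)\in[1/2,2]$ from its complement (where $\eta(x)\lor\eta(y)\leq 2|\eta(x)-\eta(y)|$ and a direct termwise estimate applies); your one-pass bound $|\eta^q(x)-\eta^q(y)|\lesssim(\eta(x)\land\eta(y))^{q-1}|\eta(x)-\eta(y)|+|\eta(x)-\eta(y)|^q$ is in the same spirit but you do not complete the argument in the regime where the second piece dominates.
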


\begin{proof}
We may assume $\bar{u}(x) > \bar{u}(y)$ without loss of generality. Let $L$ denote the left-hand side of \eqref{eq-ineq1}. Let us first consider the case $\eta(x)/\eta(y) \in [1/2,2]$. Since
\begin{align*}
\frac{1}{\beta} D^s\varphi
&= D^s \left( \frac{\bar{g}^\beta(v)-\bar{g}^\beta(l)}{\beta} \right) \eta^q(x) + \frac{1}{\beta} (\bar{g}^\beta(v(y))-\bar{g}^\beta(l))D^s\eta^q \\
&\geq D^s \left( \frac{\bar{g}^\beta(v)}{\beta} \right) (\eta(x) \land \eta(y))^q - \frac{q}{|\beta|} \frac{\bar{g}^\beta(v(y))}{v(y)} v(x) (\eta(x) \lor \eta(y))^{q-1} |D^s\eta|
\end{align*}
and similarly
\begin{equation*}
\frac{1}{\beta} D^s\varphi \geq D^s \left( \frac{\bar{g}^\beta(v)}{\beta} \right) (\eta(x) \land \eta(y))^q - \frac{q}{|\beta|} \frac{\bar{g}^\beta(v(x))}{v(x)} v(x) (\eta(x) \lor \eta(y))^{q-1} |D^s\eta|,
\end{equation*}
we have
\begin{equation*}
\frac{1}{\beta} D^s\varphi \geq D^s \left( \frac{\bar{g}^\beta(v)}{\beta} \right) (\eta(x) \land \eta(y))^q - \frac{q}{|\beta|} \left( \frac{\bar{g}^\beta(v(x))}{v(x)} \land \frac{\bar{g}^\beta(v(y))}{v(y)} \right) v(x) (\eta(x)\lor \eta(y))^{q-1}|D^s\eta^q|.
\end{equation*}
By using \eqref{eq-bar-g-pq} and $\eta(x)\lor\eta(y) \leq 2(\eta(x)\land\eta(y))$, we obtain
\begin{align*}
L
&\geq \frac{p-1}{R^s} g(D^s\bar{u}) D^s\bar{u} \left( \fint_{v(y)}^{v(x)} \frac{\bar{g}^\beta(t)}{t} \,\mathrm{d}t \right) (\eta(x) \land \eta(y))^q \\
&\quad - \frac{q2^{q-1}}{|\beta|} g(D^s\bar{u}) \frac{|D^s\eta|v(x)}{\eta(x)\land \eta(y)} \left( \frac{\bar{g}^\beta(v(x))}{v(x)} \land \frac{\bar{g}^\beta(v(y))}{v(y)} \right) (\eta(x) \land \eta(y))^q =: I_1 + I_2.
\end{align*}
Let $\varepsilon > 0$, then the inequality \eqref{eq-alg} shows that
\begin{equation*}
g(D^s\bar{u}) \frac{|D^s \eta|v(x)}{\eta(x) \land \eta(y)} \leq \varepsilon g(D^s\bar{u}) D^s\bar{u} + g\left( \frac{1}{\varepsilon} \frac{|D^s\eta| v(x)}{\eta(x) \land \eta(y)} \right) \frac{|D^s\eta|v(x)}{\eta(x) \land \eta(y)}.
\end{equation*}
By taking $\varepsilon=(p-1)|\beta|/(q2^qR^s)$, we deduce
\begin{equation*}
I_2 \geq - \frac{1}{2}I_1 - \frac{q2^{q-1}}{|\beta|} g\left( \frac{1}{\varepsilon} \frac{|D^s\eta| v(x)}{\eta(x)\land\eta(y)} \right) |D^s\eta| \bar{g}^\beta(v(x)) (\eta(x)\land\eta(y))^{q-1}.
\end{equation*}
We have from \Cref{lem-G} and \eqref{eq-bar-g-comp} that
\begin{align*}
&g\left( \frac{1}{\varepsilon} \frac{|D^s\eta|v(x)}{\eta(x) \land \eta(y)} \right) |D^s\eta| (\eta(x)\land\eta(y))^{q-1} \\
&\leq C \left( \frac{1}{|\beta|^p} \lor \frac{1}{|\beta|^q} \right) |D^s\eta| g(R^s |D^s\eta| v(x)) \\
&\leq \frac{C}{R^s} \left( \frac{1}{|\beta|} \lor \frac{1}{|\beta|^q} \right) ((R^s|D^s\eta|)^p \lor (R^s|D^s\eta|)^q) \bar{g}(v(x)).
\end{align*}
Thus, we arrive at
\begin{equation*}
L \geq \frac{1}{2} I_1 - \frac{C}{R^s} \left( \frac{1}{|\beta|} \lor \frac{1}{|\beta|^q} \right) ((R^s|D^s\eta|)^p \lor (R^s|D^s\eta|)^q) \bar{g}^\gamma(v(x))
\end{equation*}
with some constant $C = C(p, q) > 0$, which finishes the proof for the case $\eta(x)/\eta(y) \in [1/2,2]$.

Let us next consider the case $\eta(x)/\eta(y) \notin [1/2, 2]$. In this case, we have
\begin{equation}\label{eq-eta}
\eta(x) \lor \eta(y) \leq 2|\eta(x)-\eta(y)|.
\end{equation}
We write
\begin{equation*}
L = \frac{1}{\beta} g(D^s\bar{u}) \frac{\bar{g}^\beta(v(x))\eta^q(x)}{|x-y|^s} - \frac{1}{\beta} g(D^s\bar{u}) \frac{\bar{g}^\beta(v(y)) \eta^q(y)}{|x-y|^s} = J_1+J_2
\end{equation*}
when $\beta>0$ and
\begin{equation*}
L = \frac{1}{|\beta|} g(D^s\bar{u}) \frac{\bar{g}^\beta(v(y)) \eta^q(y)}{|x-y|^s} - \frac{1}{|\beta|} g(D^s\bar{u}) \frac{\bar{g}^\beta(v(x))\eta^q(x)}{|x-y|^s} = J_1+J_2
\end{equation*}
when $\beta<0$. We show that $J_1$ and $J_2$ are estimated from below by the two terms on the right-hand side of \eqref{eq-ineq1}, respectively. For $J_1$, we use \eqref{eq-bar-g-pq} to have
\begin{equation*}
\int_{v(y)}^{v(x)} \frac{\bar{g}^{\beta}(t)}{t} \,\mathrm{d}t \leq \frac{1}{(p-1)\beta} \int_{v(y)}^{v(x)} (\bar{g}^{\beta}(t))' \,\mathrm{d}t = \frac{\bar{g}^{\beta}(v(x)) - \bar{g}^{\beta}(v(y))}{(p-1)\beta} \leq \frac{\bar{g}^{\beta}(v(x))}{(p-1)\beta}
\end{equation*}
when $\beta>0$ and
\begin{equation*}
\int_{v(y)}^{v(x)} \frac{\bar{g}^{\beta}(t)}{t} \,\mathrm{d}t \leq - \frac{1}{(p-1)|\beta|} \int_{v(y)}^{v(x)} (\bar{g}^{\beta}(t))' \,\mathrm{d}t = \frac{\bar{g}^{\beta}(v(y)) - \bar{g}^{\beta}(v(x))}{(p-1)|\beta|} \leq \frac{\bar{g}^{\beta}(v(y))}{(p-1)|\beta|}
\end{equation*}
when $\beta<0$. Thus, we deduce
\begin{equation*}
J_1 \geq \frac{p(p-1)}{R^s} G(D^s\bar{u}) \left( \fint_{v(y)}^{v(x)} \frac{\bar{g}^{\beta}(t)}{t} \,\mathrm{d}t \right) (\eta(x) \land \eta(y))^q
\end{equation*}
by using \eqref{eq-pq}.

We now estimate $J_2$. Note that we have by using \eqref{eq-eta}
\begin{equation*}
J_2 \geq - \frac{2^q}{|\beta|} g(D^s\bar{u}) \bar{g}^\beta(v(x)) \frac{|\eta(x)-\eta(y)|^q}{|x-y|^s}
\end{equation*}
in both cases $\beta>0$ and $\beta<0$. We utilize \Cref{lem-G} and \eqref{eq-bar-g-comp} to have
\begin{equation*}
g(D^s\bar{u}) \leq g\left( \frac{R^sv(x)}{|x-y|^{s}} \right) \leq C \bar{g}(v(x)) \left( \frac{R^s}{|x-y|^s} \right)^{q-1}
\end{equation*}
for some $C = C(p, q) > 0$, which yields
\begin{equation*}
J_2 \geq - \frac{C}{R^s} \left( \frac{1}{|\beta|} \lor \frac{1}{|\beta|^q} \right) \bar{g}^{\gamma}(v(x)) (R^s |D^s\eta|)^q,
\end{equation*}
finishing the proof.
\end{proof}

\begin{lemma}\label{lem-ineq3}
Let $\bar{u}$ and $v$ be defined as in \eqref{eq-bar-u-v}. There exists a constant $C>0$, depending only on $p$ and $q$, such that
\begin{equation*}
|D^s \log \bar{u}|^p \leq \frac{C}{R^{sp}} \left( G(|D^s\bar{u}|) \fint_{v(y)}^{v(x)} \frac{\mathrm{d}t}{G(t)} + 1 \right)
\end{equation*}
for all $x, y \in B_R=B_R(x_0)$.
\end{lemma}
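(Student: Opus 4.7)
The plan is to reduce the inequality to a clean pointwise comparison of $a^p/t^p$ with $G(a)/G(t)$, which is then integrated. Since $\bar u$ may vanish, I first interpret $\log \bar u$ as $\log(\bar u + d)$; this is harmless because $\log(\bar u + d) = \log v + s\log R$, so the additive constant drops out of $D^s$ and $D^s\log(\bar u + d) = D^s \log v$. Writing the logarithm via the fundamental theorem of calculus and using $|v(x) - v(y)| = |x-y|^s |D^s \bar u|/R^s$, I obtain
\[
|D^s \log(\bar u + d)|^p = \frac{|D^s \bar u|^p}{R^{sp}} \left(\fint_{v(y)}^{v(x)} \frac{\mathrm{d}t}{t}\right)^p.
\]

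Next, Jensen's inequality applied to the convex function $s \mapsto s^p$ on $[0,\infty)$ yields $\bigl(\fint \mathrm{d}t/t\bigr)^p \leq \fint \mathrm{d}t/t^p$, so it suffices to establish the pointwise estimate
\[
\frac{a^p}{t^p} \leq \frac{G(a)}{G(t)} + 1 \qquad \text{for all } a,t>0,
\]
with $a = |D^s \bar u|$. Setting $\lambda = a/t$, this is trivial when $\lambda \leq 1$ since the left-hand side is at most one. When $\lambda \geq 1$, \Cref{lem-G}(i) gives $G(\lambda t) \geq \lambda^p G(t)$, which rearranges exactly to $\lambda^p \leq G(a)/G(t)$.

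Integrating the pointwise bound against the normalized measure on $[v(y)\wedge v(x),\, v(y)\vee v(x)]$ produces
\[
a^p \fint_{v(y)}^{v(x)} \frac{\mathrm{d}t}{t^p} \leq G(a) \fint_{v(y)}^{v(x)} \frac{\mathrm{d}t}{G(t)} + 1,
\]
and combining with the previous two displays yields the desired inequality (with a constant depending only on $p$ and $q$; in fact $C=1$ works). There is no real obstacle here: the only point worth highlighting is that the lower-$p$ side of the Orlicz growth condition, encoded in \Cref{lem-G}(i), provides exactly the right scaling to compare $a^p/t^p$ with $G(a)/G(t)$, and the $+1$ on the right-hand side absorbs the regime $a \leq t$ where $G(a)/G(t)$ can be arbitrarily small. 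No further case analysis or application of Young's inequality is required.
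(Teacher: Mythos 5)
Your proof is correct. The overall skeleton matches the paper's: you write the log increment via the fundamental theorem of calculus, use Jensen to pass from $\bigl(\fint t^{-1}\,\mathrm{d}t\bigr)^p$ to $\fint t^{-p}\,\mathrm{d}t$, and then reduce to a pointwise bound equivalent to $a^p\,G(t)/t^p \lesssim G(a) + G(t)$. Where you diverge is in the proof of this pointwise step: the paper introduces the auxiliary Young function $H(t)=G(t^{1/p})$, applies Young's inequality to the product $|D^s\bar u|^p\cdot G(t)/t^p$, and invokes \Cref{lem-H} to identify $H^\ast(G(t)/t^p)\sim G(t)$; you instead split into the cases $a\le t$ and $a>t$ and use the lower growth bound $G(\lambda t)\ge\lambda^p G(t)$ from \Cref{lem-G}(i) directly. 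Your route is more elementary and in fact yields $C=1$, which is slightly sharper than what the Young-inequality route produces. The paper's version has the advantage that the same Young-inequality/conjugate-function mechanism is reused verbatim in the proof of \Cref{lem-ineq4} (where the integrand involves $\bar g^{\gamma}(t)/t^p$ rather than $1/t^p$, and a bare case split would be less clean), so the authors get uniformity across the two appendix lemmas. Your observation that the additive constant in $\log(\bar u+d)=\log v + s\log R$ drops out under $D^s$ is correct and properly addresses the notational shorthand in the statement.
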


\begin{proof}
We may assume without loss of generality $\bar{u}(x)>\bar{u}(y)$. By Jensen's inequality, we have
\begin{equation}\label{eq-ineq2-1}
|D^s \log \bar{u}|^p = \left( D^sv \fint_{v(y)}^{v(x)} \frac{\mathrm{d}t}{t} \right)^p \leq \frac{1}{R^{sp}} \fint_{v(y)}^{v(x)} |D^s\bar{u}|^p \frac{\mathrm{d}t}{t^{p}}.
\end{equation}
Let us define $H(t)=G(t^{1/p})$, then by Young's inequality and \Cref{lem-H} we have
\begin{equation}\label{eq-ineq2-2}
|D^s\bar{u}|^p \frac{G(t)}{t^p} \leq H(|D^s\bar{u}|^p) + H^{\ast} \left( \frac{G(t)}{t^{p}} \right) \lesssim G(|D^s\bar{u}|) + G(t).
\end{equation}
Thus, the desired inequality follows by combining \eqref{eq-ineq2-1} and \eqref{eq-ineq2-2}.
\end{proof}

\begin{lemma}\label{lem-ineq4}
Let $\gamma \in \mathbb{R}$ and $d>0$. Let $\bar{u}$ be defined as in \eqref{eq-bar-u} and $v=(\bar{u}+d)/R^s$. There exists a constant $C>0$, depending only on $p$ and $q$, such that
\begin{equation*}
|D^s\bar{g}^{\gamma/p}(v)|^p \leq C \frac{|\gamma|^p}{R^{sp}} \left( G(|D^s\bar{u}|) \fint_{v(y)}^{v(x)} \frac{\bar{g}^{\gamma}(t)}{G(t)} \,\mathrm{d}t + \bar{g}^{\gamma}(v(x)) \lor \bar{g}^{\gamma}(v(y)) \right)
\end{equation*}
for all $x, y \in B_R=B_R(x_0)$, where $\bar{g}$ is defined as in \eqref{eq-bar-g}.
\end{lemma}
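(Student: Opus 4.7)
Assume without loss of generality that $v(x)>v(y)$. The plan is to first use the fundamental theorem of calculus to rewrite the difference quotient $D^s\bar g^{\gamma/p}(v)$ as an integral, then apply Jensen, then reduce to a pointwise Young-type inequality inside the integrand.

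First, I would compute
\begin{equation*}
D^s\bar g^{\gamma/p}(v)=\frac{1}{|x-y|^s}\int_{v(y)}^{v(x)}\frac{\gamma}{p}\,\bar g^{\gamma/p-1}(t)\,\bar g'(t)\,\mathrm d t,
\end{equation*}
so that, factoring out the length $v(x)-v(y)=(\bar u(x)-\bar u(y))/R^s$ and applying Jensen's inequality to the convex function $\tau\mapsto|\tau|^p$,
\begin{equation*}
|D^s\bar g^{\gamma/p}(v)|^p\leq \frac{|\gamma|^p}{p^p}\,\frac{|D^s\bar u|^p}{R^{sp}}\fint_{v(y)}^{v(x)}\bigl|\bar g^{\gamma/p-1}(t)\bar g'(t)\bigr|^p\,\mathrm d t.
\end{equation*}
Using the right-hand inequality of \eqref{eq-bar-g-pq}, namely $t\bar g'(t)\leq (q-1)\bar g(t)$, the integrand is bounded by $(q-1)^p \bar g^{\gamma}(t)/t^p$, so
\begin{equation*}
|D^s\bar g^{\gamma/p}(v)|^p\leq C\,\frac{|\gamma|^p}{R^{sp}}\,|D^s\bar u|^p\fint_{v(y)}^{v(x)}\frac{\bar g^{\gamma}(t)}{t^p}\,\mathrm d t.
\end{equation*}

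The key step is to rewrite the integrand as $|D^s\bar u|^p\bar g^{\gamma}(t)/t^p=(\bar g^{\gamma}(t)/G(t))\cdot(|D^s\bar u|^p G(t)/t^p)$ and then apply Young's inequality to the second factor with $H(\tau)=G(\tau^{1/p})$. By \Cref{lem-H} we have $H^\ast(G(t)/t^p)\lesssim G(t)$, hence
\begin{equation*}
|D^s\bar u|^p\cdot \frac{G(t)}{t^p}\leq H(|D^s\bar u|^p)+H^\ast\!\Bigl(\frac{G(t)}{t^p}\Bigr)\leq G(|D^s\bar u|)+C\,G(t).
\end{equation*}
Multiplying back by $\bar g^{\gamma}(t)/G(t)$ yields the pointwise bound
\begin{equation*}
|D^s\bar u|^p\,\frac{\bar g^{\gamma}(t)}{t^p}\leq G(|D^s\bar u|)\,\frac{\bar g^{\gamma}(t)}{G(t)}+C\,\bar g^{\gamma}(t).
\end{equation*}

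Finally, since $\bar g$ is non-decreasing (a consequence of \eqref{eq-bar-g-pq} with $p>1$), for every $t$ lying between $v(y)$ and $v(x)$ we have $\bar g^{\gamma}(t)\leq \bar g^{\gamma}(v(x))\lor\bar g^{\gamma}(v(y))$ regardless of the sign of $\gamma$. Averaging the pointwise estimate over $[v(y),v(x)]$ and combining with the Jensen step above gives exactly the claimed inequality, with a constant depending only on $p$ and $q$. I do not expect a serious obstacle; the only mild subtlety is the Young splitting that converts $\bar g^{\gamma}(t)/t^p$ into the desired weight $\bar g^{\gamma}(t)/G(t)$ plus an error that is controlled purely by the endpoint values.
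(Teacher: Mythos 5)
Your proof is correct and follows essentially the same route as the paper: express the difference quotient via the fundamental theorem of calculus, apply Jensen to $\tau\mapsto|\tau|^p$, bound $\bar g'$ by $(q-1)\bar g/t$ using \eqref{eq-bar-g-pq}, and then split $|D^s\bar u|^p\,\bar g^\gamma(t)/t^p$ via the Young-inequality estimate of \Cref{lem-H} with $H(t)=G(t^{1/p})$, finally using the monotonicity of $\bar g$ to control the error term by the endpoint values. The paper condenses the last steps by citing the intermediate estimate \eqref{eq-ineq2-2} proved in \Cref{lem-ineq3}, but the underlying argument is the same; your write-up simply spells out the multiplication by $\bar g^\gamma(t)/G(t)$ and the endpoint bound explicitly.
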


\begin{proof}
We may assume without loss of generality $\bar{u}(x) > \bar{u}(y)$. By using Jensen's inequality and \eqref{eq-bar-g-pq}, we have
\begin{equation}\label{eq-ineq3-1}
|D^s\bar{g}^{\gamma/p}(v)|^p = \left| D^sv \fint_{v(y)}^{v(x)} \frac{\gamma}{p} \bar{g}^{\gamma/p-1}(t) \bar{g}'(t) \,\mathrm{d}t \right|^p \leq C\frac{|\gamma|^p}{R^{sp}} \fint_{v(y)}^{v(x)} |D^s\bar{u}|^p \frac{\bar{g}^{\gamma}(t)}{t^p} \,\mathrm{d}t.
\end{equation}
Thus, the desired inequality follows from \eqref{eq-ineq3-1} and \eqref{eq-ineq2-2}.
\end{proof}

\begin{lemma}\label{lem-ineq5}
Assume \eqref{eq-q-1}. Let $a, b \in \mathbb{R}$, and $\delta \in (0, \frac{1}{q-1}]$.  Then
\begin{equation}\label{eq-alg3-L}
L:= \left| g(|a|)\frac{a}{|a|} - g(|b|)\frac{b}{|b|} \right| \leq \frac{1}{\delta} g^\delta(|a-b|) \left( g^{1-\delta}(|a|) + g^{1-\delta}(|b|) \right).
\end{equation}
In particular, for each $\varepsilon > 0$ there exists a constant $C=C(q, \varepsilon) > 0$ such that
\begin{equation*}
L \leq Cg(|a-b|) + \varepsilon (g(|a|) + g(|b|))
\end{equation*}
\end{lemma}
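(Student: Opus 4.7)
The plan is to prove \eqref{eq-alg3-L} by a case analysis on the signs of $a$ and $b$, invoking the monotonicity assumption \eqref{eq-q-1} twice in the same-sign case, and then to deduce the corollary by a weighted Young inequality. By the symmetry of $L$ in $(a,b)$ I may assume $|a| \geq |b|$, and split into the same-sign case ($ab \geq 0$) and the opposite-sign case ($ab < 0$).

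Consider first the same-sign case; after replacing $(a,b)$ with $(-a,-b)$ if necessary I reduce to $a \geq b \geq 0$ and $L = g(a) - g(b)$. The hypothesis \eqref{eq-q-1}, recast as $g(s) \geq g(t)(s/t)^{q-1}$ whenever $s \leq t$, gives first (with $s=b,\,t=a$)
\[
L \leq g(a)\bigl(1 - (b/a)^{q-1}\bigr) \leq \max(1,q-1)\, g(a)\, \frac{a-b}{a},
\]
the second step being the elementary inequality $1 - x^{q-1} \leq \max(1,q-1)(1-x)$ on $[0,1]$ (convexity for $q \geq 2$, concavity for $1<q<2$). Applied a second time with $s=a-b,\,t=a$ it yields $(a-b)/a \leq (g(a-b)/g(a))^{1/(q-1)}$, so that
\[
L \leq \max(1,q-1)\, g^{\,1-1/(q-1)}(a)\, g^{\,1/(q-1)}(a-b).
\]
For general $\delta \in (0, 1/(q-1)]$ I write $1/(q-1) = \delta + (1/(q-1)-\delta)$; since $a-b \leq a$ forces $g(a-b) \leq g(a)$, the extra factor $(g(a-b)/g(a))^{1/(q-1)-\delta}$ is at most one, leaving $L \leq \max(1,q-1)\, g^\delta(a-b)\, g^{1-\delta}(a)$, which is absorbed into the prefactor $1/\delta$ when $\delta \max(1,q-1) \leq 1$.

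In the opposite-sign case (say $a > 0 > b$) one has $L = g(|a|) + g(|b|)$ and $|a-b| = |a| + |b| \geq |a| \vee |b|$, so monotonicity of $g$ gives $g^\delta(|a-b|) \geq g^\delta(|a|)$ and $\geq g^\delta(|b|)$. This immediately produces the per-term bound $g(|a|) \leq g^\delta(|a-b|)\, g^{1-\delta}(|a|)$ (and analogously for $|b|$) whenever $1-\delta \geq 0$; summing yields the required estimate with constant $\leq 1 \leq 1/\delta$. The corollary is then obtained by applying weighted Young's inequality $X^\delta Y^{1-\delta} \leq \lambda X + C(\lambda,\delta)\, Y$ to each of $(X,Y) = (g(|a-b|), g(|a|))$ and $(g(|a-b|), g(|b|))$, choosing $\lambda = \lambda(\varepsilon)$ so that the $g(|a|)$ and $g(|b|)$ coefficients are both $\varepsilon/2$; the coefficient of $g(|a-b|)$ then becomes the constant $C = C(q,\varepsilon)$ in the statement.

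The main obstacle is reconciling the $1/\delta$ factor in the regime $\delta > 1$, which occurs only when $q < 2$ and is most delicate in the opposite-sign case: there the crude per-term bound loses a factor and one must exploit the identity $|a-b| = |a|+|b|$ more carefully, applying \eqref{eq-q-1} to the ratio $|a-b|/(|a|\vee|b|) = 1 + \min(|a|,|b|)/\max(|a|,|b|) \in [1,2]$ to upgrade $g^\delta(|a-b|)$ strictly beyond $g^\delta(|a|\vee|b|)$. Once this sharper bound is in hand, the two cases combine to deliver \eqref{eq-alg3-L}, and Young's inequality finishes the corollary.
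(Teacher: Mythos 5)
Your argument is correct on the range $\delta \leq 1$ and, in the same-sign case, follows a genuinely different path from the paper. The paper represents $g(a)-g(b)$ as $\int_{g^\delta(b)}^{g^\delta(a)} \tfrac{1}{\delta} t^{1/\delta - 1}\,\mathrm{d}t$, bounds the integrand by $g^{1-\delta}(a)$, and then uses that $t \mapsto g^\delta(t)/t$ is non-increasing (this is where \eqref{eq-q-1} and $\delta\leq 1/(q-1)$ enter) to deduce $g^\delta(a)-g^\delta(b)\leq g^\delta(a-b)$. You instead invoke \eqref{eq-q-1} directly twice, with the elementary inequality $1-x^{q-1}\leq\max(1,q-1)(1-x)$ in between, obtaining the constant $\max(1,q-1)$ rather than $1/\delta$ and then absorbing the spare power of $g(a-b)/g(a)$. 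The paper's integral trick is cleaner and delivers the exact $1/\delta$ in one stroke with no case split on $q$; yours is more elementary and makes the double use of the monotonicity hypothesis explicit. The opposite-sign case is handled essentially identically in both.

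The regime $\delta>1$ that you flag as the ``main obstacle'' cannot be patched by the refinement you sketch, because \eqref{eq-alg3-L} is in fact false there. Take $g(t)=t^{q-1}$ with $q=1.1$ (so \eqref{eq-q-1} holds with equality), $\delta=1/(q-1)=10$, $a=1$, $b=-1$: then $L=2$, while the right-hand side of \eqref{eq-alg3-L} is $\tfrac{1}{10}\,g^\delta(2)\bigl(g^{1-\delta}(1)+g^{1-\delta}(1)\bigr) = \tfrac{1}{10}\cdot 2\cdot 2 = 0.4$. The paper's own proof has the same defect in the opposite-sign case (the estimate there carries no $1/\delta$ factor, which is harmless only when $1\leq 1/\delta$), so the hypothesis should really read $\delta\in(0,\min\{1,\tfrac{1}{q-1}\}]$; since all of the paper's applications take $\delta$ small, nothing downstream is affected. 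Once you restrict to $\delta\leq 1$, your proof is complete and the difficulty you were trying to circumvent disappears.
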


\begin{proof}
If $a \geq 0 \geq b$, then $|a-b|=a+b > \max\lbrace a, |b| \rbrace$, and hence
\begin{equation*}
L \leq g(a)+g(|b|) \leq g^\delta(|a-b|)g^{1-\delta}(a) + g^\delta(|a-b|)g^{1-\delta}(|b|).
\end{equation*}
Thus, we may assume without loss of generality that $a>b>0$. In this case, we have
\begin{equation}\label{eq-alg3-FTC}
L = g(a)-g(b) = \int_{g^\delta(b)}^{g^\delta(a)} \frac{1}{\delta} t^{1/\delta-1} \,\mathrm{d}t \leq \frac{1}{\delta} (g^\delta(a)-g^\delta(b)) g^{1-\delta}(a).
\end{equation}
Note that $t \mapsto g^\delta(t)/t = (g(t)/t^{q-1})^\delta t^{(q-1)\delta-1}$ is non-increasing, and hence
\begin{equation}\label{eq-alg3-tau}
g^\delta(a) = (a-b) \frac{g^\delta(a)}{a} + b \frac{g^\delta(a)}{a} \leq g^\delta(a-b) + g^\delta(b).
\end{equation}
The desired inequality \eqref{eq-alg3-L} follows from \eqref{eq-alg3-FTC} and \eqref{eq-alg3-tau}. The second assertion is a consequence of \eqref{eq-alg3-L} and Young's inequality.
\end{proof}

\begin{lemma}\label{lem-ineq6}
Let $w \geq 0$ in $B_{3R/2}$ and $0 \leq \eta \leq 1$. Then
\begin{align*}
g(|D^sw|)\frac{D^sw}{|D^sw|} D^s (w\eta^q)
&\geq c\,G(|D^sw|) (\eta(x) \land \eta(y))^q \\
&\quad - C G\left( \frac{w(x)\lor w(y)}{R^s} \right) ((R^s |D^s\eta|)^p \lor (R^s |D^s\eta|)^q)
\end{align*}
for all $x, y \in B_{3R/2}$, where $c$ and $C$ are positive constants depending only on $p$ and $q$.
\end{lemma}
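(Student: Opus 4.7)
The plan is to reduce the estimate to a pointwise Caccioppoli-type inequality and then carry out an Orlicz-weighted Young's inequality carefully. By the symmetry of the expression we may assume $D^sw \geq 0$; set $a := D^sw$, $\tilde\eta := \eta(x)\land\eta(y)$, $\hat\eta := \eta(x)\lor\eta(y)$, $W := (w(x)\lor w(y))/R^s$, and $E := R^s|D^s\eta|$. A direct case split on the sign of $D^s\eta^q$ (using $w\geq 0$ and $w(x)\geq w(y)$) yields the pointwise lower bound $D^s(w\eta^q) \geq a\tilde\eta^q - (w(x)\lor w(y))|D^s\eta^q|$. Combined with the coercivity $g(a)a \geq pG(a)$ from \eqref{eq-pq}, this reduces the proof to the upper estimate
\[
g(a)(w(x)\lor w(y))|D^s\eta^q| \leq \tfrac{p}{2}G(a)\tilde\eta^q + CG(W)\bigl[E^p \lor E^q\bigr].
\]

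I would then split into two regimes. In the nondegenerate regime $\hat\eta \leq 2\tilde\eta$, the mean value estimate gives $|D^s\eta^q| \leq q2^{q-1}\tilde\eta^{q-1}|D^s\eta|$, so the target quantity is at most $Cg(a)W\tilde\eta^{q-1}E$. I apply \eqref{eq-alg} with the scaled parameter $\varepsilon = \delta\tilde\eta^q$ (for some $\delta=\delta(p,q)$ small) to split this as $q\delta\tilde\eta^q G(a) + g\!\bigl(W\tilde\eta^{q-1}E/(\delta\tilde\eta^q)\bigr)\cdot W\tilde\eta^{q-1}E$; the first term is absorbed into the good term, and in the second the factor $\tilde\eta^{-(q-1)}$ produced by the scaling of $g$ in \Cref{lem-G} cancels exactly with the prefactor $\tilde\eta^{q-1}$, leaving $CG(WE)\leq CG(W)[E^p \lor E^q]$.

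In the degenerate regime $\hat\eta > 2\tilde\eta$, the observation $|\eta(x)-\eta(y)| > \hat\eta/2$ gives $\hat\eta \leq 2|D^s\eta|\,|x-y|^s$ and hence $|D^s\eta^q| \leq q2^{q-1}E^q|x-y|^{s(q-1)}/R^{sq}$. In this regime the good term $G(a)\tilde\eta^q$ is itself dominated by $G(W)[E^p\lor E^q]$ (because $a \leq w(x)/|x-y|^s$ and $\tilde\eta \leq 2E|x-y|^s/R^s$), so it suffices to bound the left-hand side outright by $CG(W)[E^p\lor E^q]$. Using $a\leq w(x)/|x-y|^s$ and the $\lambda$-scaling of $g$ once more, the factor $(R/|x-y|)^{s(q-1)}$ controlling $g(a)$ cancels the factor $(|x-y|/R)^{s(q-1)}$ coming from the estimate on $(w(x)\lor w(y))|D^s\eta^q|$; the residual geometric factor is bounded because $|x-y|\leq 3R$ on $B_{3R/2}$, and $g(W)W \leq qG(W)$ closes the estimate.

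The main obstacle is the nondegenerate regime: a direct application of Young's inequality produces $\tilde\eta^p G(a)$ via $G(a\tilde\eta)\leq \tilde\eta^pG(a)$, which is useless since $\tilde\eta\leq 1$ and $p\leq q$ force $\tilde\eta^p \geq \tilde\eta^q$. The scaled choice $\varepsilon=\delta\tilde\eta^q$ in \eqref{eq-alg} is precisely the device that exploits the homogeneity of the Young-type inequality to deliver the correct power of $\tilde\eta$ on the good side while keeping the error of the required form $G(W)[E^p\lor E^q]$.
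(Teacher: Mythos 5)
Your proof is correct. The paper omits the proof of \Cref{lem-ineq6} with a pointer to the proof of \Cref{lem-ineq2}, and your argument mirrors it: the case split on whether $\eta(x)\lor\eta(y)\leq 2(\eta(x)\land\eta(y))$, the application of \eqref{eq-alg} with a parameter that absorbs the $\tilde\eta^q$ weight via the $\lambda$-scaling of $g$ (the paper realizes the same cancellation by dividing $\tilde\eta$ into the second argument rather than scaling $\varepsilon$, which is equivalent), and a direct geometric bound using $\eta(x)\lor\eta(y)\leq 2|D^s\eta|\,|x-y|^s$ together with $|x-y|\lesssim R$ in the remaining case.
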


Since the proof of \Cref{lem-ineq6} is similar to that of \Cref{lem-ineq2}, we omit the proof.

\end{appendix}


\end{document}